  \newtheorem{theorem}{Theorem}[section]
  \newtheorem{definition}[theorem]{Definition}
  \newtheorem{remark}[theorem]{Remark} 
  \newtheorem{lemma}[theorem]{Lemma}
  \newtheorem{proposition}[theorem]{Proposition}
  \newtheorem{corollary}[theorem]{Corollary}
  \newtheorem{claim}[theorem]{Claim}
  \newtheorem{example}[theorem]{Example}
\begin{document}

         \title{ Mixed trTLEP-structures and mixed Frobenius structures}
            \author{Yota Shamoto}
            \date{\today}
            
          \maketitle

        \begin{abstract}
        We introduce the notion of mixed trTLEP-structures 
        and prove that a mixed trTLEP-structure with some conditions
        naturally induces a mixed Frobenius manifold.
        This is a generalization of the reconstruction theorem of Hertling and Manin. 
        As a special case, we also show that 
        a graded polarizable variation of mixed Hodge structure
        with $H^2$-generation condition gives rise to a family of mixed Frobenius manifolds.
        It implies that there exist mixed Frobenius manifolds associated to local B-models.
        
%        We also give a method to construct mixed trTLEP-structures as a limit of logarithmic trTLEP-structures.
%        Using this method, we give a new description 
%        to the mixed Frobenius manifold in the local A-model constructed by Konishi-Minabe.
%        
        \end{abstract}
        
        \tableofcontents
        %%%%%%%%%%%%%%%%%%%%         section           %%%%%%%%%%%%%%%%%%%
        \section{Introduction}
         
         %%%%%%%%%%%%%%%%%%%         subsection        %%%%%%%%%%%%%%%%%%%
           \subsection{Frobenius manifolds and their comparison}          
           %%%%%%%%    list of contents    %%%%%%%%%%%%           
           A Frobenius manifold is a complex manifold 
           whose tangent bundle is equipped with a commutative product, a metric and a global section 
           satisfying a kind of integrability condition.         
           The notion of Frobenius manifold was introduced by Dubrovin \cite{Dub1} 
           for the investigation of 2D topological field theories.
           (The equivalent structure, called flat structure, was introduced by K. Saito \cite{K.Saito1}.)
           It has been shown that Frobenius manifolds naturally arise in many theories: 
           the invariant theory of Weyl groups \cite{K.T};
           singularity theory \cite{Sab2, Sab3, Sab4, K.T};  
           Gromov-Witten theory \cite{Kontz Man}; 
           the deformation theory
           of $A_\infty$-structures \cite{Konts Bara}, 
           etc.
                               
           In some cases, we have an interesting isomorphism of Frobenius manifolds in two different theories.
           For instance, one of the goals in mirror symmetry 
           is to prove that a Frobenius manifold constructed in a A-model is isomorphic 
           to the one constructed in the corresponding B-model.

           However, in general, it is difficult to compare Frobenius manifolds 
           $\mathscr{F}^{(0)}$ and $\mathscr{F}^{(1)}$. 
           A useful strategy is to split the problem into two steps as follows.
             \begin{description}
             \item[Step 1.] 
              Show that each $\mathscr{F}^{(i)}$ is constructed from more restricted set of data 
              $\mathcal{T}^{(i)}$ $(i=0,1)$.
             \item[Step 2.]
              Show that $\mathcal{T}^{(0)}\simeq \mathcal{T}^{(1)}$.
           \end{description}         
           The fact in Step 1 is called (re-)construction theorem. 
           It depends on the problem which data we choose.
           Following \cite{H.M1}, let us consider the case of trTLEP-structure.
           
           Let $M$ be a complex manifold and 
           $j_\lambda:\mathbb{P}_\lambda^1\times M\to \mathbb{P}_\lambda^1\times M$ a map defined by 
           $j_\lambda(\lambda,t):=(-\lambda,t)$ where $\lambda$ 
           is non-homogeneous coordinate on $\mathbb{P}^1_\lambda$
           and $t$ is a point in $M$.
           For an integer $k$, 
           trTLEP($k$)-structure on $M$ is a tuple $(\mathcal{H},{\nabla},P)$ with following properties
           (Definition \ref{def of trTLEP(k)}).
           $\mathcal{H}$ is a holomorphic vector bundle on $\mathbb{P}_\lambda^1\times M$ 
           trivial along $\mathbb{P}^1_\lambda$, 
           ${\nabla}$ is a meromorphic flat connection on $\mathcal{H}$;
            \begin{equation}\label{intro-conn}
              {\nabla}\colon \mathcal{H}\to 
              \mathcal{H}\otimes
              \Omega^1_{\mathbb{P}^1_\lambda\times M}
              \big{(}\log (\{0,\infty\}\times M)\big{)}
              \otimes\mathcal{O}_{\mathbb{P}_\lambda^1\times M}(\{0\}\times M)
              .
            \end{equation}
           $P$ is ${\nabla}$-flat non-degenerate $(-1)^k$-symmetric pairing 
           \begin{equation}
           P:\mathcal{H}\otimes j_\lambda^*\mathcal{H}\to 
             \mathcal{O}_{\mathbb{P}^1_\lambda\times M}(-k\{0\}\times M + k\{\infty\}\times M).
           \end{equation}

           Hertling and Manin \cite{H.M1} showed the construction theorem for trTLEP-structure. 
           In other words, they proved that 
           a trTLEP-structure with some condition uniquely induces a Frobenius manifold.
           %using the notion of unfolding. 
           Then, they applied the construction theorem to compare the Frobenius manifolds 
           constructed from 
           isolated singularities,
           Frobenius manifolds associated to 
           variation of polarized Hodge structure of some family of hypersurfaces,  
           and super Frobenius manifolds in the deformation theory of $A_\infty$-structures. 
           
           Reichelt \cite{R1} defined the notion of logarithmic trTLEP-structure 
           as a generalization of trTLEP-structure
           and proved the construction theorem for logarithmic Frobenius manifolds.
           Reichelt and Sevenheck \cite{RS} used the construction theorem to 
           refine the mirror symmetry theorem \cite{Giv} for 
           weak Fano toric manifolds. 
           Here, we note that in \cite{RS}, 
           the result of Givental \cite{Giv} 
           plays an important role at Step 2 of the 
           strategy above.

           %%%%%%%%%%%%%%%%%%        subsection      %%%%%%%%%%%%%%%%%
           \subsection{Construction theorem for mixed Frobenius manifolds}
           The first main theorem of this paper is the construction theorem for mixed Frobenius manifolds.
           We introduce the notion of mixed trTLEP-structure and 
           show that a mixed trTLEP-structure with some condition 
           naturally induces a mixed Frobenius manifold.
           The notion of mixed Frobenius manifolds was introduced by 
           Konishi and Minabe \cite{konisi1} to understand the local mirror symmetry.
           Here we shall explain the notions of mixed trTLEP-structures and mixed Frobenius structures, 
           and the statement of the construction theorem. 
           The applications of the construction theorem 
           to the local mirror symmetry will be discussed in \S\ref{intro local B}
           and \S\ref{intro local A}.
             %%%%%%%%%%%%%%%%%%      intro section    %%%%%%%%%%%%%%%
             \subsubsection{Mixed Frobenius structures and mixed trTLEP-structures}\label{intro Frobenius}
             %Roughly speaking, mixed Frobenius manifold is a Frobenius manifold with filtration.
             Let $M$ be a complex manifold equipped with holomorphic vector fields $e$ and $E$.
             Suppose that the tangent bundle $\Theta_M$ has 
             an associative commutative product $\circ$, a torsion free flat connection ${\bm \nabla}$, 
             and a ${\bm \nabla}$-flat increasing
             filtration $\mathcal{I}=(\mathcal{I}_k\mid k\in \mathbb{Z})$.
             If the tuple $(\circ,{\bm \nabla},e,E,\mathcal{I})$ and sequence of metrics $g=(g_k\mid k\in\mathbb{Z})$ 
             on $\mathrm{Gr}^\mathcal{I}\Theta_M=\bigoplus_{k\in \mathbb{Z}}\mathrm{Gr}^\mathcal{I}_k\Theta_M$   
             satisfies some conditions, we call the tuple $\mathscr{F}:=(\circ,{\bm \nabla},e,E,\mathcal{I},g)$
             a mixed Frobenius structure (MFS)
             on $M$. A complex manifold equipped with a MFS is called mixed Frobenius manifold
             (Definition \ref{def mixed Frob}).
                          
             Similarly, a mixed trTLEP-structure is a trTLEP-structure with a filtration.
             Let $\mathcal{H}$ be a holomorphic vector bundle on $\mathbb{P}^1_\lambda\times M$ 
             trivial along $\mathbb{P}^1_\lambda$
             and ${\nabla}$ a meromorphic flat connection on $\mathcal{H}$ as in (\ref{intro-conn}).
             If we are given an increasing filtration ${W}=({W}_k\mid k\in\mathbb{Z})$ 
             of ${\nabla}$-flat subbundle 
             on $\mathcal{H}$ and pairings $P=(P_k\mid k\in\mathbb{Z})$ on $\mathrm{Gr}^{{W}}\mathcal{H}$
             such that $(\mathrm{Gr}^{W}_k(\mathcal{H}),{\nabla},P_k)$ 
             is a trTLEP($-k$)-structure
             for any $k$, 
             then we call the tuple $\mathcal{T}:=(\mathcal{H},{\nabla},{W},P)$ 
             mixed trTLEP-structure on $M$ (Definition \ref{def mixed trTLEP}). 
             As we will see in Proposition
             \ref{proposition mixed Frobenius and mixed trTLEP}, 
             a mixed Frobenius structure $\mathscr{F}$ on $M$ naturally induces 
             a mixed trTLEP-structure $\mathcal{T}(\mathscr{F})$ on $M$.
                        
             Let $f:M_0\to M_1$ be a holomorphic map between complex manifolds. 
             A mixed trTLEP-structure $\mathcal{T}$ on $M_1$ 
             naturally induces mixed trTLEP-structure $f^*\mathcal{T}$ on $M_0$.
             In particular, if we are given MFS $\mathscr{F}$ on a complex manifold $M$
             and closed embedding $\iota: M\hookrightarrow \widetilde{M}$ 
             then we have the mixed trTLEP-structure $\mathcal{T}=\iota^*\mathcal{T}(\mathscr{F})$.
             This plays the role of the ^^ ^^ restricted set of data'' in Step 1 in the strategy.

             %%%%%%%%%%%%%%%%%%    intro section      %%%%%%%%%%%%%%
             \subsubsection{Unfolding of mixed trTLEP-structure and the construction theorem}\label{intro unfolding}
             Let $(M,0)$ be a germ of a complex manifold and $\mathcal{T}$ be a mixed trTLEP-structure on $(M,0)$.
             An unfolding of $\mathcal{T}$ is a tuple 
             $\big{(}(\widetilde{M},0),\widetilde{\mathcal{T}},\iota, i\big{)}$
             where 
             $\iota:(M,0)\hookrightarrow (\widetilde{M},0)$ is a closed embedding , 
             $\widetilde{\mathcal{T}}$ is a mixed trTLEP-structure on $(\widetilde{M},0)$,
             and  
             $i:\mathcal{T}\xrightarrow{\sim} \iota^*\widetilde{\mathcal{T}}$
             is an isomorphism of mixed trTLEP-structures.
             We can define the notion of morphisms of unfoldings of 
             $\mathcal{T}$.
             Hence we get the category of unfoldings of $\mathcal{T}$
             denoted by $\mathfrak{Unf}_\mathcal{T}$
             (Definition \ref{def of unfolding}).   
             
             If there exists a terminal object in $\mathfrak{Unf}_\mathcal{T}$, 
             we call it universal unfolding of $\mathcal{T}$. 
             We show that there is a universal unfolding 
             $\big{(}(\widetilde{M},0),\widetilde{\mathcal{T}},\iota, i\big{)}$
             of $\mathcal{T}$
             under some conditions (Theorem \ref{unfolding theorem for mixed trTLEP-structure}).
             Moreover, we show that $\widetilde{\mathcal{T}}$ is isomorphic to 
             $\mathcal{T}(\mathscr{F})$ for a MFS $\mathscr{F}$ on $(\widetilde{M},0)$
             (Corollary \ref{construction theorem of mixed Frobenius manifold}). 
             Hence we get the first main theorem of this paper as follows.
     
             %%%%%%%%%%%%  main theorem in intro %%%%%%%%%%%%
             \begin{theorem}[Theorem \ref{unfolding theorem for mixed trTLEP-structure},
             Corollary \ref{construction theorem of mixed Frobenius manifold}]
             \label{intro main theorem}
             Let $\mathcal{T}$ be a mixed {\rm trTLEP}-structure on a germ of complex manifold $(M,0)$.
             Assume that $\mathcal{T}$ satisfies ^^ ^^ some conditions''. 
             Then there exists $($uniquely up to isomorphisms$)$
             a mixed Frobenius structure $\mathscr{F}$
             on a germ of a complex manifold $(\widetilde{M},0)$ 
             such that the induced mixed {\rm trTLEP}-structure 
             $\mathcal{T}(\mathscr{F})$ gives a universal unfolding of $\mathcal{T}$.             
             \end{theorem}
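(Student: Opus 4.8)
The plan is to prove the two cited results in turn: first the existence and uniqueness of a universal unfolding inside the category $\mathfrak{Unf}_{\mathcal{T}}$ of unfoldings of $\mathcal{T}$ (Theorem~\ref{unfolding theorem for mixed trTLEP-structure}), and then the identification of the resulting mixed trTLEP-structure with the one induced by a mixed Frobenius structure (Corollary~\ref{construction theorem of mixed Frobenius manifold}). The overall architecture follows Hertling--Manin \cite{H.M1} and, for the passage through the pole along $\{0\}\times M$, Reichelt \cite{R1}; the new feature is that every step must be carried out compatibly with the weight filtration $W$.

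For the universal unfolding I would first drop the pairings and construct the universal unfolding of the underlying datum $(\mathcal{H},{\nabla},W)$. Restriction to $\{0\}\times M$ produces the initial data: the locally free $\mathcal{O}_M$-module $K:=\mathcal{H}|_{\lambda=0}$, the Higgs field $C\colon\Theta_M\to\mathrm{End}_{\mathcal{O}_M}(K)$ given by the $\mathcal{O}_M$-linear part of $\lambda{\nabla}$ along $M$, the endomorphisms $\mathcal{U},\mathcal{V}$ at $\lambda=0$ coming from the $\lambda$-direction of ${\nabla}$, the induced filtration on $K$, and, on $\mathrm{Gr}^{W}K$, the residue pairings of the $P_k$. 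The hypothesis ``some conditions'' should be the mixed analogue of the conditions of \cite{H.M1}: a \emph{generation condition} (a distinguished section $\zeta$ whose iterated images under $C$ generate $K$ over $\mathcal{O}_M$) together with an \emph{injectivity condition} ($\xi\mapsto C_\xi\zeta$ injective at $0$). I would then build $(\widetilde{M},0)$ with $T_0\widetilde{M}$ identified with the fibre $\mathcal{H}|_{(0,0)}$ and extend ${\nabla}$ over $\mathbb{P}^1_\lambda\times\widetilde{M}$ by the standard prolongation argument along the infinitesimal neighbourhoods of $0$: at each order the obstruction to prolonging lies in a cokernel killed by the generation condition, and the injectivity condition makes the prolongation unique. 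Since $\widetilde{{\nabla}}$ is the unique flat extension, each $W_k$ propagates to a $\widetilde{{\nabla}}$-flat subbundle of $\widetilde{\mathcal{H}}$ by parallel transport on the germ, giving $\widetilde{W}$; each $P_k$, being ${\nabla}$-flat, extends uniquely to $\widetilde{P}_k$ on $\mathrm{Gr}^{\widetilde{W}}_k\widetilde{\mathcal{H}}$, and one checks that it stays non-degenerate and $(-1)^k$-symmetric and, using the triviality of $\widetilde{\mathcal{H}}$ along $\mathbb{P}^1_\lambda$ together with the weight bookkeeping for the Euler action, that it still takes values in the appropriately twisted line bundle, so that $(\mathrm{Gr}^{\widetilde{W}}_k\widetilde{\mathcal{H}},\widetilde{{\nabla}},\widetilde{P}_k)$ is a $\mathrm{trTLEP}(-k)$-structure on the germ. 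A short diagram chase (uniqueness of flat extensions on a germ) then shows that every unfolding in $\mathfrak{Unf}_{\mathcal{T}}$ maps uniquely to $\widetilde{\mathcal{T}}:=(\widetilde{\mathcal{H}},\widetilde{{\nabla}},\widetilde{W},\widetilde{P})$, so the latter is a terminal object.

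For the second stage, universality yields that the period map $a\colon\Theta_{\widetilde{M}}\to\widetilde{\mathcal{H}}|_{\lambda=0}$, $\xi\mapsto(\lambda\widetilde{{\nabla}}_\xi\zeta)|_{\lambda=0}$, is an isomorphism. I would transport structure along $a$: the product $\circ$ via $a(\xi_1\circ\xi_2)=C_{\xi_1}a(\xi_2)$, the unit $e$ as the preimage of $\zeta|_{\lambda=0}$, the Euler field $E$ from $\mathcal{U}$, the torsion-free flat connection ${\bm \nabla}$ from the flat structure of $\widetilde{\mathcal{H}}$ at $\lambda=0$ (equivalently, via a Birkhoff-type normal form along $\mathbb{P}^1_\lambda$, following K.~Saito's period-map argument), the filtration $\mathcal{I}$ as the image of $\widetilde{W}$, and the metrics $g_k$ on $\mathrm{Gr}^{\mathcal{I}}_k\Theta_{\widetilde{M}}$ as the residue pairings of $\widetilde{P}_k$ pulled back through $a$. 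One then verifies the axioms of Definition~\ref{def mixed Frob}: associativity and commutativity of $\circ$, flatness and torsion-freeness of ${\bm \nabla}$, the potentiality identities, ${\bm \nabla}$-flatness and $\circ$-stability of $\mathcal{I}$, and the compatibilities of $g=(g_k)$ with $\circ$, ${\bm \nabla}$, $e$ and $E$ across the graded pieces. These follow from the flatness and $(-1)^k$-symmetry of $\widetilde{{\nabla}}$ and $\widetilde{P}$ by the now-standard dictionary, the metric-related ones being checked graded-piece by graded-piece since the $\widetilde{P}_k$ live only on $\mathrm{Gr}^{\widetilde{W}}$. Uniqueness is then formal: a terminal object of $\mathfrak{Unf}_{\mathcal{T}}$ is unique up to a unique isomorphism, and a mixed Frobenius structure is recovered from the induced mixed trTLEP-structure (Proposition~\ref{proposition mixed Frobenius and mixed trTLEP}) together with its primitive section $e$, which is itself recovered from that mixed trTLEP-structure; hence any two solutions are isomorphic.

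The main new obstacle is carrying out the axiom verification of the second stage in the \emph{filtered} setting, so that the metrics on the distinct graded pieces $\mathrm{Gr}^{\mathcal{I}}_k\Theta_{\widetilde{M}}$ interact correctly with the single product $\circ$ and the Euler field $E$, i.e.\ the ``conformality'' conditions coupling different weights. A secondary technical point is the control of the value-sheaf of the extended pairings $\widetilde{P}_k$ over the enlarged base, which relies on tracking the weight $-k$ of the Euler action; with these in hand, the remaining arguments are routine adaptations of \cite{H.M1} and \cite{R1}.
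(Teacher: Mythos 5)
Your overall architecture is the same as the paper's: first build a universal unfolding of the underlying filtered trTLE-structure under an injectivity and a generation condition, then extend the graded pairings, then pass to a mixed Frobenius structure via the period map $\xi\mapsto\mathcal{C}_\xi\zeta$ and the trTLE/Saito dictionary (Proposition \ref{proposition mixed Frobenius and mixed trTLEP}). However, the two places where the mixed situation actually differs from Hertling--Manin are exactly the places where your plan substitutes an assertion for the needed mechanism. (i) You extend each $W_k$ ``by parallel transport'' and control the value sheaf of $\widetilde{P}_k$ by ``weight bookkeeping for the Euler action''. Parallel transport only produces flat subbundles over $\mathbb{C}^*_\lambda\times(\widetilde{M},0)$; since $\widetilde{\nabla}$ has an irregular pole along $\lambda=0$, it is not automatic that these extend to subbundles of $\widetilde{\mathcal{H}}$ with trTLE graded quotients, nor that the flat extension of $P_k$ keeps pole order $k$ in $\lambda$ (the naive estimate from $\partial_y\phi_{\widetilde P_k}=\lambda^{-1}(C_y^\vee\phi_{\widetilde P_k}-\phi_{\widetilde P_k}\circ j_\lambda^*C_y)$ loses one power of $\lambda$ at each order in $y$). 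The paper's mechanism for both points is the generation condition (GC): it forces the new Higgs components $C_y$ to lie in the commutative algebra generated by the old $C_i$ and $\mathcal{U}$, hence in $\mathcal{P}^W_{\mathcal{H}}$, so the constructed connection preserves the filtration by construction (Lemma \ref{key lemma for unfolding}), and the same membership drives the induction in $y$ that bounds the pole order of $\widetilde{P}_k$ (Proposition \ref{unfolding of pairing}, Claim \ref{P_k}). Without this observation the filtered and paired extensions are unproved, and an Euler-weight argument does not replace it.

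(ii) Two further gaps: your list of ``some conditions'' omits the eigenvalue condition $(\mathrm{EC})_d$ of Definition \ref{definition of residually flat section}, which is genuinely needed in the second stage (it is what makes $e$ flat with $-[E,e]=e$ after the twist, and what yields the homogeneity $\mathrm{Lie}_E(g_k)=(2-d+k)g_k$ of the graded metrics; (IC) and (GC) alone give only the unfolding statement, not the MFS). And your prolongation ``along the infinitesimal neighbourhoods of $0$'' produces only a formal unfolding; the paper keeps the coefficients holomorphic in $t$, formal only in the new variable $y$, and proves convergence by rewriting the $y$-flatness equations as a Cauchy--Kovalevskaya system (Claim \ref{claim2}). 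Some convergence argument of this kind must be supplied, since the order-by-order construction by itself does not yield a germ of a holomorphic unfolding.
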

             This is a generalization of Theorem 4.5 in Hertling-Manin \cite{H.M1}.
             ^^ ^^ Some conditions'' in this theorem is explained 
             in Definition \ref{definition of residually flat section} 
             and Definition \ref{definition of generating condition}.
             We also give the definition of the equivalent condition for a special case in \S\ref{MFS VMHS}.
         %%%%%%%%%%%%%%%%%%%%%   subsection    intro  %%%%%%%%%%%%%%%  
         \subsection{Mixed Frobenius manifolds and variations of mixed Hodge structure}
         \label{MFS VMHS}
         %We will explain some applications of the theorem \ref{intro main theorem}.
         %These applications are also a kind of generalization of the results of Hertling-Manin \cite{H.M1}.
         Consider a graded polarizable variation of mixed Hodge structure (VMHS)
         $\mathscr{H}:=(\mathbb{V}_\mathbb{Q},F,W)$ on a
         germ of a complex manifold  $(M,0)$.
         Here, $\mathbb{V}_\mathbb{Q}$ is a local system of $\mathbb{Q}$-vector space, 
         $F=(F^\ell\mid \ell\in \mathbb{Z})$ is a Hodge filtration
         on $K:=\mathbb{V}_\mathbb{Q}\otimes \mathcal{O}_{M,0}$,
         and $W=(W_k\mid k\in \mathbb{Z})$ is a weight filtration 
         on $\mathbb{V}_\mathbb{Q}$.
         If we fix  a graded polarization $S=(S_k\mid k\in \mathbb{Z})$ on $\mathrm{Gr}^WV_\mathbb{Q}$
         and
         an opposite filtration $U=(U_\ell\mid \ell\in \mathbb{Z})$ 
         (see Definition \ref{opposite filtration} for the definition of opposite filtrations), 
         then we have a mixed trTLEP-structure $\mathcal{T}(\mathscr{H},S,U)$ by Rees construction
         (Lemma \ref{REES}).
         
         For the mixed trTLEP-structure $\mathcal{T}=\mathcal{T}(\mathscr{H},U,S)$, 
         ^^ ^^ some conditions" in Theorem \ref{intro main theorem} can be reformulated as 
         a condition for $\mathscr{H}$. 
         The condition is called $H^2$-generation condition(\cite{H.M1}), motivated by quantum cohomology.
         
         Let $\nabla$ be the flat connection on $K$ induced by $\mathscr{H}$.
         By Griffiths transversality,
         we have a Higgs field 
         $\theta:=\mathrm{Gr}_F(\nabla):\mathrm{Gr}_F K
         \to \mathrm{Gr}_F K\otimes \Omega^1_{M,0}$.
         Put $w:=\max \{\ell\in \mathbb{Z}\mid \mathrm{Gr}_F^\ell K\neq 0\}$.
         Note that $\mathrm{Gr}^w_F K=F^w$. 
         The $H^2$-generation condition for 
         $\mathscr{H}$ is the following.
         \begin{enumerate}
         \item[(i)] The rank of $F^w$ is 1,
                    and the rank of $\mathrm{Gr}^{w-1}_FK$ 
                    is equal to the dimension of $(M,0)$,
         \item[(ii)] The map $\mathrm{Sym}\ \Theta_{M,0}\otimes F^w \to \mathrm{Gr}_F K$ 
                     induced by $\theta$ is surjective.
         \end{enumerate}
         Let $\zeta_0$ be a non-zero vector in $F^w|_0$.
         Then the condition (ii) is equivalent to the condition that $\mathrm{Gr}_F K|_0$ is 
         generated by $\zeta_0$ over $\mathrm{Sym}\ \Theta_{M,0}|_0$
         and conditions (i) and (ii) imply $\Theta_{M,0}\simeq \mathrm{Gr}_F^{w-1}K$.
         
         The following theorem is an application of Theorem \ref{intro main theorem}
         in the case of $\mathcal{T}=\mathcal{T}(\mathscr{H},U,S)$.
         
         %%%%%%%%%%%%% 2nd main theorem in introduction %%%%%%%%%%%%%%
         \begin{theorem}[Corollary \ref{H2-generated mixed Frobenius}]\label{intro main theorem 2}
         Let $\mathscr{H}=(\mathbb{V}_\mathbb{Q},F,W)$ be a VMHS
         on a germ of a complex manifold $(M,0)$ with the $H^2$-generation condition.
         Take an integer $w$ as above and a non-zero vector $\zeta_0\in F^w\mid_0$.
         Fix a graded polarization $S$ and an opposite filtration $U$ on $\mathrm{Gr}^W V_\mathbb{Q}$.
         Then there exists 
         $($uniquely up to isomorphisms$)$ 
         a tuple $\big{(}(\widetilde{M},0),\mathscr{F},\iota, i\big{)}$ with the following conditions.
         \begin{enumerate}
         \item[$1.$] $\mathscr{F}=(\circ,{\bm \nabla},e,E,\mathcal{I},g)$ 
                     is a MFS on a 
                     germ of a complex manifold $(\widetilde{M},0)$.
         \item[$2.$] $\iota: (M,0)\hookrightarrow (\widetilde{M},0)$ is a closed embedding.
         \item[$3.$] $i:\mathcal{T}(\mathscr{H},U,S)\xrightarrow{\sim} \mathcal{T}(\mathscr{F})$ is
                     an isomorphism of mixed {\rm trTLEP}-structure with
                     $i|_{(0,0)}(\zeta_0)=e|_0$.  
                    % where $e$ is the unit vector field of $\mathscr{F}$.
         \end{enumerate}
         \end{theorem}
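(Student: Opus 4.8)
The plan is to deduce the statement from Theorem~\ref{intro main theorem} applied to the mixed {\rm trTLEP}-structure attached to $\mathscr{H}$, and then to pin down the normalization $i|_{(0,0)}(\zeta_0)=e|_0$ by tracking the primitive section. First, by Lemma~\ref{REES} the data $(\mathscr{H},U,S)$ produce a mixed {\rm trTLEP}-structure $\mathcal{T}=\mathcal{T}(\mathscr{H},U,S)=(\mathcal{H},\nabla,\mathcal{W},P)$ on $(M,0)$: the opposite filtration $U$ provides the trivialization of $\mathcal{H}$ along $\mathbb{P}^1_\lambda$, the Gauss--Manin connection of $\mathscr{H}$ induces $\nabla$ with the pole structure (\ref{intro-conn}), the weight filtration $W$ induces the $\nabla$-flat filtration $\mathcal{W}$, and the graded polarizations $S_k$ induce the pairings $P_k$ making each $(\mathrm{Gr}^{\mathcal{W}}_k\mathcal{H},\nabla,P_k)$ a {\rm trTLEP}$(-k)$-structure. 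Restricting to $\lambda=0$ one gets $\mathcal{H}|_{\{0\}\times M}\cong\mathrm{Gr}_F K$ via the Rees grading, under which $F^w=\mathrm{Gr}^w_F K$ is a line subbundle by condition $(i)$ and the residual part of $\nabla$ along $\{0\}\times M$ in the $M$-directions becomes the Higgs field $\theta=\mathrm{Gr}_F(\nabla)$ on $\mathrm{Gr}_F K$.

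The heart of the argument is to verify that the $H^2$-generation condition for $\mathscr{H}$ is exactly ``some conditions'' of Theorem~\ref{intro main theorem} for $\mathcal{T}$, in the form of Definitions~\ref{definition of residually flat section} and~\ref{definition of generating condition}, with the distinguished section being $\zeta_0\in F^w|_0$. Condition $(i)$ --- $\mathrm{rank}\,F^w=1$ and $\mathrm{rank}\,\mathrm{Gr}^{w-1}_FK=\dim(M,0)$ --- together with $(ii)$ restricted to symmetric degree one gives the isomorphism $\Theta_{M,0}\xrightarrow{\sim}\mathrm{Gr}^{w-1}_FK$, $X\mapsto\theta_X\zeta_0$, while the flatness of the Gauss--Manin connection together with the Rees grading provides the extension of $\zeta_0$ required to be residually flat at $\lambda=0$ in the sense of Definition~\ref{definition of residually flat section}. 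Condition $(ii)$ in full --- surjectivity of $\mathrm{Sym}\,\Theta_{M,0}\otimes F^w\to\mathrm{Gr}_F K$, equivalently generation of $\mathrm{Gr}_F K|_0$ by $\zeta_0$ over $\mathrm{Sym}\,\Theta_{M,0}|_0$ under iterated $\theta$ --- is the generating condition of Definition~\ref{definition of generating condition} for $(\mathcal{T},\zeta_0)$, since at $\lambda=0$ the action of tangent vectors by $\nabla$ on $\mathcal{H}$ is the action of $\theta$ on $\mathrm{Gr}_F K$; the weight- and homogeneity-compatibilities needed are supplied by the grading inherent in the Rees construction and by graded polarizability. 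This dictionary between the Hodge-theoretic data $(\theta,F^w,\mathrm{Gr}_F K)$ and the $\lambda=0$ residue data of $\mathcal{H}$, and the resulting equivalence of the abstract hypotheses with $(i)$--$(ii)$, is the step I expect to be the main obstacle; everything else is formal.

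Granting this, Theorem~\ref{intro main theorem} and Corollary~\ref{construction theorem of mixed Frobenius manifold} apply to $\mathcal{T}$ and produce a germ $(\widetilde M,0)$, a closed embedding $\iota:(M,0)\hookrightarrow(\widetilde M,0)$, a mixed Frobenius structure $\mathscr{F}=(\circ,{\bm \nabla},e,E,\mathcal{I},g)$ on $(\widetilde M,0)$, and an isomorphism $i:\mathcal{T}\xrightarrow{\sim}\iota^*\mathcal{T}(\mathscr{F})$ such that $\big{(}(\widetilde M,0),\mathcal{T}(\mathscr{F}),\iota,i\big{)}$ is a universal unfolding of $\mathcal{T}$; uniqueness up to isomorphism of the tuple $\big{(}(\widetilde M,0),\mathscr{F},\iota,i\big{)}$ follows from the terminal property in $\mathfrak{Unf}_\mathcal{T}$ together with the fact that $\mathscr{F}$ is recovered from $\mathcal{T}(\mathscr{F})$. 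Finally, under the canonical identification $\mathcal{H}(\mathscr{F})|_{\{0\}\times\widetilde M}\cong\Theta_{\widetilde M}$ furnished by $\mathscr{F}$, the unit field $e$ is by construction the image of the primitive section used to build $\mathscr{F}$; since condition $(i)$ forces that section to be a scalar multiple of $\zeta_0$ and $\zeta_0$ has been fixed, the construction may be performed with $\zeta_0$ itself as primitive section, whence $i|_{(0,0)}(\zeta_0)=e|_0$. This gives the last assertion and completes the proof.
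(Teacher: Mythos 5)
Your overall route coincides with the paper's: build $\mathcal{T}(\mathscr{H},U,S)$ by the Rees construction (Lemma \ref{REES}), identify $\mathcal{H}|_{\lambda=0}\simeq\mathrm{Gr}_F K$ so that the Higgs field $\mathcal{C}$ of the associated Frobenius type structure becomes $\theta=\mathrm{Gr}_F(\nabla)$, translate the $H^2$-generation condition into {\rm (IC)} and {\rm (GC)} for $\zeta_0$, and then invoke the construction theorem; your verification of {\rm (IC)} (via the isomorphism $\Theta_{M,0}\simeq\mathrm{Gr}^{w-1}_FK$) and of {\rm (GC)} is essentially the paper's.

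There is, however, a genuine gap: Corollary \ref{construction theorem of mixed Frobenius manifold} requires, besides {\rm (IC)} and {\rm (GC)}, the eigenvalue condition $(\mathrm{EC})_d$ of Definition \ref{definition of residually flat section}, i.e. $\mathcal{V}\zeta=\tfrac d2\zeta$ for the $\nabla^{\rm r}$-flat extension $\zeta$ of $\zeta_0$. With only {\rm (IC)} and {\rm (GC)}, Theorem \ref{unfolding theorem for mixed trTLEP-structure} gives a universal unfolding as a mixed {\rm trTLEP}-structure, but the passage to a mixed Frobenius structure (Proposition \ref{proposition mixed Frobenius and mixed trTLEP}) needs {\rm (IdC)} \emph{together with} $(\mathrm{EC})_d$: the Euler vector field, the charge, and the homogeneity (\ref{g2}) of the metrics all come from the eigenvalue of $\mathcal{V}$ on $\zeta$. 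You never check this condition---indeed $\mathcal{V}$ and $\mathcal{U}$ do not appear in your argument---and it is not a consequence of the $H^2$-generation condition; it comes from the opposite filtration. The paper verifies $(\mathrm{EC})_{2w}$ by using the decomposition $K=\bigoplus_\ell F^\ell\cap U_\ell$ determined by $U$, on which $\mathcal{V}$ acts by $\ell\cdot\mathrm{id}$; since $\zeta_0\in F^w\cap U_w$, one gets $\mathcal{V}\zeta=w\zeta$, so the construction theorem applies with $d=2w$ and the resulting MFS has charge $2w$. (A related small point: your claim that condition (ii) ``is'' {\rm (GC)} implicitly uses $\mathcal{U}=0$ for the Rees bundle, since the algebra $\mathcal{A}$ in Definition \ref{definition of generating condition} is generated by the $\mathcal{C}_x$ \emph{and} $\mathcal{U}$; for the implication you actually need---(ii) implies {\rm (GC)}---this is harmless, but it should be said.) Once $(\mathrm{EC})_{2w}$ is verified, the remainder of your argument, including the normalization $i|_{(0,0)}(\zeta_0)=e|_0$ and the uniqueness via universality, is in line with the paper.
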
 
           
%        
%         We construct shall two kinds of examples of VMHS with $H^2$-generation condition.
%         One is constructed from deformation of open Calabi-Yau manifold with divisor and 
%         another is from toric data.
%         
%         The toric case will be explained in the next subsection.
         
         %%%%%%%%%%%%%%%%%%%%  introduction subsection   local B  %%%%%%%%%%%%%%
         \subsection{Mixed Frobenius manifolds in local B models}\label{intro local B}
         We shall explain some applications of construction theorem to the local mirror symmetry.
         Konishi and Minabe introduced the notion of mixed Frobenius manifolds in \cite{konisi1, konisi2}
         to understand the local mirror symmetry. 
         In \cite{konisi1}, they constructed mixed Frobenius manifolds for weak Fano toric surface.
         It remains to construct mixed Frobenius manifolds for local B-models.
         
         Mixed Frobenius manifolds for local B-models are expected to be constructed from 
         variations of mixed Hodge structure (\cite{konisi3}). 
         Using the results of Batyrev \cite{Baty} and Stienstra \cite{J.S}, 
         Konishi and Minabe \cite{konisi3} gave a combinatorial description for 
         the VMHS in the local B-models. 
         
         Let $\Delta\subset \mathbb{Z}^2$ be a two dimensional reflexive polyhedron.
         We have the moduli space $\mathcal{M}(\Delta)$ 
         of affine hypersurfaces in $(\mathbb{C}^\ast)^2$ (Definition \ref{moduli of hypersurface}).
         Let $V_f\subset (\mathbb{C}^\ast)^2$ be a hypersurface corresponds to $[f]\in \mathcal{M}(\Delta)$.
         Fix a stable smooth point $[f_0]$ in $\mathcal{M}(\Delta)$.
         Then the mixed Hodge structure on the relative cohomology 
         $H^2((\mathbb{C}^\ast)^2,V_f),\ (f\in \mathcal{M}(\Delta))$ defines
         a VMHS  
         $\mathscr{H}_\Delta$ on the germ of complex manifold
         $(\mathcal{M}(\Delta),[f_0])$.
         Using the results of $\cite{konisi3}$ and \cite{Baty}, we give a sufficient condition 
         for $\mathscr{H}_\Delta$ to satisfy the $H^2$-generation condition in terms of the toric data. 
         As a consequence, we have
         the following theorem.
         
         %%%%%%%%%%%%%%%  intro %%%%%%%
         \begin{theorem}[\rm Corollary \ref{construction in local B}]\label{intro hodge}
         Fix a graded polarization $S$ and an opposite filtration $U$ for $\mathscr{H}_\Delta$, 
         and take a non-zero vector $\zeta_0\in F^2([f_0])$.
         Then, there exists a tuple $((\widetilde{M},0),\mathscr{F},\iota,i)$ 
         with the following conditions uniquely up to isomorphisms.
         \begin{enumerate}
         \item[$1.$] $\mathscr{F}=(\theta,\nabla,e,E,W,g)$ is a mixed Frobenius structure on $(\widetilde{M},0)$.
         \item[$2.$] $\iota:(\mathcal{M}(\Delta),[f_0])\hookrightarrow (\widetilde{M},0)$ is an embedding.
         \item[$3.$] $i:\mathcal{T}(\mathscr{H}_\Delta,U,S)\xrightarrow{\sim}\iota^*\mathcal{T}(\mathscr{F})$
               is an isomorphism of {\rm trTLEP}-structure with $i|_{(0,[f_0])}(\zeta_0)=e|_0$.
         \end{enumerate}
         \end{theorem}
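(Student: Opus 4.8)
The plan is to obtain Theorem~\ref{intro hodge} as a direct application of Theorem~\ref{intro main theorem 2} (= Corollary~\ref{H2-generated mixed Frobenius}) to the specific variation $\mathscr{H}_\Delta$. That theorem already manufactures the tuple $((\widetilde{M},0),\mathscr{F},\iota,i)$ satisfying conditions 1--3, uniquely up to isomorphism, out of any VMHS with the $H^2$-generation condition together with a choice of $\zeta_0\in F^w|_0$; so once we know $\mathscr{H}_\Delta$ satisfies that condition with $w=2$ (which is precisely what makes $\zeta_0\in F^2([f_0])$ an admissible choice), nothing remains but to invoke it. Hence the real content is the verification of the $H^2$-generation condition for $\mathscr{H}_\Delta$. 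Here $K=H^2\big((\mathbb{C}^\ast)^2,V_f\big)\otimes\mathcal{O}_{\mathcal{M}(\Delta),[f_0]}$, the integer $w$ is $2$ (the top nonzero step of the Hodge filtration), and $F^2$ is the line spanned by the class of the ``holomorphic volume form'' $d\log x\wedge d\log y/f$.

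First I would import the combinatorial model for $\mathscr{H}_\Delta$ from Konishi--Minabe~\cite{konisi3}, which rests on Batyrev~\cite{Baty} and Stienstra~\cite{J.S}: it identifies $K$, together with its Hodge filtration $F$, its weight filtration $W$ and its Gauss--Manin connection $\nabla$, with data attached to the reflexive polygon $\Delta$. In particular $\mathrm{Gr}_F K$ is identified, as a graded vector space concentrated in three degrees, with a finite-dimensional graded ring $R_\Delta$ (the residue, or $B$-model chiral-ring, description of the cohomology of $V_f\subset(\mathbb{C}^\ast)^2$), the bottom-degree ``fundamental class'' line is identified with $F^2$, and the Higgs field $\theta=\mathrm{Gr}_F(\nabla)$ is identified with multiplication in $R_\Delta$ by the classes of the torus monomials indexed by $\Delta\cap\mathbb{Z}^2$, equivalently with the Kodaira--Spencer action of $\Theta_{\mathcal{M}(\Delta)}$.

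Second, under this identification the two clauses of the $H^2$-generation condition become statements about $R_\Delta$. Clause~(i), that $\operatorname{rank} F^2 = 1$ and $\operatorname{rank} \mathrm{Gr}^1_F K = \dim\mathcal{M}(\Delta)$, I would read off from Batyrev's Hodge/Euler-number formulas for the affine hypersurface $V_f$ --- the genus of the compactification equals the number of interior lattice points of $\Delta$, which is $1$ for reflexive $\Delta$, while the weight-$2$ part of $H^1(V_f)$ is controlled by the boundary lattice points --- together with the dimension count $\dim\mathcal{M}(\Delta)=|\Delta\cap\mathbb{Z}^2|-3$. Clause~(ii), surjectivity of $\mathrm{Sym}\,\Theta_{\mathcal{M}(\Delta),[f_0]}\otimes F^2\to\mathrm{Gr}_F K$, is the assertion that $R_\Delta$ is generated over $F^2$ by its degree-one classes; this is where the toric combinatorics of \S\ref{intro local B} is needed, and I would establish it by combining two ingredients: that a two-dimensional lattice polygon has the integer decomposition property, so that $2\Delta\cap\mathbb{Z}^2$ is the additive span of $\Delta\cap\mathbb{Z}^2$ and hence the degree-two part of $R_\Delta$ is spanned by products of degree-one classes; and that at a stable smooth point $[f_0]$, i.e. under Batyrev's non-degeneracy condition, these product maps have the expected rank predicted by Batyrev's Poincar\'e-polynomial computation. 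This gives the surjectivity in (ii) and, as in the general discussion of \S\ref{MFS VMHS}, the isomorphism $\Theta_{\mathcal{M}(\Delta),[f_0]}\simeq\mathrm{Gr}^1_F K$.

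Finally I would feed $\mathscr{H}_\Delta$, with the prescribed graded polarization $S$, opposite filtration $U$, and the chosen $\zeta_0\in F^2([f_0])$, into Theorem~\ref{intro main theorem 2}; it returns $((\widetilde{M},0),\mathscr{F},\iota,i)$ satisfying 1--3 and unique up to isomorphism, which is exactly the asserted statement --- modulo the harmless notational reconciliations that the MFS written there as $(\circ,{\bm\nabla},e,E,\mathcal{I},g)$ appears here as $(\theta,\nabla,e,E,W,g)$, and that $\iota^\ast\mathcal{T}(\mathscr{F})$ denotes the pullback along $\iota$ of the mixed {\rm trTLEP}-structure $\mathcal{T}(\mathscr{F})$ to $(\mathcal{M}(\Delta),[f_0])$. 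The main obstacle is the combinatorial core of the second step: distilling from the Batyrev--Stienstra descriptions a workable criterion for the generation statement in clause~(ii) and checking that $\mathscr{H}_\Delta$ meets it at the stable smooth point $[f_0]$; once that is done, everything else is formal given Theorem~\ref{intro main theorem 2}.
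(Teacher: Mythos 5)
Your proposal is correct and follows essentially the same route as the paper: verify the $H^2$-generation condition for $\mathscr{H}_\Delta$ via the Batyrev--Stienstra--Konishi--Minabe identification of $(\mathrm{Gr}_FK,\theta)$ with the graded ring $R_f$ and multiplication by $R^1_f\simeq\Theta_{\mathcal{M}(\Delta),[f_0]}$ (Propositions~\ref{GM on moduli}, \ref{toric higgs}), then invoke Corollary~\ref{H2-generated mixed Frobenius}. The only minor divergence is in the combinatorial core: you appeal to the integer decomposition property of arbitrary lattice polygons (and to Hodge-number counts for clause (i)), whereas the paper proves the degree-one generation of $S_\Delta$ by decomposing the cone $\sigma_\Delta$ into unimodular subcones coming from the smooth weak Fano fan of the reflexive polygon (Lemma~\ref{2-dim lemma}) and gets clause (i) from Batyrev's tangent-space identification --- both yield the same key fact, so the difference is immaterial.
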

         This theorem gives the mixed Frobenius manifolds associated to local B-models.
                  
         %%%%%%%%%%%%%%%%%%%%  intro  subsection     %%%%%%%%%%%%%%
         \subsection{Limit mixed trTLEP-structure and local A-models}\label{intro local A}
         We shall give a method to construct mixed trTLEP-structures from logarithmic trTLEP-structures.
         Let $(M,0)$ be a germ of complex manifold and $(Z,0)\subset (M,0)$ a co-dimension $1$ submanifold.
         If we are given a $\log Z$-trTLEP(0)-structure $\mathcal{T}$ 
         (the definition of $\log Z$-trTLEP-structure is given in Definition
         \ref{logarithmic trTLEP} or \cite[Definition 1.8]{R1})
         satisfies some conditions, we have a mixed trTLEP-structure
         $\mathcal{T}_{Z,0}$ on $(Z,0)$, 
         which is called a limit mixed trTLEP-structure
         (See Definition \ref{LMtrTLEP}).
         
         Let $X$ be a weak Fano toric manifold. Let $r$ be the dimension of  $H^2(X,\mathbb{C})$. 
         For an apporopiate open embedding of $H^2(X,\mathbb{C})/2\pi\sqrt{-1}H^2(X,\mathbb{Z})$ 
         to $\mathbb{C}^r$, %$(r=\dim )$, 
         we have the logarithmic trTLEP-structure $\mathcal{T}^{\rm small}_X$
         on a neighborhood $V$ of $0\in \mathbb{C}^r$, which is called the small quantum D-module (\cite{RS}).
                
         Let $S$ be a weak Fano toric surface and $X$ be 
         the projective compactification of the canonical bundle of $S$.
         There is a divisor $Z$ of $V$ which is canonically identified with
         a locally closed subset of the quotient space $H^2(S,\mathbb{C})/2\pi\sqrt{-1}H^2(S,\mathbb{Z})$.
         For each $z\in Z$, the logarithmic trTLEP-structure 
         $\mathcal{T}^{\rm small}_X$ on $(V,z)$ induces 
         a limit mixed trTLEP-structure $(\mathcal{T})_{Z,z}$ 
         on $(Z,z)$.
         Moreover, if $z$ is close to $0\in\mathbb{C}^r$ enough, 
         then $(\mathcal{T})_{Z,z}$ induces the mixed Frobenius manifold 
         constructed by Konishi-Minabe \cite{konisi1}.       
        \subsubsection*{Acknowledgement}
        The author would like to express his deep gratitude to his supervisor Takuro Mochizuki 
        for his valuable advice and many suggestions to improve this paper.        
        %%%%%%%%%%%%%%%%%%%%        section    (construction theorem)        %%%%%%%%%%%%%%%%%%
        \section{Construction theorem for mixed Frobenius manifolds}
         The aim of this chapter is to prove the construction theorem for mixed Frobenius manifolds 
         (Corollary \ref{construction theorem of mixed Frobenius manifold}),
         which is a generalization of \cite[Theorem 4.5]{H.M1}.
         
         The symbol
           $\mathbb{P}^1_\lambda$ denotes a projective line
           with non-homogeneous parameter $\lambda$. 
         We identify a holomorphic vector bundle with the associated locally free sheaf.
         For a holomorphic vector bundle $K$, we write $s\in K$ to mean that $s$ is a local section of $K$. 
         We denote the dual vector bundle of $K$ by $K^\vee$. 
         When we consider filtrations, we always assume that the filtrations are exhaustive.
         Hence we always omit ^^ ^^ exhaustive".
                 %%%introduction of this section%%%%      
          \subsection{Mixed trTLEP-structures and mixed Frobenius manifolds}\label{section correspond trTLE}        
           We define the notion of 
           mixed trTLEP-structures and mixed Frobenius manifolds.
           We show that a mixed Frobenius manifold always induces a mixed trTLEP-structures.
           We also show that a 
           mixed trTLEP-structure induces a mixed Frobenius manifold
           under certain conditions.
 
         %%%%%%%%%%%%%%%%%%%       subsection       trTLE and Saito  %%%%%%%%%%%%%%%%%%%  
         \subsubsection{trTLE-structures and Saito structures}
         %%%%  intro. to this subsection %%%
          Recall the definition of trTLE-structures.  
          Let $M$ be a complex manifold and $p_\lambda:\mathbb{P}^1_\lambda\times M\to M$ a
          natural projection.        

        \begin{definition}[{\rm \cite{H.M1,H}}]\label{tle}
                A pair $(\mathcal{H},\nabla)$ is called a {\bf {\rm \bf trTLE}-structure} on $M$
        if the following properties are satisfied:
         \begin{enumerate}
          \item[$1.$] %%%%%%%%%%%
                 $\mathcal{H}$ is a holomorphic vector bundle on $\mathbb{P}^1_\lambda \times M$ 
                 such that the adjoint morphism 
                 $p_\lambda^*{p_{\lambda}}_*\mathcal{H}\to \mathcal{H}$ is an isomorphism,  
          \item[$2.$]  $\nabla$ is a meromorphic flat connection on $\mathcal{H}$ 
                 with pole order $1$ along $\{0\}\times M$ 
                 and logarithmic pole along $(\{\infty\}\times M) 
                 :$
                 \begin{equation*}
                  \nabla: \mathcal{H}\to \mathcal{H}\otimes 
                  \Omega^1_{\mathbb{P}^1_{\lambda}\times M}
                  \big{(}\log (\{0,\infty \}\times M)
                  \big{)}
                  \otimes \mathcal{O}_{\mathbb{P}^1_{\lambda}\times M}
                  (\{0\}\times M).
                 \end{equation*}
         \end{enumerate}
         A morphism of trTLE-structures is a flat morphism of holomorphic vector bundles.
        \end{definition}
        
        \begin{remark} %We often omit the connection $\nabla$ and only write $\mathcal{H}$.
           For a {\rm trTLE}-structure $(\mathcal{H},\nabla)$ and a complex number $c$, 
            the pair 
            $(\mathcal{H},\nabla+c\cdot \mathrm{id}_\mathcal{H}\ \lambda^{-1}\mathrm{d}\lambda)$ 
            is also a {\rm trTLE}-structure.
        \end{remark}
        
        We recall the definition of Saito structure (without a metric) in \cite{Sab1}.
        Let $M$ be a complex manifold and $p_\lambda:\mathbb{P}^1_\lambda\times M\to M$ a
          natural projection.        
        Suppose that its tangent bundle $\Theta_M$ is equipped with
           a symmetric Higgs field $\theta$,
           %\item 
           a torsion free flat connection ${\bm \nabla}$, and
           %\item 
           two global sections $e$ and $E$.
           We have endomorphisms 
           ${\bm \nabla}_\bullet E$ and $\theta_E$ defined by
           $a\mapsto {\bm \nabla}_aE$ and $a\mapsto\theta_E(a)$ for $a\in \Theta_M$.
           
           %%%%%%%%%%Definition of Saito structure%%%%%%%%%%
        \begin{definition}[{\cite[Definition V\hspace{-.1em}I\hspace{-.1em}I. $1.1$]{Sab1}}]        
        \label{def of saito} %\in \Gamma(M,\Theta_M)$.
          %\end{itemize}
         The tuple $\mathcal{S}:=(\theta,{\bm \nabla}, e, E)$ is called a {\bf Saito structure} on M 
         if the following conditions are satisfied.
          \begin{enumerate}
            \item[$1.$] The vector field $e$ is ${\bm \nabla}$-flat and $\theta_e a=-a$ for all $a\in\Theta_M$.
            \item[$2.$] The following meromorphic connection $\hat{\nabla}$ on $p_\lambda^*\Theta_M$ is flat:
            \begin{equation}\label{hatnabla}
            \hat{\nabla}:=p_\lambda^*{\bm \nabla}+\frac{1}{\lambda}p_\lambda^*\theta-
                          \Big{(}\frac{1}{\lambda}p_\lambda^*\theta_E+p_\lambda^*{\bm \nabla}_\bullet E \Big{)}
                          \frac{\mathrm{d}\lambda}{\lambda}.
            \end{equation}
          \end{enumerate} 
           The vector field $e$ is called the {\bf unit vector field} and $E$ is called the {\bf Euler vector field}.
        \end{definition} 
        
        \begin{remark}\label{remark on saito} 
        \begin{enumerate}
      
         \item[$1.$] The tangent bundle $\Theta_M$ has the structure of 
                     a commutative associative $\mathcal{O}_M$-algebra defined by 
                     $a\circ b:=-\theta_ab\ (a,b\in\Theta_M)$. 
                     The unit vector field $e$ is the global unit section of this algebra.
        \item[$2.$]  The flatness of $\hat{\nabla}$ is equivalent to the condition 
                     that the equations
                     ${\bm \nabla}(\circ)=0$, ${\bm \nabla}({\bm \nabla}_\bullet E)=0$, 
                     and $\mathrm{Lie}_E(\circ)=\circ$ hold.
        \end{enumerate}
        \end{remark} 
        By definition, a Saito structure always induces a trTLE-structure.
        \begin{lemma}\label{lemma Saito to trTLE}
        Let $\mathcal{S}:=(\theta,{\bm \nabla},e,E)$ be a Saito Structure on a complex manifold $M$.
        Then the pair $(p_\lambda^*\Theta_M,\hat{\nabla})$ 
        is a {\rm trTLE}-structure on $M$. %We denote it by $\mathcal{H}_\mathcal{S}$.
        \qed \end{lemma}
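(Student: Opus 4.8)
The plan is to unwind the definitions and check the two conditions in Definition~\ref{tle} directly, since the Saito structure was essentially designed so that $\hat\nabla$ is the required connection. First I would verify condition~1: the bundle $\mathcal{H}:=p_\lambda^*\Theta_M$ is by construction the pull-back of a bundle on $M$ along the projection $p_\lambda:\mathbb{P}^1_\lambda\times M\to M$, so it is trivial along each fibre $\mathbb{P}^1_\lambda\times\{t\}$; the adjoint morphism $p_\lambda^*{p_\lambda}_*\mathcal{H}\to\mathcal{H}$ is then an isomorphism because ${p_\lambda}_*p_\lambda^*\Theta_M\simeq\Theta_M$ (as $\mathbb{P}^1$ is compact and connected, ${p_\lambda}_*\mathcal{O}_{\mathbb{P}^1_\lambda\times M}=\mathcal{O}_M$ by the projection formula), and pulling this identification back recovers $\mathcal{H}$. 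This step is purely formal.

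Next I would check condition~2, namely the pole structure of $\hat\nabla$ as given in~(\ref{hatnabla}). Writing $\hat\nabla=p_\lambda^*{\bm\nabla}+\tfrac1\lambda p_\lambda^*\theta-\bigl(\tfrac1\lambda p_\lambda^*\theta_E+p_\lambda^*{\bm\nabla}_\bullet E\bigr)\tfrac{d\lambda}{\lambda}$, I would examine each summand. The term $p_\lambda^*{\bm\nabla}$ is holomorphic in the $M$-directions; the term $\tfrac1\lambda p_\lambda^*\theta$ contributes a simple pole along $\{0\}\times M$ in the $dt$-directions, which is exactly accounted for by the twist $\otimes\mathcal{O}(\{0\}\times M)$; the $\tfrac{d\lambda}{\lambda}$-component has coefficient $-\bigl(\tfrac1\lambda p_\lambda^*\theta_E+p_\lambda^*{\bm\nabla}_\bullet E\bigr)$, whose $\tfrac1\lambda\,d\lambda=\lambda^{-1}\,d\lambda$ part is logarithmic along $\{0\}\times M$ while the $p_\lambda^*{\bm\nabla}_\bullet E\,\tfrac{d\lambda}{\lambda}$ part is again logarithmic along $\{0\}\times M$; near $\lambda=\infty$, setting $\mu=1/\lambda$ so $\tfrac{d\lambda}{\lambda}=-\tfrac{d\mu}{\mu}$, all the $\tfrac1\lambda$-prefactors become holomorphic and only logarithmic poles along $\{\infty\}\times M$ survive. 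Hence $\hat\nabla$ lands in $\mathcal{H}\otimes\Omega^1_{\mathbb{P}^1_\lambda\times M}\bigl(\log(\{0,\infty\}\times M)\bigr)\otimes\mathcal{O}(\{0\}\times M)$ as required.

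Finally, the flatness of $\hat\nabla$ is not something to prove: it is part~2 of Definition~\ref{def of saito}, so it is available by hypothesis. Thus the lemma reduces to the pole-order bookkeeping above together with the elementary triviality statement for the bundle. The only mildly delicate point — the ``main obstacle'' if any — is making the pole computation at both $\lambda=0$ and $\lambda=\infty$ precise in coordinates, in particular confirming that the $p_\lambda^*\theta$ term does not worsen the pole beyond order one along $\{0\}\times M$ and that nothing forces a pole along $\{\infty\}\times M$ worse than logarithmic; but since $\theta$, $\theta_E$, ${\bm\nabla}$, ${\bm\nabla}_\bullet E$ are all holomorphic objects on $M$ and $\lambda$ is a coordinate transverse to the divisors, this is a routine check. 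Assembling these observations gives the claim.
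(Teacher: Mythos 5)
Your proposal is correct and follows the same (essentially definitional) route as the paper, which states the lemma with no written proof precisely because conditions 1 and 2 of Definition~\ref{tle} follow directly from the shape of $\hat{\nabla}$ in (\ref{hatnabla}) and the flatness required in Definition~\ref{def of saito}. One small imprecision: the term $\lambda^{-1}p_\lambda^*\theta_E\,\tfrac{\mathrm{d}\lambda}{\lambda}$ is not merely logarithmic along $\{0\}\times M$ but also uses the twist by $\mathcal{O}_{\mathbb{P}^1_\lambda\times M}(\{0\}\times M)$ (just like the $\lambda^{-1}p_\lambda^*\theta$ term), which the target sheaf indeed allows, so the conclusion stands.
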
       
        We recall the definition of Frobenius type structure.
         \begin{definition}[{\cite[Definition 5.6]{H}}]\label{Frob type}
          Let $K$ be a holomorphic vector bundle over a complex manifold $M$.
          A {\bf Frobenius type structure} on $K$ consists of 
          a flat connection $\nabla^{\rm r}$ on $K$, 
          a Higgs field $\mathcal{C}$ on $K$,
          and endomorphisms $\mathcal{U},\mathcal{V}\in \mathrm{End}(K)$ such that
          \begin{gather}
          \label{A}
           \nabla^{\rm r}(\mathcal{C})=\nabla^{\rm r}(\mathcal{V})=[\mathcal{C},\mathcal{U}]=0 ,\\
          \label{B}
           \nabla^{\rm r}(\mathcal{U})-[\mathcal{C},\mathcal{V}]+\mathcal{C}=0.
          \end{gather}         
         \end{definition}
       We remark that this definition of Frobenius type structure lacks the pairing 
       comparing with \cite[Definition 5.6]{H}.
       There is a correspondence between a trTLE-structure and a Frobenius type structure as follows.
        %%%%%%%%%%%%%%%%%%%%%%%%%%%%%%%%%%%%%%%%%%%%%%%%%%%%%%%%  lemma  frob type structure%%%%%
        \begin{lemma}[{\cite[Theorem 5.7]{H}}]\label{Frob type trTLE}
         Let $(\mathcal{H},\nabla)$ be a trTLE-structure on a complex manifold $M$.
         There exists a unique Frobenius type structure 
         $(\nabla^{\rm r},\mathcal{C},\mathcal{U},\mathcal{V})$ on $\mathcal{H}|_{\lambda=0}$
         such that
         \begin{align} \label{equation nabla}
          \nabla=p_\lambda^*\nabla^{\rm r}+
                \frac{1}{\lambda}p_\lambda^*\mathcal{C}
                +\Big{(}\frac{1}{\lambda}p_\lambda^*\mathcal{U}-p_\lambda^*\mathcal{V}\Big{)}
                \frac{\mathrm{d}\lambda}{\lambda}
         \end{align}
         via the natural isomorphism $\mathcal{H}\simeq p_\lambda^*(\mathcal{H}|_{\lambda=0})$.
         We call it the Frobenius type structure associated to $(\mathcal{H},\nabla)$.
        \end{lemma}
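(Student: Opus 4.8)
The strategy is to use the $\mathbb{P}^1_\lambda$-triviality of $\mathcal{H}$ to rewrite the meromorphic connection $\nabla$ in terms of a finite amount of data on $M$, and then to extract the Frobenius type structure equations from the flatness of $\nabla$ by decomposing its curvature into parts homogeneous in $\lambda$. First I would fix the canonical identification $\mathcal{H}\simeq p_\lambda^*(\mathcal{H}|_{\lambda=0})$: by the first axiom in Definition \ref{tle} the adjoint morphism $p_\lambda^*p_{\lambda *}\mathcal{H}\to\mathcal{H}$ is an isomorphism, and restricting it to $\{0\}\times M$ identifies $p_{\lambda *}\mathcal{H}$ with $\mathcal{H}|_{\lambda=0}$. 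Working locally on $M$, a frame of $\mathcal{H}|_{\lambda=0}$ then pulls back to a frame of $\mathcal{H}$ constant along $\mathbb{P}^1_\lambda$, in which $\nabla=\mathrm{d}+\omega$ with $\omega$ a matrix of meromorphic $1$-forms that is holomorphic on $\mathbb{C}^*_\lambda\times M$.

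Next I would read off the decomposition (\ref{equation nabla}). The pole hypothesis in Definition \ref{tle} says that, near $\lambda=0$, the $\tfrac{\mathrm{d}\lambda}{\lambda}$- and $\mathrm{d}t_k$-components of $\omega$ have at worst a simple pole in $\lambda$, while near $\lambda=\infty$ (where $\tfrac{\mathrm{d}\mu}{\mu}=-\tfrac{\mathrm{d}\lambda}{\lambda}$ for $\mu=1/\lambda$) they are holomorphic. Hence, for each fixed $t$, the $\tfrac{\mathrm{d}\lambda}{\lambda}$-component of $\omega$ is a rational function on $\mathbb{P}^1_\lambda$ whose only pole is a simple one at $0$, so it is of the form $\tfrac1\lambda\mathcal{U}-\mathcal{V}$ for endomorphisms $\mathcal{U},\mathcal{V}$ of $\mathcal{H}|_{\lambda=0}$, and likewise the $\mathrm{d}t_k$-component is $\tfrac1\lambda\mathcal{C}_k+(\theta^{\rm r})_k$; with $\mathcal{C}=\sum_k\mathcal{C}_k\,\mathrm{d}t_k$ and $\nabla^{\rm r}=\mathrm{d}+\theta^{\rm r}$ this is precisely (\ref{equation nabla}). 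Because $\lambda\nabla_a$ (for $a\in\Theta_M$) and $\lambda\nabla_{\lambda\partial_\lambda}$ preserve $\mathcal{H}$, the maps $\mathcal{C}_a$ and $\mathcal{U}$ are their restrictions to $\lambda=0$, hence globally defined on $M$ and independent of the frame, and $\mathcal{V},\nabla^{\rm r}$ are the remaining coefficients with respect to the canonical trivialization; thus (\ref{equation nabla}) is forced, which gives the uniqueness assertion.

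It remains to verify that $(\nabla^{\rm r},\mathcal{C},\mathcal{U},\mathcal{V})$ is a Frobenius type structure, and here I would compute the curvature $\nabla^2=\mathrm{d}\omega+\omega\wedge\omega$ and expand it along $\mathrm{d}t_j\wedge\mathrm{d}t_k$ and $\tfrac{\mathrm{d}\lambda}{\lambda}\wedge\mathrm{d}t_k$ with coefficients that are Laurent polynomials in $\lambda^{-1}$. Flatness of $\nabla$ makes every coefficient vanish, and the resulting identities are exactly the required ones: the $\mathrm{d}t\wedge\mathrm{d}t$-coefficients at $\lambda^{-2},\lambda^{-1},\lambda^0$ say respectively that $\mathcal{C}$ is a Higgs field, that $\nabla^{\rm r}(\mathcal{C})=0$, and that $\nabla^{\rm r}$ is flat, while the $\tfrac{\mathrm{d}\lambda}{\lambda}\wedge\mathrm{d}t$-coefficients at $\lambda^{-2},\lambda^{-1},\lambda^0$ give $[\mathcal{C},\mathcal{U}]=0$, then $\nabla^{\rm r}(\mathcal{U})-[\mathcal{C},\mathcal{V}]+\mathcal{C}=0$, and then $\nabla^{\rm r}(\mathcal{V})=0$; that is, (\ref{A}) and (\ref{B}). (The same computation run backwards shows that (\ref{equation nabla}) applied to any Frobenius type structure produces a flat connection with the prescribed pole orders, so the correspondence is in fact bijective.) The only point that needs care is this curvature expansion: one must keep track of the $\mathrm{d}\lambda$-terms created when $\mathrm{d}$ differentiates the factors $\tfrac1\lambda$, and it is essential to use the holomorphy at $\lambda=\infty$, since that is what forces the $\tfrac{\mathrm{d}\lambda}{\lambda}$-component to be affine in $\lambda^{-1}$ and hence that exactly the two endomorphisms $\mathcal{U},\mathcal{V}$---and no higher-order terms---occur. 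With the right bases fixed, the rest is a routine matching of coefficients.
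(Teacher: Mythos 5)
Your proposal is correct and follows essentially the same route as the paper: both use the triviality of $\mathcal{H}$ along $\mathbb{P}^1_\lambda$ together with the prescribed pole orders at $\lambda=0$ and $\lambda=\infty$ to force the decomposition (\ref{equation nabla}), and then read off (\ref{A}) and (\ref{B}) from the flatness of $\nabla$. The only cosmetic difference is that the paper defines $\nabla^{\rm r}$ and $\mathcal{V}$ intrinsically as the residual connection and residue endomorphism at $\lambda=\infty$ (transported to $\lambda=0$ by the trivialization) and defines $\mathcal{C},\mathcal{U}$ by restricting $\lambda\nabla$ to $\lambda=0$, while you recover the same data as the Laurent coefficients of the connection matrix in a constant frame and make the curvature bookkeeping explicit.
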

        \begin{proof}
         Let $a$ be a local section of $\Theta_M$. 
         Extend the section $a$ constantly along $\mathbb{P}^1_\lambda$ 
         and denote it by $\widetilde{a}$.
         Similarly, take a local section $s$ of $\mathcal{H}|_{\lambda=0}$
         and extend it to the local section $\widetilde{s}$ of $\mathcal{H}$.
         Define $\mathcal{C}_as$ as 
         the restriction of $\lambda\nabla_{\widetilde{a}}\widetilde{s}$ to $\lambda=0$.
         Define $\mathcal{U}s$ as the restriction of 
         $\lambda\nabla_{\lambda\partial_\lambda}\widetilde{s}$ to $\{\lambda=0\}$.
         Since the flat section $\nabla$ is pole order 1 along $\{\lambda=0\}$, 
         the morphism $(a,s)\mapsto\mathcal{C}_as$ defines a Higgs field on $\mathcal{H}|_{\lambda=0}$
         and $s\mapsto \mathcal{U}s$ defines an endomorphism on $\mathcal{H}|_{\lambda=0}$.         
            
         Since $\nabla$ is regular singular along $\{\lambda=\infty\}$, 
         we have the residual connection $\nabla^{\rm res}$ and  
         the residue endomorphism ${\rm Res}_{\lambda=\infty}\nabla$ on $\mathcal{H}|_{\lambda=\infty}$.
         By the condition 1 in Definition \ref{tle}, 
         we have a natural isomorphism $\mathcal{H}|_{\lambda=\infty}\simeq \mathcal{H}|_{\lambda=0}$.
         Using the isomorphism, regard $\nabla^{\rm res}$ as the connection on $\mathcal{H}|_{\lambda=0}$
         and denote it by $\nabla^{\rm r}$. Similarly, regard 
         ${\rm Res}_{\lambda=\infty}\nabla$
         as a endomorphism on  $\mathcal{H}|_{\lambda=0}$
         and denote it by $\mathcal{V}$.
         One can check the equation (\ref{equation nabla}).
         The flatness of $\nabla$ implies (\ref{A}) and (\ref{B}).
         The uniqueness is trivial by construction.          
        \end{proof}

        %%%%%%%%%%%%%%%%  definition
        \begin{definition}[\cite{H.M1}]\label{definition of residually flat section}
        Let $(\mathcal{H},\nabla)$ be a {\rm trTLE}-structure on $M$.
        Let $(\nabla^{\rm r},\mathcal{C},\mathcal{U},\mathcal{V})$ 
        be the Frobenius type structure associated to $(\mathcal{H},\nabla)$.
        Assume that there is a $\nabla^{\rm r}$-flat global section $\zeta$ of $\mathcal{H}|_{\lambda=0}$.
        \begin{itemize}
        \item%%%%%%%%  identity condition
                    The section $\zeta$ is said to satisfy the injectivity condition 
                    $($resp. the identity condition$)$ when
                    the induced morphism
                    \begin{equation}\label{injectively condition}
                      \mathcal{C}_\bullet\zeta:\Theta_M\rightarrow \mathcal{H}|_{\lambda=0}
                    \end{equation}
                   defined by $a\mapsto \mathcal{C}_a\zeta$ $(a\in \Theta_M)$
                   is an injective morphism $($resp. an isomorphism$)$.
        \item%%%%%%%%  eigenvalue condition   %%%%%%%% 
                   Take a complex number $d$.
                   The section $\zeta$ is said to satisfy the eigenvalue condition for $d$
                   $($with respect to $(\mathcal{H},\nabla)$ $)$ if the following equation holds.
                   \begin{equation}\label{eigenvalue condition}
                   \mathcal{V}\zeta=\frac{d}{2}\zeta.
                   \end{equation}
        \end{itemize}
               We denote by {\rm (IC), (IdC)}, and $(\mathrm{EC})_d$ the injectively condition, 
               the identity condition and the eigenvalue condition for $d$ respectively.
        \end{definition}%%%%%%% definition end   %%%%%%%
        \begin{remark}\label{remark shift}
           Let $(\mathcal{H},\nabla)$, $(\nabla^{\rm r},\mathcal{C},\mathcal{U},\mathcal{V})$,
           and $\zeta$ be as in Definition $\ref{definition of residually flat section}$.
           Fix complex numbers $c$ and $d$. 
           The Frobenius type structure associated to 
           $(\mathcal{H},\nabla-c\cdot\mathrm{id}_{\mathcal{H}}\lambda^{-1}\mathrm{d}\lambda)$
           is  $(\nabla^{\rm r},\mathcal{C},\mathcal{U},\mathcal{V}+c\cdot\mathrm{id})$.         
           If $\zeta$ satisfies ${\rm (EC)}_d$ with respect to $(\mathcal{H},\nabla)$, 
           then it satisfies ${\rm (EC)}_{d+2c}$ with respect to  
           $(\mathcal{H},\nabla-c\cdot\mathrm{id}\lambda^{-1}\mathrm{d}\lambda)$.     
         \end{remark}
         Let $\mathcal{S}=(\theta,{\bm \nabla},E,e)$ be a Saito structure on a complex manifold $M$. 
         By Lemma \ref{lemma Saito to trTLE}, we have the trTLE-structure 
         $(p_\lambda^*\Theta_M,\hat{\nabla})$. 
         Comparing the equations (\ref{hatnabla}) and (\ref{equation nabla}),
         we can check that
         the Frobenius type structure associated to $(p_\lambda^*\Theta_M,\hat{\nabla})$ is 
         $({\bm \nabla},\theta,\theta_E,{\bm \nabla}_\bullet E)$.
         \begin{lemma}\label{IdC for Saito}%%%%%%%%%
         The unit vector field $e$ satisfies {\rm (IC)} and $(\mathrm{EC})_2$.
         \end{lemma}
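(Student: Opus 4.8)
The plan is to read everything off the explicit description, recorded just before the statement, of the Frobenius type structure associated to the trTLE-structure $(p_\lambda^*\Theta_M,\hat{\nabla})$ of Lemma \ref{lemma Saito to trTLE}: it is $(\bm\nabla,\theta,\theta_E,\bm\nabla_\bullet E)$, so that $\nabla^{\rm r}=\bm\nabla$, $\mathcal{C}=\theta$, $\mathcal{U}=\theta_E$, $\mathcal{V}=\bm\nabla_\bullet E$, and $\mathcal{H}|_{\lambda=0}=\Theta_M$. First I would note that $e$ is $\nabla^{\rm r}$-flat, since it is $\bm\nabla$-flat by condition $1$ of Definition \ref{def of saito}; hence it is legitimate to ask whether $\zeta=e$ satisfies (IC) and $(\mathrm{EC})_2$, and both reduce to unwinding the Saito axioms.

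For (IC) I would compute the morphism $\mathcal{C}_\bullet e\colon\Theta_M\to\mathcal{H}|_{\lambda=0}=\Theta_M$. For $a\in\Theta_M$ one has $\mathcal{C}_a e=\theta_a e$; by the symmetry of the Higgs field $\theta$ (equivalently, the commutativity of $\circ$ from Remark \ref{remark on saito}.$1$) this equals $\theta_e a$, which is $-a$ by condition $1$ of Definition \ref{def of saito}. Thus $\mathcal{C}_\bullet e=-\,\mathrm{id}_{\Theta_M}$, an isomorphism; in particular it is injective, so (IC) holds (in fact (IdC) holds as well).

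For $(\mathrm{EC})_2$ I must check $\mathcal{V}e=\bm\nabla_e E=e$, and this is the only place where an actual computation enters. I would use the identity $\mathrm{Lie}_E(\circ)=\circ$ from Remark \ref{remark on saito}.$2$. Expanding $(\mathrm{Lie}_E\circ)(a,e)=\mathrm{Lie}_E(a\circ e)-(\mathrm{Lie}_E a)\circ e-a\circ(\mathrm{Lie}_E e)$ and using that $e$ is the unit of $\circ$, so $v\circ e=v$ for every $v$, this collapses to $-\,a\circ[E,e]$; setting it equal to $a\circ e=a$ and taking $a=e$ yields $[E,e]=-e$. Since $\bm\nabla$ is torsion free, $[E,e]=\bm\nabla_E e-\bm\nabla_e E$, and $\bm\nabla_E e=0$ because $e$ is $\bm\nabla$-flat; hence $\bm\nabla_e E=e$, which is precisely $\mathcal{V}e=\tfrac{2}{2}e$, i.e.\ $(\mathrm{EC})_2$. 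The main (indeed the only) obstacle is this last derivation, but it is purely formal once the Lie-derivative identity is written out; the rest is a direct reading-off of the definitions.
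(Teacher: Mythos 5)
Your proof is correct and follows essentially the same route as the paper: (IC) is read off from condition 1 of Definition \ref{def of saito} together with the symmetry of $\theta$, and $(\mathrm{EC})_2$ comes from $\mathrm{Lie}_E(\circ)=\circ$ plus torsion-freeness and $\bm\nabla$-flatness of $e$ to get $[E,e]=-e$ and hence $\bm\nabla_eE=e$. The only cosmetic difference is that you evaluate the Lie-derivative identity on $(a,e)$ and then set $a=e$, whereas the paper applies it directly to $(e,e)$.
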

         \begin{proof}
         The condition 1 in Definition \ref{def of saito} implies 
         that the unit vector field $e$ satisfies (IC).
         Since ${\bm \nabla}$ is torsion free and $e$ is ${\bm \nabla}$-flat, 
         ${\bm \nabla}_eE={\bm \nabla}_Ee-[E,e]=-[E,e]$.
         By Remark \ref{remark on saito}, 
         $$[E,e\circ e]-[E,e]\circ e-e\circ[E,e]=e\circ e.$$
         This implies $-[E,e]=e$.
         Hence $e$ also satisfies $(\mathrm{EC})_2$.
         \end{proof}
        Let $(\mathcal{H},\nabla)$ be a {\rm trTLE}-structure on $M$.
        Let $(\nabla^{\rm r},\mathcal{C},\mathcal{U},\mathcal{V})$ 
        be the Frobenius type structure associated to $(\mathcal{H},\nabla)$. 
        Assume that we have a global $\nabla^{\rm r}$-flat section $\zeta$ of $\mathcal{H}|_{\lambda=0}$
        with (IdC) and $(\mathrm{EC})_2$.
        Put $\mu:=-\mathcal{C}_\bullet\zeta:\Theta_M\xrightarrow{\sim}\mathcal{H}|_{\lambda=0}$.
        Using this isomorphism, regard $\nabla$ (resp. $\mathcal{C}$) 
        as a flat section (resp. Higgs field) on $\Theta_M$ and denote it by the same letter.
        Put $e:=\mu^{-1}(\zeta)$ and $E:=\mu^{-1}(\mathcal{U}\zeta)$.
        The following proposition is essentially proved in \cite{H.M1} and \cite{R1}.
        \begin{proposition}\label{lemma trTLE to Saito}%%%%%%%%%   proposition trTLE to Saito     %%%%%%%%%%
          \begin{itemize}
           \item The tuple $\mathcal{S}_{\mathcal{H},\zeta}:=(\mathcal{C},\nabla^{\rm r},e,E)$
                 is a Saito structure on $M$.
           \item $\mathcal{S}_{\mathcal{H},\zeta}$ is the unique Saito structure on $M$
                 such that $\mu(e)=\zeta$ and the morphism
                 $$p^*_\lambda (\mu):p^*_\lambda \Theta_M\xrightarrow{\sim}p^*_\lambda(\mathcal{H}|_{\lambda=0})
                 \simeq \mathcal{H}$$
                 gives an isomorphism of {\rm trTLE}-structures
                 between $(p^*_\lambda\Theta_M,\hat{\nabla})$ and $(\mathcal{H},\nabla)$.
                 \qed
          \end{itemize}
         \end{proposition}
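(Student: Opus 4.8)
I would proceed as follows.

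The plan is to transport the Frobenius type structure $(\nabla^{\mathrm r},\mathcal{C},\mathcal{U},\mathcal{V})$ of $(\mathcal{H},\nabla)$ to $\Theta_M$ along $\mu=-\mathcal{C}_\bullet\zeta\colon\Theta_M\xrightarrow{\sim}\mathcal{H}|_{\lambda=0}$ and to show that the resulting tuple is a Saito structure by identifying its associated {\rm trTLE}-structure with $(\mathcal{H},\nabla)$. Set $K:=\mathcal{H}|_{\lambda=0}$, keep the letters $\nabla^{\mathrm r},\mathcal{C},\mathcal{U},\mathcal{V}$ for the transported objects, and write $e=\mu^{-1}(\zeta)$, $E=\mu^{-1}(\mathcal{U}\zeta)$; from $\mu(e)=\zeta$ and $\mu(E)=\mathcal{U}\zeta$ one reads off the identities $\mathcal{C}_e\zeta=-\zeta$ and $\mathcal{C}_E\zeta=-\mathcal{U}\zeta$ on $K$. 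First I would check that the transported objects form admissible ambient data on $\Theta_M$ in the sense of Definition \ref{def of saito}: the transported $\nabla^{\mathrm r}$ is flat because transport preserves flatness; the transported $\mathcal{C}$ is a symmetric Higgs field because $\mathcal{C}_a(b)=\mu^{-1}\bigl(\mathcal{C}_a\mu(b)\bigr)=-\mu^{-1}(\mathcal{C}_a\mathcal{C}_b\zeta)$ is symmetric in $(a,b)$ by the Higgs identity $[\mathcal{C}_a,\mathcal{C}_b]=0$; and the transported $\nabla^{\mathrm r}$ is torsion free, since for vector fields $a,b$ one has $\mu\bigl(\nabla^{\mathrm r}_ab-\nabla^{\mathrm r}_ba-[a,b]\bigr)=-\nabla^{\mathrm r}(\mathcal{C})(a,b)(\zeta)=0$ by (\ref{A}) and $\nabla^{\mathrm r}\zeta=0$. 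Granting this, condition $1.$ is immediate: $e$ is $\nabla^{\mathrm r}$-flat because $\zeta$ is, and $\mathcal{C}_e a=\mathcal{C}_a e=\mu^{-1}\bigl(\mathcal{C}_a\mu(e)\bigr)=\mu^{-1}(\mathcal{C}_a\zeta)=-a$ by the symmetry just established and the definition of $\mu$.

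The heart of the proof is to show that, via $p_\lambda^*(\mu)\colon p_\lambda^*\Theta_M\xrightarrow{\sim}\mathcal{H}$, the connection $\hat{\nabla}$ of (\ref{hatnabla}) built from the transported data $(\mathcal{C},\nabla^{\mathrm r},e,E)$ is carried to $\nabla$. By Lemma \ref{Frob type trTLE} and the computation preceding Lemma \ref{IdC for Saito}, it suffices to check that the Frobenius type structure of $(p_\lambda^*\Theta_M,\hat{\nabla})$, pushed to $K$ via $\mu$, equals $(\nabla^{\mathrm r},\mathcal{C},\mathcal{U},\mathcal{V})$. The first two constituents agree by the definition of the transport. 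The third constituent is the endomorphism built from $E$ via $a\mapsto\mathcal{C}_a(E)$ (up to the sign coming from (\ref{hatnabla})), and $\mathcal{C}_a\mu(E)=\mathcal{C}_a\mathcal{U}\zeta=\mathcal{U}\mathcal{C}_a\zeta=-\mathcal{U}\mu(a)$ by $[\mathcal{C},\mathcal{U}]=0$ from (\ref{A}), so it pushes to $\mathcal{U}$. For the fourth constituent $\nabla^{\mathrm r}_\bullet E$: since $\zeta$ is flat, $\mu(\nabla^{\mathrm r}_aE)=\nabla^{\mathrm r}_a(\mathcal{U}\zeta)=(\nabla^{\mathrm r}_a\mathcal{U})\zeta$, and then (\ref{B}) with $\mathcal{C}_a\zeta=-\mu(a)$ and the eigenvalue condition $\mathcal{V}\zeta=\zeta$ (which is $(\mathrm{EC})_2$) give $(\nabla^{\mathrm r}_a\mathcal{U})\zeta=[\mathcal{C}_a,\mathcal{V}]\zeta-\mathcal{C}_a\zeta=-\mathcal{V}\mathcal{C}_a\zeta=\mathcal{V}\mu(a)$, so it pushes to $\mathcal{V}$. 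By the uniqueness clause in Lemma \ref{Frob type trTLE}, $p_\lambda^*(\mu)$ intertwines $\hat{\nabla}$ and $\nabla$; in particular $\hat{\nabla}$ is flat, which is condition $2.$ of Definition \ref{def of saito}. Hence $\mathcal{S}_{\mathcal{H},\zeta}=(\mathcal{C},\nabla^{\mathrm r},e,E)$ is a Saito structure, $\mu(e)=\zeta$, and $p_\lambda^*(\mu)$ is an isomorphism of {\rm trTLE}-structures, which settles the first assertion and the existence part of the second.

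For uniqueness, let $\mathcal{S}'=(\theta',\nabla',e',E')$ be a Saito structure with $\mu(e')=\zeta$ for which $p_\lambda^*(\mu)$ is an isomorphism of {\rm trTLE}-structures from $(p_\lambda^*\Theta_M,\hat{\nabla}')$ onto $(\mathcal{H},\nabla)$. Then the Frobenius type structure of $(p_\lambda^*\Theta_M,\hat{\nabla}')$ pushed to $K$ via $\mu$ is again $(\nabla^{\mathrm r},\mathcal{C},\mathcal{U},\mathcal{V})$; comparing constituent by constituent forces $\nabla'=\nabla^{\mathrm r}$ and $\theta'=\mathcal{C}$ as transported structures on $\Theta_M$, and the third constituent forces $\theta'_{E'}=\theta'_{E}$, i.e.\ $\theta'_{E'-E}=0$. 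Evaluating this endomorphism on $e'$ and using $\theta'_{e'}(c)=-c$ and the symmetry of $\theta'$ gives $0=\theta'_{E'-E}(e')=\theta'_{e'}(E'-E)=-(E'-E)$, hence $E'=E$; and $\mu(e')=\mu(e)$ gives $e'=e$. Therefore $\mathcal{S}'=\mathcal{S}_{\mathcal{H},\zeta}$.

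I expect the main obstacle to be the sign bookkeeping in the second paragraph: matching the Frobenius type structure of $\hat{\nabla}$ with that of $\nabla$ hinges on the precise conventions in (\ref{hatnabla}) and (\ref{equation nabla}) and on the sign in $\mu=-\mathcal{C}_\bullet\zeta$. A secondary but essentially routine point is the torsion-freeness of the transported connection, which is the one place where (\ref{A}) and the flatness of $\zeta$ are genuinely combined.
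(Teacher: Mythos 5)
Your proof is correct and follows essentially the argument the paper omits by deferring to Hertling--Manin and Reichelt: transport $(\nabla^{\rm r},\mathcal{C},\mathcal{U},\mathcal{V})$ along $\mu=-\mathcal{C}_\bullet\zeta$, verify symmetry and torsion-freeness from (\ref{A}) and $\nabla^{\rm r}\zeta=0$, and match the connection forms (\ref{hatnabla}) and (\ref{equation nabla}) via $\mu\theta_E\mu^{-1}=-\mathcal{U}$ and $\mu({\bm\nabla}_\bullet E)\mu^{-1}=\mathcal{V}$ (using (\ref{B}) and $\mathcal{V}\zeta=\zeta$), so that flatness of $\hat{\nabla}$ and the trTLE-isomorphism, and then uniqueness, follow. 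One cosmetic point: the appeal to the ``uniqueness clause'' of Lemma \ref{Frob type trTLE} in the existence step is unnecessary (that uniqueness runs in the other direction); your two displayed identities already show the two connection $1$-forms coincide, which is all that is needed, and your sign bookkeeping is the consistent one for (\ref{hatnabla}) versus (\ref{equation nabla}).
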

         We conclude this subsection with the following corollary.
         \begin{corollary}\label{saito trTLE}
          Fix a complex number $d$. 
          Put $c:=(2-d)/2$. 
          Let $M$ be a complex manifold and 
          $p_\lambda:\mathbb{P}^1_\lambda\times M\to M$ a natural projection.        
          \begin{enumerate}
           \item[$1.$] If $\mathcal{S}$ is a Saito structure on $M$,
                       then $\mathcal{H}_{\mathcal{S},d}
                       :=(p_\lambda^*\Theta_M,\hat{\nabla}+c\cdot \mathrm{id} \lambda^{-1}d\lambda)$
                       is a {\rm trTLE}-structure such that the unit vector field 
                       satisfies {\rm (IdC)} and $(\mathrm{EC})_d$. 
           \item[$2.$] Let $(\mathcal{H},\nabla)$ be a trTLE-structure and 
                       $(\nabla^{\rm r},\mathcal{C},\mathcal{U},\mathcal{V})$ 
                       the associated Frobenius type structure. 
                       Let $\zeta$ be a $\nabla^{\rm r}$-flat global section of $\mathcal{H}|_{\lambda=0}$
                       satisfying {\rm (IdC)} and $(\mathrm{EC})_d$.
                       Then there is a unique Saito structure $\mathcal{S}$ on $M$ such that 
                       the unit vector $e$ satisfies
                       $-\mathcal{C}_e\zeta=\zeta$ and
                       the morphism 
                       $$-p^*_\lambda(\mathcal{C}_\bullet\zeta):p_\lambda^*\Theta_M\xrightarrow{\sim}
                       p^*_\lambda(\mathcal{H}|_{\lambda=0})\simeq\mathcal{H}$$
                       gives an isomorphism of {\rm trTLE}-structures 
                       between $\mathcal{H}_{\mathcal{S},d}$ and $(\mathcal{H},\nabla)$.                                                                                          
          \end{enumerate}
         \end{corollary}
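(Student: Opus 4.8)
The plan is to reduce both assertions to Lemma~\ref{lemma Saito to trTLE}, Lemma~\ref{IdC for Saito} and Proposition~\ref{lemma trTLE to Saito}, using Remark~\ref{remark shift} to keep track of the twist by $c\cdot\mathrm{id}\,\lambda^{-1}\mathrm{d}\lambda$. The number $c=(2-d)/2$ is chosen precisely so that $d+2c=2$, which is what makes the eigenvalue conditions $(\mathrm{EC})_d$ and $(\mathrm{EC})_2$ interchangeable under this twist.

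For the first assertion I would start from the trTLE-structure $(p_\lambda^*\Theta_M,\hat{\nabla})$ provided by Lemma~\ref{lemma Saito to trTLE}; by the remark following Definition~\ref{tle}, $\mathcal{H}_{\mathcal{S},d}$ is again a trTLE-structure. The recollection stated just before Lemma~\ref{IdC for Saito} identifies the Frobenius type structure associated to $(p_\lambda^*\Theta_M,\hat{\nabla})$ with $({\bm\nabla},\theta,\theta_E,{\bm\nabla}_\bullet E)$, so by Remark~\ref{remark shift} the one associated to $\mathcal{H}_{\mathcal{S},d}$ is $({\bm\nabla},\theta,\theta_E,{\bm\nabla}_\bullet E-c\cdot\mathrm{id})$. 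By the first axiom of Definition~\ref{def of saito} and the symmetry of $\theta$ one has $\theta_\bullet e=-\mathrm{id}$, so $\mathcal{C}_\bullet e$ is an isomorphism and $e$ satisfies {\rm (IdC)}; and the computation in the proof of Lemma~\ref{IdC for Saito} gives ${\bm\nabla}_e E=e$, whence $({\bm\nabla}_\bullet E-c\cdot\mathrm{id})e=(1-c)e=\frac{d}{2}e$ because $c=1-d/2$, i.e.\ $e$ satisfies $(\mathrm{EC})_d$.

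For the second assertion I would pass to the twisted trTLE-structure $(\mathcal{H},\nabla'):=(\mathcal{H},\nabla-c\cdot\mathrm{id}\,\lambda^{-1}\mathrm{d}\lambda)$. By Remark~\ref{remark shift} its associated Frobenius type structure is $(\nabla^{\rm r},\mathcal{C},\mathcal{U},\mathcal{V}+c\cdot\mathrm{id})$, so $\zeta$, which is still $\nabla^{\rm r}$-flat and still satisfies {\rm (IdC)} since $\mathcal{C}$ and $\nabla^{\rm r}$ are unaffected, now satisfies $(\mathrm{EC})_{d+2c}=(\mathrm{EC})_2$ for $(\mathcal{H},\nabla')$. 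Proposition~\ref{lemma trTLE to Saito} applied to $(\mathcal{H},\nabla')$ and $\zeta$ then produces a Saito structure $\mathcal{S}:=\mathcal{S}_{\mathcal{H},\zeta}=(\mathcal{C},\nabla^{\rm r},e,E)$ with $e=\mu^{-1}(\zeta)$, $\mu:=-\mathcal{C}_\bullet\zeta$, such that $p_\lambda^*(\mu)$ is an isomorphism of trTLE-structures from $(p_\lambda^*\Theta_M,\hat{\nabla})$ onto $(\mathcal{H},\nabla')$. Adding $c\cdot\mathrm{id}\,\lambda^{-1}\mathrm{d}\lambda$ back to both connections, which commutes with the flat isomorphism $p_\lambda^*(\mu)$ since a fixed scalar $1$-form times the identity is carried to itself, turns this into an isomorphism of trTLE-structures $-p_\lambda^*(\mathcal{C}_\bullet\zeta)\colon\mathcal{H}_{\mathcal{S},d}\xrightarrow{\sim}(\mathcal{H},\nabla)$, while $\mu(e)=\zeta$ reads $-\mathcal{C}_e\zeta=\zeta$. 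For uniqueness, a Saito structure $\mathcal{S}'$ with the stated properties gives, after subtracting the same twist, an isomorphism $-p_\lambda^*(\mathcal{C}_\bullet\zeta)$ from the trTLE-structure attached to $\mathcal{S}'$ by Lemma~\ref{lemma Saito to trTLE} onto $(\mathcal{H},\nabla')$, and its unit vector field $e'$ satisfies $-\mathcal{C}_{e'}\zeta=\zeta$; by the uniqueness clause of Proposition~\ref{lemma trTLE to Saito} this forces $\mathcal{S}'=\mathcal{S}_{\mathcal{H},\zeta}=\mathcal{S}$.

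There is essentially no hard step here: the analytic content is entirely contained in the cited Proposition~\ref{lemma trTLE to Saito}. The only point requiring care is the sign bookkeeping — verifying that the twist by $c\cdot\mathrm{id}\,\lambda^{-1}\mathrm{d}\lambda$ genuinely commutes with $p_\lambda^*(\mu)$ and shifts $(\mathrm{EC})_2\leftrightarrow(\mathrm{EC})_d$ correctly in both directions — which is exactly the reason for the choice $c=(2-d)/2$.
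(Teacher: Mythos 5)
Your proposal is correct and follows essentially the same route as the paper: both reduce the statement to Proposition \ref{lemma trTLE to Saito} via the twist by $c\cdot\mathrm{id}\,\lambda^{-1}\mathrm{d}\lambda$, using Remark \ref{remark shift} to convert $(\mathrm{EC})_d$ into $(\mathrm{EC})_2$ and Lemma \ref{IdC for Saito} (with the symmetry of $\theta$) for the first assertion. Your write-up merely makes explicit the untwisting step and the uniqueness argument that the paper leaves implicit.
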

         \begin{proof}
          The first assertion is easily checked by using Lemma \ref{IdC for Saito} and Remark \ref{remark shift}. 
          Let $(\mathcal{H},\nabla)$ and $\zeta$ be the same as in the second assertion.
          By Remark \ref{remark shift}, $\zeta$ satisfies (IdC) and $\mathrm{(EC)}_2$ with respect to 
          $(\mathcal{H},\nabla-c\cdot \mathrm{id}_\mathcal{H}\lambda^{-1}\mathrm{d}\lambda)$.
          Hence by Proposition \ref{lemma trTLE to Saito},
          there exists a unique Saito structure $\mathcal{S}$
          such that $-\mathcal{C}_e\zeta=\zeta$ and 
          $-p^*_\lambda(\mathcal{C}_\bullet\zeta)$ gives an 
          isomorphism of trTLE-structures between 
          $(p_\lambda^*\Theta_M,\hat{\nabla})$ and 
          $(\mathcal{H},\nabla-c\cdot \mathrm{id}_\mathcal{H}\lambda^{-1}\mathrm{d}\lambda)$.          
         \end{proof}
        
       %%%%%%%%%%%%%%%%%     subsection   def of mixed Frobenius %%%%%%%%%%%%%%
        \subsubsection{Weight filtrations, graded pairings, and Frobenius filtrations}
        Let $M$ be a complex manifold.       
         Let $j_\lambda:\mathbb{P}^1_\lambda\times M\to \mathbb{P}^1_\lambda\times M$ be the morphism
         defined by $j_\lambda(\lambda,t)=(-\lambda,t)$ where $\lambda$ is the non-homogeneous coordinate
         on $\mathbb{P}^1_\lambda$ and $t$ is a point in $M$.
         For two holomorphic vector bundles $\mathcal{E}$ and $\mathcal{F}$
         , we denote by $\sigma$ the natural isomorphism 
         $\mathcal{E}\otimes\mathcal{F}\to\mathcal{F}\otimes\mathcal{E}$ given by
         $$e\otimes f\mapsto f\otimes e\ \ (e\in\mathcal{E},\ f\in\mathcal{F}).$$         
        For an integer $k$,  
        we denote the invertible sheaf 
        $\mathcal{O}_{\mathbb{P}^1_\lambda\times M}(-k\{0\}\times M + k\{\infty\}\times M)$
         by $\lambda^k\mathcal{O}_{\mathbb{P}^1_\lambda\times M}$.
         Let $\mathcal{H}$ be a holomorphic vector bundle on $\mathbb{P}^1_\lambda\times M$.        
        Let $P: \mathcal{H}\otimes j_\lambda^*\mathcal{H}\to \lambda^k\mathcal{O}_{\mathbb{P}^1_\lambda\times M}$
        be a morphism of $\mathcal{O}_{\mathbb{P}^1_\lambda\times M}$-modules.
         The morphism $P$ is called $(-1)^k$-symmetric
        if $j_\lambda^*P=(-1)^kP\circ\sigma$, 
        and it is called non-degenerate if 
        the morphism $\mathcal{H}\to (j^*_\lambda\mathcal{H})^{\vee}$ induced by 
        $\lambda^{-k}P$ is an isomorphism.

         Recall the definition of trTLEP($k$)-structures.
        \begin{definition}[{\rm \cite{H}}]\label{def of trTLEP(k)}%%%%%%%%%%%%  trTLEP(k)  %%%%%%%%%%%%%
          Let $(\mathcal{H},\nabla)$ be a trTLE-structure on $M$ and fix an integer $k$.
          If a morphism of $\mathcal{O}_{\mathbb{P}^1_\lambda\times M}$-modules
          $P: \mathcal{H}\otimes j_\lambda^*\mathcal{H}\to \lambda^k\mathcal{O}_{\mathbb{P}^1_\lambda\times M}$ 
          is 
          $\nabla$-flat,
          $(-1)^k$-symmetric, and  
          non-degenerate,        
          then we call the triple $(\mathcal{H},\nabla,P)$ a {\bf{\rm \bf trTLEP}$(k)$-structure}. 
          We also call the morphism $P$ a {\bf pairing} of the {\rm trTLEP}$(k)$-structure.
         \end{definition}
       %%%%%%  
        We introduce the notions of filtered trTLEP-structures and mixed trTLEP-structures.
        \begin{definition}\label{def mixed trTLEP}%%%%%%%%%%%%% mixed trTLEP  %%%%%%%%%%%%%%%%%%%%
        Let $(\mathcal{H},\nabla)$ be a {\rm trTLE}-structure on $M$.
              \begin{enumerate}
               \item[$1.$] An increasing filtration $W=(W_k\mid k\in \mathbb{Z})$ 
                           of 
                           $\nabla$-flat subbundles in $\mathcal{H}$
                           is called a {\bf weight filtration} of $(\mathcal{H},\nabla)$ 
                           if the subquotient
                           $\mathrm{Gr}^W_k\mathcal{H}:=W_k/W_{k-1}$ 
                           is a {\rm trTLE}-structure for 
                           every integer $k$. 
                           We call the triple $\mathcal{T}_{\rm filt}:=(\mathcal{H},\nabla,W)$ 
                           a {\bf filtered {\rm \bf trTLE}-structure} if
                           $(\mathcal{H},\nabla)$ is a {\rm trTLE}-structure 
                           and $W$ is a weight filtration of it.
               \item[$2.$] Let $W$ be a weight filtration of $(\mathcal{H},\nabla)$.
                           A sequence of morphisms
                           $$P:=
                           (P_k:\mathrm{Gr}^W_k\mathcal{H}\otimes j^*_\lambda \mathrm{Gr}^W_k\mathcal{H}
                           \to \lambda^{-k}\mathcal{O}_{\mathbb{P}^1_\lambda\times M}\mid k\in\mathbb{Z})$$
                           is called a {\bf graded pairing} 
                           on the filtered {\rm trTLE}-structure 
                           $\mathcal{T}_{\rm filt}=(\mathcal{H},\nabla,W)$
                           if the triple $(\mathrm{Gr}_k^W(\mathcal{H}),\nabla,P_k)$ is
                           a {\rm trTLEP}$(-k)$-structure for every integer $k$. 
                           We call the pair 
                           $\mathcal{T}:=(\mathcal{T}_{\rm filt},P)$
                           a {\bf mixed trTLEP-structure} 
                           if $\mathcal{T}_{\rm filt}$ is a filtered {\rm trTLE}-structure and
                           $P$ is a graded pairing on it.
              \end{enumerate}
            Isomorphisms of these structures are the isomorphisms of underlying trTLE-structures which 
            preserves the weight filtrations and graded Pairings.  
        \end{definition} 
        \begin{remark}\label{remark on MtrTLEP}%%%%%%%%%%%%%%%%%%%%
        \begin{itemize}
        \item
               We can define pull-backs for mixed trTLEP-structures. 
              Let $f:M_0\to M_1$ be a holomorphic map 
              and $\mathcal{T}=(\mathcal{H}, (W_k)_k, (P_k)_k)$ a mixed trTLEP-structure on $M_1$.
              Then $f^*\mathcal{T}:=\big{(}(\mathrm{id}_{\mathbb{P}^1_\lambda}\times f)^*\mathcal{H},
                                     \{(\mathrm{id}_{\mathbb{P}^1_\lambda}\times f)^*W_k\}_k,
                                     \{(\mathrm{id}_{\mathbb{P}^1_\lambda}\times f)^*P_k\}_k
                                          \big{)}$   
              is mixed trTLEP-structure on $M_0$.
        \item 
              Let $\mathcal{T}=(\mathcal{H},\nabla, W, P)$ a mixed trTLEP-structure and
              $\ell$ a half-integer. 
              Then if we put $W(\ell)_k:=W_{k+2\ell}$ and $P(\ell)_k:=\lambda^{2\ell}P_{k+2\ell}$, the tuple 
         $\big{(}\mathcal{H},\nabla-\ell\mathrm{id}_\mathcal{H}\lambda^{-1}\mathrm{d}\lambda, W(\ell),P(\ell)\big{)}$
              is a mixed trTLEP structure.
              We denote it by $\mathcal{T}(\ell)$ and call it the Tate twist of $\mathcal{T}$ by $\ell$.
        \item A {\rm trTLEP}$(k)$-structure $\mathcal{T}$ can be regarded as 
              a mixed trTLEP-structure by the canonical way.
              The Tate twist $\mathcal{T}(\ell)$ gives a {\rm trTLEP}$(k+2\ell)$-structure 
              for every half-integer $\ell$. 
        \end{itemize}
        \end{remark}%%%%%%%%%%%%%%%%%%%%%%
        We recall the definition of mixed Frobenius structure introduced in \cite{konisi1,konisi2}.        
        Let $\mathcal{S}=(\theta,{\bm \nabla},e,E)$ be a Saito structure on a complex manifold $M$.
        A subbundle $\mathcal{J}\subset\Theta_M$ is called 
        $\theta$-invariant (resp.${\bm \nabla}$-flat) if 
        $\theta_x y\in\mathcal{J}$ (resp.${\bm \nabla}_x y\in\mathcal{J}$) for all $x\in \Theta_M, y\in \mathcal{J}$.
        A subbundle $\mathcal{J}\subset\Theta_M$ is called $E$-closed if 
        $\mathrm{Lie}_E(y)=[E,y]\in \mathcal{J}$ for all $y\in \mathcal{J}$.
        If $\mathcal{J}$ is ${\bm \nabla}$-flat,
        the $E$-closedness of $\mathcal{J}$ is equivalent to 
        the condition that ${\bm \nabla}_yE\in\mathcal{J}$ for all $y\in \mathcal{J}$.
                
        %%%%%%%%%   mixed Frobenius manifold   %%%%%%%%
        \begin{definition}[{\cite[Definition 6.2]{konisi1},\cite[Definition 4.5]{konisi2}}]\label{def mixed Frob}
        
        Fix a complex number $d$.  
         Let $\mathcal{I}=(\mathcal{I}_k \subset \Theta_M \mid k\in \mathbb{Z})$ 
         be an increasing filtration of 
        $\theta$-invariant, 
        ${\bm \nabla}$-flat, and   
        $E$-closed 
         subbundles on $\Theta_M$.
          Let         
         $g=(
         g_k:\mathrm{Gr}_k^\mathcal{I}\Theta_M\otimes\mathrm{Gr}_k^\mathcal{I}\Theta_M
             \to 
             \mathcal{O}_M
             \mid k\in\mathbb{Z} 
              )$ 
         be a sequence of ${\bm \nabla}$-flat symmetric morphisms.
          The pair $(\mathcal{I},g)$
         is called  a {\bf Frobenius filtration} on the Saito structure $\mathcal{S}$ 
         of charge $d$
         if they satisfy the following equations for all integer $k$:

         \begin{align}\label{g1}
           g_k(\theta_x y,z)&=g_k(y,\theta_x z) \ \ 
           (x\in \Theta_M, y,z\in \mathrm{Gr}^\mathcal{I}_k\Theta_M)\\ 
           \label{g2}  
           \mathrm{Lie}_E(g_k)&=(2-d+k)g_k.        
         \end{align}
%         for every integer $k$. 
         A triple $\mathscr{F}:=(\mathcal{S},\mathcal{I},g)$ is called a {\bf mixed Frobenius structure} 
         $(${\bf MFS}$)$ 
         on a complex manifold $M$ 
         if 
         $\mathcal{S}$ is a Saito structure on $M$ 
         and $(\mathcal{I},g)$ is a Frobenius filtration on $\mathcal{S}$. 
         A complex manifold equipped with a MFS is called {\bf mixed Frobenius manifold}.
        \end{definition}
        \begin{remark} 
        \begin{itemize}
         \item This definition of Frobenius filtrations $($and MFS$)$ is slightly different from 
               the one in {\rm \cite{konisi1,konisi2}}.
               A MFS $(\mathcal{S},\mathcal{I},g)$ gives a Frobenius structure 
               if $\mathcal{I}_{-1}=0$ and $\mathcal{I}_0=\Theta_M$. 
         \item Let $\mathscr{F}=(\mathcal{S},\mathcal{I},g)$ be a MFS of charge $d$ and $\ell$ a half-integer.
               If we put $\mathcal{I}(\ell)_k:=\mathcal{I}_{k+2\ell}$ and $g(\ell)_k:=g_{k+2\ell}$, 
               then $\mathscr{F}(\ell):=(\mathcal{S},\mathcal{I}(\ell), g(\ell))$ is a MFS of charge $d-2\ell$.
        \end{itemize}
        \end{remark} 
        Let $\mathscr{F}=(\mathcal{S},\mathcal{I},g)$ be 
        a MFS
        on a complex manifold $M$ 
        of charge $d$.
        The underlying Saito structure $\mathcal{S}$ induces 
        a trTLE-structure $\mathcal{H}_{\mathcal{S},d}$ (Corollary \ref{saito trTLE}).        
         Put $W_{\mathcal{I},k}:=p_\lambda^*\mathcal{I}_k$ 
         and $W_\mathcal{I}:=(W_{\mathcal{I},k}\mid k\in\mathbb{Z})$.
         \begin{lemma}%%%%%%%%%%%%%%%%%%%
          $W_\mathcal{I}$ is a weight filtration on $\mathcal{H}_{\mathcal{S},d}$.
         \end{lemma}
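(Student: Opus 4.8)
The plan is to verify the two defining properties of a weight filtration (Definition \ref{def mixed trTLEP}, part 1) for $W_\mathcal{I}=(p_\lambda^*\mathcal{I}_k\mid k\in\mathbb{Z})$ on the trTLE-structure $\mathcal{H}_{\mathcal{S},d}=(p_\lambda^*\Theta_M,\hat\nabla+c\cdot\mathrm{id}\,\lambda^{-1}d\lambda)$: namely that each $p_\lambda^*\mathcal{I}_k$ is a $\widehat\nabla$-flat subbundle, and that each graded piece $\mathrm{Gr}^{W_\mathcal{I}}_k\mathcal{H}_{\mathcal{S},d}=p_\lambda^*\mathrm{Gr}^{\mathcal{I}}_k\Theta_M$ is again a trTLE-structure. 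Since the shift by $c\cdot\mathrm{id}\,\lambda^{-1}d\lambda$ preserves every subbundle and only changes $\mathcal V$ by a scalar, it suffices to work with $\widehat\nabla$ itself and invoke Remark \ref{remark shift} / Corollary \ref{saito trTLE} at the end. Recall from the discussion preceding Lemma \ref{IdC for Saito} that the Frobenius type structure associated to $(p_\lambda^*\Theta_M,\widehat\nabla)$ is $({\bm\nabla},\theta,\theta_E,{\bm\nabla}_\bullet E)$, and that by (\ref{hatnabla})
\begin{equation*}
\widehat\nabla=p_\lambda^*{\bm\nabla}+\tfrac1\lambda p_\lambda^*\theta-\Big(\tfrac1\lambda p_\lambda^*\theta_E+p_\lambda^*{\bm\nabla}_\bullet E\Big)\tfrac{d\lambda}{\lambda}.
\end{equation*}

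First I would check $\widehat\nabla$-flatness of $p_\lambda^*\mathcal{I}_k$. The connection $\widehat\nabla$ is built from three operators pulled back from $M$: the connection ${\bm\nabla}$, the Higgs field $\theta$, and the endomorphisms $\theta_E$ and ${\bm\nabla}_\bullet E$. By hypothesis in Definition \ref{def mixed Frob}, $\mathcal{I}_k$ is ${\bm\nabla}$-flat, $\theta$-invariant and $E$-closed; since $\mathcal{I}_k$ is ${\bm\nabla}$-flat, $E$-closedness is equivalent to ${\bm\nabla}_y E\in\mathcal{I}_k$ for all $y\in\mathcal{I}_k$ (as noted just before Definition \ref{def mixed Frob}), i.e. $\mathcal{I}_k$ is preserved by ${\bm\nabla}_\bullet E$; and $\theta$-invariance gives $\theta_x y\in\mathcal{I}_k$ for all $x$, so in particular $\mathcal{I}_k$ is preserved by $\theta_E$. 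Hence every term of $\widehat\nabla$ maps $p_\lambda^*\mathcal{I}_k$ into $p_\lambda^*\mathcal{I}_k\otimes\Omega^1_{\mathbb{P}^1_\lambda\times M}(\log\{0,\infty\}\times M)\otimes\mathcal{O}(\{0\}\times M)$, so $p_\lambda^*\mathcal{I}_k$ is a $\widehat\nabla$-flat subbundle.

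Next I would show the graded pieces are trTLE-structures. Since $\mathcal{I}_k$ is a subbundle of $\Theta_M$, $\mathrm{Gr}^{\mathcal{I}}_k\Theta_M=\mathcal{I}_k/\mathcal{I}_{k-1}$ is a holomorphic vector bundle on $M$, and $p_\lambda^*(\mathcal{I}_k/\mathcal{I}_{k-1})=p_\lambda^*\mathcal{I}_k/p_\lambda^*\mathcal{I}_{k-1}$ is trivial along $\mathbb{P}^1_\lambda$, so condition 1 of Definition \ref{tle} holds. The operators ${\bm\nabla}$, $\theta$, $\theta_E$, ${\bm\nabla}_\bullet E$ descend to the quotient $\mathrm{Gr}^{\mathcal{I}}_k\Theta_M$ by the invariance properties just used, and by functoriality of the curvature the induced connection $\overline{\widehat\nabla}$ on $p_\lambda^*\mathrm{Gr}^{\mathcal{I}}_k\Theta_M$ is flat with the same pole profile — pole order $1$ along $\{0\}\times M$, logarithmic along $\{\infty\}\times M$ — so condition 2 of Definition \ref{tle} holds as well. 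Therefore $\mathrm{Gr}^{W_\mathcal{I}}_k\mathcal{H}_{\mathcal{S},d}$ is a trTLE-structure for every $k$.

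I do not expect any real obstacle here: the lemma is essentially bookkeeping, repackaging the three invariance conditions on $\mathcal{I}_k$ into the statement that $\mathcal{I}_k$ is preserved by the four operators assembling $\widehat\nabla$, plus the trivial observation that pulling back a short exact sequence of bundles along $p_\lambda$ and grading stays within the trivial-along-$\mathbb{P}^1_\lambda$ class. The only point deserving a word of care is the translation ``$E$-closed and ${\bm\nabla}$-flat $\Rightarrow$ stable under ${\bm\nabla}_\bullet E$,'' which is already recorded in the text immediately above Definition \ref{def mixed Frob}, and the harmless effect of the scalar shift $c\cdot\mathrm{id}\,\lambda^{-1}d\lambda$, which changes neither subbundles nor flatness.
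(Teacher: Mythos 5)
Your proposal is correct and follows essentially the same route as the paper: the paper also deduces $(\hat\nabla+c\,\mathrm{id}\,\lambda^{-1}\mathrm{d}\lambda)$-flatness of $p_\lambda^*\mathcal{I}_k$ directly from the $\theta$-invariance, ${\bm\nabla}$-flatness and $E$-closedness of $\mathcal{I}_k$, and then uses the natural isomorphism $\mathrm{Gr}^{W_\mathcal{I}}_k(p_\lambda^*\Theta_M)\simeq p_\lambda^*\mathrm{Gr}^{\mathcal{I}}_k\Theta_M$ to see that the graded pieces are trTLE-structures. You merely spell out the bookkeeping (the role of ${\bm\nabla}_\bullet E$ and the harmlessness of the scalar shift) that the paper leaves implicit.
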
%%%%%%%%%%%%%%%%%%%%%
         \begin{proof}
          Recall that $\mathcal{H}_{\mathcal{S},d}=(p_\lambda^*\Theta_M,\hat{\nabla}+c\lambda^{-1}\mathrm{d}\lambda)$
          where $\hat{\nabla}$ is given in (\ref{hatnabla}) and $c=(2-d)/2$. 
          Since each $\mathcal{I}_k$ is $\theta$-invariant, $\nabla$-flat, and $E$-closed,
          $W_{\mathcal{I},k}=p^*_\lambda\mathcal{I}_k$ is 
          ($\hat{\nabla}+c\cdot\mathrm{id}_{p^*_\lambda \Theta_M} \lambda^{-1}\mathrm{d}\lambda$)-flat. 
          The natural isomorphism 
          $\mathrm{Gr}^{W_\mathcal{I}}_k(p^*_\lambda\Theta_M)
          \simeq 
          p^*_\lambda\mathrm{Gr}^\mathcal{I}_k\Theta_M$ 
          shows that $W_\mathcal{I}$ is a weight filtration.
         \end{proof}
         We denote $\mathrm{Gr}^{W_\mathcal{I}}_k(p^*_\lambda\Theta_M)$ 
         by $\mathrm{Gr}_k^{W_\mathcal{I}}(\mathcal{H}_{\mathcal{S},d})$.
         For each integer $k$, 
         let $P_{g,k}$ be a morphism given by the following composition:
         \begin{equation*} 
                       \mathrm{Gr}_k^{W_\mathcal{I}}(\mathcal{H}_{\mathcal{S},d})
                       \otimes j_\lambda^*\mathrm{Gr}_k^{W_\mathcal{I}}
                       (\mathcal{H}_{\mathcal{S},d})                      
                       \xrightarrow{id\otimes j^*}
                       \mathrm{Gr}_k^{W_\mathcal{I}}(\mathcal{H}_{\mathcal{S},d})
                       \otimes \mathrm{Gr}_k^{W_\mathcal{I}}(\mathcal{H}_{\mathcal{S},d})                      
                       \xrightarrow{\lambda^{-k}p_\lambda^*(g_k)}
                       \lambda^{-k}\mathcal{O}_{\mathbb{P}^1_\lambda\times M}.                     
         \end{equation*} 
        %%%%%%%% proposition mixed Frobenius manifold induces mixed trTLEP-structure  %%%%%%%%%%% 
        \begin{proposition}\label{proposition mixed Frobenius and mixed trTLEP}
         Put $P_g:=(P_{g,k}\mid k\in\mathbb{Z})$. 
         Then $\mathcal{T}(\mathscr{F}):=((\mathcal{H}_{\mathcal{S},d},W_\mathcal{I}),P_g)$
         is a mixed trTLEP-structure.       
        \end{proposition}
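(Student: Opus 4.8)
The plan is to reduce the claim to a verification on each graded piece. By the preceding Lemma, $W_\mathcal{I}$ is a weight filtration on $\mathcal{H}_{\mathcal{S},d}$, so $(\mathcal{H}_{\mathcal{S},d},W_\mathcal{I})$ is already a filtered {\rm trTLE}-structure; what remains is to check that $P_g=(P_{g,k}\mid k\in\mathbb{Z})$ is a graded pairing on it, that is, that for every integer $k$ the triple $\big{(}\mathrm{Gr}_k^{W_\mathcal{I}}(\mathcal{H}_{\mathcal{S},d}),\nabla,P_{g,k}\big{)}$ is a {\rm trTLEP}$(-k)$-structure, where $\nabla$ denotes the connection induced on $\mathrm{Gr}_k^{W_\mathcal{I}}(\mathcal{H}_{\mathcal{S},d})$ by $\hat{\nabla}+c\cdot\mathrm{id}\,\lambda^{-1}\mathrm{d}\lambda$, $c=(2-d)/2$. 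First I would record the shape of this induced connection. Since $\mathcal{I}_k$ is $\theta$-invariant, ${\bm\nabla}$-flat and $E$-closed, the operators $\theta$, ${\bm\nabla}$, $\theta_E$ and ${\bm\nabla}_\bullet E$ preserve $\mathcal{I}_k$ and hence descend to $\mathrm{Gr}_k^\mathcal{I}\Theta_M$ (the notation being consistent with (\ref{g1}) and (\ref{g2})); using the isomorphism $\mathrm{Gr}_k^{W_\mathcal{I}}(\mathcal{H}_{\mathcal{S},d})\simeq p_\lambda^*\mathrm{Gr}_k^\mathcal{I}\Theta_M$ and the description of the Frobenius type structure of $(p_\lambda^*\Theta_M,\hat{\nabla})$ recalled just before Lemma \ref{IdC for Saito}, one obtains
\[
\nabla = p_\lambda^*{\bm\nabla} + \frac{1}{\lambda}p_\lambda^*\theta - \Big{(}\frac{1}{\lambda}p_\lambda^*\theta_E + p_\lambda^*{\bm\nabla}_\bullet E - c\cdot\mathrm{id}\Big{)}\frac{\mathrm{d}\lambda}{\lambda},
\]
compare (\ref{hatnabla}); in particular $(\mathrm{Gr}_k^{W_\mathcal{I}}(\mathcal{H}_{\mathcal{S},d}),\nabla)$ is a {\rm trTLE}-structure because $W_\mathcal{I}$ is a weight filtration.

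Next I would dispose of the two formal conditions. The pairing $P_{g,k}=\big{(}\lambda^{-k}p_\lambda^*(g_k)\big{)}\circ(\mathrm{id}\otimes j_\lambda^*)$ is non-degenerate because $\mathrm{id}\otimes j_\lambda^*$ is an isomorphism, $g_k$ is a non-degenerate metric, and $\lambda^{-k}p_\lambda^*(-)$ is an isomorphism onto $\lambda^{-k}\mathcal{O}_{\mathbb{P}^1_\lambda\times M}$. For the $(-1)^{-k}$-symmetry I would compute $j_\lambda^*P_{g,k}$ using $j_\lambda^2=\mathrm{id}$ and the canonical identification $j_\lambda^*p_\lambda^*=p_\lambda^*$: interchanging the two tensor factors is absorbed by the symmetry of $g_k$, while the sign $(-1)^{-k}$ is produced by $j_\lambda^*(\lambda^{-k})=(-1)^{-k}\lambda^{-k}$, so $j_\lambda^*P_{g,k}=(-1)^{-k}P_{g,k}\circ\sigma$. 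Then comes the only real content, $\nabla$-flatness of $P_{g,k}$. Since this identity is $\mathcal{O}$-linear, it suffices to verify it on a local frame of $\mathrm{Gr}_k^{W_\mathcal{I}}(\mathcal{H}_{\mathcal{S},d})$ consisting of sections $p_\lambda^*a$ with $a$ a ${\bm\nabla}$-flat section of $\mathrm{Gr}_k^\mathcal{I}\Theta_M$; for such $a,b$ we have $P_{g,k}(p_\lambda^*a\otimes j_\lambda^*p_\lambda^*b)=g_k(a,b)\,\lambda^{-k}$ with $g_k(a,b)$ independent of $\lambda$. In a direction $x\in\Theta_M$ the left-hand side of the flatness identity vanishes because $g_k,a,b$ are ${\bm\nabla}$-flat; on the right-hand side the ${\bm\nabla}$-parts vanish on $p_\lambda^*a$ and $j_\lambda^*p_\lambda^*b$, and since $j_\lambda$ turns $\frac{1}{\lambda}\theta_x$ into $-\frac{1}{\lambda}\theta_x$ the two Higgs contributions combine to $\frac{1}{\lambda}\big{(}g_k(\theta_x a,b)-g_k(a,\theta_x b)\big{)}\lambda^{-k}$, which vanishes by (\ref{g1}).

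The remaining, and genuinely delicate, direction is $\lambda\partial_\lambda$. There the connection on $\lambda^{-k}\mathcal{O}_{\mathbb{P}^1_\lambda\times M}$, in the trivialization by $\lambda^{-k}$, acts as multiplication by $-k$ along $\lambda\partial_\lambda$, so the left-hand side of the flatness identity is $-k\,g_k(a,b)\,\lambda^{-k}$. On the right-hand side, using that $\lambda\partial_\lambda$ is $j_\lambda$-invariant while $\frac{1}{\lambda}$ changes sign: the two $\frac{1}{\lambda}\theta_E$-terms cancel by (\ref{g1}) applied with $x=E$; the two ${\bm\nabla}_\bullet E$-terms add up (for ${\bm\nabla}$-flat $a,b$, using torsion-freeness of ${\bm\nabla}$ and the ${\bm\nabla}$-flatness of $g_k$) to $-\mathrm{Lie}_E(g_k)(a,b)$, which by (\ref{g2}) equals $-(2-d+k)g_k(a,b)$; and the two $c\cdot\mathrm{id}$-terms give $2c\,g_k(a,b)$. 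Since $2c=2-d$, the right-hand side equals $\big{(}(2-d)-(2-d+k)\big{)}g_k(a,b)\,\lambda^{-k}=-k\,g_k(a,b)\,\lambda^{-k}$, matching the left-hand side. This gives $\nabla$-flatness, so each $\big{(}\mathrm{Gr}_k^{W_\mathcal{I}}(\mathcal{H}_{\mathcal{S},d}),\nabla,P_{g,k}\big{)}$ is a {\rm trTLEP}$(-k)$-structure and hence $\mathcal{T}(\mathscr{F})$ is a mixed trTLEP-structure. The only real obstacle I expect is bookkeeping: keeping track of the signs introduced by $j_\lambda$ (on $\frac{1}{\lambda}$, on the trivializing section $\lambda^{-k}$, and on the vector field $\lambda\partial_\lambda$) and of the $\lambda^{-k}$-twist in the $\lambda\partial_\lambda$-direction — the conceptual content being entirely forced by (\ref{g1}), (\ref{g2}) and the normalization $c=(2-d)/2$.
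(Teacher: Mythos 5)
Your argument is correct and follows essentially the same route as the paper: reduce to each graded piece, get $(-1)^{-k}$-symmetry and non-degeneracy directly from the corresponding properties of $g_k$, and verify flatness of $P_{g,k}$ from (\ref{g1}), (\ref{g2}), the ${\bm\nabla}$-flatness of $g_k$, and the normalization $c=(2-d)/2$. The paper leaves these as ``easy calculations''; your write-up simply carries them out explicitly (including the sign bookkeeping for $j_\lambda$ and the $\lambda^{-k}$-twist), and the computations check out.
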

        \begin{proof}
        We need to show that $(\mathrm{Gr}^{W_\mathcal{I}}_k(\mathcal{H}_{\mathcal{S},d}),P_{g,k})$ 
        is a trTLEP$(-k)$-structure for each integer $k$.
        Since $g_k$ is symmetric and non-degenerate (and by construction), 
        $P_{g,k}$ is $(-1)^{-k}$-symmetric and non-degenerate. 
        Hence it remains to show that $P_{g,k}$ is 
        ($\hat{\nabla}+c\cdot\mathrm{id}_{p^*_\lambda \Theta_M}\lambda^{-1}\mathrm{d}\lambda$)-flat.        
        This follows from the equations (\ref{g1}) and (\ref{g2}),
        the fact that $g_k$ is $\nabla$-flat, 
        and some easy calculations.
%         The correspondence of underlying 
%         trTLE-structure and Saito structure is given by Corollary \ref{saito trTLE}.
%         The correspondence of the weight filtrations is obvious.
%         The correspondence of the graded pairings is essentially the same as
%         the proof of Proposition 1.10 in \cite{R1}. 
        \end{proof}
         The following proposition describes a correspondence between mixed Frobenius structures and 
         mixed trTLEP-structures. 
        \begin{proposition}%%%%%%proposition 
         Let $\mathcal{T}=(\mathcal{H},\nabla,W,P)$ be a mixed trTLEP-structure 
         on a complex manifold $M$.
         Let $(\nabla^{\rm r},\mathcal{C},\mathcal{U},\mathcal{V})$ be the Frobenius type structure 
         associated to the underlying trTLE-structure $(\mathcal{H},\nabla)$.
         Assume that there is a $\nabla^{\rm r}$-flat global section $\zeta$ of $\mathcal{H}|_{\lambda=0}$
         satisfying {\rm (IdC)} and $(\mathrm{EC})_d$ for a complex number $d$.
         Then there exists a unique MFS $\mathscr{F}$ on $M$ of charge $d$ such that 
         the unit vector field $e$ satisfies $-\mathcal{C}_e\zeta=\zeta$ and
         the morphism 
         \begin{align}\label{mu3}
          -p^*_\lambda(\mathcal{C}_\bullet\zeta):p_\lambda^*\Theta_M\xrightarrow{\sim}
                       p^*_\lambda(\mathcal{H}|_{\lambda=0})\simeq\mathcal{H}
         \end{align}
         gives an isomorphism of mixed {\rm trTLEP}-structures 
         between $\mathcal{T}(\mathscr{F})$ and $\mathcal{T}$.
        \end{proposition}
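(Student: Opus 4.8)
The plan is to deduce this proposition from the second assertion of Corollary~\ref{saito trTLE}, applied to the underlying {\rm trTLE}-structure $(\mathcal{H},\nabla)$ together with the section $\zeta$, and then to carry the weight filtration and the graded pairings through the resulting isomorphism, one graded piece at a time. Since $\zeta$ is $\nabla^{\rm r}$-flat and satisfies {\rm (IdC)} and $(\mathrm{EC})_d$, Corollary~\ref{saito trTLE} produces a \emph{unique} Saito structure $\mathcal{S}=(\mathcal{C},\nabla^{\rm r},e,E)$ on $M$ with $-\mathcal{C}_e\zeta=\zeta$ for which the morphism $\mu:=-p_\lambda^*(\mathcal{C}_\bullet\zeta)$ of~\eqref{mu3} is an isomorphism of {\rm trTLE}-structures $\mathcal{H}_{\mathcal{S},d}\xrightarrow{\sim}(\mathcal{H},\nabla)$. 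Recall $\mathcal{H}_{\mathcal{S},d}$ has underlying bundle $p_\lambda^*\Theta_M$ and connection $\hat{\nabla}+c\cdot\mathrm{id}\,\lambda^{-1}\mathrm{d}\lambda$ with $c=(2-d)/2$; comparing~\eqref{hatnabla} with~\eqref{equation nabla} and using Remark~\ref{remark shift}, its associated Frobenius type structure is $({\bm\nabla},\theta,\theta_E,{\bm\nabla}_\bullet E-c\cdot\mathrm{id})$, which $\mu$ identifies with $(\nabla^{\rm r},\mathcal{C},\mathcal{U},\mathcal{V})$. It remains to produce, uniquely, a Frobenius filtration $(\mathcal{I},g)$ of charge $d$ on $\mathcal{S}$ so that $\mu$ becomes an isomorphism $\mathcal{T}(\mathscr{F})\xrightarrow{\sim}\mathcal{T}$, where $\mathscr{F}:=(\mathcal{S},\mathcal{I},g)$.

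Next I would transport the weight filtration $W$. Each $W_k\subset\mathcal{H}$ is a $\nabla$-flat subbundle and every $\mathrm{Gr}^W_j\mathcal{H}$ is a {\rm trTLE}-structure, hence trivial, and in particular of degree zero, on every $\mathbb{P}^1_\lambda\times\{t\}$; since $W_k$ is a subbundle of the trivial bundle $\mathcal{H}|_{\mathbb{P}^1_\lambda\times\{t\}}$ it is a sum of line bundles of nonpositive degree, and $\deg(W_k|_{\mathbb{P}^1_\lambda\times\{t\}})=\sum_{j\le k}\deg(\mathrm{Gr}^W_j\mathcal{H}|_{\mathbb{P}^1_\lambda\times\{t\}})=0$ forces $W_k$ to be trivial along $\mathbb{P}^1_\lambda$, so $W_k=p_\lambda^*(W_k|_{\lambda=0})$. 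Put $\mathcal{I}_k:=(\mu|_{\lambda=0})^{-1}(W_k|_{\lambda=0})\subset\Theta_M$, so that $\mu^{-1}(W_k)=p_\lambda^*\mathcal{I}_k$; as $\mu$ is flat this subsheaf is $(\hat{\nabla}+c\cdot\mathrm{id}\,\lambda^{-1}\mathrm{d}\lambda)$-flat, and reading that condition against the explicit form~\eqref{hatnabla} term by term shows that $\mathcal{I}_k$ is $\theta$-invariant (the $\lambda^{-1}p_\lambda^*\theta$-term), ${\bm\nabla}$-flat (the $p_\lambda^*{\bm\nabla}$-term), and — since $\theta_E$ then already preserves $\mathcal{I}_k$ — also stable under ${\bm\nabla}_\bullet E$, i.e. $E$-closed (the $\mathrm{d}\lambda/\lambda$-term, together with the remark preceding Definition~\ref{def mixed Frob}). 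Thus $\mathcal{I}:=(\mathcal{I}_k\mid k\in\mathbb{Z})$ is an increasing filtration of the type required in Definition~\ref{def mixed Frob}, $W_\mathcal{I}=(p_\lambda^*\mathcal{I}_k)_k$ is a weight filtration of $\mathcal{H}_{\mathcal{S},d}$ carried onto $W$ by $\mu$, and $\mu$ induces for each $k$ an isomorphism of {\rm trTLE}-structures $\bar\mu_k\colon\mathrm{Gr}^{W_\mathcal{I}}_k\mathcal{H}_{\mathcal{S},d}=p_\lambda^*\mathrm{Gr}^\mathcal{I}_k\Theta_M\xrightarrow{\sim}\mathrm{Gr}^W_k\mathcal{H}$.

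Then I would transport the graded pairings and check the Frobenius-filtration identities. For each $k$, $\bar\mu_k$ pulls $P_k$ back to a $\nabla$-flat, $(-1)^{-k}$-symmetric, non-degenerate pairing on $p_\lambda^*\mathrm{Gr}^\mathcal{I}_k\Theta_M$ valued in $\lambda^{-k}\mathcal{O}_{\mathbb{P}^1_\lambda\times M}$; since this bundle is trivial along $\mathbb{P}^1_\lambda$ and the only fibrewise global sections of $\lambda^{-k}\mathcal{O}_{\mathbb{P}^1_\lambda}$ are the multiples of $\lambda^{-k}$, this pairing equals $\lambda^{-k}p_\lambda^*g_k\circ(\mathrm{id}\otimes j_\lambda^*)$ for a unique $\mathcal{O}_M$-bilinear form $g_k$ on $\mathrm{Gr}^\mathcal{I}_k\Theta_M$ (namely the restriction to $\lambda=0$ of $\lambda^k$ times the pulled-back pairing), which is symmetric because $j_\lambda^*(\lambda^{-k})=(-1)^k\lambda^{-k}$ and non-degenerate because $P_k$ is; by construction $P_{g_k,k}$ is carried to $P_k$ by $\bar\mu_k$. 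To see that $g=(g_k\mid k\in\mathbb{Z})$ satisfies~\eqref{g1} and~\eqref{g2}, write the connection on $p_\lambda^*\mathrm{Gr}^\mathcal{I}_k\Theta_M$ as $p_\lambda^*\nabla^{\rm r}_k+\tfrac1\lambda p_\lambda^*\mathcal{C}_k+(\tfrac1\lambda p_\lambda^*\mathcal{U}_k-p_\lambda^*\mathcal{V}_k)\tfrac{\mathrm{d}\lambda}{\lambda}$ with $(\nabla^{\rm r}_k,\mathcal{C}_k,\mathcal{U}_k,\mathcal{V}_k)=\mathrm{Gr}_k({\bm\nabla},\theta,\theta_E,{\bm\nabla}_\bullet E-c\cdot\mathrm{id})$, and expand the $\nabla$-flatness of the pulled-back $P_k$ on sections constant along $\mathbb{P}^1_\lambda$ in powers of $\lambda$ (using that $j_\lambda^*$ reverses the sign of $\tfrac1\lambda$ and fixes $\tfrac{\mathrm{d}\lambda}{\lambda}$ and $\lambda\partial_\lambda$): the $M$-directions at order $\lambda^{-k}$ give $\nabla^{\rm r}_k$-flatness of $g_k$ and at order $\lambda^{-k-1}$ give~\eqref{g1}; the $\lambda\partial_\lambda$-direction at order $\lambda^{-k}$ gives $g_k(\mathcal{V}_ky,z)+g_k(y,\mathcal{V}_kz)=k\,g_k(y,z)$, and substituting $\mathcal{V}_k=\mathrm{Gr}_k({\bm\nabla}_\bullet E)-c\cdot\mathrm{id}$ and using the $\nabla^{\rm r}_k$-flatness of $g_k$ together with $\mathrm{Lie}_E|_{\mathrm{Gr}^\mathcal{I}_k}=\nabla^{\rm r}_{k,E}-\mathrm{Gr}_k({\bm\nabla}_\bullet E)$ (torsion-freeness and $E$-closedness) turns this into $\mathrm{Lie}_E(g_k)=(k+2c)\,g_k=(2-d+k)\,g_k$, which is~\eqref{g2}.

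Hence $\mathscr{F}=(\mathcal{S},\mathcal{I},g)$ is a mixed Frobenius structure of charge $d$, and $\mu$ is an isomorphism $\mathcal{T}(\mathscr{F})\xrightarrow{\sim}\mathcal{T}$ with $-\mathcal{C}_e\zeta=\zeta$. Uniqueness follows step by step: $\mathcal{S}$ is forced by Corollary~\ref{saito trTLE}, then $\mathcal{I}$ by the requirement $\mu(W_\mathcal{I})=W$, then each $g_k$ by $P_{g_k,k}\leftrightarrow P_k$. I expect the main obstacle to be the $\lambda$-expansion in the third paragraph: one must carefully track the sign of the twist $c=(2-d)/2$ in $\mathcal{V}$ and the $-k$ coming from the $\lambda^{-k}$-valued target so that~\eqref{g2} emerges with the coefficient $2-d+k$ (rather than, say, $d-2+k$); this is exactly the computation that, in the non-filtered case, underlies the correspondence between {\rm trTLEP}$(0)$-structures and Frobenius structures implicit in Corollary~\ref{saito trTLE} and Proposition~\ref{proposition mixed Frobenius and mixed trTLEP}.
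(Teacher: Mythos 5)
Your proposal is correct and follows essentially the same route as the paper's proof: invoke Corollary \ref{saito trTLE} to obtain the unique Saito structure with $-\mathcal{C}_e\zeta=\zeta$, pull the weight filtration back via $\mu$ to get $\mathcal{I}_k=\mu^{-1}(W_k|_{\lambda=0})$ (checking $\theta$-invariance, ${\bm\nabla}$-flatness and $E$-closedness from flatness of $W_k$ and the identification of ${\bm\nabla}_\bullet E$ with $\mathcal{V}+c\,\mathrm{id}$), define $g_k$ as the restriction of $\lambda^kP_k$ to $\lambda=0$, and obtain uniqueness by construction. The only difference is that you make explicit some verifications the paper leaves implicit (the triviality of $W_k$ along $\mathbb{P}^1_\lambda$ via the degree argument, and the $\lambda$-expansion of flatness giving (\ref{g1}) and (\ref{g2}) with coefficient $2-d+k$), and these are carried out correctly.
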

        \begin{proof}%%%%%%%proof
         By Corollary (\ref{saito trTLE}), we have a unique Saito structure 
         $\mathcal{S}$
         such that (\ref{mu3}) gives an isomorphism between 
         $\mathcal{H}_{\mathcal{S},d}$ and $(\mathcal{H},\nabla)$.
         Put $\mu:=-\mathcal{C}_\bullet\zeta$
         and $\mathcal{I}_{W,k}:=\mu^{-1}(W_k|_{\lambda=0})$.
         Since $W_k$ is $\nabla$-flat, $W_k|_{\lambda=0}$ is 
         $\nabla^{\rm r}$-flat, $\mathcal{C}$-invariant, and closed under $\mathcal{U}$ and $\mathcal{V}$.
         Recall that the flat connection and Higgs field of $\mathcal{S}$
         are identified with $\nabla^{\rm r}$ and $\mathcal{C}$ via $\mu$.
         If $E$ is the Euler vector field of $\mathcal{S}$
         and $c=(2-d)/2$,
         the endomorphism $\nabla^{\rm r}_\bullet E$ 
         is identified with 
         $\mathcal{V}+c\cdot\mathrm{id}_{\Theta_M}$ via $\mu$.
         Hence $\mathcal{I}_{W,k}$ is $E$-closed.
         Let $g_{P,k}$ be the restriction of 
         $\lambda^k P$ to $\{\lambda=0\}$
         and put $g_P:=(g_{P,k}\mid k\in \mathbb{Z})$.
         Then via (\ref{mu3}) and the natural isomorphism $(p^*_\lambda\Theta_M)|_{\lambda=0}\simeq\Theta_M$,
         $g_{P,k}$ gives a symmetric, $\nabla^{\rm r}$-flat, non-degenerate pairing 
         on $\mathrm{Gr}^{\mathcal{I}}_k\Theta_M$ satisfying
         (\ref{g1}) and (\ref{g2}), which we denote by the same letter. 
         %This is checked by the same manner as in \cite[]{}.
         As a conclusion, the pair $(\mathcal{I}_W,g_P)$ is 
         a Frobenius filtration on $\mathcal{S}$ 
         and hence $\mathscr{F}:=(\mathcal{S},\mathcal{I}_W,g_P)$ is a MFS on $M$.
         It is easy to check that (\ref{mu3}) gives the isomorphism $\mathcal{T}(\mathscr{F})\simeq\mathcal{T}$.
         The uniqueness of such $\mathscr{F}$ is trivial by construction.
        \end{proof}
      %%%%%%%%%%%%%%%%%%      section   construction theorem  %%%%%%%%%%%%%%%%%
      
      \subsection{Unfoldings and the construction theorem}
        For a complex manifold $M$ and a point $0\in M$,
        we denote by $(M,0)$ the germ of manifold around $0$.
        Let $(\mathcal{H},\nabla)$ be a trTLE-structure on $(M,0)$,
        and $(\nabla^{\rm r},\mathcal{C},\mathcal{U},\mathcal{V})$ 
        the associated Frobenius type structure on $\mathcal{H}|_{\lambda=0}$(Lemma \ref{Frob type trTLE}).
        Let $\zeta$ be a $\nabla^{\rm r}$-flat section of $\mathcal{H}|_{\lambda=0}$.
        Then $\zeta$ satisfies (IC) (resp. (IdC)) if and only if 
        the map (\ref{injectively condition}) is injective (resp. isomorphism) at $0\in (M,0)$.
        
        Let $\zeta_0$ be a non-zero vector in $\mathcal{H}|_{(0,0)}$
        and take the $\nabla^{\rm r}$-flat extension $\zeta\in\mathcal{H}|_{\lambda=0}$.
        The vector $\zeta_0$ is said to satisfy (IC) (resp. (IdC)) 
        if $\zeta$ satisfies (IC) (resp. (IdC)).
        %%%%%%%%%%%%%%%%%%%%  subsection   unfolding  %%%%%%%%%
        \subsubsection{Statements of the unfolding theorem and the construction theorem}
        We define the category of unfoldings of mixed trTLEP-structures.
        \begin{definition}[{cf. \cite[Definition 2.3]{H.M1}}]\label{def of unfolding}%%%%%%%%%%    unfolding   %%%%%%%%%
        Fix a mixed {\rm trTLEP}-structure $\mathcal{T}$ on $(M,0)$.
        \begin{enumerate}
        \item[$(a)$] An {\bf unfolding} of $\mathcal{T}$ is 
                   a mixed {\rm trTLEP}-structure $\widetilde{\mathcal{T}}$ on
                   a germ
                   $(\widetilde{M},0)$ of complex manifold
                   together with a closed embedding $\iota:(M,0)\to (\widetilde{M},0)$
                   and an isomorphism %of mixed {\rm trTLEP}-structure 
                   $i:\mathcal{T}\simeq \iota^*\widetilde{\mathcal{T}}$. 
        \item[$(b)$] Let $\big{(}(\widetilde{M},0),\widetilde{\mathcal{T}},\iota,i\big{)}$ 
                     and $\big{(}(\widetilde{M}',0),\widetilde{\mathcal{T}}',\iota',i'\big{)}$ 
                     be two unfoldings of $\mathcal{T}$.
                     A {\bf morphism of unfoldings} of $\mathcal{T}$ from 
                     $\big{(}(\widetilde{M},0),\widetilde{\mathcal{T}},\iota,i\big{)}$                                          
                     to $\big{(}(\widetilde{M}',0),\widetilde{\mathcal{T}}',\iota',i'\big{)}$
                     is a pair $(\varphi,\phi)$
                     such that
                     \begin{itemize}
                      \item $\varphi:(\widetilde{M},0) \to (\widetilde{M}',0)$ is a holomorphic map with 
                            $\varphi\circ\iota=\iota'$, and
                      \item $\phi:\widetilde{\mathcal{T}}\xrightarrow{\sim} \varphi^*\widetilde{\mathcal{T}}'$ is an 
                            isomorphism of mixed trTLEP-structure with $i={\iota'}^*(\phi)\circ i'$.
                     \end{itemize}
                     
        \end{enumerate}
        We denote the category of unfoldings by $\mathfrak{Unf}_\mathcal{T}$. 
        A terminal object of $\mathfrak{Unf}_\mathcal{T}$ is called a universal unfolding of $\mathcal{T}$
        if it exists.       
        \end{definition}%%%%%%%%%%%%%
        \begin{remark}\label{normalize of unfoldings}
         Let $\big{(}(\widetilde{M},0),\widetilde{\mathcal{T}},\iota,i\big{)}$ be an unfolding such that 
         $(\widetilde{M},0)=\big{(}M\times \mathbb{C}^l,(0,0)\big{)}$ and $\iota$ is the inclusion. 
         Then we denote the unfolding by $\big{(}(M\times\mathbb{C}^l,0),\widetilde{\mathcal{T}},i\big{)}$.
         Every unfolding is isomorphic to such an unfolding. 
        \end{remark}
        Let $(\mathcal{H},\nabla)$ be a trTLE-structure on $(M,0)$,
        and $(\nabla^{\rm r},\mathcal{C},\mathcal{U},\mathcal{V})$
        the associated Frobenius type structure. 
        Recall that 
        $\mathcal{C}$ is a Higgs field and $\mathcal{U}$ is an endomorphism on $\mathcal{H}|_{\lambda=0}$.
        Let $\mathcal{A}$ be the sub-algebra in $\mathrm{End}(\mathcal{H}|_{\lambda=0})$ generated by 
        $\{\mathcal{C}_x\mid x\in \Theta_{M,0}\}$ and $\mathcal{U}$.
        The relation (\ref{A}) implies that $\mathcal{A}$ is a commutative algebra.
         
        \begin{definition}\label{definition of generating condition}%%%%%%%%%%%%  GC %%%%%%%%%% 
        A $\nabla^{\rm r}$-flat section $\zeta$ is said to satisfy the {\bf generation condition} 
        {\rm ((GC)} for short$)$
        if it generates $\mathcal{H}|_{\lambda=0}$ over $\mathcal{A}$,
        i.e. $\mathcal{A}\zeta=\mathcal{H}|_{\lambda=0}$.
        \end{definition}%%%%%%%%%%%%%%
        
        \begin{remark}\label{remark on GC}
           A $\nabla^{\rm r}$-flat section $\zeta$ satisfies {\rm (GC)} if and only if 
           its restriction to $(0,0)\in\mathbb{P}^1_\lambda\times M$ 
           generates $\mathcal{H}|_{(0,0)}$ over $\mathcal{A}|_{(0,0)}$.
           A non-zero vector $\zeta_0\in\mathcal{H}|_{(0,0)}$ is said to satisfy {\rm (GC)}
           if its $\nabla^{\rm r}$-flat extension $\zeta\in\mathcal{H}|_{\lambda=0}$ satisfies {\rm (GC)}. 
        \end{remark}
        \begin{lemma}
               If a $\nabla^{\rm r}$-flat section $\zeta$ satisfies {\rm (GC)}, 
               then the map $\mathcal{A}\rightarrow \mathcal{H}|_{\lambda=0},\ a\mapsto a(\zeta)$ 
               is an isomorphism of $\mathcal{O}_{M,0}$-modules.
        \end{lemma}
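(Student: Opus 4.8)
The plan is to prove bijectivity of the evaluation map $\pi\colon\mathcal{A}\to\mathcal{H}|_{\lambda=0}$, $a\mapsto a(\zeta)$, directly. First I would note that $\pi$ is automatically $\mathcal{O}_{M,0}$-linear: the structure morphism $\mathcal{O}_{M,0}\to\mathcal{A}$ sends $f$ to $f\cdot\mathrm{id}_{\mathcal{H}|_{\lambda=0}}$, so $\pi(fa)=(f\cdot\mathrm{id})\,a(\zeta)=f\,\pi(a)$. Hence it suffices to check that $\pi$ is injective and surjective.

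Surjectivity is immediate from the definition of (GC): by Definition \ref{definition of generating condition} we have $\mathcal{A}\zeta=\mathcal{H}|_{\lambda=0}$, and $\mathcal{A}\zeta$ is exactly the image of $\pi$. For injectivity, the key input is that $\mathcal{A}$ is commutative, which was recorded just after Definition \ref{definition of generating condition} (the generators $\mathcal{C}_x$ commute with each other since $\mathcal{C}$ is a Higgs field, and with $\mathcal{U}$ by the relation $[\mathcal{C},\mathcal{U}]=0$ in (\ref{A})). Suppose $a\in\mathcal{A}$ satisfies $a(\zeta)=0$. Then for every $b\in\mathcal{A}$ we get $a\bigl(b(\zeta)\bigr)=(ab)(\zeta)=(ba)(\zeta)=b\bigl(a(\zeta)\bigr)=0$. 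Since $b$ ranges over all of $\mathcal{A}$ and $\mathcal{A}\zeta=\mathcal{H}|_{\lambda=0}$, the endomorphism $a$ annihilates the whole bundle $\mathcal{H}|_{\lambda=0}$; as $\mathcal{A}$ is by construction a subalgebra of $\mathrm{End}(\mathcal{H}|_{\lambda=0})$, this forces $a=0$. Therefore $\pi$ is an isomorphism of $\mathcal{O}_{M,0}$-modules.

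There is no real obstacle here: once one knows that $\mathcal{A}$ is commutative and acts faithfully on $\mathcal{H}|_{\lambda=0}$ (the latter being built into its definition as an algebra of endomorphisms), the cyclic-vector argument above is forced, and no finiteness, freeness, or Nakayama-type input is needed. If convenient, one may additionally observe that transporting the ring structure of $\mathcal{A}$ across $\pi$ exhibits $\mathcal{H}|_{\lambda=0}$ as a free rank-one $\mathcal{A}$-module generated by $\zeta$, which is the form in which this statement will be used in the construction of the universal unfolding.
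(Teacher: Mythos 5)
Your proposal is correct and follows essentially the same route as the paper: surjectivity is read off directly from (GC), and injectivity is the same cyclic-vector argument, using the commutativity of $\mathcal{A}$ (from the relations (\ref{A})) to show that an element killing $\zeta$ kills every $b(\zeta)$ and hence all of $\mathcal{H}|_{\lambda=0}$, which forces it to vanish since $\mathcal{A}\subset\mathrm{End}(\mathcal{H}|_{\lambda=0})$. The paper phrases this as "$a(\zeta)=a'(\zeta)$ implies $a=a'$" rather than "trivial kernel", but the content is identical.
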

        \begin{proof}
        Let $a,a'$ be two endomorphisms in $\mathcal{A}$ such that $a(\zeta)=a'(\zeta)$.
        For any section $s\in\mathcal{H}|_{\lambda=0}$, 
        we have $b\in\mathcal{A}$ with $b(\zeta)=s$ by (GC).
        Since $\mathcal{A}$ is a commutative algebra, we have $a(b(\zeta))=b(a(\zeta))$.
        This implies $a(s)=a'(s)$ and hence $a=a'$.         
        \end{proof}
        
        \begin{remark}\label{conditions unfolding}
        Let $\big{(}(\widetilde{M},0),\widetilde{\mathcal{T}},\iota,i\big{)}$ 
        be an unfolding of a mixed {\rm trTLEP}-structure $\mathcal{T}$ on $(M,0)$.
        Let $(\widetilde{\mathcal{H}},\widetilde{\nabla})$ 
        $($resp. $(\mathcal{H},\nabla)$$)$ 
        be the underlying trTLE-structures
        of $\widetilde{\mathcal{T}}$ $($resp. $\mathcal{T}$$)$. 
        The restriction of $i$ to $(0,0)\in\mathbb{P}^1_\lambda\times (M,0)$
        is an isomorphism 
        $i|_{(0,0)}:\mathcal{H}|_{(0,0)}\xrightarrow{\sim}\widetilde{\mathcal{H}}|_{(0,0)}$
        of vector spaces.
        If a non-zero vector $\zeta_0\in\mathcal{H}|_{(0,0)}$
        satisfies {\rm (GC)} or $\mathrm{(EC)}_d$, then $i|_{(0,0)}(\zeta_0)$ satisfies the same condition.
        On the other hand, even if $\zeta_0$ satisfies {\rm (IC)} or {\rm (IdC)}, 
        $i|_{(0,0)}(\zeta_0)$ does not necessarily satisfy the same condition.
        \end{remark}
                
        The following theorem is the first main theorem of this paper.
        \begin{theorem}[Unfolding theorem]\label{unfolding theorem for mixed trTLEP-structure}%%%%%%%%%  unfolding theorem  %%%%%%
        Let $\mathcal{T}$ be a mixed {\rm trTLEP}-structure on a germ $(M,0)$ of complex manifold 
        and $(\mathcal{H},\nabla)$ the underlying {\rm trTLE}-structure. 
        Let $\zeta_0$ be a non-zero vector in $\mathcal{H}|_{0,0}$ 
        satisfying {\rm (IC)} and {\rm (GC)}. 
        Then there exists a universal unfolding of $\mathcal{T}$.
        Moreover, an unfolding $\big{(}(\widetilde{M},0),\widetilde{\mathcal{T}},\iota,i\big{)}$ is universal 
        if and only if the vector
        $i|_{(0,0)}(\zeta_0)$ satisfies {\rm (IdC)}. 
        \end{theorem}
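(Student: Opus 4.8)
The plan is to reduce the theorem to the Hertling--Manin unfolding theorem for {\rm trTLE}-structures by proving that, in a mixed {\rm trTLEP}-structure, the weight filtration and the graded pairing are rigid relative to the underlying {\rm trTLE}-structure. Concretely, I would show that the functor forgetting $W$ and $P$ identifies $\mathfrak{Unf}_\mathcal{T}$ with the category of unfoldings of the bare {\rm trTLE}-structure $(\mathcal{H},\nabla)$ marked by $\zeta_0$, in the sense of \cite[Definition 2.3]{H.M1}. Granting this, the theorem is immediate: by \cite[Theorem 4.5]{H.M1} (see also \cite[\S2]{R1}), in its {\rm trTLE}-formulation, the pair $\big{(}(\mathcal{H},\nabla),\zeta_0\big{)}$ admits a universal unfolding since $\zeta_0$ satisfies {\rm (IC)} and {\rm (GC)}, and an unfolding is universal exactly when the flat extension of $i|_{(0,0)}(\zeta_0)$ satisfies {\rm (IdC)}; because {\rm (IC)}, {\rm (GC)}, {\rm (IdC)}, $\mathrm{(EC)}_d$ and the section $\zeta_0$ refer only to the underlying {\rm trTLE}-structure (Remark \ref{conditions unfolding}), this transfers across the equivalence verbatim.

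For the rigidity I would first normalize every unfolding to the form $\big{(}(M\times\mathbb{C}^l,0),\widetilde{\mathcal{T}}\big{)}$ with $\iota$ the inclusion (Remark \ref{normalize of unfoldings}), fix an arbitrary unfolding $(\widetilde{\mathcal{H}},\widetilde{\nabla})$ of $(\mathcal{H},\nabla)$ and the given isomorphism $i$. Since a {\rm trTLE}-connection has poles only along $\{0,\infty\}\times\widetilde{M}$, the connection $\widetilde{\nabla}$ is holomorphic and flat in the $\widetilde{M}$-directions, so parallel transport in those directions is well defined over the germ. I would then define $\widetilde{W}_k\subset\widetilde{\mathcal{H}}$ as the $\widetilde{M}$-direction parallel transport of the image of $W_k|_{\mathbb{P}^1_\lambda\times\{0\}}$ under $i|_{\mathbb{P}^1_\lambda\times\{0\}}$. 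Using that $\widetilde{\mathcal{H}}$ is trivial along $\mathbb{P}^1_\lambda$ and that the $\widetilde{M}$-direction connection form is holomorphic on all of $\mathbb{P}^1_\lambda\times\widetilde{M}$ (including over $\lambda=0,\infty$), this produces a genuine holomorphic subbundle over $\mathbb{P}^1_\lambda\times(\widetilde{M},0)$ that restricts, via $i$, to $W_k$ over $\mathbb{P}^1_\lambda\times(M,0)$. Choosing an $\widetilde{M}$-parallel frame adapted to the flag $W_\bullet|_{\mathbb{P}^1_\lambda\times\{0\}}$ (which splits over the central fibre since each graded piece is trivial along $\mathbb{P}^1_\lambda$), the zero-curvature identity for $\widetilde{\nabla}$ forces the $\lambda$-direction connection components to be independent of the $\widetilde{M}$-coordinates; as over $\{0\}$ they preserve $W_k|_{\mathbb{P}^1_\lambda\times\{0\}}$ (because $W_k$ is $\nabla$-flat), they preserve $\widetilde{W}_k$ everywhere, so $\widetilde{W}_k$ is $\widetilde{\nabla}$-flat. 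Finally, triviality of $\mathrm{Gr}^{\widetilde{W}}_k\widetilde{\mathcal{H}}$ along $\mathbb{P}^1_\lambda$ is an open condition on $\widetilde{M}$ holding over $0$, hence on the germ, and the induced connection is visibly a {\rm trTLE}-connection; thus $\widetilde{W}$ is a weight filtration, and it is the unique extension of $W$ because a $\widetilde{\nabla}$-flat subbundle of a {\rm trTLE}-structure over a germ is determined by its restriction to the central fibre.

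Given $\widetilde{W}$, each graded pairing $P_k$ extends uniquely to a $\widetilde{\nabla}$-flat section $\widetilde{P}_k$ of the relevant invertible sheaf over $\mathrm{Gr}^{\widetilde{W}}_k\widetilde{\mathcal{H}}\otimes j_\lambda^*\mathrm{Gr}^{\widetilde{W}}_k\widetilde{\mathcal{H}}$, again by pole-free flatness in the $\widetilde{M}$-directions; it is $(-1)^{k}$-symmetric because $j_\lambda^*\widetilde{P}_k$ and $(-1)^{k}\widetilde{P}_k\circ\sigma$ are two $\widetilde{\nabla}$-flat extensions of the same section, and non-degenerate in a neighbourhood of the central fibre by openness. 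Hence $\widetilde{\mathcal{T}}:=(\widetilde{\mathcal{H}},\widetilde{\nabla},\widetilde{W},\widetilde{P})$ is a mixed {\rm trTLEP}-structure and $i$ automatically becomes an isomorphism $\mathcal{T}\simeq\iota^*\widetilde{\mathcal{T}}$ by the same central-fibre rigidity, so $\widetilde{\mathcal{T}}$ is an unfolding of $\mathcal{T}$. The assignment carrying $(\widetilde{\mathcal{H}},\widetilde{\nabla})$ to $\widetilde{\mathcal{T}}$ is canonical, hence functorial, and the uniqueness statements show that every morphism of {\rm trTLE}-unfoldings automatically respects $\widetilde{W}$ and $\widetilde{P}$; therefore the forgetful functor is an equivalence of categories, completing the reduction.

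I expect the genuine work to be concentrated in the rigidity of the weight filtration: checking that the $\widetilde{M}$-transported family is honestly a subbundle over all of $\mathbb{P}^1_\lambda\times(\widetilde{M},0)$ (where triviality along $\mathbb{P}^1_\lambda$ is used), that $\widetilde{\nabla}$-flatness propagates off the central fibre through the curvature identity, and that triviality along $\mathbb{P}^1_\lambda$ of the graded pieces is open. Everything else — the extension of the graded pairing, the functoriality, and the invocation of \cite[Theorem 4.5]{H.M1} together with Remark \ref{conditions unfolding} — is then formal. If a self-contained argument is preferred, an alternative is to rerun the Hertling--Manin inductive construction of the universal unfolding directly at the filtered level, carrying the (unique, because flat) weight filtration and graded pairing along at every step; the difficulties are the same.
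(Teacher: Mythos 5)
There is a genuine gap, and it sits at the heart of your rigidity claim. You assert that ``the $\widetilde{M}$-direction connection form is holomorphic on all of $\mathbb{P}^1_\lambda\times\widetilde{M}$ (including over $\lambda=0,\infty$)''. This is false: for a trTLE-structure the base-direction components of $\nabla$ are $\tfrac{1}{\lambda}\mathcal{C}$, which has a simple pole along $\lambda=0$ (and $\mathcal{C}\neq 0$ whenever (IC) holds and $\dim M>0$). So parallel transport in the $\widetilde{M}$-directions is only defined over $\mathbb{C}^*_\lambda\times(\widetilde{M},0)$; your construction produces flat extensions of $W_k$ and of $P_k$ there, but whether the filtration extends across $\lambda=0$ as a holomorphic subbundle, and whether the flat extension of $P_k$ takes values in $\lambda^{-k}\mathcal{O}_{\mathbb{P}^1_\lambda\times(\widetilde{M},0)}$ rather than acquiring essential growth at $\lambda=0,\infty$, is exactly the nontrivial point. (The subsequent step, where zero curvature is said to force the $\lambda$-components to be independent of the base coordinates, relies on the same nonexistent base-flat frame along all of $\mathbb{P}^1_\lambda$.) The claimed equivalence between $\mathfrak{Unf}_\mathcal{T}$ and unfoldings of the bare trTLE-structure is in fact false without (GC): take $M$ a point, $V_\mathcal{H}=\mathbb{C}^2$, $\mathcal{U}=\mathcal{V}=0$, $W_0$ a line, and the bare unfolding over $(\mathbb{C}_y,0)$ given by a constant $C_y$ not preserving $W_0$ with $U=-yC_y$; flatness of any extension $\widetilde{W}_0$ would force $C_y(W_0|_{\lambda=0})\subset W_0|_{\lambda=0}$, so no extension exists. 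Since your filtration and pairing steps never invoke (GC), this shows the argument as written cannot be correct.

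Under (GC) the extension statements you want are true, but proving them is precisely the substance of the paper's proof, not a formality: Lemma \ref{key lemma for unfolding} constructs the unfolded connection matrices inside the filtration-preserving algebra $\mathcal{P}^W_\mathcal{H}\otimes\mathcal{O}$ by the Hertling--Manin induction (plus Cauchy--Kovalevskaya for convergence), which is what guarantees $\widetilde{W}$ is $\widetilde{\nabla}$-flat; and Proposition \ref{unfolding of pairing} establishes the pole bound $\lambda^{-k}$ for the flatly extended $P_k$ by an induction in the unfolding variable, using (GC) to write $C_y$ as an element of the algebra generated by the $C_i$ and $\mathcal{U}$. So the viable version of your plan is not a soft reduction to \cite{H.M1} but a rerun of their induction at the filtered level together with a separate (GC)-based argument for the pairing --- which is exactly the route the paper takes (Proposition \ref{unfolding for filter} plus Proposition \ref{unfolding of pairing}). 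The parts of your proposal that do work (normalization of unfoldings, the fact that flat subbundles and flat pairings over a germ are determined by their restriction to the central fibre, hence morphisms of unfoldings automatically respect $\widetilde{W}$ and $\widetilde{P}$, and the transfer of (GC), (EC)$_d$, (IdC) via Remark \ref{conditions unfolding}) are the easy uniqueness half; the existence half is where your argument breaks.
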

        This theorem will be proved in \S \ref{pf of unf}.
        Using this theorem, %and Proposition \ref{proposition mixed Frobenius and mixed trTLEP}, 
        we have the following.
        \begin{corollary}[Construction theorem]\label{construction theorem of mixed Frobenius manifold}
        Let $\mathcal{T}$, $(\mathcal{H},\nabla)$, and $\zeta_0$ be the same as in 
        {\rm Theorem \ref{unfolding theorem for mixed trTLEP-structure}}. 
        Assume moreover that $\zeta_0$ satisfies ${\rm (EC)}_d$ for a complex number $d$. 
        Then there exists a tuple $\big{(}(\widetilde{M},0),\mathscr{F},\iota,i\big{)}$ 
        with the following properties uniquely up to isomorphisms.
        \begin{enumerate}
        \item[$1.$] $\mathscr{F}$ is a MFS 
              of charge $d$ on a germ $(\widetilde{M},0)$ of a complex manifold. 
        \item[$2.$] $\iota:(M,0)\hookrightarrow (\widetilde{M},0)$ is a closed embedding.
        \item[$3.$] $i:\mathcal{T}\xrightarrow{\sim} \iota^*\mathcal{T}(\mathscr{F})$ is an isomorphism of 
              mixed {\rm trTLEP}-structure such that $e(0)=i|_{(0,0)}(\zeta_0)$.
        \end{enumerate}        
        \end{corollary}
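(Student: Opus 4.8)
The plan is to derive the statement from the unfolding theorem (Theorem~\ref{unfolding theorem for mixed trTLEP-structure}) together with the correspondence between mixed Frobenius structures and mixed trTLEP-structures recorded in the final Proposition of \S\ref{section correspond trTLE} (the one relating $\mathscr{F}$ and $\mathcal{T}(\mathscr{F})$ via a residually flat section). Modulo those two inputs, the Corollary is essentially a formal assembly.

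\emph{Existence.} First I would apply Theorem~\ref{unfolding theorem for mixed trTLEP-structure} to obtain a universal unfolding $\big((\widetilde{M},0),\widetilde{\mathcal{T}},\iota,i\big)$ of $\mathcal{T}$. Let $(\widetilde{\mathcal{H}},\widetilde{\nabla})$ be the underlying {\rm trTLE}-structure of $\widetilde{\mathcal{T}}$, let $(\widetilde{\nabla}^{\rm r},\widetilde{\mathcal{C}},\widetilde{\mathcal{U}},\widetilde{\mathcal{V}})$ be the associated Frobenius type structure, set $\widetilde{\zeta}_0:=i|_{(0,0)}(\zeta_0)$ and let $\widetilde{\zeta}\in\widetilde{\mathcal{H}}|_{\lambda=0}$ be its $\widetilde{\nabla}^{\rm r}$-flat extension (unique on the contractible germ). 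By the characterization of universal unfoldings in Theorem~\ref{unfolding theorem for mixed trTLEP-structure}, $\widetilde{\zeta}_0$ satisfies {\rm (IdC)}; by Remark~\ref{conditions unfolding} it also satisfies $(\mathrm{EC})_d$. Hence the correspondence Proposition of \S\ref{section correspond trTLE} applies to $\widetilde{\mathcal{T}}$ and $\widetilde{\zeta}$ and produces a \emph{unique} MFS $\mathscr{F}$ of charge $d$ on $(\widetilde{M},0)$, with unit vector field $e$ satisfying $-\widetilde{\mathcal{C}}_e\widetilde{\zeta}=\widetilde{\zeta}$, such that $\mu:=-p_\lambda^*(\widetilde{\mathcal{C}}_\bullet\widetilde{\zeta})$ is an isomorphism $\mathcal{T}(\mathscr{F})\xrightarrow{\sim}\widetilde{\mathcal{T}}$ of mixed {\rm trTLEP}-structures. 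I would then set $i':=(\iota^*\mu)^{-1}\circ i\colon\mathcal{T}\xrightarrow{\sim}\iota^*\mathcal{T}(\mathscr{F})$; restricting to $(0,0)$ and using $\mu|_{(0,0)}(e(0))=\widetilde{\zeta}_0$ gives $i'|_{(0,0)}(\zeta_0)=e(0)$. Thus $\big((\widetilde{M},0),\mathscr{F},\iota,i'\big)$ satisfies properties 1--3.

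\emph{Uniqueness.} Suppose $\big((\widetilde{M}',0),\mathscr{F}',\iota',i'\big)$ is a second tuple with properties 1--3, and let $e'$ be the unit vector field of the Saito structure $\mathcal{S}'$ underlying $\mathscr{F}'$. I would first check that $\big((\widetilde{M}',0),\mathcal{T}(\mathscr{F}'),\iota',i'\big)$ is a \emph{universal} unfolding of $\mathcal{T}$: property 3 gives $i'|_{(0,0)}(\zeta_0)=e'(0)$, and, as in the proof of Lemma~\ref{IdC for Saito}, condition~1 of Definition~\ref{def of saito} together with the symmetry of the Higgs field shows that the map~(\ref{injectively condition}) attached to $e'$ and to the trTLE-structure underlying $\mathcal{T}(\mathscr{F}')$ equals $-\mathrm{id}$ on $\Theta_{\widetilde{M}'}$, hence is an isomorphism; so $e'(0)$ satisfies {\rm (IdC)} and Theorem~\ref{unfolding theorem for mixed trTLEP-structure} gives universality. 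Since terminal objects are unique up to unique isomorphism in $\mathfrak{Unf}_\mathcal{T}$, there is an isomorphism of unfoldings $(\varphi,\phi)$ between $\big((\widetilde{M},0),\mathcal{T}(\mathscr{F}),\iota,i'\big)$ and $\big((\widetilde{M}',0),\mathcal{T}(\mathscr{F}'),\iota',i'\big)$, with $\varphi\colon(\widetilde{M},0)\xrightarrow{\sim}(\widetilde{M}',0)$ satisfying $\varphi\circ\iota=\iota'$ and $\phi\colon\mathcal{T}(\mathscr{F})\xrightarrow{\sim}\varphi^*\mathcal{T}(\mathscr{F}')$. It then remains to promote $(\varphi,\phi)$ to an isomorphism $\mathscr{F}\simeq\varphi^*\mathscr{F}'$: since $\varphi^*\mathscr{F}'$ is a MFS of charge $d$ on $(\widetilde{M},0)$ inducing $\varphi^*\mathcal{T}(\mathscr{F}')$, and $\phi$ carries the residually flat section distinguished by $\mathscr{F}$ to the one distinguished by $\varphi^*\mathscr{F}'$ (both restricting at $(0,0)$ to the image of $\zeta_0$), the uniqueness clause of the correspondence Proposition of \S\ref{section correspond trTLE} identifies $\mathscr{F}$ with $\varphi^*\mathscr{F}'$ under $\phi$. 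Hence the two tuples are isomorphic.

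\emph{Main obstacle.} The genuine difficulty is entirely in Theorem~\ref{unfolding theorem for mixed trTLEP-structure}, proved separately in \S\ref{pf of unf}; granting that and the correspondence Proposition, the only delicate point in the present proof is the uniqueness argument — recognizing an arbitrary tuple with properties 1--3 as a \emph{universal} unfolding (which rests on the identity axiom forcing the unit field to satisfy {\rm (IdC)}), and then upgrading the resulting isomorphism of unfoldings to an isomorphism of mixed Frobenius structures via the uniqueness built into the correspondence.
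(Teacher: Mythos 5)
Your argument is correct and follows essentially the same route as the paper: existence via the unfolding theorem, Remark~\ref{conditions unfolding}, and the correspondence proposition relating MFS and mixed trTLEP-structures through a flat section with {\rm (IdC)} and $(\mathrm{EC})_d$; uniqueness via universality of the unfolding plus the uniqueness clause of that correspondence. Your write-up merely spells out the uniqueness bookkeeping (recognizing any tuple satisfying 1--3 as a universal unfolding because the unit field forces {\rm (IdC)}), which the paper leaves implicit.
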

        \begin{proof}
         By Theorem \ref{unfolding theorem for mixed trTLEP-structure}, 
         we have a universal unfolding 
         $\big{(}(\widetilde{M},0),\widetilde{\mathcal{T}},\iota,i\big{)}$
         of $\mathcal{T}$.
         Since $i|_{(0,0)}(\zeta_0)$ satisfies (IdC) and 
         $\mathrm{(EC)}_d$ (Remark \ref{conditions unfolding}),
         there is a unique MFS $\mathscr{F}$
         on $(\widetilde{M},0)$ such that the 
         morphism (\ref{mu3}) gives the isomorphism $\mathcal{T}(\mathscr{F})\simeq\widetilde{\mathcal{T}}$
         (Proposition \ref{proposition mixed Frobenius and mixed trTLEP}).
         This gives the existence of the tuple $\big{(}(\widetilde{M},0),\mathscr{F},\iota,i\big{)}$.
         The universality of the unfolding and 
         the uniqueness of the MFS in Proposition \ref{proposition mixed Frobenius and mixed trTLEP}
         imply the uniqueness of the tuple.
        \end{proof}
        This is a generalization of the Theorem 4.5 in \cite{H.M1}.
                %%%%%%%%%%%%%%%%      subsection    proof of unfolding %%%%%%%%%%%%%
        \subsubsection{A proof of the unfolding theorem}\label{pf of unf}
        In this section, we give a proof of Theorem 
        \ref{unfolding theorem for mixed trTLEP-structure}.
         We define the category of unfoldings of a (filtered) trTLE-structure
         as in the case of mixed trTLEP-structure.
        \begin{proposition}\label{unfolding for filter}%%%%%%%%  prop  unfolding for filter  %%%%%%
        Let $\mathcal{T}_{\rm filt}$ be a filtered {\rm trTLE}-structure
        on a germ of complex manifold $(M,0)$ 
        and $(\mathcal{H},\nabla)$ the underlying trTLE-structure.
        Let $\zeta_0$ be a vector in $\mathcal{H}|_{(0,0)}$ satisfying {\rm (GC)} and {\rm (IC)}.
        Then, a universal unfolding of $\mathcal{T}_{\rm filt}$ exists and
        is characterized by the same condition as in 
        Theorem $\ref{unfolding theorem for mixed trTLEP-structure}$.
        \end{proposition}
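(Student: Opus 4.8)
The plan is to reduce to the unfolding theorem for un-filtered {\rm trTLE}-structures and then transport the weight filtration along. Since $\zeta_{0}$ satisfies {\rm (IC)} and {\rm (GC)}, that theorem (\cite{H.M1}, applied to $(\mathcal{H},\nabla)$) produces a universal unfolding $\mathfrak{U}=\big((\widetilde{M},0),(\widetilde{\mathcal{H}},\widetilde{\nabla}),\iota,i\big)$ of $(\mathcal{H},\nabla)$, an unfolding of $(\mathcal{H},\nabla)$ being universal precisely when the {\rm (GC)}-section it carries satisfies {\rm (IdC)}. I would then equip $(\widetilde{\mathcal{H}},\widetilde{\nabla})$ with a weight filtration $\widetilde{W}$ with $\iota^{*}\widetilde{W}=W$, show $(\mathfrak{U},\widetilde{W})$ is an unfolding of $\mathcal{T}_{\rm filt}$, and prove it is terminal in $\mathfrak{Unf}_{\mathcal{T}_{\rm filt}}$ with the asserted characterization. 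A tool used repeatedly is the rigidity of flat objects on the contractible germs $(M,0)$ and $(\widetilde{M},0)$: a flat subbundle is determined by its fibre at one point, and a flat section vanishing at a point vanishes identically.

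For the construction I would go through the equivalence between {\rm trTLE}-structures and Frobenius type structures (Lemma \ref{Frob type trTLE}): $W$ corresponds to the filtration of $K:=\mathcal{H}|_{\lambda=0}$ by the subbundles $K_{k}:=W_{k}|_{\lambda=0}$, which are $\nabla^{\rm r}$-flat, stable under $\mathcal{C}$, $\mathcal{U}$, $\mathcal{V}$, and satisfy $W_{k}=p_{\lambda}^{*}K_{k}$ (each $W_{k}$ being again trivial along $\mathbb{P}^{1}_{\lambda}$, as an iterated extension of the $\mathrm{Gr}^{W}_{\bullet}\mathcal{H}$). Letting $(\widetilde{\nabla}^{\rm r},\widetilde{\mathcal{C}},\widetilde{\mathcal{U}},\widetilde{\mathcal{V}})$ be the Frobenius type structure of $(\widetilde{\mathcal{H}},\widetilde{\nabla})$ on $\widetilde{K}:=\widetilde{\mathcal{H}}|_{\lambda=0}$, whose restriction to $(M,0)$ is identified by $i$ with $(\nabla^{\rm r},\mathcal{C},\mathcal{U},\mathcal{V})$, I would define $\widetilde{K}_{k}\subset\widetilde{K}$ to be the $\widetilde{\nabla}^{\rm r}$-flat subbundle whose fibre at $0$ is the image of $K_{k}|_{0}$ under $i$, and set $\widetilde{W}_{k}:=p_{\lambda}^{*}\widetilde{K}_{k}$. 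Rigidity gives $\iota^{*}\widetilde{K}_{k}=K_{k}$, hence $\iota^{*}\widetilde{W}_{k}=W_{k}$.

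The hard part is to verify that $\widetilde{W}$ is again a weight filtration, i.e.\ that $\widetilde{K}_{k}$ is stable under $\widetilde{\mathcal{C}}$, $\widetilde{\mathcal{U}}$ and $\widetilde{\mathcal{V}}$; granting this, $\widetilde{W}_{k}=p_{\lambda}^{*}\widetilde{K}_{k}$ is $\widetilde{\nabla}$-flat by (\ref{equation nabla}), with locally free quotient $p_{\lambda}^{*}(\widetilde{K}/\widetilde{K}_{k})$ trivial along $\mathbb{P}^{1}_{\lambda}$. The $\widetilde{\mathcal{V}}$-case is immediate: the map $\widetilde{K}_{k}\to\widetilde{K}/\widetilde{K}_{k}$ induced by $\widetilde{\mathcal{V}}$ is a $\widetilde{\nabla}^{\rm r}$-flat section vanishing at $0$, hence zero. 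For $\widetilde{\mathcal{C}}$ I would use that $\mathfrak{U}$ carries a Saito-type structure: with $\widetilde{\zeta}$ the $\widetilde{\nabla}^{\rm r}$-flat extension of $i|_{(0,0)}(\zeta_{0})$, the isomorphism $\nu:=-\widetilde{\mathcal{C}}_{\bullet}\widetilde{\zeta}\colon\Theta_{\widetilde{M}}\xrightarrow{\sim}\widetilde{K}$ furnished by {\rm (IdC)} transports $(\widetilde{\nabla}^{\rm r},\widetilde{\mathcal{C}})$ into a flat torsion-free connection ${\bm\nabla}$ and an associative commutative product $\circ$ on $\Theta_{\widetilde{M}}$ with flat unit and ${\bm\nabla}(\circ)=0$, while {\rm (GC)} forces the commutative algebra generated by $\{\widetilde{\mathcal{C}}_{X}\}_{X\in\Theta_{\widetilde{M}}}$ and $\widetilde{\mathcal{U}}$ to coincide with $\{-\widetilde{\mathcal{C}}_{X}\}_{X\in\Theta_{\widetilde{M}}}$, which under $\nu$ is the algebra of $\circ$-multiplications. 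Thus stability of $\widetilde{K}_{k}$ under $\widetilde{\mathcal{C}}$, and then automatically under $\widetilde{\mathcal{U}}$, is equivalent to: $\widetilde{\mathcal{I}}_{k}:=\nu^{-1}(\widetilde{K}_{k})$ is a $\circ$-ideal of $\Theta_{\widetilde{M}}$. Here the key point is that the $\mathcal{O}_{\widetilde{M}}$-bilinear map $\Theta_{\widetilde{M}}\otimes\widetilde{\mathcal{I}}_{k}\to\Theta_{\widetilde{M}}/\widetilde{\mathcal{I}}_{k}$, $(X,Y)\mapsto\overline{X\circ Y}$, is ${\bm\nabla}$-flat (being assembled from the ${\bm\nabla}$-parallel tensor $\circ$ and the ${\bm\nabla}$-flat subbundle $\widetilde{\mathcal{I}}_{k}$) and vanishes over $(M,0)$ --- because $W$ is already a weight filtration, so $K_{k}$ is an ideal for the induced product --- and therefore vanishes identically by rigidity. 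I expect this step to be the main obstacle: it is exactly where one must use the finer structure of the universal unfolding (the induced Saito-type structure, ${\bm\nabla}(\circ)=0$, and the cyclicity of the Higgs algebra coming from {\rm (GC)}) rather than the mere flatness of $\widetilde{\nabla}$.

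Finally, for terminality and the characterization, let $\mathfrak{U}'=\big((\widetilde{M}',0),(\widetilde{\mathcal{H}}',\widetilde{\nabla}',\widetilde{W}'),\iota',i'\big)$ be any unfolding of $\mathcal{T}_{\rm filt}$. Forgetting filtrations and using that $\mathfrak{U}$ is universal over $(\mathcal{H},\nabla)$, I obtain a unique morphism $(\varphi,\phi)$ of {\rm trTLE}-unfoldings $\mathfrak{U}'\to\mathfrak{U}$; the isomorphism $\phi$ respects the weight filtrations because $\phi(\widetilde{W}'_{k})$ and $\varphi^{*}\widetilde{W}_{k}$ are $\widetilde{\nabla}'$-flat subbundles of $\widetilde{\mathcal{H}}'$ whose pull-backs along $\iota'$ both equal $W_{k}$ (as $i$, $i'$ are isomorphisms of filtered {\rm trTLE}-structures), hence coincide by rigidity over $(\widetilde{M}',0)$; so $(\varphi,\phi)$ is the unique morphism of filtered unfoldings $\mathfrak{U}'\to\mathfrak{U}$, and $\mathfrak{U}$ is terminal. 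Since its underlying {\rm trTLE}-unfolding is universal, $i|_{(0,0)}(\zeta_{0})$ satisfies {\rm (IdC)}; conversely, if a filtered unfolding $\mathfrak{U}'$ has $i'|_{(0,0)}(\zeta_{0})$ satisfying {\rm (IdC)}, then its underlying {\rm trTLE}-unfolding is universal, hence isomorphic to that of $\mathfrak{U}$ by a unique {\rm trTLE}-isomorphism, which respects the filtrations by the same rigidity argument; thus $\mathfrak{U}'\cong\mathfrak{U}$ in $\mathfrak{Unf}_{\mathcal{T}_{\rm filt}}$ and is universal.
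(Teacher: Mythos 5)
Your overall route is genuinely different from the paper's. The paper does not invoke the Hertling--Manin unfolding theorem for the underlying unfiltered {\rm trTLE}-structure and then transport $W$; instead it re-runs the Hertling--Manin order-by-order construction of the extended connection matrix (Claim \ref{claim1}), but with all coefficients taken in $\mathcal{P}^W_\mathcal{H}\otimes\mathcal{O}$: the new Higgs component $C_y$ is produced, via {\rm (GC)}, as an element of the algebra $\mathcal{A}$ generated by the $\mathcal{C}_{\partial/\partial t_i}$ and $\mathcal{U}$, hence is automatically filtration-preserving; convergence is by Cauchy--Kovalevskaya (Claim \ref{claim2}), and the uniqueness of the unfolding with prescribed $\psi_{\widetilde{\zeta}}$ (Lemma \ref{key lemma for unfolding}) yields both universality and the {\rm (IdC)}-characterization. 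Your plan of extending $W$ as the flat prolongation $\widetilde{W}_k=p_\lambda^*\widetilde{K}_k$ and checking stability under $\widetilde{\mathcal{C}},\widetilde{\mathcal{U}},\widetilde{\mathcal{V}}$ is a reasonable alternative, and your treatment of $\widetilde{\mathcal{V}}$, of $\widetilde{\mathcal{U}}$ (via the identity $\widetilde{\mathcal{A}}=\{\widetilde{\mathcal{C}}_X\}$ forced by {\rm (GC)} and {\rm (IdC)}), and of terminality and the characterization by rigidity of flat subbundles is sound.

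However, the central step has a genuine gap. You justify the vanishing over $(M,0)$ of the flat bilinear map $\Theta_{\widetilde{M}}\otimes\widetilde{\mathcal{I}}_k\to\Theta_{\widetilde{M}}/\widetilde{\mathcal{I}}_k$ by saying that ``$W$ is already a weight filtration, so $K_k$ is an ideal for the induced product.'' That hypothesis only controls $X\circ Y$ for $X$ tangent to $M$: being a weight filtration means $K_k=W_k|_{\lambda=0}$ is preserved by $\mathcal{C}_X$ with $X\in\Theta_{M,0}$ (and by $\mathcal{U},\mathcal{V},\nabla^{\rm r}$), whereas over $(M,0)$ the product of the universal unfolding also involves the transverse operators $\widetilde{\mathcal{C}}_{\partial_y}\big|_{(M,0)}$, which are new data of the unfolding and a priori need not preserve $K_k$ --- this is precisely the content of the proposition and precisely where {\rm (GC)} must enter, so the restriction of the bilinear map to $(M,0)$ (or even its value at the single point $0$, which is all your rigidity argument needs) is not yet known to vanish. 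The gap is closable with ingredients you already name but do not use at this spot: by {\rm (GC)} choose $a\in\mathcal{A}$ with $a(\zeta)=\widetilde{\mathcal{C}}_{\partial_y}|_{(M,0)}(\zeta)$; since $\widetilde{\mathcal{C}}_{\partial_y}|_{(M,0)}$ commutes with every $\mathcal{C}_X$ $(X\in\Theta_{M,0})$ and with $\mathcal{U}$, and since $\mathcal{A}\zeta=\mathcal{H}|_{\lambda=0}$, one gets $\widetilde{\mathcal{C}}_{\partial_y}|_{(M,0)}=a\in\mathcal{A}\subset\mathcal{P}^W_\mathcal{H}\otimes\mathcal{O}_{M,0}$ (cf. Remark \ref{remark before key lemma}), hence it preserves each $K_k$, and only then does flatness plus rigidity give the vanishing on all of $(\widetilde{M},0)$. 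As written, the step you yourself flag as ``the main obstacle'' is asserted rather than proved; with the above insertion your argument becomes a correct, genuinely alternative proof, at the cost of relying on the unfiltered Hertling--Manin theorem that the paper in effect reproves.
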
 
        To prove this proposition, let us prepare some notions.
        Let $(\nabla^{\rm r},\mathcal{V},\mathcal{C},\mathcal{U})$
        be the Frobenius type structure on $\mathcal{H}|_{\lambda=0}$
        associated to $(\mathcal{H},\nabla)$. 
        Let $V_{\mathcal{H}}$ the vector space of $\nabla^{\rm r}$-flat sections
        of $\mathcal{H}|_{\lambda=0}$.
        The dimension of $V_\mathcal{H}$ is 
        equal to the rank of $\mathcal{H}$.
        In fact, there is a canonical isomorphism  
        $(\mathcal{O}_{M,0}\otimes V_\mathcal{H},d\otimes \mathrm{id}) 
        \xrightarrow{\sim} (\mathcal{H}|_{\lambda=0},\nabla^r)$ of flat bundles
        on the germ of manifold $(M,0)$.
        We also note that for each unfolding $\widetilde{\mathcal{H}}$ of trTLE-structure, 
        the restriction map $ V_{\widetilde{\mathcal{H}}}\to V_\mathcal{H}$ is an isomorphism.        
        
        Fix a $\nabla^{\rm r}$-flat section $\zeta\in \mathcal{H}|_{\lambda=0}$.
        Since $\nabla^{\rm r}(\mathcal{C}_\bullet\zeta)=0$ as a section of 
        $\mathcal{H}|_{\lambda=0}\otimes \Omega_{M,0}^1$, 
        there is an unique section $\psi_\zeta\in \mathcal{H}|_{\lambda=0}$
        such that $\psi_\zeta(0)=0$ and 
        $\nabla^{\rm r}\psi_\zeta=\mathcal{C}_\bullet\zeta$.
        If we consider $\psi$ as a holomorphic function $\psi:(M,0)\to(V_\mathcal{H},0)$
        via the identification $\mathcal{H}|_{\lambda=0}\simeq V_\mathcal{H}\otimes \mathcal{O}_{M,0}$
        , we have $d\psi=\mathcal{C}_\bullet\zeta$.
        
        When we are given an unfolding $\widetilde{\mathcal{T}}_{\rm filt}$ of $\mathcal{T}_{\rm filt}$,
        we have a unique $\widetilde{\nabla}^{\rm r}$-flat section 
        $\widetilde{\zeta}$ such that its restriction to $(M,0)$
        equals to $\zeta$.
        Then the restriction of the holomorphic function $\psi_{\widetilde{\zeta}}$
        to $(M,0)$ equals to $\psi_\zeta$.
         %%%%%%%%%%%%   key lemma for unfolding   %%%%%%%   
        \begin{lemma}\label{key lemma for unfolding}
        Let $\mathcal{T}_{\rm filt}=(\mathcal{H},\nabla,W)$ be a filtered trTLE-structure on a germ of 
        complex manifold $(M,0)$.  
        Let $\zeta\in V_\mathcal{H}$ be a $\nabla^{\rm r}$-flat section with {\rm (GC)}.
        Let $\psi$ be the holomorphic function on $(M\times \mathbb{C}^l,0)$ such that 
        $\psi|_{(M\times \{0\},0)}=\psi_\zeta$. 
        Then, there exists a unique unfolding 
        $\big{(}(M\times \mathbb{C}^l,0),\widetilde{\mathcal{T}}_\mathrm{filt},i\big{)}$
        such that $\psi=\psi_{\widetilde{\zeta}}$.
        \end{lemma}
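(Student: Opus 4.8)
The plan is to transport everything to the flat trivialization of $\mathcal{H}|_{\lambda=0}$, where the structure becomes a concrete bundle of algebras cyclically generated by $\zeta$, and then to build the unfolding order by order in the new variables. Fix a $\nabla^{\rm r}$-flat frame, so that $\mathcal{H}|_{\lambda=0}\simeq V_\mathcal{H}\otimes\mathcal{O}_{M,0}$ with $\nabla^{\rm r}=d$, $\zeta$ a constant vector, $\mathcal{C}=\sum_i C_i(t)\,dt_i$ with $[C_i,C_j]=0$ and $\partial_{t_i}C_j=\partial_{t_j}C_i$, $\mathcal{U}=\mathcal{U}(t)$ with $[C_i,\mathcal{U}]=0$ and $\partial_{t_i}\mathcal{U}=[C_i,\mathcal{V}]-C_i$, $\mathcal{V}$ a constant endomorphism, and each $W_k|_{\lambda=0}$ a constant subspace $W_k^{0}\subset V_\mathcal{H}$ stable under all $C_i(t)$, $\mathcal{U}(t)$, $\mathcal{V}$; these relations are precisely the flatness of $\nabla$ via Lemma \ref{Frob type trTLE}, and the stability of the $W_k^0$ encodes that $W$ is a weight filtration. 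By {\rm (GC)} the subalgebra $\mathcal{A}(t)\subset\mathrm{End}(V_\mathcal{H})$ generated by the $C_i(t)$ and $\mathcal{U}(t)$ satisfies $\mathcal{A}(t)\zeta=\mathcal{H}|_{\lambda=0}$, so $a\mapsto a\zeta$ makes $\mathcal{H}|_{\lambda=0}$ a commutative $\mathcal{O}_{M,0}$-algebra $R$ with unit $\zeta$ in which $\mathcal{C}_{\partial_{t_i}}$ is multiplication by $C_i\zeta=\partial_{t_i}\psi_\zeta$ and $\mathcal{U}$ is multiplication by $\mathcal{U}\zeta$; since $\mathcal{A}$ stabilizes $W_k^0$, each $W_k^0$ is an ideal of $R$.

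Write $(\widetilde M,0)=(M\times\mathbb{C}^l,0)$ with coordinates $(t,s)$. By Lemma \ref{Frob type trTLE}, constructing an unfolding $\widetilde{\mathcal{T}}_{\rm filt}$ on $V_\mathcal{H}\otimes\mathcal{O}_{\widetilde M,0}$ with $\widetilde\nabla^{\rm r}=d$ and $\psi_{\widetilde\zeta}=\psi$ amounts to producing a closed $\mathrm{End}(V_\mathcal{H})$-valued $1$-form $\widetilde{\mathcal{C}}=\sum_i\widetilde C_i\,dt_i+\sum_j\widetilde C'_j\,ds_j$ with pairwise commuting coefficients and $\widetilde{\mathcal{C}}_\bullet\zeta=d\psi$, a function $\widetilde{\mathcal{U}}$ commuting with all of them and satisfying $d\widetilde{\mathcal{U}}=[\widetilde{\mathcal{C}},\mathcal{V}]-\widetilde{\mathcal{C}}$, a constant endomorphism $\widetilde{\mathcal{V}}$, and constant subspaces $\widetilde W_k^0$ stable under $\widetilde{\mathcal{C}},\widetilde{\mathcal{U}},\widetilde{\mathcal{V}}$. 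Restriction to $s=0$ forces $\widetilde{\mathcal{V}}=\mathcal{V}$ (by constancy) and $\widetilde W_k^0=W_k^0$ (by constancy and the correspondence, coming from Lemma \ref{Frob type trTLE}, between $\nabla$-flat sub-trTLE-structures and $(\nabla^{\rm r},\mathcal{C},\mathcal{U},\mathcal{V})$-stable subbundles). Expanding $\widetilde{\mathcal{C}}=\sum_{p\ge0}\mathcal{C}^{(p)}$ and $\widetilde{\mathcal{U}}=\sum_{p\ge0}\mathcal{U}^{(p)}$ into parts homogeneous of degree $p$ in $s$, with $\mathcal{C}^{(0)}=\mathcal{C}$ and $\mathcal{U}^{(0)}=\mathcal{U}$, the equations split by $s$-degree and at degree $p\ge1$ determine $\mathcal{C}^{(p)},\mathcal{U}^{(p)}$ recursively from the lower-degree data and the degree-$(\le p)$ jet of $\psi$, provided the prescribed right-hand sides are consistent. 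Here one uses that, by {\rm (GC)} and Nakayama's lemma, the operators built at each stage are multiplication operators in the (truncated) bundle of algebras they generate, so that $\widetilde{\mathcal{C}}_{\partial_\alpha}$ is multiplication by $\partial_\alpha\psi$ and $\widetilde{\mathcal{U}}$ by a suitable element of $V_\mathcal{H}\otimes\mathcal{O}_{\widetilde M}$; this collapses the whole system to a recursion for $\psi$ together with that one element. Convergence of the resulting series is obtained by the standard majorant argument, as in \cite{H.M1,R1}; the isomorphism $i$ is the identification with $\mathcal{T}_{\rm filt}$ at $s=0$, which is one because $\mathcal{C}^{(0)}=\mathcal{C}$, $\mathcal{U}^{(0)}=\mathcal{U}$, $\widetilde{\mathcal{V}}=\mathcal{V}$, $\widetilde W_k^0=W_k^0$.

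For uniqueness, let $\big((M\times\mathbb{C}^l,0),\widetilde{\mathcal{T}}'_{\rm filt},i'\big)$ be any unfolding with $\psi_{\widetilde\zeta'}=\psi$, and identify its flat bundle with $V_\mathcal{H}\otimes\mathcal{O}_{\widetilde M}$ compatibly with $i'$. Then $\widetilde{\mathcal{V}}'$ is constant and restricts to $\mathcal{V}$, so $\widetilde{\mathcal{V}}'=\mathcal{V}$; each $\widetilde W'_k|_{\lambda=0}$ is a flat subbundle restricting to $W_k^0\otimes\mathcal{O}_M$, hence equals $W_k^0\otimes\mathcal{O}_{\widetilde M}$. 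Since {\rm (GC)} passes to unfoldings (Remark \ref{conditions unfolding}), Nakayama again forces $\widetilde{\mathcal{C}}'_{\partial_\alpha}$ and $\widetilde{\mathcal{U}}'$ to be multiplication operators with $\widetilde{\mathcal{C}}'_{\partial_\alpha}\zeta=\partial_\alpha\psi$, and the degree-$p$ analysis shows that their homogeneous parts satisfy the same recursion with the same initial data as those of $\widetilde{\mathcal{T}}_{\rm filt}$. Hence $\widetilde{\mathcal{T}}'_{\rm filt}\cong\widetilde{\mathcal{T}}_{\rm filt}$ as unfoldings of $\mathcal{T}_{\rm filt}$.

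The main obstacle I expect is the consistency of the degree-$p$ equations, i.e.\ that the recursively prescribed right-hand sides for $\mathcal{C}^{(p)}$ and $\mathcal{U}^{(p)}$ are actually compatible with commutativity, closedness, and $d\widetilde{\mathcal{U}}=[\widetilde{\mathcal{C}},\mathcal{V}]-\widetilde{\mathcal{C}}$ -- equivalently, that the relevant obstruction classes vanish. This is where the structure is genuinely used: the collapse of the system to a recursion for $\psi$ (and one element of $V_\mathcal{H}\otimes\mathcal{O}_{\widetilde M}$) via {\rm (GC)}, the relations (\ref{A}) and (\ref{B}) in lower degree, and the fact that $\psi$ is a bona fide function so that its mixed partials agree; this is the filtered analogue of the argument of \cite{H.M1} and \cite{R1}. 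The one extra point beyond the unfiltered case is that at every degree one must simultaneously check that $W_k^0\otimes\mathcal{O}_{\widetilde M}$ remains stable under $\widetilde{\mathcal{C}}$ and $\widetilde{\mathcal{U}}$; this follows by induction from the fact that $W_k^0$ is an ideal of $R$, and it guarantees that $\widetilde W$ is a weight filtration, i.e.\ that each $\mathrm{Gr}^{\widetilde W}_k$ is again a trTLE-structure.
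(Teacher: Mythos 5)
Your proposal follows essentially the same route as the paper: pass to the flat trivialization, construct the unfolded connection order by order in the unfolding variables with {\rm (GC)} forcing the new Higgs components to lie in the filtration-preserving algebra generated by the old ones (which is exactly how the paper keeps the weight filtration flat, via $\mathcal{P}^W_\mathcal{H}$), and read off uniqueness from the same recursion. The only real difference is presentational: the paper reduces to $l=1$ and carries out explicitly the two steps you defer, namely the consistency of the recursion (its Step 3, checked by differentiating in the $y$-direction and using the lower-order relations) and the convergence, which it obtains by using {\rm (GC)} once more to write $C_y$ as a convergent function of the entries of the other matrices and then invoking the Cauchy--Kovalevskaya theorem rather than a bare majorant argument.
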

        %%%%%%%%%%%%%%%%%%%%%% remark  %%%%%%%%
        \begin{remark}\label{remark before key lemma}
        \begin{itemize}
        \item We can regard $V_\mathcal{H}$ as a vector space of global section of $\mathcal{H}$
              whose restriction to $\{\lambda=0\}$ is $\nabla^{\rm r}$-flat. 
              From this point of view, we have a natural isomorphism of holomorphic vector bundles 
              $\mathcal{O}_{\mathbb{P}^1_\lambda\times (M,0)}\otimes V_\mathcal{H}
               \xrightarrow{\sim} \mathcal{H}$.
        \item
          Let $\mathcal{T}_{\rm filt}={(}\mathcal{H},\nabla,W%=(W_k\mid k\in \mathbb{Z})
          {)}$ 
          be filtered trTLE-structure.
        Since $(W_k(\mathcal{H}),\nabla)$ is also a trTLE-structure, we have 
        a filtration $W_k(V_\mathcal{H}):=V_{W_k(\mathcal{H})}$ on $V_{\mathcal{H}}$.
        We have a canonical isomorphism of filtered vector bundles
        \begin{align}\label{ism A}
        (\mathcal{H},W)\simeq\big{(}V_\mathcal{H}\otimes \mathcal{O}_{\mathbb{P}^1_\lambda\times (M,0)}
        ,\{W_k(V_\mathcal{H})\otimes \mathcal{O}_{\mathbb{P}^1_\lambda\times (M,0)}\}_k\big{)}.
        \end{align}
         \item
         Define an algebra 
        $\mathcal{P}_\mathcal{H}^W$ of $\mathrm{End}_\mathbb{C}(V_\mathcal{H})$
        by
        \begin{align}
        \mathcal{P}_\mathcal{H}^W
        &:=\{a\in \mathrm{End}_\mathbb{C}(V_\mathcal{H})
        \mid a(W_k)\subset W_k \text{ for all}\ k\in \mathbb{Z}\}.
        \end{align}
        Then $\mathcal{A}$ can be regarded as a subalgebra of $\mathcal{P}^W_\mathcal{H}\otimes\mathcal{O}_{M,0}$
        via the natural isomorphism $\mathcal{H}|_{\lambda=0}\simeq V_\mathcal{H}\otimes \mathcal{O}_{M,0}$.
        \end{itemize}
        \end{remark}
        %%%%%%%%  proof of key lemma   %%%%
        \begin{proof}[Proof of Lemma $\ref{key lemma for unfolding}$]
        We may assume $l=1$.
        Put  
        $(\widetilde{M},0):=(M\times \mathbb{C},0)$, 
        $\widetilde{\mathcal{H}}:=\mathcal{O}_{\mathbb{P}^1_\lambda\times(\widetilde{M},0)}\otimes V_\mathcal{H}$,
        and 
        $\widetilde{W}_k:=\mathcal{O}_{\mathbb{P}^1_\lambda\times(\widetilde{M},0)}\otimes W_k(V_\mathcal{H})$.
        We will prove the existence and uniqueness of a meromorphic differential form $\Omega$ 
        with values in $\mathrm{End}(\widetilde{\mathcal{H}})$ 
        such that $\widetilde{\nabla}:=d+\Omega$ defines the desired
        filtered trTLE-structure
        $\widetilde{\mathcal{T}}_\mathrm{filt}:=(\widetilde{\mathcal{H}},\widetilde{\nabla},\widetilde{W})$.
 
        Take a coordinate 
        $(t,y):=(t_1,t_2,\dots,t_m,y)$ on $(M\times \mathbb{C},0)$.
        Put $\mathcal{P}(n):=\mathcal{P}^W_\mathcal{H}\otimes (\mathcal{O}_{M,0}[y]/(y)^{n+1})$
        for every non-negative integer $n$.
        %and let $\partial/\partial y:\mathcal{P}(n)\to\mathcal{P}(n-1)$
        %be the map induced from the natural differential $\partial/\partial y$.
        Let $(\nabla^{\rm r},\mathcal{C},\mathcal{U},\mathcal{V})$ be 
        the Frobenius type structure associated to $(\mathcal{H},\nabla)$,          
        and put 
        $C_i^{(0)}:=\mathcal{C}_{\partial/\partial t_i}\ (i=1,\dots,m),
        U^{(0)}:=\mathcal{U}$, 
        and 
        $V^{(0)}:=\mathcal{V}$.
        Identifying $\mathcal{H}|_{\lambda=0}$ and 
        $\mathcal{O}_{M,0}\otimes V_\mathcal{H}$,
        we regard $C^{(0)}_i,U^{(0)}$, and $V^{(0)}$
        as a element of $\mathcal{P}(0)$.
        The meromorphic differential form $\Omega^{(0)}:=\nabla-d$
        is written as 
        \begin{align}\label{0}
         \Omega^{(0)}=\frac{1}{\lambda}\sum^m_{i=1}C_i^{(0)}\mathrm{d}t_i
                +\Big(\frac{1}{\lambda}U^{(0)}-V^{(0)}\Big{)}\frac{\mathrm{d}\lambda}{\lambda}.
        \end{align}
        %%%%%%%%%%%%%%  key claim for formal construction %%%%%%%%%%%%%%
        \begin{claim}\label{claim1}
        For every non-negative
        integer $n$, 
        there uniquely exist 
        $(C_i^{(n)},U^{(n)},V^{(n)})
        \subset
        \mathcal{P}(n)$       
        and $C_y^{(n-1)}\in \mathcal{P}(n-1)$
        with the following properties. Here, we put $\mathcal{P}(-1):=\mathcal{P}(0)$.
        \begin{itemize}
        \item The equations 
              \begin{align}\label{n=0}
              C^{(n-1)}_y=0, C^{(n)}_i=C_i^{(0)}, U^{(n)}=U^{(0)}, V^{(n)}=V^{(0)}
              \end{align}
              are satisfied in $\mathcal{P}(0)$.
        \item The equations 
              \begin{align}
              \label{t1}
              [C_i^{(n)},C_j^{(n)}]= 
              \frac{\partial C^{(n)}_i}{\partial t_j}-\frac{\partial C^{(n)}_j}{\partial t_i}=
              [C_i^{(n)},U^{(n)}]=\frac{\partial V^{(n)}}{\partial t_i}=0 ,\\              
              \label{t2}
              \frac{\partial U^{(n)}}{\partial t_i}=[V^{(n)},C^{(n)}_i]-C^{(n)}_i
              \end{align}
              are satisfied in $\mathcal{P}(n)$.  
        \item The equations 
              \begin{align}
               \label{y1}
                [C^{(n)}_i,C^{(n-1)}_y]=
                \frac{\partial C^{(n)}_i}{\partial y}-\frac{\partial C^{(n-1)}_y}{\partial t_i}
                =[C^{(n-1)}_y,U^{(n)}]
                =\frac{\partial V^{(n)}}{\partial y}=0, \\
               \label{y2}
                \frac{\partial U^{(n)}}{\partial y}=[V^{(n-1)},C^{(n-1)}_y]-C^{(n-1)}_y
              \end{align}
              are satisfied in $\mathcal{P}(n-1)$.
              Here $\partial/\partial y:\mathcal{P}(n)\to\mathcal{P}(n-1)$ is induced from
              the differential.  
        \item The equation 
               \begin{align}\label{potential}
                 C^{(n-1)}_y(\zeta)=d\psi\Big{(}\frac{\partial}{\partial y}\Big{)}
               \end{align}
              is satisfied in $V_\mathcal{H}\otimes(\mathcal{O}_{M,0}[y]/(y)^{n})$
        \end{itemize}
        \end{claim}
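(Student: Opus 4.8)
The plan is to prove the Claim by induction on $n$, building the connection form of the unfolding order by order in the parameter $y$, following the strategy of the proofs of the unfolding theorem in \cite{H.M1} and \cite{R1}; the only genuinely new point will be to keep track that all the endomorphisms produced at each stage lie in the filtration-preserving subalgebra $\mathcal{P}^W_\mathcal{H}$, so that the resulting connection is automatically compatible with $\widetilde{W}$. For $n=0$ there is nothing to do: with $C_y^{(-1)}:=0$ and $C_i^{(0)},U^{(0)},V^{(0)}$ as in \eqref{0}, regarded as elements of $\mathcal{P}(0)$ through $\mathcal{H}|_{\lambda=0}\simeq V_\mathcal{H}\otimes\mathcal{O}_{M,0}$, the equations \eqref{t1} and \eqref{t2} in $\mathcal{P}(0)$ are exactly the Frobenius type structure relations \eqref{A} and \eqref{B} for $(\mathcal{H},\nabla)$ written in the $\nabla^{\rm r}$-flat frame, while \eqref{y1}, \eqref{y2} and \eqref{potential} hold trivially since $\mathcal{P}(-1)=\mathcal{P}(0)$ carries no $y$ and $V_\mathcal{H}\otimes\mathcal{O}_{M,0}[y]/(y)^0$ is the zero module; uniqueness at $n=0$ is the normalization \eqref{n=0}.

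For the inductive step, assume the data of the Claim has been constructed, uniquely, up to level $n-1$. First I would use the generation condition to produce $C_y^{(n-1)}$. Let $\mathcal{A}^{(n-1)}\subset\mathcal{P}^W_\mathcal{H}\otimes\mathcal{O}_{M,0}[y]/(y)^n$ be the subalgebra generated by $C_1^{(n-1)},\dots,C_m^{(n-1)}$ and $U^{(n-1)}$; it is commutative by \eqref{t1} at level $n-1$, and its reduction modulo $y$ is $\mathcal{A}$. Since $\zeta$ satisfies {\rm (GC)}, Nakayama's lemma for the nilpotent ideal $(y)$ shows that $\zeta$ generates $V_\mathcal{H}\otimes\mathcal{O}_{M,0}[y]/(y)^n$ over $\mathcal{A}^{(n-1)}$, and then the argument of the Lemma following Definition \ref{definition of generating condition} shows that evaluation at $\zeta$ is an isomorphism $\mathcal{A}^{(n-1)}\xrightarrow{\sim}V_\mathcal{H}\otimes\mathcal{O}_{M,0}[y]/(y)^n$. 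I would take $C_y^{(n-1)}\in\mathcal{A}^{(n-1)}\subset\mathcal{P}(n-1)$ to be the unique preimage of $\mathrm{d}\psi(\partial/\partial y)$; this is \eqref{potential}, and reducing modulo $y^{n-1}$ and invoking uniqueness at the previous level gives $C_y^{(n-1)}\equiv C_y^{(n-2)}$ in $\mathcal{P}(n-2)$. Being a polynomial in the commuting, filtration-preserving elements $C_i^{(n-1)},U^{(n-1)}$, the element $C_y^{(n-1)}$ lies in $\mathcal{P}^W_\mathcal{H}\otimes\mathcal{O}_{M,0}[y]/(y)^n$ and commutes with each of them, so the bracket relations in \eqref{y1} are automatic. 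The remaining relations in \eqref{y1} and \eqref{y2} then force the rest of the level-$n$ data: one must have $V^{(n)}=V^{(0)}$, and $C_i^{(n)}$ (resp. $U^{(n)}$) must agree with $C_i^{(n-1)}$ (resp. $U^{(n-1)}$) modulo $y^n$ and have $y^n$-coefficient equal to $\tfrac1n$ times the $y^{n-1}$-coefficient of $\partial_{t_i}C_y^{(n-1)}$ (resp. of $[V^{(n-1)},C_y^{(n-1)}]-C_y^{(n-1)}$); these $y^n$-coefficients lie in $\mathcal{P}^W_\mathcal{H}\otimes\mathcal{O}_{M,0}$ because $\partial_{t_i}$, the bracket, and multiplication preserve $\mathcal{P}^W_\mathcal{H}$. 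With these choices \eqref{y1} and \eqref{y2} hold in $\mathcal{P}(n-1)$, and uniqueness of the level-$n$ data is built in.

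It then remains to check that \eqref{t1} and \eqref{t2} hold at level $n$, i.e. in $\mathcal{P}(n)$. I would set $\Omega^{(n)}:=\tfrac1\lambda\big(\sum_iC_i^{(n)}\mathrm{d}t_i+C_y^{(n-1)}\mathrm{d}y\big)+\big(\tfrac1\lambda U^{(n)}-V^{(n)}\big)\tfrac{\mathrm{d}\lambda}{\lambda}$ and let $R^{(n)}=\mathrm{d}\Omega^{(n)}+\Omega^{(n)}\wedge\Omega^{(n)}$ be its curvature. The $\mathrm{d}y\wedge\mathrm{d}t_i$ and $\mathrm{d}y\wedge\mathrm{d}\lambda$ components of $R^{(n)}$ are precisely the left-hand sides of \eqref{y1} and \eqref{y2}, hence $\equiv0$ modulo $y^n$ by the previous paragraph; the $\mathrm{d}t_i\wedge\mathrm{d}t_j$ and $\mathrm{d}t_i\wedge\mathrm{d}\lambda$ components are the left-hand sides of \eqref{t1} and \eqref{t2}, and they vanish at $y=0$ by the base case and $\equiv0$ modulo $y^n$ because $C_i^{(n)}\equiv C_i^{(n-1)}$, $U^{(n)}\equiv U^{(n-1)}$ modulo $y^n$ and \eqref{t1}, \eqref{t2} hold at level $n-1$. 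Reading off the $\mathrm{d}y\wedge\mathrm{d}t_i\wedge\mathrm{d}t_j$ and $\mathrm{d}y\wedge\mathrm{d}t_i\wedge\mathrm{d}\lambda$ components of the Bianchi identity $\mathrm{d}R^{(n)}+[\Omega^{(n)},R^{(n)}]=0$ expresses $\partial_y$ of each $\mathrm{d}t$-component of $R^{(n)}$ as a combination of $\mathrm{d}y$-components of $R^{(n)}$, their derivatives, and brackets of connection coefficients with $\mathrm{d}t$-components of $R^{(n)}$; all of these are $\equiv0$ modulo $y^n$, so $\partial_y$ of each $\mathrm{d}t$-component is $\equiv0$ modulo $y^n$, and since these components vanish at $y=0$ they vanish modulo $y^{n+1}$, i.e. \eqref{t1} and \eqref{t2} hold in $\mathcal{P}(n)$, which closes the induction.

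The hard part will be this last step: propagating flatness in the $M$-directions from order $y^{n-1}$ to order $y^n$ once it is known in the $y$-direction, which is the curvature/Bianchi bookkeeping sketched above (present, without the filtration, in \cite{H.M1,R1}). The conceptual input that makes $C_y^{(n-1)}$ — and hence the whole inductive construction — well-defined and unique is the generation condition {\rm (GC)}, through the evaluation isomorphism of the second paragraph. The weight filtration never obstructs anything, since every operation used ($\partial_{t_i}$, $\partial_y$, brackets, and passing to the subalgebra generated by the $C_i$ and $U$) preserves $\mathcal{P}^W_\mathcal{H}$.
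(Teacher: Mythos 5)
Your proposal is correct and follows essentially the same route as the paper's proof: induction on $n$, construction of $C_y$ at each stage via the evaluation isomorphism supplied by {\rm (GC)} inside the commutative algebra generated by the $C_i$ and $U$, lifting $C_i,U,V$ by integrating the $y$-direction equations, and then propagating the $t$-direction flatness to the next order by showing its $y$-derivative vanishes. Your only cosmetic deviation is packaging this last step as a Bianchi-identity computation for the curvature of $\Omega^{(n)}$ (and spelling out Nakayama and the $\mathcal{P}^W_\mathcal{H}$-preservation), which is the same calculation the paper performs directly on the defects of (\ref{t1}) and (\ref{t2}).
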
        
        \begin{proof}[Proof of Claim $\ref{claim1}$]
        We use an induction on $n$.
        In the case $n=0$, the flatness of $\nabla$ and the equation (\ref{0})
        imply (\ref{t1}) and (\ref{t2}). 
        Since $C_y^{(-1)}$, $\partial U^{(0)}/\partial y$, 
        $\partial C_i^{(0)}/\partial y$, and $\partial V^{(0)}/\partial y$
        are zero, 
        the equations (\ref{y1}), (\ref{y2}), and (\ref{potential}) are trivial.
        The induction step from $n$ to $n+1$ consists of the following three steps:
        \begin{description}
        \item[{\rm Step 1.}] Construction of $C_y^{(n)}\in \mathcal{P}(n)$ as a lift of $C^{(n-1)}_y$
                             so that $C_y^{(n)}$ together with $C_i^{(n)},U^{(n)}$
                             satisfies the part $[C_i^{(n)},C_y^{(n)}]=[U^{(n)},C_y^{(n)}]=0$ 
                             of (\ref{y1}) in $\mathcal{P}(n)$, 
                             and (\ref{potential}) in $V_\mathcal{H}\otimes (\mathcal{O}_{M,0}[y]/(y)^{n+1})$.
        \item[{\rm Step 2.}] Construction of $C_i^{(n+1)},U^{(n+1)},V^{(n+1)} \in \mathcal{P}(n+1)$ 
                             as a lift of $C_i^{(n)},U^{(n)},V^{(n)}$
                             such that
                             conditions (\ref{y2}) and the part 
                             ${\partial C_i^{(n+1)}}/{\partial y}-{\partial C^{(n+1)}_y}/{\partial t_i}
                             ={\partial V^{(n+1)}}/{\partial y}=0$ of (\ref{y1}) 
                             are satisfied in $\mathcal{P}(n)$.                             
        \item[{\rm Step 3.}] Check that $C_i^{(n+1)},U^{(n+1)},V^{(n+1)}$ satisfy
                             the conditions (\ref{t1}) and (\ref{t2})
                             in $\mathcal{P}(n+1)$.
        \end{description}
                Let $\mathcal{A}^{(n)}$ be a commutative subalgebra of $\mathcal{P}(n)$ 
                generated by $C_i^{(n)}$ and $U^{(n)}$.
                By (GC), the map 
                $\mathcal{A}^{(n)}\to V_\mathcal{H}\otimes(\mathcal{O}_{M,0}[y]/(y)^{n+1})$
                defined by $a\mapsto a(\zeta)$ is an isomorphism.
                Take $C_y^{(n)}$ as the inverse image of $d\psi(\partial/\partial y)$ of this isomorphism. 
                This completes Step 1.
                Step 2 is obvious.
                To prove Step 3, we use the derivation $\partial/\partial y$, 
                the equations (\ref{y1}), (\ref{y2}), and the induction hypothesis.
                For example, in $\mathcal{P}(n)$, we have
                \begin{align*}
                 &\frac{\partial}{\partial y}
                  \Big{(}
                         \frac{\partial U^{(n+1)}}{\partial t_i}
                         -\big{[}V^{(n+1)},C_i^{(n+1)}\big{]}+C_i^{(n+1)}
                  \Big{)}\\
                 &= \frac{\partial}{\partial t_i}\frac{\partial U^{(n+1)}}{\partial y}
                    -\Big{[}V^{(n)},\frac{\partial C^{(n+1)}_i}{\partial y}\Big{]}
                    +\frac{\partial C^{(n+1)}_i}{\partial y} \\
                 &=\frac{\partial}{\partial t_i}\Big{\{} \big{[}V^{(n)},C_y^{(n)}]-C_y^{(n)}\Big{\}}
                   -\Big{\{}\Big{[}V^{(n)},\frac{\partial C^{(n)}_y}{\partial t_i}\Big{]}
                            -\frac{\partial C^{(n)}_y}{\partial t_i}
                    \Big{\}} \\
                 &=0.            
                \end{align*}
            This implies that the equation (\ref{t2}) holds for $n+1$.
            The equation (\ref{t1}) is proved similarly.
                    \end{proof}
        The sequences 
        $\big{(}C_i^{(n)}\big{)}_n,\big{(}U^{(n)}\big{)}_n,\big{(}V^{(n)}\big{)}_n$, 
        and $\big{(}\mathcal{C}_y^{(n)}\big{)}_n$ 
        give formal endomorphisms
        $C_i,U,V$, and $C_y$ 
        in $\mathcal{P}^W_\mathcal{H}\otimes\mathcal{O}_{M,0}[[y]]$.
        We show that they are actually convergent.
        %%%%%%%%%%%%%%%%  convergence   %%%%%%%%%%%%%
        \begin{claim}\label{claim2}
          The endomorphisms  $C_i,U,V$, and $C_y$ 
          are in $\mathcal{P}^W_\mathcal{H}\otimes \mathcal{O}_{\widetilde{M},0}$. 
        \end{claim}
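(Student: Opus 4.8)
The plan is to show that the formal endomorphisms produced in Claim \ref{claim1} are nothing but the Taylor expansions in $y$ of an honest holomorphic solution, obtained by solving a Cauchy problem to which the Cauchy--Kovalevskaya theorem applies. First I would reorganise the equations of Claim \ref{claim1}. In $\mathcal{P}^W_\mathcal{H}\otimes\mathcal{O}_{M,0}[[y]]$, the relations (\ref{t1}) say that $V$ is independent of $t$; together with the vanishing of $\partial V/\partial y$ in (\ref{y1}) this means $V$ is simply the constant endomorphism $\mathcal{V}$ in the trivialisation by $V_\mathcal{H}$, and in particular it is not an unknown. The genuinely new datum is $C_y$, which by Step 1 in the proof of Claim \ref{claim1} lies in the commutative subalgebra $\mathcal{A}$ generated by $C_1,\dots,C_m,U$ and is characterised there by $C_y(\zeta)=\mathrm{d}\psi(\partial/\partial y)$. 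Since $\zeta$ satisfies (GC) and (GC) is an open condition (Remark \ref{remark on GC}), for $(C_1,\dots,C_m,U)$ in a neighbourhood of $(C_1^{(0)},\dots,C_m^{(0)},U^{(0)})|_0$ the evaluation map $\mathcal{A}\to V_\mathcal{H}$ stays an isomorphism, and solving the resulting finite linear system expresses $C_y$ as a holomorphic function $G(t,y,C_1,\dots,C_m,U)$ with values in $\mathcal{P}^W_\mathcal{H}$, the dependence on $(t,y)$ entering only through the holomorphic section $\mathrm{d}\psi(\partial/\partial y)$.

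With this substitution, the equations (\ref{y1}) and (\ref{y2}) become the first-order holomorphic system
\begin{equation*}
\frac{\partial C_i}{\partial y}=\frac{\partial}{\partial t_i}G(t,y,C_\bullet,U),\qquad
\frac{\partial U}{\partial y}=[\,\mathcal{V},\,G(t,y,C_\bullet,U)\,]-G(t,y,C_\bullet,U)
\end{equation*}
for the unknowns $(C_1,\dots,C_m,U)$, with Cauchy data at $y=0$ given by the convergent $(C_1^{(0)},\dots,C_m^{(0)},U^{(0)})=(\mathcal{C}_{\partial/\partial t_1},\dots,\mathcal{C}_{\partial/\partial t_m},\mathcal{U})$. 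This is already in Cauchy--Kovalevskaya form: the $y$-derivative of each unknown is expressed through a holomorphic function of $(t,y)$, the unknowns, and their first $t$-derivatives (the right-hand side of the second equation involves no derivatives at all). Hence Cauchy--Kovalevskaya yields a unique holomorphic solution on a neighbourhood of $0\in M\times\mathbb{C}$ with the prescribed data; by the uniqueness part of Claim \ref{claim1}, its Taylor expansion in $y$ coincides with the formal series $(C_i)_i,U,V,C_y$, so these converge and define elements of $\mathrm{End}(V_\mathcal{H})\otimes\mathcal{O}_{\widetilde M,0}$. Since each coefficient $C_i^{(n)},U^{(n)},V^{(n)},C_y^{(n)}$ lies in the closed linear subspace $\mathcal{P}^W_\mathcal{H}\subset\mathrm{End}_\mathbb{C}(V_\mathcal{H})$, so do the limits, which is the assertion. (One may equally run Cauchy--Kovalevskaya inside $\mathcal{P}^W_\mathcal{H}$ from the outset, using that $\mathcal{U},\mathcal{V},\mathcal{C}_{\partial/\partial t_i}\in\mathcal{P}^W_\mathcal{H}$ and that $G$ takes values in $\mathcal{P}^W_\mathcal{H}\otimes\mathcal{O}$.)

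The main obstacle is the algebraic elimination of $C_y$, i.e. verifying that the system can genuinely be put in Cauchy--Kovalevskaya form: one must check that (GC), which a priori only holds at the central point, propagates to a full neighbourhood in the space of the unknowns $(C_\bullet,U)$ uniformly enough for $G$ to be holomorphic there. This is where the openness of the generation condition and the finite-dimensionality of $\mathcal{A}$ are used, so that $\mathcal{A}$ is spanned by boundedly many monomials in $C_\bullet,U$, with a fixed choice of spanning monomials remaining a basis near the base point, making the linear system for the coefficients of $C_y$ invertible with holomorphic inverse. Alternatively, since the recursion of Claim \ref{claim1} is formally identical to the one used by Hertling--Manin \cite{H.M1} (and Reichelt \cite{R1}) for the universal unfolding of the underlying (non-filtered) trTLE-structure, one may simply invoke their convergence result and observe, by uniqueness of the formal solution, that the filtration-preserving series constructed here coincides with theirs; hence it converges and automatically remains in $\mathcal{P}^W_\mathcal{H}\otimes\mathcal{O}_{\widetilde M,0}$.
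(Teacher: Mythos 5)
Your proposal is correct and follows essentially the same route as the paper: using (GC) to write $C_y$ as a holomorphic function of $(t,y)$ and the unknown endomorphisms (via a fixed set of monomials in $C_1,\dots,C_m,U$ whose values on $\zeta$ form a frame near the base point), substituting into the relations \eqref{y1}--\eqref{y2} to obtain a quasilinear first-order Cauchy problem, and invoking Cauchy--Kovalevskaya together with uniqueness of the formal solution. Your elimination of $V$ as a constant endomorphism is only a cosmetic simplification of the paper's argument, which keeps $V$ among the entries of the vector $X$ but uses $\partial V/\partial t_i=\partial V/\partial y=0$ in the same way.
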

         \begin{proof}[Proof of Claim $\ref{claim2}$]
          %We use the following Cauchy-Kovalevski theorem
          Let $e_k\ (1\leq k\leq r=\mathrm{rank}\ \mathcal{H})$ be a 
          $\nabla^{\rm r}$-flat frame of $\widetilde{\mathcal{H}}|_{\lambda=0}$. % such that $e_1=\widetilde{\zeta}$.
          Regard the endomorphisms on $\widetilde{\mathcal{H}}|_{\lambda=0}$ 
          (or on its formal completion $\mathcal{H}|_{\lambda}\otimes\mathcal{O}_{M,0}[[y]]$)
          as $r\times r$ matrices.         
            Put $N:=r^2(m+2)$ 
            and let
            $X(t,y)\in \mathbb{C}^N\otimes \mathcal{O}_{M,0}[[y]]$ 
            be a $N$-dimensional vector valued (formal) function 
            whose entries are the entries of $C_1,\dots,C_m,U$, and $V$.
            Similarly, let $X^{(0)}\in \mathbb{C}^N\otimes \mathcal{O}_{M,0}$ 
            be a $N$-dimensional vector valued holomorphic function 
            whose entries are the entries of $C_1^{(0)},\dots,C_m^{(0)},U^{(0)}$, and $V^{(0)}$.
            The order of entries are chosen to satisfy $X(t,0)=X^{(0)}(t)$.
                       
            Let $\mathscr{A}$ be a subalgebra of 
            $\mathcal{P}^W_\mathcal{H}\otimes\mathcal{O}_{M,0}[[y]]$
            generated by $C_1,\dots,C_m$, and $U$ over $\mathcal{O}_{M,0}[[y]]$.
            By (GC), the map $\mathscr{A}\to\mathcal{H}|_{\lambda=0}\otimes\mathcal{O}_{M,0}[[y]]$
            given by $a\mapsto a{(}\widetilde{\zeta}{)}$ is an isomorphism.
%            Let $\mathcal{A}'$ be the inverse image of 
%            $\widetilde{\mathcal{H}}|_{\lambda=0}\subset \mathcal{H}|_{\lambda=0}\otimes\mathcal{O}_{M,0}[[y]]$. 
           Therefore, $\mathscr{A}$ is free $\mathcal{O}_{M,0}[[y]]$-module of rank $r$.
            Take monomials $G_1,\dots,G_r$ in the endomorphisms $C_1,\dots,C_m,U$ 
            which form an $\mathcal{O}_{M,0}[[y]]$-basis of $\mathscr{A}$.
            Then, there are formal functions $g_j\in\mathcal{O}_{M,0}[[y]]$ $(1\leq j\leq r)$
            such that 
            \begin{align}\label{monomial}
             C_y=\sum_{j=1}^r g_j G_j.
            \end{align}            
            By (\ref{potential}), we have
            \begin{align}\label{potential2}
             \sum_{j=1}^r g_j G_j(\widetilde{\zeta})=\mathrm{d}\psi\Big{(}\frac{\partial}{\partial y}\Big{)}.
            \end{align}
            Since $G_j$ are monomials in $C_1,\dots,C_m$, and $U$,
            there exist $r\times r$-matrix valued functions 
            $Q_j(t,x)$ 
            %\in \mathrm{End}\big{(}^{\oplus r}\big{)}$
            such that
            the entries are in $\mathbb{C}\{t\}[x_1,x_2,\dots,x_N]$ and 
            $G_j(t,y)=Q_j\big{(}t,X(t,y)\big{)}$.
%            Therefore, if $a\in \mathcal{A}'$, there is a 
%            $r\times r$-matrix valued function 
%            $Q_a(t,y,x)\in \mathrm{End}\big{(}\mathbb{C}\{t,y\}[x_1,x_2,\dots,x_N]^{\oplus r}\big{)}$
%            such that 
%            $a=Q_a(t,y,X)$.
            Therefore, by the equation (\ref{potential2}) 
            and the fact that $\{G_j(\widetilde{\zeta})\mid 1\leq j\leq r\}$
            form a frame of $\mathcal{H}|_{\lambda=0}\otimes\mathcal{O}_{M,0}[[y]]$,
            there exist convergent power series $q_j(t,y,x)\in\mathbb{C}\{t,y,x\}$
            such that $g_j(t,y)=q_j\big{(}t,y,X(t,y)\big{)}$.
            Put $Q:=\sum_j q_j Q_j$.
%            By (\ref{potential2}) and 
%            Hence there exists $Q=Q_{C_y}$ such that 
            Then by (\ref{monomial}), we have $C_y=Q(t,y,X)$.
            
            By (\ref{y1}) and (\ref{y2}), we have the equations 
            \begin{align*}
             \frac{\partial C_i}{\partial y}=\frac{\partial C_y}{\partial t_i},\ \
              \frac{\partial V}{\partial y}=0,\ \ 
             \frac{\partial U}{\partial y}=[V,C_y]+C_y.
            \end{align*}
            Using the expression $C_y=Q(t,y,X)$ and these equations, 
            we can regard $X(t,y)$ as a formal solution of the following partial differential equation  
            \begin{align*}
              \frac{\partial X}{\partial y}(t,y)
                 &=\sum_{i=1}^mA_i(t,y,X)\frac{\partial X}{\partial t_i}(t,y) 
                   + B(t,y,X)\\
                X(t,0)&=X^{(0)}(t)
            \end{align*}
            where $A_i(t,y,x),\ (1\leq i\leq m)$ are $N\times N$ matrix whose entries 
            are in $\mathbb{C}\{t,y,x\}$ and $B_i(t,y,x)$ is $N$-dimensional 
            vector whose entries are also in $\mathbb{C}\{t,y,x\}$.
            %with initial condition $X(t,0)=X^{(0)}(t)$.
            The theorem of Cauchy-Kovalevski implies that $X(t,y)$ actually converges.
            Therefore, $C_i,U,V$ are all homomorphic and hence $C_y$ is also holomorphic by (\ref{monomial}). 
         \end{proof}
        Put  
        \begin{align}\label{Omega}
        \Omega:=
             \frac{1}{\lambda}\Big{(}\sum_{i=1}^{m}C_i\mathrm{d}t_i+C_y\mathrm{d}y\Big{)}
             +\Big{(}\frac{1}{\lambda}U-V\Big{)}\frac{\mathrm{d}\lambda}{\lambda}, 
        \end{align}
        and $\widetilde{\nabla}:=d+\Omega$.
        Then the equations (\ref{n=0}) imply that the restriction of $\widetilde{\nabla}$ to $(M,0)$ is $\nabla$.
        The equations (\ref{t1}), (\ref{t2}), (\ref{y1}), and (\ref{y2}) imply that $\widetilde{\nabla}$ is flat.
        And the equation (\ref{potential}) implies $\psi=\psi_{\widetilde{\zeta}}$.
        This proves the existence of the unfolding.
        The uniqueness in Claim \ref{claim1} implies the uniqueness of the unfolding.
        \end{proof}
         
        \begin{proof}[Proof of Proposition $\ref{unfolding for filter}$]
          By {\rm (IC)}, $\psi_\zeta:(M,0)\to (V_\mathcal{H},0)$ is closed embedding.
          Hence there exist a non-negative integer $l$ 
          and an isomorphism $\psi:(M\times \mathbb{C}^l,0)\xrightarrow{\sim}(V_\mathcal{H},0)$
          such that $\psi|_{(M\times\{0\},0)}=\psi_\zeta$.
          Applying the lemma \ref{key lemma for unfolding} for this $\psi$,
          we have an unfolding $((M\times \mathbb{C}^l,0),\mathcal{T}_{\rm filt},i)$ such that 
          $\psi_{\widetilde{\zeta}}=\psi$ and hence 
          $\widetilde{\mathcal{C}}_\bullet\widetilde{\zeta}
          :\Theta_{(M\times\mathbb{C}^l,0)}\to\widetilde{\mathcal{H}}|_{\lambda=0}$
          is an isomorphism.
          It is easy to check that this unfolding is the universal unfolding.
        \end{proof}

        The following proposition together with Proposition \ref{unfolding for filter} 
        proves Theorem \ref{unfolding theorem for mixed trTLEP-structure}.
        \begin{proposition}\label{unfolding of pairing}%%%%%%%% proposition %%%%%%
         Let $\mathcal{T}=(\mathcal{H},\nabla,W,P)$ be a mixed {\rm trTLEP}-structure. 
         Assume that there is a vector $\zeta_0\in \mathcal{H}|_{(0,0)}$
         with {\rm (GC)}.
         Then, for any unfolding 
         $\big{(}(\widetilde{M},0), \widetilde{\mathcal{T}}_{\rm filt},\iota,i\big{)}$ 
         of the underlying filtered {\rm trTLE}-structure $(\mathcal{H},\nabla,W)$, 
         there exists a unique sequence of 
         graded pairings $\widetilde{P}$ on $\widetilde{\mathcal{T}}_{\rm filt}$
         such that 
         $\big{(}(\widetilde{M},0), (\widetilde{\mathcal{T}}_{\rm filt},\widetilde{P}),\iota,i\big{)}$
         is an unfolding of the mixed {\rm trTLEP}-structure $\mathcal{T}$.
        \end{proposition}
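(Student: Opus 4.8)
I would work one graded piece at a time, extend each pairing via its restriction to $\{\lambda=0\}$, and then check the compatibilities; the only delicate step is the compatibility with the Higgs field in the unfolding directions. Since $\widetilde{\mathcal T}_{\rm filt}=(\widetilde{\mathcal H},\widetilde\nabla,\widetilde W)$ is a filtered {\rm trTLE}-structure, each $\mathrm{Gr}^{\widetilde W}_k\widetilde{\mathcal H}$ is a {\rm trTLE}-structure on $(\widetilde M,0)$, and since restriction to a submanifold commutes with forming graded pieces, $\mathrm{Gr}^{\widetilde W}_k\widetilde{\mathcal H}$ is an unfolding of the {\rm trTLE}-structure underlying $(\mathrm{Gr}^W_k\mathcal H,\nabla,P_k)$. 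A graded pairing $\widetilde P=(\widetilde P_k)_k$ on $\widetilde{\mathcal T}_{\rm filt}$ making $\big((\widetilde M,0),(\widetilde{\mathcal T}_{\rm filt},\widetilde P),\iota,i\big)$ an unfolding of $\mathcal T$ is exactly a choice, for each $k$, of a {\rm trTLEP}$(-k)$-pairing $\widetilde P_k$ on $\mathrm{Gr}^{\widetilde W}_k\widetilde{\mathcal H}$ such that $i$ identifies $P_k$ with $\iota^*\widetilde P_k$; so I would fix $k$ and construct $\widetilde P_k$ separately.

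Next I would pass to $\{\lambda=0\}$. Let $(\widetilde\nabla^{\rm r},\widetilde{\mathcal C},\widetilde{\mathcal U},\widetilde{\mathcal V})$ be the Frobenius type structure on $\mathrm{Gr}^{\widetilde W}_k\widetilde{\mathcal H}|_{\lambda=0}$ and use the trivialisation $\mathrm{Gr}^{\widetilde W}_k\widetilde{\mathcal H}\simeq p_\lambda^*(\mathrm{Gr}^{\widetilde W}_k\widetilde{\mathcal H}|_{\lambda=0})$ along $\mathbb P^1_\lambda$. By the computation in the proof of Proposition \ref{proposition mixed Frobenius and mixed trTLEP}, giving a {\rm trTLEP}$(-k)$-pairing on $\mathrm{Gr}^{\widetilde W}_k\widetilde{\mathcal H}$ amounts to giving a symmetric, non-degenerate, $\widetilde\nabla^{\rm r}$-flat pairing $\widetilde g_k$ on $\mathrm{Gr}^{\widetilde W}_k\widetilde{\mathcal H}|_{\lambda=0}$ satisfying the compatibilities forced by $\widetilde\nabla$-flatness, namely $\widetilde g_k(\widetilde{\mathcal C}_xa,b)=\widetilde g_k(a,\widetilde{\mathcal C}_xb)$, $\widetilde g_k(\widetilde{\mathcal U}a,b)=\widetilde g_k(a,\widetilde{\mathcal U}b)$ and $\widetilde g_k(\widetilde{\mathcal V}a,b)+\widetilde g_k(a,\widetilde{\mathcal V}b)=k\,\widetilde g_k(a,b)$; one then recovers the pairing as $\lambda^{-k}p_\lambda^*(\widetilde g_k)\circ(\mathrm{id}\otimes j_\lambda^*)$, and the datum attached to $P_k$ in the same way is $g_k:=$ the restriction of $\lambda^kP_k$ to $\{\lambda=0\}$. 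Since the restriction $V_{\widetilde{\mathcal H}}\xrightarrow{\sim}V_{\mathcal H}$ recalled before Lemma \ref{key lemma for unfolding} is filtered, it induces an isomorphism on $k$-th graded pieces, so $(\mathrm{Gr}^{\widetilde W}_k\widetilde{\mathcal H}|_{\lambda=0},\widetilde\nabla^{\rm r})$ is the pull-back of $(\mathrm{Gr}^W_k\mathcal H|_{\lambda=0},\nabla^{\rm r})$, and therefore a $\widetilde\nabla^{\rm r}$-flat tensor on the germ $(\widetilde M,0)$ is determined by its value at $0$. I would thus let $\widetilde g_k$ be the unique $\widetilde\nabla^{\rm r}$-flat extension of $g_k$ (along $i$); uniqueness of flat extensions forces the uniqueness of $\widetilde P_k$, hence of $\widetilde P$; symmetry and non-degeneracy of $\widetilde g_k$ follow from those of $g_k$ at $0$; and the $\widetilde{\mathcal V}$-compatibility holds because the tensor $(a,b)\mapsto\widetilde g_k(\widetilde{\mathcal V}a,b)+\widetilde g_k(a,\widetilde{\mathcal V}b)-k\,\widetilde g_k(a,b)$ is $\widetilde\nabla^{\rm r}$-flat (by $\widetilde\nabla^{\rm r}\widetilde{\mathcal V}=0$, part of (\ref{A})) and vanishes at $0$.

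It remains to check the $\widetilde{\mathcal C}$- and $\widetilde{\mathcal U}$-compatibilities over all of $(\widetilde M,0)$; this is the main obstacle. They hold on $M$, where $g_k$ is the datum of the {\rm trTLEP}$(-k)$-pairing $P_k$, but, unlike for $\widetilde{\mathcal V}$, the fields $\widetilde\nabla^{\rm r}\widetilde{\mathcal C}$ and $\widetilde\nabla^{\rm r}\widetilde{\mathcal U}$ do not vanish, so the associated defects are not flat and vanishing at $0$ does not suffice. Here I would use the generation hypothesis: the flat extension $\widetilde\zeta$ of $\zeta_0$ again satisfies (GC) (Remark \ref{conditions unfolding}), so, normalising the unfolding as in Remark \ref{normalize of unfoldings} and using the uniqueness in Lemma \ref{key lemma for unfolding}, it is the unfolding produced there from $\psi:=\psi_{\widetilde\zeta}$, and in that construction the endomorphisms $\widetilde{\mathcal C}_{\partial/\partial y_j}$ in the unfolding directions lie in the commutative algebra $\widetilde{\mathcal A}$ generated by $\{\widetilde{\mathcal C}_{\partial/\partial t_i}\}$ and $\widetilde{\mathcal U}$; the same holds for the endomorphisms they induce on $\mathrm{Gr}^{\widetilde W}_k\widetilde{\mathcal H}|_{\lambda=0}$. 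A product of pairwise commuting $\widetilde g_k$-self-adjoint endomorphisms is again $\widetilde g_k$-self-adjoint, so it suffices to show that the generators $\widetilde{\mathcal C}_{\partial/\partial t_i}$ and $\widetilde{\mathcal U}$ are $\widetilde g_k$-self-adjoint on $(\widetilde M,0)$.

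Write $\Phi_i$, $\Psi_j$, $\Xi$ for the self-adjointness defects of $\widetilde{\mathcal C}_{\partial/\partial t_i}$, $\widetilde{\mathcal C}_{\partial/\partial y_j}$, $\widetilde{\mathcal U}$. Expanding $\widetilde{\mathcal C}_{\partial/\partial y_j}$ inside $\widetilde{\mathcal A}$ writes $\Psi_j$ as an $\mathcal O_{\widetilde M}$-linear combination of $\Phi_1,\dots,\Phi_m,\Xi$ pre-composed with endomorphisms from $\widetilde{\mathcal A}$, while $d^{\widetilde\nabla^{\rm r}}\widetilde{\mathcal C}=0$ (part of (\ref{A})), relation (\ref{B}) and the $\widetilde{\mathcal V}$-compatibility express $\widetilde\nabla^{\rm r}_{\partial/\partial y_j}\Phi_i$ and $\widetilde\nabla^{\rm r}_{\partial/\partial y_j}\Xi$ through $\Phi_1,\dots,\Phi_m,\Xi$ and their $\widetilde\nabla^{\rm r}$-derivatives along $M$. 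Hence $(\Phi_1,\dots,\Phi_m,\Xi)$ satisfies a linear differential system in which the $\partial/\partial y_j$-derivatives are given through $\Phi_1,\dots,\Phi_m,\Xi$ and their $\partial/\partial t_i$-derivatives, with vanishing value along $M=\{y=0\}$; an induction on the order in $y$, of the same kind as in Claim \ref{claim1}, then forces $\Phi_i=\Xi=0$, whence $\Psi_j=0$. This yields the $\widetilde{\mathcal C}$- and $\widetilde{\mathcal U}$-compatibilities, so $\widetilde P_k$ is a {\rm trTLEP}$(-k)$-pairing, and $\widetilde g_k|_M=g_k$ gives $\iota^*\widetilde P_k=P_k$ along $i$; thus $\big((\widetilde M,0),(\widetilde{\mathcal T}_{\rm filt},\widetilde P),\iota,i\big)$ is an unfolding of $\mathcal T$. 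The hard part, as indicated, is the propagation of the $\widetilde{\mathcal C}$-compatibility into the unfolding directions, where the generation condition is essential.
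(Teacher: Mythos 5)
Your proposal is correct, and it runs on the same two ingredients as the paper's proof: uniqueness of flat extensions in the unfolding directions, and the fact that (GC) forces the Higgs endomorphisms $\widetilde{\mathcal C}_{\partial/\partial y_j}$ in the new directions to lie in the commutative algebra generated by the $\widetilde{\mathcal C}_{\partial/\partial t_i}$ and $\widetilde{\mathcal U}$, which is exactly how (GC) enters the paper's Claim \ref{P_k}. The packaging, however, is genuinely different. The paper never leaves $\mathbb P^1_\lambda$: it extends $P_k$ as a $\widetilde\nabla$-flat section of the relevant Hom-bundle over $\mathbb C^*_\lambda\times(\widetilde M,0)$ and then, using the flatness equations (\ref{flat pair 1})--(\ref{flat pair 3}), proves by induction on the order in $y$ that this extension takes values in $\lambda^{-k}\mathcal O_{\mathbb P^1_\lambda\times(\widetilde M,0)}$, convergence in $y$ being inherited from the flat extension. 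You instead use the dictionary of Lemma \ref{Frob type trTLE} and Proposition \ref{proposition mixed Frobenius and mixed trTLEP} to trade a trTLEP$(-k)$-pairing on $\mathrm{Gr}^{\widetilde W}_k\widetilde{\mathcal H}$ for a flat pairing $\widetilde g_k$ on the $\lambda=0$ fibre subject to the $\widetilde{\mathcal C}$-, $\widetilde{\mathcal U}$- and $\widetilde{\mathcal V}$-compatibilities, take the flat extension of $g_k$, and kill the self-adjointness defects through a linear system in $y$ with zero initial data, again by induction on the $y$-order; the terms the paper must control in (\ref{flat pair 1})--(\ref{flat pair 3}) are precisely $\lambda^{-1}$ times your defect tensors, so the two inductions are the same computation in different coordinates. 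What your route buys is transparency: uniqueness, symmetry, non-degeneracy and the $\widetilde{\mathcal V}$-identity come for free from flatness on a germ, and only finitely many self-adjointness statements remain; what it costs is that you must justify the graded-level dictionary and the membership of $\widetilde{\mathcal C}_{\partial/\partial y_j}$ in the subalgebra for an \emph{arbitrary} unfolding — your appeal to the uniqueness in Lemma \ref{key lemma for unfolding} after the normalization of Remark \ref{normalize of unfoldings} does this, though a direct argument (Nakayama at the closed point plus injectivity of $a\mapsto a(\widetilde\zeta)$ on the commutative algebra, as in the lemma following Definition \ref{definition of generating condition}) is shorter and is what the paper implicitly invokes.
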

        %%%%%%%%%%%%%%%%%%%%   proof 
        \begin{proof}
        We may assume that $(\widetilde{M},0):=(M\times\mathbb{C},0)$. 
        Let %$\widetilde{\mathcal{T}}_{\rm filt}
        $(\widetilde{\mathcal{H}},\widetilde{\nabla})$ 
        be the underlying trTLE-structure in $\widetilde{\mathcal{T}}_{\rm filt}$
        and $\widetilde{W}$ the weight filtration.
        Fix an arbitrary integer $k$.
        The graded pairing $P_k$ uniquely extends to 
        a $\widetilde{\nabla}$-flat section $\widetilde{P}_k$ 
        on $\mathrm{Gr}^{\widetilde{W}}_k(\widetilde{\mathcal{H}})$ 
        over $\mathbb{C}^*_\lambda\times (M\times \mathbb{C},0)$.
        We need to show that it takes values on $\mathrm{Gr}_k^W(\widetilde{\mathcal{H}})$ 
        in $\lambda^{-k}\mathcal{O}_{\mathbb{P}^1_\lambda\times(\widetilde{M},0)}$.
%        A priori the values are in 
%        $\mathcal{O}_{\mathbb{C}^*_\lambda\times (\widetilde{M},0)}$.
        As in Remark \ref{remark before key lemma} and in the proof of Lemma \ref{key lemma for unfolding}
        we can normalize 
        $\widetilde{\mathcal{H}}=\mathcal{O}_{\mathbb{P}^1_\lambda\times(\widetilde{M},0)}\otimes V_\mathcal{H}$,
        $\widetilde{W}_k=\mathcal{O}_{\mathbb{P}^1_\lambda\times(\widetilde{M},0)}\otimes W_k(V_\mathcal{H})$.
        Put $\Omega=\widetilde{\nabla}-d$ and coordinate $(t,y)=(t_1,\dots,t_m,y)$ on $(M\times \mathbb{C},0)$.
        Then as in the proof of Lemma \ref{key lemma for unfolding},
        we have equation (\ref{Omega}) 
        where $C_i,C_y,U,V$ are the elements of $\mathcal{P}^W_\mathcal{H}\otimes \mathcal{O}_{\widetilde{M},0}$.
        Let $C_{i,[k]},C_{y,[k]},U_{[k]},V_{[k]}$ be the image of $C_i,C_y,U,V$
        via the morphism
        $\mathcal{P}^W_\mathcal{H}\otimes \mathcal{O}_{\widetilde{M},0}
         \to 
         \mathrm{End}\big{(}\mathrm{Gr}^W_k(V_\mathcal{H})\big{)}\otimes\mathcal{O}_{\widetilde{M},0}$.
        %$C_{i,[k]},C_{y,[k]},U_{[k]},V_{[k]}$ the restriction to $W_k\otimes\mathcal{O}_{\widetilde{M},0}$
        Define $\Omega_{[k]}$ by the following equation:
        \begin{align*}
         \Omega_{[k]}:=
             \frac{1}{\lambda}\Big{(}\sum_{i=1}^{m}C_{i,[k]}\mathrm{d}t_i+C_{y,[k]}\mathrm{d}y\Big{)}
             +\Big{(}\frac{1}{\lambda}U_{[j]}-V_{[k]}\Big{)}\frac{\mathrm{d}\lambda}{\lambda}.
        \end{align*} 
        %replacing $C_i,C_y,U,V$ by $C_{i,[k]},C_{y,[k]},U_{[k]},V_{[k]}$.       %and $\Omega_{[]}$ 
        Then $d+\Omega_{[k]}$ is the flat connection on 
        $\mathrm{Gr}^{\widetilde{W}}_k\big{(}\widetilde{\mathcal{H}}\big{)}$
        induced by $\widetilde{\nabla}$.
        
        Giving $\widetilde{P}_k$ is equivalent to give a morphism
        \begin{equation*}
         \phi_{\widetilde{P}_k}:
         \big{(}\mathrm{Gr}^{\widetilde{W}}_k\mathcal{\widetilde{H}}\big{)}
          \Big{|}_{\mathbb{C}^*_\lambda\times({\widetilde{M},0})}
          \to   
          \big{(}\mathrm{Gr}^{j_\lambda^*\widetilde{W}}_k j_\lambda^*\mathcal{\widetilde{H}}\big{)}^\vee
          \Big{|}_{\mathbb{C}^*_\lambda\times({\widetilde{M},0})}                 
        \end{equation*}
        by $\langle \phi_{\widetilde{P}_k}(u),v\rangle=\widetilde{P}_k(u,v)$
        where $\langle\bullet,\bullet \rangle $ is the natural pairing. 
        Since 
        $\widetilde{\mathcal{H}}=\mathcal{O}_{\mathbb{P}^1_\lambda\times(\widetilde{M},0)}\otimes V_\mathcal{H}$,
        and $\widetilde{W}_k=\mathcal{O}_{\mathbb{P}^1_\lambda\times(\widetilde{M},0)}\otimes W_k(V_\mathcal{H})$,
        the morphism $\phi_{\widetilde{P}_k}$ can be regarded as  
        a global section of
        $E_k
         \otimes\mathcal{O}_{\mathbb{C}^*_\lambda\times({\widetilde{M},0})}$
         where $E_k:=\mathrm{Hom}\Big{(}\mathrm{Gr}^W_k(V_{\mathcal{H}}),
        {(}\mathrm{Gr}^{j^*W}_k(V_{j^*\mathcal{H}})\big{)}^\vee\Big{)}$.         
        The flatness condition for $\widetilde{P}_k$ is equivalent to 
        \begin{align*}
         d\phi_{\widetilde{P}_k}=\Omega_{[k]}^\vee\circ \phi_{\widetilde{P}_k}+\phi_{\widetilde{P}_k}\circ j_\lambda^*\Omega_{[k]}
        \end{align*}
        which means 
        \begin{align}%%%%%%%  flatness condition of pairing  %%%%%%%%%
        \label{flat pair 1}
         \frac{\partial}{\partial t_i}\phi_{\widetilde{P}_k}
          &=\frac{1}{\lambda}(C_{i,[k]}^\vee\circ \phi_{\widetilde{P}_k}-\phi_{\widetilde{P}_k}\circ j_\lambda^*C_{i,[k]}), \\
        \label{flat pair 2}
         \frac{\partial}{\partial y}\phi_{\widetilde{P}_k}
          &= \frac{1}{\lambda}
             (C_{y,[k]}^\vee\circ \phi_{\widetilde{P}_k}-\phi_{\widetilde{P}_k}\circ j_\lambda^*C_{y,[k]}),  \\
        \label{flat pair 3}
         \lambda\frac{\partial}{\partial \lambda}\phi_{\widetilde{P}_k}
         &=  \frac{1}{\lambda}
           (U_{[k]}^\vee\circ \phi_{\widetilde{P}_k}-\phi_{\widetilde{P}_k}\circ j_\lambda^*U_{[k]}) 
           -(V^\vee_{[k]}\circ \phi_{\widetilde{P}_k}+\phi_{\widetilde{P}_k}\circ j_\lambda^*V_{[k]} ).    
        \end{align}
        Let $\phi^{(n)}_{\widetilde{P}_k}$ be the equivalent class in 
        $E_k\otimes \big{(}\mathcal{O}_{\mathbb{C}^*_\lambda\times (M,0)}[[y]]/(y)^{n+1}\big{)}$ 
        represented by $\phi_{\widetilde{P}_k}$. 
        Then $\phi^{(0)}_{\widetilde{P}_k}$ is a global section of 
        $E_k\otimes \lambda^{-k}\mathcal{O}_{\mathbb{P}^1_\lambda\times(M,0)}$ 
        since $(\mathcal{H},W,P)$ is a mixed trTLEP-structure.
        %%%%%%%%%%  claim %%%%%%%%%%
        \begin{claim}\label{P_k} 
        The pairing
        $\phi^{(n)}_{\widetilde{P}_k}$ gives a global section of 
        $E_k\otimes\big{(}\lambda^{-k}\mathcal{O}_{\mathbb{P}^1_\lambda\times (M,0)}[[y]]/(y)^{n+1}\big{)}$
        for every non-negative integer $n$.
        \end{claim}
        \begin{proof}[Proof of Claim $\ref{P_k}$]
        We use an induction on $n$.
        The case $n=0$ is explained above.
        Suppose that 
        $\phi^{(n-1)}_{\widetilde{P}_k}$ is a global section of 
        $E_k\otimes\big{(}\lambda^{-k}\mathcal{O}_{\mathbb{P}^1_\lambda\times (M,0)}[y]/(y)^{n}\big{)}$.
        Let $C_{i,[k]}^{(n)}$ be the image of $C_{[k]}$ to 
        $\mathrm{End}(\mathrm{Gr}^W_kV_\mathcal{H})\otimes
        \big{(}\mathcal{O}_{\mathbb{P}^1_\lambda\times (M,0)}[[y]]/(y)^{n+1}\big{)}$.   
        Define $C_{y,[k]}^{(n)},U_{[k]}^{(n)},V_{[k]}^{(n)}$ similarly.
        
        By (\ref{flat pair 1}), and the induction hypothesis, 
        $C_{i,[k]}^{(n-1)*}\circ P_k^{(n-1)}-P_k^{(n-1)}\circ j_\lambda^*C_{i,[k]}^{(n-1)}$
        gives a (global) section of 
        $E_k\otimes\big{(}\lambda^{-k+1}\mathcal{O}_{\mathbb{P}^1_\lambda\times (M,0)}[[y]]/(y)^{n}\big{)}$.
        Similarly, $U_{[k]}^{(n-1)*}\circ P_k^{(n-1)}-P_k^{(n-1)}\circ j_\lambda^*U_{[k]}^{(n-1)}$
        also gives a section of the same module.
        By {\rm (GC)}, 
        $C_{y,[k]}^{(n-1)}$ is an element of the algebra generated by $C_{i,[k]}^{(n-1)}$and $U_{[k]}^{(n-1)}$.
        Therefore, $C_{y,[k]}^{\vee (n-1)}\circ P_k^{(n-1)}-P_k^{(n-1)}\circ j_\lambda^*C_{y,[k]}^{(n-1)}$ 
        is a section of the same module.
        By (\ref{flat pair 2}), 
        this implies that
        $\phi^{(n)}_{\widetilde{P}_k}$ is a section of 
        $E_k\otimes\big{(}\lambda^{-k}\mathcal{O}_{\mathbb{P}^1_\lambda\times (M,0)}[[y]]/(y)^{n+1}\big{)}$.
        \end{proof}
        %%%%%%%%%%% proof of proposition  
        This claim shows that $\phi_{\widetilde{P}_k}$ is a global section of  
        $E_k\otimes\lambda^{-k}\mathcal{O}_{\mathbb{P}^1_\lambda\times (M,0)}[[y]]$.
        Since we know that $\phi_{\widetilde{P}_k}$ is analytic along $y$-direction,
        we have proved that $\phi_{\widetilde{P}_k}$ is a global section of  
        $E_k\otimes\lambda^{-k}\mathcal{O}_{\mathbb{P}^1_\lambda\times (\widetilde{M},0)}$.
       % This completes the proof.        
       \end{proof}

      %%%%%%%%%% section %%%%%%%%%%  
      \section{Application to local B-models}
      In this section, we give an application of the construction theorem 
      (Corollary \ref{construction theorem of mixed Frobenius manifold}) to local B-models.
      
      First, in Section \ref{section VMHS}, we show that a germ of a variation of mixed Hodge structure
      with $H^2$-generation condition (\cite[Definition 5.3]{H.M1}) defines 
      a family of mixed Frobenius manifolds.
      After that, following \cite{konisi3}, 
      we recall the settings of local B-models.
      The VMHS's for local B-models are given by the relative cohomology
      group of the affine hypersurface in $(\mathbb{C}^*)^d$.
      By the work of Batyrev \cite{Baty}, Stienstra \cite{J.S}, and Konishi-Minabe \cite{konisi3}, 
      the Hodge filtrations and the weight filtrations are described by a kind of toric data.
      We recall their results in Section \ref{toric B}.
      Using these results, in Section \ref{H2 B},
      we give a sufficient condition for $H^2$-generation condition 
      in terms of the toric data
      and
      we show that the local B-model mirror to the canonical bundle of a weak Fano toric surface 
      gives rise to a mixed Frobenius manifold (Corollary \ref{construction in local B}).

      %%%%%%%%%%%%%%%%%%      subsection     %%%%%%%%%%%%%%%%%
      \subsection{Mixed Frobenius manifolds and variations of mixed Hodge structure}\label{section VMHS}
      We denote by $\mathscr{H}=(V_\mathbb{Q},F,W)$  
      a graded polarizable variation of mixed Hodge structure (VMHS)
      on a germ  of a complex manifold $(M,0)$.
      Here, $V_{\mathbb{Q}}$ is a $\mathbb{Q}$-local system on $(M,0)$,
      $W=(W_k\mid k\in \mathbb{Z})$ is a weight filtration on $V_{\mathbb{Q}}$, and  
      $F=(F^\ell\mid \ell\in \mathbb{Z})$ is a Hodge filtration on $K:=V_{\mathbb{Q}}\otimes\mathcal{O}_{M,0}$.      
      Recall that if $S=(S_k\mid k\in \mathbb{Z})$ is a graded polarization on 
      $\mathscr{H}$, then we have
      \begin{align}\label{graded polarization}
       S_k\big{(}\mathrm{Gr}^W_k(F^\ell), \mathrm{Gr}^W_k(F^{k-\ell+1})\big{)}=0
      \end{align}
      for any integers $k$ and $\ell$.
      \begin{definition}\label{opposite filtration}%%%%%%% def of opposite filtration %%%%%%%%%
       Fix a  graded polarization $S=(S_k\mid k\in\mathbb{Z})$ on a VMHS $\mathscr{H}=(V_\mathbb{Q},F,W)$.
       Let $\nabla:=\mathrm{id}_{V_\mathbb{Q}}\otimes \mathrm{d}$ be the flat connection on 
       $K=V_{\mathbb{Q}}\otimes\mathcal{O}_{M,0}$.
       Then an increasing filtration $U=(U_\ell \mid  \ell \in \mathbb{Z})$ 
       on $K$ is 
       called {\bf opposite filtration} if the following conditions are satisfied $:$
       \begin{enumerate}
        \item[$(a)$] $U_\ell$ is $\nabla$-flat subbundle of $K$ for each $\ell$.
        \item[$(b)$]
         For any integers $k$ and $\ell$,
          \begin{align}\label{opp}
          \mathrm{Gr}^W_k(F^\ell)\oplus\mathrm{Gr}^W_k(U_{\ell-1})&= \mathrm{Gr}^W_k(K), \\
          \label{opposite}
           S_k\big{(}\mathrm{Gr}^W_k(U_\ell),\mathrm{Gr}^W_k(U_{k-\ell+1})\big{)}&=0.
          \end{align}
       \end{enumerate}
      \end{definition}  %%%%%%% end of def of opposite filtration %%%%%%%
      \begin{remark}%%%%%%%%% remark %%%%%%%%
       We can always construct an opposite filtration
       using the Deligne splitting. 
      \end{remark}%%%%%%%%%%    
      Fix a VMHS $\mathscr{H}=(V_\mathbb{Q},F,W)$, a graded polarization $S$, and an opposite filtration $U$.
      Then we get a mixed trTLEP-structure as follows.
      First, let $p_\lambda:\mathbb{P}^1_\lambda\times (M,0)\to (M,0)$ be the natural projection
      and take a lattice $\mathcal{H}$ 
      of the meromorphic flat bundle 
      $\big{(}p_\lambda^*(K)(*\{0,\infty\}\times M),p^*_\lambda\nabla\big{)}$
      by 
        \begin{align}\label{F}
         \mathcal{H}|_{\mathbb{C}_\lambda\times (M,0)}
          &:=\sum_{\ell\in \mathbb{Z}} p_\lambda^*F^\ell\otimes 
             \mathcal{O}_{\mathbb{C}_\lambda\times(M,0)}(\ell\{0\}\times (M,0)),\\
          \label{U}
         \mathcal{H}|_{(\mathbb{P}^1_\lambda\setminus\{0\})\times (M,0)}
          &:=\sum_{\ell\in \mathbb{Z}} p_\lambda^*U_\ell
            \otimes\mathcal{O}_{(\mathbb{P}^1_\lambda\setminus\{0\})\times(M,0)}(-\ell\{0\}\times (M,0)). 
              \end{align}
       Then, put $\hat{W}_k:=p_\lambda^*W_k\cap \mathcal{H}$ for every integer $k$. 
      Take a pairing $P_k$ on $\mathrm{Gr}^{\hat{W}}_k(\mathcal{H})$
      by the composition of the morphism 
       \begin{align}  
         \mathrm{id}\otimes j_\lambda^*
          :\mathrm{Gr}^{\hat{W}}_k(\mathcal{H})\otimes j^*\mathrm{Gr}^{\hat{W}}_k(\mathcal{H})
                                \to
                                \mathrm{Gr}^{\hat{W}}_k(\mathcal{H})\otimes \mathrm{Gr}^{\hat{W}}_k(\mathcal{H}),    
       \end{align}
      the natural inclusion 
      $\mathrm{Gr}^{\hat{W}}_k(\mathcal{H})^{\otimes 2}
       \hookrightarrow
       \mathrm{Gr}^{p^*_\lambda W}_kp^*_\lambda K(*\{0,\infty\}\times(M,0))^{\otimes 2}$,
      and the pull back $p_\lambda^*S_k$.
      By (\ref{graded polarization}) and (\ref{opposite}),
      $P_k$ gives a morphism 
      $$P_k:\mathrm{Gr}^{\hat{W}}_k(\mathcal{H})\otimes j_\lambda^*\mathrm{Gr}^{\hat W}_k(\mathcal{H})
       \to \lambda^{-k}\mathcal{O}_{\mathbb{P}^1_\lambda\times (M,0)}.$$
      This construction is known as Rees construction.
      We get the following.
      \begin{lemma}\label{REES}%%%%%%% lemma of Rees construction %%%%%%
       The tuple $\mathcal{T}(\mathscr{H},S,U):=\big{(}(\mathcal{H},p^*_\lambda \nabla),(\hat{W}_k)_k,(P_k)_k\big{)}$
       defined above 
       is a mixed trTLEP-structure on $(M,0)$.       
      \end{lemma}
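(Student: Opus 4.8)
The plan is to verify the three requirements of Definition~\ref{def mixed trTLEP}: that $(\mathcal{H},p_\lambda^*\nabla)$ is a {\rm trTLE}-structure, that $\hat W=(\hat W_k)_k$ is a weight filtration of it, and that each triple $(\mathrm{Gr}^{\hat W}_k\mathcal{H},p_\lambda^*\nabla,P_k)$ is a {\rm trTLEP}$(-k)$-structure. The strategy is to reduce all of this to the pure case, namely to the classical Rees construction applied to the polarized variation of Hodge structure carried by each $\mathrm{Gr}^W_k K$; the only input beyond the pure case is the compatibility of the construction with passage to $\mathrm{Gr}^W$, which I expect to be the one genuinely delicate point.

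First I would show that $\mathcal{H}$ is locally free and trivial along $\mathbb{P}^1_\lambda$, and that each $\hat W_k=p_\lambda^*W_k\cap\mathcal{H}$ is a subbundle with the same property. Locally on $(M,0)$ one may choose a holomorphic frame of $K$ adapted simultaneously to $F$, $W$ and $U$: this follows from the strictness of the Hodge filtration with respect to the weight filtration in a variation of \emph{mixed} Hodge structure, together with the opposedness condition (\ref{opp}), which via the Deligne splitting induces a bigrading on each $\mathrm{Gr}^W_k K$. In such a frame the prescriptions (\ref{F}) and (\ref{U}) exhibit $\mathcal{H}$ as a direct sum of trivial line bundles over $\mathbb{P}^1_\lambda\times(M,0)$, with $\hat W_k$ the partial sum coming from $W_k$; hence the adjoint morphism $p_\lambda^*p_{\lambda*}\mathcal{H}\to\mathcal{H}$ is an isomorphism, and $\mathrm{Gr}^{\hat W}_k\mathcal{H}$ is identified with the Rees bundle of $(\mathrm{Gr}^W_k K,\mathrm{Gr}^W_k F,\mathrm{Gr}^W_k U)$.

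Next I would deal with the connection. Flatness of $p_\lambda^*\nabla$ is automatic, being a pull-back of a flat connection. For the pole orders, Griffiths transversality $\nabla F^\ell\subset F^{\ell-1}\otimes\Omega^1_{M,0}$ gives at most a first-order pole along $\{0\}\times M$ in the $M$-directions; the operator $\lambda\partial_\lambda$ acts on the generators coming from the bigraded splitting by integer scalars, hence contributes only logarithmic poles along $\{0\}\times M$ and $\{\infty\}\times M$; and $\nabla$-flatness of the $U_\ell$ gives the logarithmic pole along $\{\infty\}\times M$ in the $M$-directions. This matches Definition~\ref{tle}. Since each $\hat W_k$ is $p_\lambda^*\nabla$-flat, being the intersection of the flat subsheaf $p_\lambda^*W_k$ with $\mathcal{H}$, and the same computation applies to the pure variation on each $\mathrm{Gr}^W_k K$, the quotients $\mathrm{Gr}^{\hat W}_k\mathcal{H}$ with their induced connections are again {\rm trTLE}-structures, so $\hat W$ is a weight filtration.

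Finally I would treat the graded pairings. Each $S_k$ is $\nabla$-flat, non-degenerate and $(-1)^k$-symmetric, being a polarization of a weight-$k$ Hodge structure. That $P_k$ takes values in $\lambda^{-k}\mathcal{O}_{\mathbb{P}^1_\lambda\times(M,0)}$ is precisely relations (\ref{graded polarization}) and (\ref{opposite}) evaluated on sections written in the $F$-form near $\lambda=0$ and the $U$-form near $\lambda=\infty$, as already indicated before the statement; tracking the $j_\lambda^*$-twist through these formulas turns $(-1)^k$-symmetry of $S_k$ into the $(-1)^{-k}$-symmetry of $P_k$, and $\nabla$-flatness of $S_k$ into $p_\lambda^*\nabla$-flatness of $P_k$. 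Non-degeneracy of $P_k$ follows from that of $S_k$: in the bigraded splitting of $\mathrm{Gr}^W_k K$ the form $S_k$ pairs the piece of bidegree $(p,k-p)$ perfectly with that of bidegree $(k-p,p)$, which in Rees terms says exactly that $\lambda^k P_k$ induces an isomorphism $\mathrm{Gr}^{\hat W}_k\mathcal{H}\xrightarrow{\sim}(j_\lambda^*\mathrm{Gr}^{\hat W}_k\mathcal{H})^\vee$. Hence every $(\mathrm{Gr}^{\hat W}_k\mathcal{H},p_\lambda^*\nabla,P_k)$ is a {\rm trTLEP}$(-k)$-structure, and $\mathcal{T}(\mathscr{H},S,U)$ is a mixed {\rm trTLEP}-structure. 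As noted, the main obstacle is the reduction carried out in the second paragraph: that forming $\mathcal{H}$ from $F$ and $U$ on all of $K$ commutes with passing to $\mathrm{Gr}^W$, which rests on the strictness properties built into the notion of a variation of mixed Hodge structure; once this is in place, the remaining verifications are the standard Rees-construction computations performed graded-piece by graded-piece.
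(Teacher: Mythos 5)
Your reduction is sound in outline, but the step you yourself single out as the delicate one --- the existence of a local holomorphic frame of $K$ adapted simultaneously to $F$, $W$ and $U$ --- is not actually proved, and the reasons you cite do not give it. The Deligne splitting of a mixed Hodge structure concerns the triple $(W,F,\overline F)$ and says nothing about the auxiliary filtration $U$; and the bigrading that (\ref{opp}) induces on each $\mathrm{Gr}^W_kK$ is pure linear algebra (two opposite filtrations on each graded piece) which does not by itself lift to a splitting of $K$: three filtrations of a vector space in general admit no common splitting (three distinct lines in $\mathbb{C}^2$), so an argument specific to graded opposedness is needed, and ``strictness of the Hodge filtration with respect to the weight filtration'' is not it. The claim is in fact true and can be repaired as follows: first, (\ref{opp}) implies $F^\ell\oplus U_{\ell-1}=K$ on the nose (a nonzero $v\in F^\ell\cap U_{\ell-1}$, taken with $k$ minimal such that $v\in W_k$, would give a nonzero class in $\mathrm{Gr}^W_kF^\ell\cap\mathrm{Gr}^W_kU_{\ell-1}=0$; and $\dim F^\ell+\dim U_{\ell-1}=\dim K$ since induced filtrations compute dimensions graded piece by graded piece); then, with $K^\ell:=F^\ell\cap U_\ell$, one has $\dim K^\ell=\dim F^\ell+\dim U_\ell-\dim K=\sum_j\dim\big(\mathrm{Gr}^W_jF^\ell\cap\mathrm{Gr}^W_jU_\ell\big)$, while each graded piece of the induced filtration $W_\bullet\cap K^\ell$ injects into $\mathrm{Gr}^W_jF^\ell\cap\mathrm{Gr}^W_jU_\ell$; comparing totals forces equality in every degree, i.e. $W_k=\bigoplus_\ell(W_k\cap K^\ell)$, and constancy of all these ranks on the germ makes the adapted frame holomorphic. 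As written, however, the central assertion of your second paragraph is unsupported.

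You should also note that the paper's own proof sidesteps this point entirely, and more cheaply: it uses (\ref{opp}) only to see that each $\mathrm{Gr}^{\hat W}_k\mathcal{H}$ is the Rees bundle of a pair of opposite filtrations, hence trivial along $\mathbb{P}^1_\lambda$, and then deduces triviality of $\mathcal{H}$ itself from the fact that any extension of trivial bundles on $\mathbb{P}^1$ is trivial; the pole orders come from flatness of the $U_\ell$ and Griffiths transversality, and the properties of $P_k$ from those of $S_k$, exactly as in your third and fourth paragraphs. So the simultaneous trivialization you rely on, while true, is strictly stronger than what the lemma requires.
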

      \begin{proof}
       By (\ref{opp}), the adjunction map 
       $p_{\lambda*} p_\lambda^*\mathrm{Gr}^W_k(\mathcal{H})\to \mathrm{Gr}^W_k(\mathcal{H})$
       is an isomorphism for every $k$.
       Since any extension of two trivial bundles on $\mathbb{P}^1$ is trivial, 
       the adjunction map 
       $p_{\lambda*} p_\lambda^*\mathcal{H}\to\mathcal{H}$
       is also an isomorphism.
       Since $U_\ell$ is flat, the connection $p^*_\lambda\nabla$ is logarithmic along $\{\infty\}\times(M,0)$.
       The Griffith transversality implies that $p^*_\lambda\nabla$ is pole order $1$ along $\{\infty\}\times(M,0)$.
       Since $S_k$ is $(-1)^k$-symmetric and non-degenerate, 
       $P_k$ is $(-1)^k$-symmetric and non-degenerate.
      \end{proof}
     We recall the definition of $H^2$-generation condition in \cite{H.M1}.
     %%%%%%%%% H^2-generation condition%%%%%%%%%%%%
      \begin{definition}[{\cite[Definition 5.3]{H.M1}}]\label{H2-generation condition}
       Let $\mathscr{H}:=(V_\mathbb{Q},F,W)$ be a VMHS on a germ $(M,0)$ of a complex manifold.
       Put $K:=V_\mathbb{Q}\otimes\mathcal{O}_{M,0}$, $\nabla:=\mathrm{id}_{V_\mathbb{Q}}\otimes d$,
       and $w:=\max\{l\in\mathbb{Z}\mid F^l\neq0\}$.
       Let $\theta:=\mathrm{Gr}_F(\nabla):\mathrm{Gr}_FK\to \mathrm{Gr}_FK\otimes\Omega^1_{M,0}$
       be the induced Higgs field.
       The {\bf $H^2$-generation condition} for $\mathscr{H}$ 
       is the following.
       \begin{enumerate}
         \item[\rm (i)] The rank of $F^w$ is $1$,
                    and the rank of $\mathrm{Gr}^{w-1}_F(K)$ 
                    is equal to the dimension of $(M,0)$,
         \item[\rm (ii)] The map $\mathrm{Sym}\ \Theta_{M,0}\otimes F^w \to \mathrm{Gr}_F K$ 
                     induced by $\theta$ is surjective.
         \end{enumerate}

      \end{definition}
      Standard discussion on Rees construction shows the following.
      \begin{lemma}%%%%%%%%%%%%%%  key lemma %%%%%%%%%
       Let $\mathscr{H}=(V_\mathbb{Q},F,W)$ 
       be a VMHS on a germ of complex manifold $(M,0)$.
        Take the integer $w$ as above and non-zero vector $\zeta_0\in F^w|_0$.
         Fix a graded polarization $S$ and
         an opposite filtration $U$. 
       \begin{enumerate}
       \item[$(a)$]
        The vector $\zeta_0$        
        satisfies ${\rm(EC)}_{2w}$ with respect to $\mathcal{T}(\mathscr{H},S,U)$.
       \item[$(b)$]
        Assume moreover that the rank of $F^w$ is $1$.
        Then, the vector $\zeta_0$        
        satisfies {\rm(GC), (IC)}
        if and only if $\mathscr{H}$ satisfies $H^2$-generation condition.
       \end{enumerate}
      \end{lemma}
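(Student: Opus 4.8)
The plan is to make the Frobenius type structure $(\nabla^{\rm r},\mathcal C,\mathcal U,\mathcal V)$ attached by Lemma~\ref{Frob type trTLE} to the trTLE-structure underlying $\mathcal T(\mathscr H,S,U)$ completely explicit in Hodge-theoretic terms, and then read off both assertions. Observe first that the weight filtration $W$, the polarization $S$ and the pairings $P_k$ are irrelevant here, since ${\rm (IC)}$, ${\rm (GC)}$ and ${\rm (EC)}_d$ only refer to $(\nabla^{\rm r},\mathcal C,\mathcal U,\mathcal V)$; so everything takes place on the lattice $\mathcal H$. From (\ref{F}) there is a canonical isomorphism $\mathcal H|_{\lambda=0}\simeq\mathrm{Gr}_FK$, and a short computation following the proof of Lemma~\ref{Frob type trTLE}, using Griffiths transversality, gives $\mathcal C=\theta=\mathrm{Gr}_F(\nabla)$ and $\mathcal U=0$ (the latter because $\lambda(p_\lambda^*\nabla)_{\lambda\partial_\lambda}$ applied to a $\mathbb{P}^1_\lambda$-constant section vanishes to order $1$ at $\lambda=0$). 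For $\mathcal V$, note that on the germ $(M,0)$ the local system $\mathbb{V}_{\mathbb{Q}}$ is trivial, so each $U_\ell$ is a constant subbundle of $K$; choosing a $\nabla$-flat frame $(v_i)$ of $K$ adapted to $U$, with $v_i\in U_{\ell(i)}$, one has $(p_\lambda^*\nabla)(\lambda^{-\ell(i)}v_i)=-\ell(i)\,(\lambda^{-\ell(i)}v_i)\,\mathrm{d}\lambda/\lambda$, a frame of $\mathcal H$ near $\{\infty\}\times M$ in which the connection is diagonal and logarithmic; hence $\mathcal V$ is semisimple with eigenvalue $\ell$ on the $\mathrm{Gr}^U_\ell K$-summand of $\mathcal H|_{\lambda=\infty}$. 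Transporting through the $\mathbb{P}^1_\lambda$-trivialization of $\mathcal H$, which identifies the $F$-grading of $\mathcal H|_{\lambda=0}$ with the $U$-grading of $\mathcal H|_{\lambda=\infty}$ (a standard feature of the Rees construction), we conclude that $\mathcal V$ acts on the summand $\mathrm{Gr}_F^\ell K$ of $\mathcal H|_{\lambda=0}$ by multiplication by $\ell$, and that $\nabla^{\rm r}$ respects the $F$-grading (its graded pieces being the $\mathcal V$-eigenspaces, which are $\nabla^{\rm r}$-stable by (\ref{A})).

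Part (a) is then immediate: for $\zeta_0\in F^w|_0=\mathrm{Gr}_F^wK|_0$, its $\nabla^{\rm r}$-flat extension $\zeta$ stays in the top $\mathcal V$-eigenspace $\mathrm{Gr}_F^wK$, which is $\nabla^{\rm r}$-stable, so $\mathcal V\zeta=w\zeta$; thus $\zeta_0$ satisfies ${\rm (EC)}_{2w}$.

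For part (b), assume $\mathrm{rk}\,F^w=1$, so $\zeta$ trivializes the line bundle $\mathrm{Gr}_F^wK$. Since $\mathcal C=\theta$ lowers the $F$-degree by one and $\zeta$ lies in the top piece, the morphism $\mathcal C_\bullet\zeta\colon\Theta_{M,0}\to\mathcal H|_{\lambda=0}$ takes values in $\mathrm{Gr}_F^{w-1}K$ and coincides with $a\mapsto\theta_a\zeta_0$; hence ${\rm (IC)}$ is equivalent to injectivity of $\Theta_{M,0}\to\mathrm{Gr}_F^{w-1}K$. Because $\mathcal U=0$, the commutative algebra $\mathcal A$ generated by $\{\mathcal C_a\mid a\in\Theta_{M,0}\}$ and $\mathcal U$ has the property that $\mathcal A\zeta$ equals the $\mathcal O_{M,0}$-submodule of $\mathcal H|_{\lambda=0}$ generated by all $\mathcal C_{a_1}\cdots\mathcal C_{a_k}\zeta$ $(k\ge0)$, which under the identifications is exactly the image of the map $\mathrm{Sym}\,\Theta_{M,0}\otimes F^w\to\mathrm{Gr}_FK$ induced by $\theta$; hence ${\rm (GC)}$ is equivalent to surjectivity of that map, i.e.\ to condition (ii) of the $H^2$-generation condition. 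Finally, if ${\rm (GC)}$ and ${\rm (IC)}$ both hold, then since the map is $F$-graded, (ii) forces its degree-one part $\Theta_{M,0}\otimes F^w\to\mathrm{Gr}_F^{w-1}K$ to be surjective, so $\dim(M,0)\ge\mathrm{rk}\,\mathrm{Gr}_F^{w-1}K$, while ${\rm (IC)}$ gives the reverse inequality; together with $\mathrm{rk}\,F^w=1$ this is precisely the $H^2$-generation condition. Conversely, (i) and (ii) make $\Theta_{M,0}\to\mathrm{Gr}_F^{w-1}K$ bijective (surjective by (ii), and between spaces of equal dimension by (i)), giving ${\rm (IC)}$, and (ii) gives ${\rm (GC)}$.

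The only real work is the computation in the first paragraph — in particular, pinning down the sign so that $\mathcal V$ acts on $\mathrm{Gr}_F^\ell K$ by $+\ell$ (which is what gives ${\rm (EC)}_{2w}$ rather than ${\rm (EC)}_{-2w}$) and checking $\mathcal U=0$, both with the precise conventions of Lemma~\ref{Frob type trTLE}. I expect the matching of the $F$-grading at $\lambda=0$ with the $U$-grading at $\lambda=\infty$ under the $\mathbb{P}^1_\lambda$-trivialization to be the most delicate point, although it is standard for the Rees construction; everything after the first paragraph is bookkeeping plus the two-sided dimension count.
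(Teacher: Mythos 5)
Your proposal is correct and follows essentially the same route as the paper: identify the associated Frobenius type structure explicitly as $\mathcal{C}=\theta$, $\mathcal{U}=0$, and $\mathcal{V}$ acting by $\ell$ on the degree-$\ell$ piece, then deduce (a) directly and (b) from the equivalence of (GC) with condition (ii) together with the injectivity/dimension count for (IC) and (i). The only cosmetic difference is that you compute $\mathcal{V}$ as the residue at $\lambda=\infty$ in a $U$-adapted flat frame and match gradings through the $\mathbb{P}^1_\lambda$-trivialization, whereas the paper reads it off directly from the splitting $K=\bigoplus_\ell F^\ell\cap U_\ell$ — which is exactly the fact underlying your grading-matching step.
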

      \begin{proof}
      Let $(\mathcal{H},p^*_\lambda\nabla)$ be the underlying trTLE-structure in $\mathcal{T}(\mathscr{H},S,U)$
      (defined by (\ref{F}) and (\ref{U})).
      Let $(\nabla^r,\mathcal{C},\mathcal{U},\mathcal{V})$ be the associated Frobenius type structure 
      (Lemma \ref{Frob type trTLE}).
      Then, using the decomposition $K= \bigoplus_\ell F^\ell\cap U_\ell$, 
      we have $\mathcal{V}=\ell\cdot\mathrm{id}$ on $F^\ell\cap U_\ell$.
      Since $\zeta_0$ is in $F^w\cap U_w$, this proves $(a)$.
      The Higgs field $\mathcal{C}$ corresponds to $\theta$ via the natural isomorphism 
      $\mathcal{H}|_{\lambda=0}\simeq \mathrm{Gr}^FK$.
      We also remark that $\mathcal{U}=0$.
      Hence the condition {\rm (ii)} in Definition \ref{H2-generation condition} is equivalent to {\rm (GC)}.
      If we assume the condition {\rm (ii)}, the morphism $\Theta_{M,0}\to \mathrm{Gr}^F_{w-1}K$ is surjective. 
      Then the morphism is injective (this is equivalent to (IC)) if and only if the rank of $\mathrm{Gr}^F_{w-1}K$
      is equal to the dimension of $M$. This proves (b).   
      \end{proof}
      Hence, combining Corollary \ref{construction theorem of mixed Frobenius manifold},
      we have the following.
      \begin{corollary}\label{H2-generated mixed Frobenius}
      
      Let $\mathscr{H}=(\mathbb{V}_\mathbb{Q},F,W)$ be a VMHS 
         on a germ $(M,0)$ of a complex manifold  with $H^2$-generation condition.
         Take the integer $w$ as above and non-zero vector $\zeta_0\in F^w|_0$.
         Fix a graded polarization $S$ and
         an opposite filtration $U$.
         Then there exists a tuple $(\mathscr{F},\iota, i)$ with following conditions up to 
         isomorphisms.
         \begin{enumerate}
         \item[$1.$] $\mathscr{F}$ is a MFS of charge $2w$ on a germ  of a complex manifold $(\widetilde{M},0)$.
         \item[$2.$] $\iota: (M,0)\hookrightarrow (\widetilde{M},0)$ is a closed embedding.
         \item[$3.$] $i:\mathcal{T}(\mathscr{H},U,S)\xrightarrow{\sim} \mathcal{T}(\mathscr{F})$ is
                     an isomorphism of mixed {\rm trTLEP}-structure with 
                     $i|_{(0,0)}(\zeta_0)=e|_0$ where $e$ is the unit vector field of $\mathscr{F}$.\qed 
         \end{enumerate}
        \end{corollary}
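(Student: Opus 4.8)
The plan is to obtain this statement as a direct application of the construction theorem, Corollary~\ref{construction theorem of mixed Frobenius manifold}, to the mixed trTLEP-structure produced from $\mathscr{H}$ by the Rees construction. First I would set $\mathcal{T} := \mathcal{T}(\mathscr{H},S,U)$ and write $(\mathcal{H},p_\lambda^*\nabla)$ for its underlying trTLE-structure, defined by (\ref{F}) and (\ref{U}). By Lemma~\ref{REES} this $\mathcal{T}$ is a genuine mixed trTLEP-structure on $(M,0)$, so the structural hypotheses of the construction theorem are satisfied; let $(\nabla^{\rm r},\mathcal{C},\mathcal{U},\mathcal{V})$ be the Frobenius type structure associated to $(\mathcal{H},p_\lambda^*\nabla)$ via Lemma~\ref{Frob type trTLE}.

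Next I would make the bookkeeping identification of the chosen vector. Since $w=\max\{\ell\mid F^\ell\neq 0\}$ we have $F^{w+1}=0$, hence $\mathrm{Gr}_F^w K=F^w$, so $\zeta_0\in F^w|_0$ determines a nonzero vector in $\mathrm{Gr}_F K|_0\simeq\mathcal{H}|_{(0,0)}$, which I again call $\zeta_0$; let $\zeta$ be its $\nabla^{\rm r}$-flat extension. I then invoke the lemma immediately preceding Corollary~\ref{H2-generated mixed Frobenius}: part $(a)$ gives that $\zeta_0$ satisfies $(\mathrm{EC})_{2w}$ with respect to $\mathcal{T}$, and since the $H^2$-generation condition includes $\mathrm{rank}\,F^w=1$ (part (i) of Definition~\ref{H2-generation condition}), part $(b)$ applies and shows that $\zeta_0$ satisfies $\mathrm{(GC)}$ and $\mathrm{(IC)}$. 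Thus $\zeta_0$ satisfies $\mathrm{(IC)}$, $\mathrm{(GC)}$ and $(\mathrm{EC})_d$ with $d=2w$, which are exactly the hypotheses needed.

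Finally, applying Corollary~\ref{construction theorem of mixed Frobenius manifold} with this $d$ yields, uniquely up to isomorphism, a tuple $\big((\widetilde{M},0),\mathscr{F},\iota,i\big)$ with $\mathscr{F}$ a MFS of charge $d=2w$ on a germ $(\widetilde{M},0)$, with $\iota:(M,0)\hookrightarrow(\widetilde{M},0)$ a closed embedding, and with $i:\mathcal{T}\xrightarrow{\sim}\iota^*\mathcal{T}(\mathscr{F})$ an isomorphism of mixed trTLEP-structures satisfying $e|_0=i|_{(0,0)}(\zeta_0)$ for the unit vector field $e$ of $\mathscr{F}$. These are precisely properties $1$--$3$ of the corollary (noting that $\mathcal{T}(\mathscr{H},U,S)$ and $\mathcal{T}(\mathscr{H},S,U)$ denote the same object, and that the map in property $3$ is $\iota^*\mathcal{T}(\mathscr{F})$). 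I do not expect a serious obstacle here: the real content lies in Lemma~\ref{REES}, in the preceding lemma, and in the construction theorem itself; the only care needed is the identification of $\zeta_0$ as an element of $\mathcal{H}|_{(0,0)}$, the verification that its $\nabla^{\rm r}$-flat extension satisfies all three conditions simultaneously (which uses both parts of the preceding lemma together with $\mathrm{rank}\,F^w=1$), and the matching of the charge $d=2w$ and of the normalization $e|_0=i|_{(0,0)}(\zeta_0)$ with the form stated in Corollary~\ref{construction theorem of mixed Frobenius manifold}.
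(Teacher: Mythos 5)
Your proposal is correct and follows essentially the same route as the paper: the paper's proof is exactly the combination of the lemma preceding the corollary (giving $(\mathrm{EC})_{2w}$, (GC), and (IC) for $\zeta_0$ with respect to $\mathcal{T}(\mathscr{H},S,U)$, built via Lemma~\ref{REES}) with Corollary~\ref{construction theorem of mixed Frobenius manifold} applied with $d=2w$. Your additional bookkeeping about identifying $\zeta_0$ inside $\mathcal{H}|_{(0,0)}\simeq \mathrm{Gr}_F K|_0$ is a correct spelling-out of what the paper leaves implicit.
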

      %%%%%%%%%%%%%%%%%%      section   open Calabi-Yau with divisor   %%%%%%%%%%%%%%%%%
        %%%%%%%%%%%%%%%%%%%%%%%
       We give an example of VMHS which satisfies $H^2$-generation condition.
         \begin{definition}
         Let $Y$ be a projective complex manifold and put $d:=\dim Y$. 
         Let $D_i\ (i=0,1)$ be 
         hypersurfaces in $Y$ 
         such that $D_0$ is smooth and  
         $D:=D_0\cup D_1$ is normal crossing.
         The triple $(Y,D_0,D_1)$ is called an {\bf open Calabi-Yau manifold with a divisor} 
         if $\Omega_Y^{d}(D_1)$ is trivial.
         \end{definition}
         
         \begin{remark}\label{deg}
         Take an open Calabi-Yau manifold with divisor $(Y,D_0,D_1)$.
         Put $d:=\dim Y$.
         Let $F$ be the Hodge filtration on $H^d(Y\setminus D_1,D_0\setminus D_1)$.
         \begin{enumerate}
           \item[\rm 1.] By the degeneration of Hodge-to-de Rham spectral sequence, 
                         we have the following.
                         \begin{align}\label{degeneration of Hodge-to-de Rham}
                         \mathrm{Gr}_F^pH^d(Y\setminus D_1,D_0\setminus D_1)
                         \simeq H^{d-p}\big{(}Y,\Omega^p(\log D)(-D_0)\big{)}.
                         \end{align}
           \item[\rm 2.] Since $\Omega_Y^{d}(D_1)$ is trivial, the dimension of   
                         $\mathrm{Gr}^d_FH^d(X\setminus D_1,D_0\setminus D_1)
                         \simeq H^{0}(Y,\Omega^d(D_1))$ is $1$.
         \end{enumerate}
         \end{remark}
         \begin{definition}\label{generation condition for CY with div}
           We say that an open Calabi-Yau manifold with a divisor $(X,D_0,D_1)$ satisfies 
           {\bf $H^2$-generation condition} if the natural morphism
           \begin{equation}\label{H^2 for CY}
            \mathrm{Sym}\Big{(}H^1\big{(}Y,\Theta_Y(\mathrm{log}D)\big{)}\Big{)}
            \otimes H^0\big{(}Y,\Omega_Y^d(D_1)\big{)}
            \to \mathrm{Gr}_FH^d(Y\setminus D_1,D_0\setminus D_1)
           \end{equation}
           is surjective.
         \end{definition}
         \begin{remark}\label{1-dim case}
          If $d=1$, then $H^1(Y,\Theta_Y(-D))$ is isomorphic to 
          $H^1(Y,\mathcal{O}(-D_0))\simeq \mathrm{Gr}^1_F$ and hence 
          $H^2$-generation condition is automatically satisfied.
         \end{remark}
         
         We then consider a complete family of open Calabi-Yau manifold with a divisor.
         That is, we consider a smooth projective morphism $\pi:(\mathcal{Y},Y)\to (M,0)$ and 
         divisors $(\mathcal{D}_i,D_i)$ $(i=0,1)$ with the following properties.
         \begin{itemize}
           \item $\mathcal{D}_0$ is smooth  and $\mathcal{D}:=\mathcal{D}_0\cup \mathcal{D}_1$
                 is normal crossing in $\mathcal{Y}$.
           \item $\Omega^d_{\mathcal{Y}/M}(\mathcal{D}_1)$
                 is isomorphic to 
                 $\mathcal{O}_\mathcal{Y}$ 
                 where $d=\dim \mathcal{Y}-\dim M$.
           \item The Kodaira-Spencer morphism  
                 $\rho:\Theta_{M,0}\to R^1\pi_*\Theta_{\mathcal{Y}/M}(\log \mathcal{D})$ is an isomorphism.
         \end{itemize}
         Let $j^1:\mathcal{Y}\setminus \mathcal{D} \hookrightarrow \mathcal{Y}\setminus \mathcal{D}_0$ and 
         $j^2:\mathcal{Y}\setminus \mathcal{D}_0\hookrightarrow \mathcal{Y}\setminus$ be the inclusions.
         Then $R^d\pi_*j^2_!j^1_*\mathbb{Q}_{Y\setminus D}$ gives 
         a VMHS on $(M,0)$ which we denote by $\mathscr{H}$.     
         \begin{lemma}
          The VMHS $\mathscr{H}$ satisfies $H^2$-generation condition 
          in the sense of Definition $\ref{H2-generation condition}$
          if the open Calabi-Yau manifold with a divisor $(Y,D_0,D_1)$ 
          satisfies $H^2$-generation condition 
          in the sense of Definition $\ref{generation condition for CY with div}$.
         \end{lemma}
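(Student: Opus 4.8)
The plan is to verify clauses (i) and (ii) of Definition \ref{H2-generation condition} for $\mathscr{H}$, in each case reducing to the central fibre $Y$ and invoking the hypothesis on $(Y,D_0,D_1)$. Throughout I would work with the relative logarithmic de Rham complex $\Omega^\bullet_{\mathcal{Y}/M}(\log\mathcal{D})(-\mathcal{D}_0)$, whose stupid filtration induces the Hodge filtration $F$ on $\mathscr{H}$ and whose hypercohomology is computed by the degenerate Hodge-to-de Rham spectral sequence of Remark \ref{deg}; this gives a global identification $\mathrm{Gr}^p_F K\simeq R^{d-p}\pi_*\Omega^p_{\mathcal{Y}/M}(\log\mathcal{D})(-\mathcal{D}_0)$ and, on the fibre over $0$, $\mathrm{Gr}^p_F K|_0\simeq H^{d-p}\big(Y,\Omega^p_Y(\log D)(-D_0)\big)$ (cf. (\ref{degeneration of Hodge-to-de Rham})).

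For (i) I would first note that $\Omega^p_{\mathcal{Y}/M}(\log\mathcal{D})=0$ for $p>d$, so $F^p=0$ for $p>d$, while the Calabi--Yau hypothesis $\Omega^d_{\mathcal{Y}/M}(\log\mathcal{D})(-\mathcal{D}_0)=\Omega^d_{\mathcal{Y}/M}(\mathcal{D}_1)\simeq\mathcal{O}_{\mathcal{Y}}$ gives $\mathrm{Gr}^d_F K\simeq\pi_*\mathcal{O}_{\mathcal{Y}}=\mathcal{O}_{M,0}$; hence $w=d$ and $\mathrm{rank}\,F^w=1$. For the second half, the perfect pairing $\Omega^1_{\mathcal{Y}/M}(\log\mathcal{D})\otimes\Omega^{d-1}_{\mathcal{Y}/M}(\log\mathcal{D})\to\Omega^d_{\mathcal{Y}/M}(\log\mathcal{D})$ together with the same triviality identifies $\Omega^{d-1}_{\mathcal{Y}/M}(\log\mathcal{D})(-\mathcal{D}_0)\simeq\Theta_{\mathcal{Y}/M}(\log\mathcal{D})$, so $\mathrm{Gr}^{w-1}_F K\simeq R^1\pi_*\Theta_{\mathcal{Y}/M}(\log\mathcal{D})$, which is isomorphic to $\Theta_{M,0}$ by the Kodaira--Spencer hypothesis. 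Restricting to $0$ --- the Hodge numbers of a VMHS being locally constant, these direct images are locally free and compatible with base change --- yields $\mathrm{rank}\,\mathrm{Gr}^{w-1}_F K=\dim(M,0)$, so (i) holds.

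For (ii), the morphism $\mathrm{Sym}\,\Theta_{M,0}\otimes F^w\to\mathrm{Gr}_F K$ induced by $\theta$ is a morphism of $\mathcal{O}_{M,0}$-modules between coherent sheaves on a germ (the source may be truncated to $\bigoplus_{k=0}^{d}\mathrm{Sym}^k\Theta_{M,0}\otimes F^w$ since $\mathrm{Gr}_F K$ is concentrated in degrees $\le d$), so by Nakayama's lemma it suffices to prove surjectivity after restriction to $0$. There $F^w|_0=H^0\big(Y,\Omega^d_Y(D_1)\big)$, $\Theta_{M,0}|_0\simeq H^1\big(Y,\Theta_Y(\log D)\big)$ via Kodaira--Spencer, and, using the Calabi--Yau twist $\Omega^{d-k}_Y(\log D)(-D_0)\simeq\Lambda^k\Theta_Y(\log D)$, $\mathrm{Gr}^{d-k}_F K|_0\simeq H^k\big(Y,\Lambda^k\Theta_Y(\log D)\big)$. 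The key point is that the fibre Higgs field $\theta|_0$ gets identified, via the standard description of the Gauss--Manin connection of a geometric (mixed) variation in terms of the Kodaira--Spencer class, with cup product by that class followed by the interior product $\Theta_Y(\log D)\otimes\Omega^p_Y(\log D)\to\Omega^{p-1}_Y(\log D)$ (equivalently $\Theta_Y(\log D)\otimes\Lambda^k\Theta_Y(\log D)\to\Lambda^{k+1}\Theta_Y(\log D)$). Iterating $k$ times --- and using $\theta\wedge\theta=0$ to factor through $\mathrm{Sym}$ --- the fibre map becomes precisely the morphism (\ref{H^2 for CY}) of Definition \ref{generation condition for CY with div}; its surjectivity is therefore exactly the $H^2$-generation condition assumed for $(Y,D_0,D_1)$, which gives (ii). Combining (i) and (ii) proves the lemma.

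I expect the main obstacle to be the identification, in the last paragraph, of the fibrewise Higgs field of $\mathscr{H}$ with ``cup product with the Kodaira--Spencer class followed by contraction on logarithmic forms'', together with the verification that its $k$-fold iterate matches the symmetric-algebra map (\ref{H^2 for CY}). This is the logarithmic, relative-cohomology version (with the twist by $-\mathcal{D}_0$) of the classical computation of the Gauss--Manin connection for families of de Rham complexes; once it is established the remainder of the argument is essentially formal.
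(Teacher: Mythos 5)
Your proposal is correct and takes essentially the same route as the paper: condition (i) is obtained from the degeneration of Hodge-to-de Rham (Remark \ref{deg}) together with the Kodaira--Spencer isomorphism, and condition (ii) by identifying the graded Gauss--Manin map $\mathrm{Gr}_F(\nabla)$ at $0$ with contraction against the Kodaira--Spencer class on logarithmic forms, so that surjectivity of (\ref{H^2 for CY}) yields the generation. The additional details you supply (the Nakayama reduction to the central fibre, the identification $\Omega^{d-1}_{\mathcal{Y}/M}(\log\mathcal{D})(-\mathcal{D}_0)\simeq\Theta_{\mathcal{Y}/M}(\log\mathcal{D})$, and the iteration matching the symmetric-algebra map) merely make explicit steps that the paper's terser proof asserts.
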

         \begin{proof}
         By Remark \ref{deg} and since the Kodaira-Spencer map $\rho$ is an isomorphism, 
         the condition (i) in Definition \ref{H2-generation condition} is satisfied.
         The natural pairing
          $$\Theta_{\mathcal{Y}/\mathcal{M}}(\log \mathcal{D})\otimes 
          \Omega^p_{\mathcal{Y}/\mathcal{M}}(\log \mathcal{D}_1)(-\mathcal{D}_0) 
          \to \Omega^{p-1}_{\mathcal{Y}/\mathcal{M}}(\log \mathcal{D}_1)(-\mathcal{D}_0) $$ 
         induces the morphism 
          $$R^1\pi_*\Theta_{\mathcal{Y}/\mathcal{M}}(\log \mathcal{D})
          \otimes R^q\pi_*\Omega^p_{\mathcal{Y}/\mathcal{M}}(\log \mathcal{D}_1)(-\mathcal{D}_0)
          \to  R^{q+1}\pi_*\Omega^{p-1}_{\mathcal{Y}/\mathcal{M}}(\log \mathcal{D}_1)(-\mathcal{D}_0).$$
         Using the Kodaira-Spencer morphism $\rho$ and (\ref{degeneration of Hodge-to-de Rham}), 
         this corresponds to 
         $$\mathrm{Gr}_F(\nabla): \Theta_{M,0}\otimes F^d\to \mathrm{Gr}^d_FH^d(Y\setminus D_1,D_0)$$
         at $0\in (M,0)$. Therefore the surjectivity of (\ref{H^2 for CY}) implies 
         the condition (ii) in Definition \ref{H2-generation condition}.         
         \end{proof}
         \begin{example}{\rm 
         Put $Y:=\mathbb{P}^1$, $D_1:=\{0,\infty\}$, and $D_0:=\{1,z_1,\dots z_m\}$ 
         where $z_i\neq 0,1,\infty\ (i=1,2,\dots m)$ 
         and $z_i\neq  z_j\ (i\neq j)$.
         Then $(Y,D_0,D_1)$ is a open Calabi-Yau manifold with a divisor.
         As mentioned in Remark \ref{1-dim case}, 
         this satisfies the $H^2$-generation condition and
         hence the complete family of $(Y,D_0,D_1)$ gives rise to a mixed Frobenius manifold.}
         \end{example}
        
         %%%%%%%%%%%%%%%%%%%%%
                        
                %%%%%%%%%%%%%%%%%%%%%%%%%    subsection    %%%%%%%%%%%%%%%%%%%%
        \subsection{Combinatorial description of VMHS for local B-models}\label{toric B} 
        %%%%%%%%%%%%%%%%%%%%
        \subsubsection{Settings for local B-models}\label{set B}        
        Let $N$ be a finitely generated free abelian group and $d$ the rank of $N$.
        Let $N^{\vee}$ be the dual lattice of $N$ and put
        $\overline{N^{\vee}}:=N^{\vee}\oplus \mathbb{Z}$.
        Consider the group ring 
        $\mathbb{C}[\overline{N^{\vee}}]=\mathbb{C}[t_0,t^{-1}_0]\otimes \mathbb{C}[N^{\vee}]$
        as a graded ring by $\deg (t_0^kt^m):=k\ (k\in \mathbb{Z}, m\in N^{\vee})$.
        For an integral polyhedron $\Delta\subset N^{\vee}_{\mathbb{R}}:=N^{\vee}\otimes \mathbb{R}$, 
        let $\sigma_\Delta$ be the cone in $\overline{N^{\vee}_\mathbb{R}}$ generated by $\{1\}\times \Delta$.
        This defines a graded subring 
        $S_\Delta:=\mathbb{C}[\sigma_\Delta\cap N^{\vee}]$ in $\mathbb{C}[\overline{N^{\vee}}]$.
        $\mathbb{P}_\Delta:=\mathrm{Proj}\ S_\Delta$ is a toric variety which contains an algebraic torus 
        $T_N:=\mathrm{Spec}\ \mathbb{C}[N^{\vee}]$ as an open dense subset.
        We also note that $D_\Delta:=\mathbb{P}_\Delta\setminus T_N$ is a hypersurface.
        
        Recall that the Newton polygon of a Laurent polynomial
        $f=\sum_{m\in N^{\vee}}a_mt^m\in\mathbb{C}[N^{\vee}]$ 
        is the convex hull of the subset 
        $\{m\in N^{\vee}\mid a_m\neq 0\}$ in $N^{\vee}_\mathbb{R}$.
        Put $A:=A(\Delta):=\Delta\cap N^{\vee}$ and let
        $\mathbb{L}(\Delta)$ be the set of functions 
        whose newton polygon is contained in $\Delta$. 
        Then $\mathbb{L}(\Delta)$ is naturally identified with 
        $\mathbb{C}^A$.
        \begin{definition}
        Let $\Delta'$ be a face of $\Delta$. 
          For $f:=\sum_{m\in A}a_mt^m\in \mathbb{L}(\Delta)$, we define the function $f^{\Delta'}$ by
          $f^{\Delta'}:=\sum_{m\in \Delta'\cap N^{\vee}}a_mt^m$.         
        \end{definition}
        For a basis $u_1,u_2,\dots,u_d$ of $N$, let $\theta_1,\theta_2,\dots,\theta_d$ be the corresponding 
        vector field on $T_N$. 
        Each $\theta_i$ defines a differential operator 
        on $\mathbb{C}[N^{\vee}]$ by $\theta_i(t^m)=\langle u_i, m\rangle t^m$.
        \begin{definition}[{\cite[Definition 3.1]{konisi3}}]
        A Laurent polynomial $f\in\mathbb{C}[N^{\vee}]$ is called {\bf $\Delta$-regular}
        if the following conditions are satisfied.
         \begin{enumerate}
          \item[$1.$] The Newton polygon of $f$ is $\Delta$.
          \item[$2.$] For each face $\Delta'$ of $\Delta$, there is no point in $T_N$ such that 
            \begin{equation}
                 f^{\Delta'}=\theta_1(f^{\Delta'})=\cdots=\theta_d(f^{\Delta'})=0.
            \end{equation}
         \end{enumerate}
         Let $\mathbb{L}_{\rm reg}:=\mathbb{L}_{\rm reg}(\Delta)$ be the set of $\Delta$-regular Laurent polynomials.
        \end{definition}
         %%%%%%%%%%%%%%%%%%%%%%%%%%%%%%%%%  combinatorial MHS  %%%%%%%%%%%
        \subsubsection{Mixed Hodge structure}
          For $f\in \mathbb{L}(\Delta)$, we define the differential operators 
          $\mathcal{L}_{f}^i\ (0\leq i\leq d)$ on $S_\Delta$ by 
           \begin{equation}\label{D_f}
              \mathcal{L}_{f}^0:=t_0\partial_{t_0}+t_0f,\ 
              \mathcal{L}_{f}^i:=\theta_i+t_0\theta_if,
              \ (i=1,2,\dots,d).
           \end{equation}
        
        \begin{definition}
           We define the vector space $\mathcal{R}_f$ by
           \begin{equation}
             \mathcal{R}_f:=S_\Delta/\sum_{i=0}^d\mathcal{L}_{f}^iS_\Delta.
           \end{equation}
        \end{definition}
        We define the decreasing filtration $\mathcal{E}$ on $S_\Delta$ by 
        $\mathcal{E}^\ell:=\bigoplus_{\ell\leq k}S_\Delta^k$. 
        We denote the induced filtration on $\mathcal{R}_f$ by the same letter.
        Denote by $\sigma_\Delta(\ell)$ the set of the co-dimension $\ell$ faces of $\sigma_\Delta$.
        Put $|\sigma_\Delta(\ell)|:=\bigcup_{\tau\in\sigma_\Delta(\ell)}\tau$ and
        $I(\ell):=(\sigma_\Delta\setminus |\sigma_\Delta(\ell)|)\cap \overline{N^\vee}$.
        Then an increasing filtration 
        \begin{equation}
           \mathcal{I}_\ell:=\bigoplus_{(k,m)\in I(\ell)}
           \mathbb{C} t_0^k t^m        
        \end{equation}
        on $S_\Delta$ is defined.
        We also denote the induced filtration on $\mathcal{R}_f$ by the same letter.

         %%%%%%%%%%%%%%%%%%%%%%%%%%%%%%%  key proposition  %%%%%%%%%%%%%%%%%%%%%%%%%%%%%%%%%%%
        For $f\in \mathbb{L}_{\rm reg}(\Delta)$, 
        put $V_f:=f^{-1}(0)$. 
        In \cite{konisi3}, Konishi and Minabe constructed an isomorphism 
         \begin{equation}\label{rho}
                             \rho:\mathcal{R}_f\xrightarrow{\sim}H^d(T_N,V_f)
          \end{equation}
         with the following properties.
           \begin{enumerate}                          
             \item[$(a)$] If ${F}=(F^\ell\mid \ell\in \mathbb{Z})$ is the Hodge filtration on $H^d(T_N,V_f)$,
                           then $\rho(\mathcal{E}^{i-d})=F^i$ for $0 \leq i\leq d$.             
             \item[$(b)$] If $W=(W_k\mid k\in\mathbb{Z})$ is the weight filtration on $H^d(T_N,V_f)$, then 
                            \begin{align*}
                             \rho(\mathcal{I}_i)=W_{d-2+i},\ (0<i\leq d-1),\ 
                             \rho(\mathcal{I}_{d+1})=W_{2d-2}=W_{2d-1},\
                             H^d(T_N,V_f)=W_{2d}.
                            \end{align*}
           \end{enumerate}
                            
         %%%%%%%%%%%%%%%%%%%      subsubsection     %%%%%%%%%%
         \subsubsection{Gauss-Manin connection}
          Let $\mathcal{O}_{\mathbb{L}(\Delta)}$ 
          be the sheaf of algebraic functions on $\mathbb{L}(\Delta)$.
          Since $\mathbb{L}(\Delta)=\mathbb{C}^A$, 
          we have $\mathbb{C}[(a_m)_{m\in A}]=H^0(\mathbb{L}(\Delta),\mathcal{O}_{\mathbb{L}(\Delta)})$.
           For $i=0,\dots, d$, let $\mathcal{L}^i$ be the differential operator 
           on $S_\Delta\otimes \mathcal{O}_{\mathbb{L}(\Delta)}$
           given by 
           \begin{align}
           \mathcal{L}^0:=t_0\partial_0+t_0\sum_{m\in A}a_mt^m, \ 
           \mathcal{L}^i:=\theta_i+\sum_{m\in A} a_m\langle u_i,m \rangle t^m\ (1\leq i\leq d).
           \end{align}
          Remark that $\mathcal{L}^i=\mathcal{L}^i_f$ at $f\in \mathbb{L}(\Delta)$.
%          For each $a\in \mathbb{C}^A$, 
%          denote by $\mathcal{D}_i^a$ the differential operator $\mathcal{D}_i$ 
%          corresponding to $f_a$ (see (\ref{D_f})).
          We define the $\mathcal{O}_{\mathbb{L}(\Delta)}$-module $\mathcal{R}$ by 
           \begin{equation}
             \mathcal{R}:=
              S_\Delta \otimes \mathcal{O}_{\mathbb{L}(\Delta)}
              \Big{/} \sum_{i=1}^d\mathcal{L}^i(S_\Delta\otimes \mathcal{O}_{\mathbb{L}(\Delta)}).           
           \end{equation}   
          The restriction of $\mathcal{R}$ to $\mathbb{L}_{\rm reg}(\Delta)$ defines an algebraic 
          vector bundle, which we denote by the same letter.
          We note that the fiber of $\mathcal{R}$ at $f\in \mathbb{L}_{\rm reg}(\Delta)$ is
          $\mathcal{R}_f$.
          
           We define differential operators $\mathcal{D}_{a_m}\ (m\in A)$ 
          on $S_\Delta\otimes \mathcal{O}_{\mathbb{L}(\Delta)}$ by
           \begin{equation}
            \mathcal{D}_{a_m}:=\frac{\partial}{\partial a_m}+t_0t^m.
           \end{equation}           
         Let $\nabla$ be the connection on $\mathcal{R}$ defined by $\nabla_{\partial_{a_m}}:=\mathcal{D}_{a_m}$.
          
          Put $\mathcal{X}:=\mathbb{P}_\Delta\times \mathbb{L}_{\rm reg}(\Delta),
          \ \widetilde{\mathcal{M}}:=\mathbb{L}_{\rm reg}(\Delta)$ 
          and let $\pi:\mathcal{X}\to \widetilde{\mathcal{M}}$ be the projection.
          Define the divisors $D_0, D_1$ and $D$ by
          ${D}_0:=\{(p,f)\in \mathcal{X}\mid p\in V_{f}\}$, 
          ${D}_1:=D_\Delta\times \mathbb{L}_{\rm reg}(\Delta)$,
          and 
          ${D}:={D}_0\cup {D}_1$.
          Let
          $j^1:\mathcal{X}\setminus D_0\hookrightarrow \mathcal{X}$, 
          $j^2:\mathcal{X}\setminus D\hookrightarrow \mathcal{X}\setminus D_0$
          be the inclusions.
          The stalk of the sheaf 
          $R^d\pi_*j^1_!j^2_*\mathbb{C}_{\mathcal{X}\setminus D}$
          at $f\in \widetilde{\mathcal{M}}$
          is $H^d(T_N,V_f)$.
          \begin{lemma}[{\cite[Lemma 4.1]{konisi3},\cite[Section 6]{J.S}}]\label{loc sys}
            The isomorphism $(\ref{rho})$ gives an isomorphism between 
            the local system of flat section 
            of the analytic flat bundle 
            $(\mathcal{R},\nabla)^{\rm an}$
            and 
            $R^d\pi_*j^1_!j^2_*\mathbb{C}_{\mathcal{X}\setminus D}$.\qed
          \end{lemma}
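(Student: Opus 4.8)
The plan is to deduce the statement from the horizontality of the fibrewise isomorphism $\rho$ of (\ref{rho}). First observe that both sheaves in question, after analytification, are local systems on $\widetilde{\mathcal{M}}^{\rm an}=\mathbb{L}_{\rm reg}(\Delta)^{\rm an}$. For $(\mathcal{R},\nabla)^{\rm an}$ this is because, over $\mathbb{L}_{\rm reg}(\Delta)$, $\mathcal{R}$ is an algebraic vector bundle carrying the connection $\nabla$ with $\nabla_{\partial_{a_m}}=\mathcal{D}_{a_m}$, which is integrable since $[\mathcal{D}_{a_m},\mathcal{D}_{a_{m'}}]=0$. For $R^d\pi_*j^1_!j^2_*\mathbb{C}_{\mathcal{X}\setminus D}$ this is because $\mathbb{P}_\Delta$ is projective, so $\pi$ is proper, and over $\mathbb{L}_{\rm reg}(\Delta)$ the pair $(\mathbb{P}_\Delta,D_\Delta\cup V_f)$ varies in a topologically locally trivial family — this is exactly what $\Delta$-regularity guarantees — so proper base change identifies the stalk at $f$ with $H^d(T_N,V_f)$ and the sheaf is locally constant. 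Tensoring this second sheaf with $\mathcal{O}^{\rm an}_{\widetilde{\mathcal{M}}}$ equips it with its Gauss--Manin connection $\nabla^{\rm GM}$, whose flat sections recover the local system. Since $\rho\colon\mathcal{R}_f\xrightarrow{\sim}H^d(T_N,V_f)$ is already known to be a fibrewise isomorphism for every $f\in\mathbb{L}_{\rm reg}(\Delta)$, it suffices to prove that, after analytification, $\rho$ intertwines $\nabla$ with $\nabla^{\rm GM}$; restricting to flat sections then yields the asserted identification of local systems.

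To check horizontality I would argue on periods. Recall from \cite{konisi3,J.S} that $\rho$ is realised by an explicit period pairing: a class $[t_0^kt^m]\in\mathcal{R}_f$ is paired with a relative cycle $\gamma\in H_d(T_N,V_f)$ by an integral whose integrand is a $t^m$-form with a pole of order $k$ along $V_f$ (equivalently, in the $t_0$-homogenised ``Cayley trick'' picture of \cite{J.S}, the exponent of $t_0$ records the pole order), so that its $f$-dependence enters only through the potential $t_0 f$. Fix $f_0\in\mathbb{L}_{\rm reg}(\Delta)$, a locally constant family $(\gamma_f)$ of relative cycles near $f_0$, and a local section $\phi$ of $S_\Delta\otimes\mathcal{O}_{\mathbb{L}(\Delta)}$. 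Differentiating the period $\langle\rho([\phi]),\gamma_f\rangle$ with respect to $a_m$ under the integral sign, the $a_m$-dependence of the integrand contributes one term from $\partial\phi/\partial a_m$ and one from the $f$-dependent factor, and the latter produces exactly $t_0\,(\partial f/\partial a_m)=t_0 t^m$, by the trivial identity $\partial f/\partial a_m=t^m$. Hence $\partial/\partial a_m\,\langle\rho([\phi]),\gamma_f\rangle=\langle\rho([\partial\phi/\partial a_m+t_0 t^m\phi]),\gamma_f\rangle=\langle\rho([\mathcal{D}_{a_m}\phi]),\gamma_f\rangle$, which says precisely that $\rho\circ\nabla_{\partial_{a_m}}=\nabla^{\rm GM}_{\partial_{a_m}}\circ\rho$; since the vector fields $\partial_{a_m}$ span the tangent sheaf of $\mathbb{L}(\Delta)$, $\rho$ is horizontal. (Along the way, this also re-proves that $\mathcal{D}_{a_m}$ preserves the submodule defining $\mathcal{R}$, i.e.\ that $\nabla$ descends, by a short commutator computation of $[\mathcal{D}_{a_m},\mathcal{L}^i]$.)

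Combining the two steps, $\rho$ restricts to an isomorphism of local systems between $\ker\big(\nabla^{\rm an}\big)$ and the $\nabla^{\rm GM}$-flat sections of $R^d\pi_*j^1_!j^2_*\mathbb{C}_{\mathcal{X}\setminus D}\otimes\mathcal{O}^{\rm an}_{\widetilde{\mathcal{M}}}$, i.e.\ the local system $R^d\pi_*j^1_!j^2_*\mathbb{C}_{\mathcal{X}\setminus D}$ itself, which is the claim. I expect the main obstacle to lie in the second step: one must extract from \cite[Lemma 4.1]{konisi3} and \cite[Section 6]{J.S} the precise period realisation of $\rho$ — including the normalisation of the $t_0$-grading and of the filtrations $\mathcal{E}$ and $\mathcal{I}$ — and justify differentiation under the integral sign near the locus where $\gamma_f$ meets $V_f$ and the toric boundary $D_\Delta$. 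Both points are settled in those references, so for us the proof is essentially a matter of assembling them; the conceptual content is simply that the identity $\partial f/\partial a_m=t^m$ forces the algebraically defined connection $\nabla_{\partial_{a_m}}=\mathcal{D}_{a_m}$ to coincide with Gauss--Manin.
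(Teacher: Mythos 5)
The paper does not actually prove this lemma: it is quoted verbatim from the cited sources (the \qed with no argument indicates it is taken from Konishi--Minabe, Lemma 4.1, and Stienstra, Section 6), so there is no in-paper proof to compare yours against. Your sketch is essentially a reconstruction of the argument those references give: both sheaves are local systems over $\mathbb{L}_{\rm reg}(\Delta)$ (integrability of $\nabla$ from $[\mathcal{D}_{a_m},\mathcal{D}_{a_{m'}}]=0$, and topological local triviality of the family of pairs from $\Delta$-regularity plus properness of $\pi$), and the fibrewise isomorphism $\rho$ intertwines $\mathcal{D}_{a_m}=\partial_{a_m}+t_0t^m$ with the Gauss--Manin connection because differentiating the period integrals in $a_m$ produces exactly the multiplication-by-$t_0t^m$ term via $\partial f/\partial a_m=t^m$. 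That outline is sound; the only caveat is that it is not self-contained, since the decisive inputs --- the precise de Rham/period realisation of $\rho$, the compatibility of the $t_0$-grading with pole order, and the justification of differentiating under the integral --- are precisely what the cited results of Stienstra and Konishi--Minabe supply, which is the same level of reliance on the literature as the paper itself adopts.
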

          
          %%%%%%%%%%%%%% subsubsection %%%%%%%%%%%%%%%%%%%%%%%
          \subsubsection{The moduli space}
          We recall the definition of the moduli space $\mathcal{M}(\Delta)$ 
          of the affine hypersurfaces of $T_N$.
          Define the action of $T_N$ on $\mathbb{L}(\Delta)$ by
          $(sf)(t):=f(st)$ where $f\in \mathbb{L}(\Delta)$ and $s,t\in T_N$.
          We put $\mathbb{C}[a_m]:=\mathbb{C}[(a_m)_{m\in A}]$.
          We regard the invariant ring $\mathbb{C}[a_m]^{T_N}$ as a graded ring using 
          the natural grading on $\mathbb{C}[a_m]$.
          \begin{definition}[{\cite[Definition 10.4]{Baty}}]\label{moduli of hypersurface}
             We define the moduli space of affine hypersurface in $T_N$ by
             $\mathcal{M}(\Delta):=\mathrm{Proj}(\mathbb{C}[a_m]^{T_N})$.
          \end{definition}
          If we put $\mathbb{P}(A):=\mathrm{Proj}(\mathbb{C}[a_m])$, 
          $\mathcal{M}(\Delta)$ is a GIT-quotient of $\mathbb{P}(A)$ by the action of $T_N$.
          The stability condition of this GIT-quotient is defined as follows.
          \begin{definition}
            For a point $x\in \mathbb{P}(A)$, 
            take a non-zero vector $v\in\mathbb{C}^A$
            which represents $x$.
            The point $x$ is stable if the orbit $T_N\cdot v$ is closed and of $d$-dimensions.
          \end{definition}
          Put $R_f^i:=\mathrm{Gr}^{-i}_\mathcal{E}\mathcal{R}_f$,
          $R_f:=\bigoplus_iR^i_f$.
          By the property $(a)$ of the isomorphism (\ref{rho}),
          we have the isomorphism 
           \begin{equation}\label{Higgs field}
             R^i_f\simeq \mathrm{Gr}^{d-i}_{F}H^d(T_N,V_f),\ (0\leq i\leq d).
           \end{equation}
          Let $J_f$ be a homogeneous ideal of $S_\Delta$ generated  by $t_0f,t_0\theta_1f,\dots,t_0\theta_df$.
          Then we naturally have the isomorphism $R_f\simeq S_\Delta/J_f$ of graded rings.
          %%%%% prop %%%
          \begin{proposition}[{\cite[Proposition 11.2, Corollary 11.3]{Baty}}]\label{GM on moduli}
          Consider the action of the torus $T_{\overline{N}}$ on $\mathbb{L}(\Delta)$
          \begin{equation}
           T_{\overline{N}}\times \mathbb{L}(\Delta)\to\mathbb{L}(\Delta):(t_0,t)\times f(s)\mapsto t_0f(ts).
          \end{equation}
          If we identify $L(\Delta)$ and $S^1_\Delta$ by $f\mapsto t_0f$, 
          the tangent space to the orbit $T_{\overline{N}}f$ is isomorphic to the homogeneous component $J^1_f$.         
          Moreover, if we assume that $f\in\mathbb{L}_{\rm reg}(\Delta)$, and 
          the corresponding class
          $[f]\in \mathcal{M}(\Delta)$ is a smooth stable point.
          Then, the tangent space $\Theta_{\mathcal{M}(\Delta),[f]}$ is naturally
          isomorphic to $R^1_f=S^1_\Delta/J^1_f$. \qed
          \end{proposition}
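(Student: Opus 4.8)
The plan is to establish the first claim by a direct computation with the orbit map and the second by standard geometric invariant theory (this is \cite[Proposition 11.2, Corollary 11.3]{Baty}, which one may simply invoke). For the orbit tangent space I would compute the differential at the identity of the orbit map $\phi\colon T_{\overline{N}}\to\mathbb{L}(\Delta)$, $g\mapsto g\cdot f$. Writing $f=\sum_{m\in A}a_mt^m$ and $g=(t_0,t)$, one has $(g\cdot f)(s)=t_0f(ts)=\sum_{m\in A}t_0t^m a_m\,s^m$, so in the coordinates $(a_m)_{m\in A}$ on $\mathbb{L}(\Delta)=\mathbb{C}^A$ the orbit map reads $g\mapsto(t_0t^m a_m)_{m\in A}$. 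Differentiating along the one-parameter subgroup generated by $c_0(t_0\partial_{t_0})+\sum_{i=1}^{d}c_i\theta_i$ in the Lie algebra of $T_{\overline{N}}$, the $m$-th entry of the resulting tangent vector is $\bigl(c_0+\sum_{i=1}^{d}c_i\langle u_i,m\rangle\bigr)a_m$; under the identification $\mathbb{L}(\Delta)\xrightarrow{\sim}S^1_\Delta$, $f\mapsto t_0f$, this is sent to $c_0\,t_0f+\sum_{i=1}^{d}c_i\,t_0\theta_if$, since $\theta_i(t^m)=\langle u_i,m\rangle t^m$ by definition. Thus the orbit tangent space is the $\mathbb{C}$-span of $t_0f,t_0\theta_1f,\dots,t_0\theta_df$, and since $S_\Delta^0=\mathbb{C}$ this span is exactly the degree-$1$ component $J^1_f$ of $J_f=(t_0f,t_0\theta_1f,\dots,t_0\theta_df)$. (Equivalently, $t_0f=\mathcal{L}^0_f(1)$ and $t_0\theta_if=\mathcal{L}^i_f(1)$.) This part is routine.

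For the tangent space of $\mathcal{M}(\Delta)$ I would use that $\mathcal{M}(\Delta)=\mathrm{Proj}\,\mathbb{C}[a_m]^{T_N}$ is the GIT quotient of $\mathbb{L}(\Delta)\setminus\{0\}$ by $T_{\overline{N}}=T_N\times\mathbb{C}^*$, the $\mathbb{C}^*$-factor being the $\mathrm{Proj}$-grading. First I would observe that for $f\in\mathbb{L}_{\rm reg}(\Delta)$ the $T_{\overline{N}}$-stabilizer of $f$ is finite: the Newton polygon of $f$ equals $\Delta$, which is full-dimensional, so the lattice points $m$ with $a_m\neq0$ affinely span $N^\vee_\mathbb{R}$ and force $t_0t^m=1$ for enough $m$; hence the orbit $T_{\overline{N}}f$ has dimension $d+1$. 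When $[f]$ is a stable point, the GIT quotient is a geometric quotient near $[f]$ whose fibres are the $T_{\overline{N}}$-orbits, and when $[f]$ is moreover a smooth point, the natural map $q\colon f'\mapsto[f']$, defined near $f$ in $\mathbb{L}_{\rm reg}(\Delta)$, is a submersion there. Then $\mathrm{d}q_f$ yields a natural isomorphism $\Theta_{\mathcal{M}(\Delta),[f]}\cong\Theta_{\mathbb{L}(\Delta),f}\big/T_f\bigl(T_{\overline{N}}f\bigr)$, and combining $\Theta_{\mathbb{L}(\Delta),f}\cong\mathbb{L}(\Delta)\cong S^1_\Delta$ with the computation above gives $\Theta_{\mathcal{M}(\Delta),[f]}\cong S^1_\Delta/J^1_f=R^1_f$.

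The hard part is the passage from the GIT quotient to an honest submersion of complex manifolds: one must use stability to guarantee that the orbit is closed and of the expected dimension (so that the quotient is geometric there) and smoothness of $[f]$ to know the quotient is locally a manifold, after which the tangent-space identification is purely formal. All of this is contained in \cite[Proposition 11.2, Corollary 11.3]{Baty}, so in the paper it suffices to cite that reference.
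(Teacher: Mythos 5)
Your proposal is correct and matches what the paper does: the paper offers no proof of its own, simply citing Batyrev's Proposition 11.2 and Corollary 11.3, and your explicit differentiation of the orbit map (identifying the orbit tangent space with the span of $t_0f,t_0\theta_1f,\dots,t_0\theta_df$, i.e.\ $J^1_f$ since $S^0_\Delta=\mathbb{C}$) together with delegating the GIT/smooth-stable-point step to Batyrev is exactly the intended justification.
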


          %%%%%%%%%%%%%%%
          \begin{proposition}\label{toric higgs}
            Let $f$ be a $\Delta$-regular function in $\mathbb{L}(\Delta)$ such that 
            corresponding class $[f]$ in $\mathcal{M}(\Delta)$ is smooth stable point.
            Then under the isomorphism $(\ref{Higgs field})$ and 
            the isomorphism $\Theta_{\mathcal{M}(\Delta),[f]}\simeq R^1_f$ in Proposition $\ref{GM on moduli}$,
            the Higgs field 
            \begin{equation*}
             \mathrm{Gr}_{F}(\nabla)_{[f]}
             :\Theta_{\mathcal{M}(\Delta),[f]}\otimes \mathrm{Gr}_{F}H^d(T_N,V_f)
             \to \mathrm{Gr}_{F}H^d(T_N,V_f)
            \end{equation*}
            corresponds to the multiplication 
            $$R^1_f\otimes R_f\to R_f.$$
          \end{proposition}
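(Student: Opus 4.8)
The plan is to reduce the statement to the explicit shape of the Gauss--Manin connection $\nabla$ on $\mathcal{R}$ over $\widetilde{\mathcal{M}}=\mathbb{L}_{\rm reg}(\Delta)$, to pass to the associated graded with respect to the Hodge filtration, and then to descend to the moduli space $\mathcal{M}(\Delta)$. First I would work upstairs on $\widetilde{\mathcal{M}}$. By Lemma \ref{loc sys} the analytic flat bundle $(\mathcal{R},\nabla)^{\mathrm{an}}$, with the filtration $\mathcal{E}$ matched to the Hodge filtration $F$ through $\rho$ and property $(a)$ of $(\ref{rho})$, realizes the cohomology variation of the family; hence the Higgs field attached to $\mathscr{H}$ equals, after transport by $\rho$, the $\mathcal{E}$-graded of $\nabla$ on $\mathrm{Gr}_{\mathcal{E}}\mathcal{R}$. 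Over $\widetilde{\mathcal{M}}$ one has $\mathrm{Gr}_{\mathcal{E}}\mathcal{R}_f\cong R_f\cong S_\Delta/J_f$ as graded rings (the identification recalled just before Proposition \ref{GM on moduli}), with $J_f=(t_0f,t_0\theta_1f,\dots,t_0\theta_df)$; the key point is that $J_f$ is generated exactly by the top-degree (symbol) parts of the defining operators $\mathcal{L}^0_f,\dots,\mathcal{L}^d_f$, so that on passing to $\mathrm{Gr}_{\mathcal{E}}$ the $\mathcal{L}^i_f$-relations become the Jacobian relations cutting out $S_\Delta/J_f$ --- this is the content of the results of Batyrev and Stienstra used in \cite{konisi3}.

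Next I would compute $\mathrm{Gr}_{\mathcal{E}}(\nabla)$ itself. Since $\nabla_{\partial_{a_m}}=\mathcal{D}_{a_m}=\partial/\partial a_m+t_0t^m$, the derivation $\partial/\partial a_m$ preserves the grading of $S_\Delta$ underlying the Hodge filtration and so induces no component in the Griffiths direction, while multiplication by the degree-one element $t_0t^m\in S^1_\Delta$ is precisely what realizes Griffiths transversality. Concretely, since $\partial f/\partial a_m=t^m$, the $a_m$-derivative of a cohomology class is represented, modulo exact forms (that is, modulo the image of the $\mathcal{L}^i_f$'s), by multiplication by $t_0t^m$; hence on $\mathrm{Gr}_{\mathcal{E}}\mathcal{R}_f\cong S_\Delta/J_f$ the Higgs field $\theta_{\partial_{a_m}}$ is the multiplication by the class $[t_0t^m]\in R^1_f$.

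It remains to descend to $\mathcal{M}(\Delta)$ and to match the two identifications of the tangent space. Here I would use that $\mathcal{M}(\Delta)=\mathrm{Proj}\,\mathbb{C}[a_m]^{T_N}$ is a GIT quotient of $\mathbb{P}(A)$, equivalently of $\mathbb{L}(\Delta)\setminus\{0\}$ by $T_{\overline{N}}$, and that near a smooth stable point $[f]$ the quotient map $q\colon\mathbb{L}_{\rm reg}(\Delta)\dashrightarrow\mathcal{M}(\Delta)$ is submersive. Since $H^d(T_N,V_g)$ is canonically invariant under the $T_{\overline{N}}$-action on $\mathbb{L}(\Delta)$ (translation of $V_g$ by $T_N$ and the $t_0$-scaling), the triple $(\mathcal{R},\nabla,\mathcal{E})$ is $T_{\overline{N}}$-equivariant, descends along $q$, and agrees there with $\mathscr{H}$; in particular the Higgs field upstairs is the pullback of the one on $\mathcal{M}(\Delta)$, so $\theta^{\widetilde{\mathcal{M}}}_{\partial_{a_m}}=\theta^{\mathcal{M}}_{dq_f(\partial_{a_m})}$. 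Moreover, under $\Theta_{\mathbb{L}(\Delta),f}\cong S^1_\Delta$ (via $f\mapsto t_0f$) the map $dq_f$ sends $\partial_{a_m}\mapsto[t_0t^m]$, and this is precisely the isomorphism $\Theta_{\mathcal{M}(\Delta),[f]}\cong S^1_\Delta/J^1_f=R^1_f$ of Proposition \ref{GM on moduli}, whose kernel $J^1_f$ is the tangent space to the orbit $T_{\overline{N}}f$. Combining these, $\theta^{\mathcal{M}}_{[t_0t^m]}$ is again the multiplication by $[t_0t^m]$ on $R_f$; since the classes $[t_0t^m]$ for $m\in A$ span $R^1_f$ and both sides are $\mathbb{C}$-linear, $\mathrm{Gr}_F(\nabla)_{[f]}$ is the multiplication map $R^1_f\otimes R_f\to R_f$ after the transport $\mathrm{Gr}_FH^d(T_N,V_f)\cong R_f$ of $(\ref{Higgs field})$.

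The step I expect to be the main obstacle is the filtration/descent bookkeeping: checking cleanly that the Hodge-graded of $\nabla$ on $\mathcal{R}$ is the Jacobian-ring multiplication --- controlling the symbols of the inhomogeneous operators $\mathcal{L}^i_f$ and of $\mathcal{D}_{a_m}$ and verifying that no cross term survives in $\mathrm{Gr}_{\mathcal{E}}$ --- together with the fact that this computation descends through the GIT quotient and genuinely computes the Higgs field of $\mathscr{H}$ on $\mathcal{M}(\Delta)$, not merely its pullback. Everything else is a direct unwinding of the formula $\nabla_{\partial_{a_m}}=\partial/\partial a_m+t_0t^m$ and of Proposition \ref{GM on moduli}.
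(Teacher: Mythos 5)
Your proposal is correct and follows essentially the same route as the paper: use Lemma \ref{loc sys} to identify the Gauss--Manin connection with $\nabla_{\partial_{a_m}}=\partial/\partial a_m+t_0t^m$ on $\mathcal{R}$, observe that since $\mathcal{E}$ is the $t_0$-degree filtration the associated graded of $\nabla$ is multiplication $S^1_\Delta\otimes R_f\to R_f$, and then descend via Proposition \ref{GM on moduli}. Your extra bookkeeping on the $T_{\overline{N}}$-equivariance and the quotient map is just a spelled-out version of the paper's final appeal to that proposition.
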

          \begin{proof}
          By Lemma \ref{loc sys}, 
          the Gauss-Manin connection corresponds to the connection 
          \begin{equation}
           \nabla_{\partial_{a_m}}=\frac{\partial}{\partial a_m}+t^0t^m\ \ (m\in A)
          \end{equation}
          on $\mathcal{R}$ over $\mathbb{L}(\Delta)$.
          Since the filtration $\mathcal{E}$ is determined by the degree of $t_0$, 
          under the identifications  $\mathbb{L}(\Delta)\simeq S_\Delta^1$
          and (\ref{Higgs field}),  
          the Higgs field
          $$\mathrm{Gr}_F(\nabla):
           \Theta_{\mathbb{L}(\Delta),f}\otimes \mathrm{Gr}_{F}H^d(T_N,V_f)\to \mathrm{Gr}_{F}H^d(T_N,V_f)$$
          corresponds to the multiplication
          $$S_\Delta^1\otimes R_f\to R_f.$$
          By Proposition \ref{GM on moduli}, 
          this implies the conclusion.
          \end{proof}
      %%%%%%%%%%%%%%%%%%%%%%%%%%   subsection   %%%%%%%%%%%%%
      \subsection{$H^2$-generation condition and MFS for local B-models}\label{H2 B}
          Let $[f_0]\in \mathcal{M}(\Delta)$ be a smooth stable point and assume that $f_0$ is $\Delta$-regular.
          And let $\mathscr{H}_\Delta$ be the variation of mixed Hodge structure on the germ of complex 
          manifold $(\mathcal{M}(\Delta),[f_0])$ defined by $H^d(T_N,V_{f}), (f\in \mathcal{M}(\Delta))$.
          By proposition \ref{toric higgs}, $\mathscr{H}_\Delta$ satisfies the $H^2$-generation condition 
          if and only if $R_f$ is generated by $R^1_f$.
          In this section, we consider the following condition: 
          $S^1_\Delta$ generates $S_\Delta$.
          If this condition is satisfied, then $\mathscr{H}_\Delta$ satisfies the $H^2$-generation condition
          and hence gives rise to a mixed Frobenius manifold. 
          %as a sufficient condition to the $H^2$-generation condition for $\mathscr{H}_\Delta$.
          
          \begin{definition}[{\cite[Definition 12.3]{Baty}}]
           A polyhedron $\Delta\subset N^{\vee}_\mathbb{R}$ is called {\bf reflexive}
           if it satisfies the following conditions.
             \begin{enumerate}
              \item[\rm 1.] $\Delta$ contains $0\in N^{\vee}$.
              \item[\rm 2.] For any codimension $1$ face $\Delta'$, 
                            there exists a primitive element $u\in N$ 
                            such that 
                             \begin{equation*}
                               \Delta'=\{m\in N^\vee_\mathbb{R}\mid \langle m,u\rangle=-1\}.
                             \end{equation*}
             \end{enumerate}
          \end{definition}
          In the following, we assume that $\Delta$ is reflexive.
          \begin{remark}
           Reflexive polyhedron $\Delta$ has following properties $(${\rm \cite[\rm Theorem 12.2]{Baty}}$)$.
            \begin{enumerate}
             \item[\rm 1.] Its dual polyhedron 
                           $\Delta^*:=\{u\in N_\mathbb{R}\mid \langle\Delta, u\rangle\geq -1\}$
                           is also reflexive. 
             \item[\rm 2.] $D_\Delta=\mathbb{P}_\Delta\setminus T_N$ is an 
                           anti-canonical divisor of $\mathbb{P}_\Delta$.
             \item[\rm 3.] $\mathbb{P}_\Delta$ is a Fano variety with Gorenstein singularities.
            \end{enumerate}
          \end{remark}
          
          \begin{lemma}\label{2-dim lemma}
            Let $\Delta\subset N^\vee_\mathbb{R}$ be a $2$-dimensional reflexive polyhedron.
            Then, $S_\Delta$ is generated by $S^1_\Delta$.
          \end{lemma}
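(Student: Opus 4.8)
The plan is to reduce the statement to a classical fact about lattice polygons: every two-dimensional lattice polytope $\Delta$ is \emph{integrally closed}, meaning that for every $k \geq 1$ each lattice point of the dilate $k\Delta$ is a sum of $k$ lattice points of $\Delta$. First I would unwind the grading on $S_\Delta = \mathbb{C}[\sigma_\Delta \cap \overline{N^\vee}]$. Since $\sigma_\Delta$ is the cone over $\{1\}\times\Delta$, its lattice points at height $k$ are exactly $\{(k,m)\mid m\in k\Delta\cap N^\vee\}$, so $S_\Delta^k$ has $\mathbb{C}$-basis $\{t_0^k t^m \mid m \in k\Delta \cap N^\vee\}$, and the ring structure is $t_0^k t^m \cdot t_0^\ell t^{m'} = t_0^{k+\ell} t^{m+m'}$. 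Hence the subalgebra of $S_\Delta$ generated by $S_\Delta^1$ has degree-$k$ part spanned by those $t_0^k t^m$ with $m$ in the $k$-fold Minkowski sum $\sum_{j=1}^k(\Delta\cap N^\vee)$ of the lattice-point set, and "$S_\Delta$ is generated by $S_\Delta^1$" becomes exactly the assertion that $\Delta$ is integrally closed.

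For the integral closedness of the lattice polygon $\Delta$, I would argue via a unimodular triangulation. Choose a triangulation $\mathcal{T}$ of $\Delta$ whose vertex set is precisely $\Delta \cap N^\vee$; then every triangle $T \in \mathcal{T}$ contains no lattice point other than its three vertices, so by Pick's formula $T$ has area $1/2$ and is therefore unimodular, i.e.\ carried onto the standard simplex $\mathrm{conv}\{0,e_1,e_2\}$ by some element of $\mathrm{GL}(N^\vee)\ltimes N^\vee$. For a unimodular triangle with vertices $v_0, v_1, v_2$, reducing to the standard simplex gives at once
\begin{equation*}
kT \cap N^\vee = \{\, a_0 v_0 + a_1 v_1 + a_2 v_2 \mid a_i \in \mathbb{Z}_{\geq 0},\ a_0 + a_1 + a_2 = k \,\},
\end{equation*}
so every lattice point of $kT$ is a sum of $k$ vertices of $T$.

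Finally, given $k \geq 1$ and a lattice point $m \in k\Delta$, I would use $k\Delta = \bigcup_{T \in \mathcal{T}} kT$ to get $m \in kT$ for some $T \in \mathcal{T}$, and then the previous step writes $m = \sum_{i=1}^k m_i$ with each $m_i$ a vertex of $T$, hence $m_i \in \Delta \cap N^\vee$. This shows $\Delta$ is integrally closed, and therefore $S_\Delta$ is generated by $S_\Delta^1$.

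I do not anticipate a genuine obstacle: the only non-formal inputs are the existence of a full (hence unimodular) triangulation of a lattice polygon and Pick's theorem, both elementary and special to dimension two; in particular reflexivity of $\Delta$ is not really used in this lemma. If one preferred to avoid triangulations, an alternative would be to invoke the classification of the sixteen two-dimensional reflexive polytopes up to lattice equivalence and check integral closedness case by case, but the triangulation argument is shorter and more transparent.
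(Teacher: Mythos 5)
Your argument is correct, but it takes a different route from the paper. The paper exploits reflexivity: it labels the nonzero lattice points $m_1,\dots,m_\ell$ of $\Delta$ (all of which lie on the boundary) in cyclic order, uses the fact that the fan on these rays is the fan of a smooth weak Fano toric surface to conclude that each consecutive pair $\{m_i,m_{i+1}\}$ is a lattice basis, and then decomposes the cone $\sigma_\Delta$ into the unimodular subcones $\mathrm{Cone}\big((1,0),(1,m_i),(1,m_{i+1})\big)$, so that $S_\Delta=\sum_i S_i$ with each $S_i$ generated in degree $1$. You instead reformulate degree-one generation of $S_\Delta$ as the statement that the lattice polygon $\Delta$ is integrally closed, and prove that for an arbitrary lattice polygon via a full triangulation whose cells are empty, hence (by Pick's theorem) unimodular, triangles. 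In effect both proofs rest on a unimodular decomposition, but yours replaces the star decomposition at the origin (which is where reflexivity and the quoted smooth weak Fano fact enter) by an arbitrary full triangulation plus Pick, so it is more elementary in its inputs and strictly more general: it shows reflexivity is irrelevant to this lemma, whereas the paper's proof is tailored to the reflexive case appearing in the application. The only steps worth making explicit in a write-up are the identification $S^k_\Delta=\bigoplus_{m\in k\Delta\cap N^\vee}\mathbb{C}\,t_0^kt^m$ (which you do) and the standard existence of a triangulation of $\Delta$ using all of $\Delta\cap N^\vee$ as vertices; with those in place your proof is complete.
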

          \begin{proof}
            Fix an isomorphism $N^\vee\simeq \mathbb{Z}^2$.
            Label the elements of $\Delta\cap N^\vee\setminus \{0\}$ 
            anti-clockwise with $\{m_1,m_2,\dots m_l\}$. 
            For each $i$, let $\tau_i$ be the cone %in $N^\vee_\mathbb{R}\simeq \mathbb{R}^2$
            generated by $m_i$ and $m_{i+1}$. (Here, we put $m_{\ell+1}:=m_1$).
            The cones $\{\tau_i\}_i$ define a complete fan, which we denote by $\Sigma(\Delta^*)$.
            It is known that the toric manifold corresponding to $\Sigma(\Delta^*)$
            is smooth and weak Fano.
            Therefore, the pair $\{m_i,m_{i+1}\}$ is a basis of $N^\vee$ for every $i$.
            %Under the isomorphism $\mathbb{Z} \oplus N^\vee \simeq $
            Put $\sigma_i:=\mathrm{Cone}((1,0),(1,m_i),(1,m_{i+1}))\subset \mathbb{R}\times N^\vee_\mathbb{R}$
            and $S_i:=\mathrm{Spec}(\mathbb{C}[\sigma_i\cap (\mathbb{Z}\times N^\vee)])$.
            Since $\{m_i,m_{i+1}\}$ is a basis of $N^\vee$, each $S_i$ is generated by $S^1_i$.
            The equation $S_\Delta=\sum_iS_i$ shows the lemma. 
          \end{proof}
          For higher dimensional case, we consider following condition.
          \begin{definition}[{\cite[Definition 12.5, Remark 12.6]{Baty}}]
            Let $\Delta$ be a reflexive polyhedron and $\Delta^*$ its dual.
            Then $\Delta$ is called {\bf Fano polyhedron} if $\mathbb{P}_\Delta$ is smooth 
            Fano variety.
          \end{definition}
          
          \begin{lemma}[{\cite[Lemma 12.9]{Baty}}]\label{batyrev lemma}
            If $\Delta$ is Fano polyhedron, then $S_\Delta$ is generated by
            $S^1_\Delta$.\qed
          \end{lemma}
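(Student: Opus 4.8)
The plan is to extend the argument used in the proof of Lemma \ref{2-dim lemma}. What we must show is that the graded ring $S_\Delta=\mathbb{C}[\sigma_\Delta\cap\overline{N^\vee}]$ is generated over $\mathbb{C}$ by its degree one piece $S^1_\Delta$; concretely, that every lattice point of the cone $\sigma_\Delta=\mathrm{Cone}(\{1\}\times\Delta)$ is a sum of lattice points of the form $(1,m)$ with $m\in\Delta\cap N^\vee$. As in Lemma \ref{2-dim lemma}, it is enough to produce finitely many subcones $\sigma_j\subset\sigma_\Delta$ with $\sigma_\Delta=\bigcup_j\sigma_j$ such that each $\sigma_j$ is smooth (i.e.\ generated by a $\mathbb{Z}$-basis of $\overline{N^\vee}$) with all minimal generators in degree one: then each $\mathbb{C}[\sigma_j\cap\overline{N^\vee}]$ is a polynomial ring generated in degree one, every monomial of $S_\Delta$ lies in some $\mathbb{C}[\sigma_j\cap\overline{N^\vee}]$, and $S_\Delta=\sum_j\mathbb{C}[\sigma_j\cap\overline{N^\vee}]$ yields the claim.

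To construct such a cover I would reduce to finding a covering of $\Delta$ by unimodular lattice simplices. Suppose $\Delta=\bigcup_j\delta_j$, where each $\delta_j$ is a $d$-dimensional simplex with vertices $v^{(j)}_0,\dots,v^{(j)}_d\in\Delta\cap N^\vee$ of normalized volume $1$. Setting $\sigma_j:=\mathrm{Cone}\big((1,v^{(j)}_0),\dots,(1,v^{(j)}_d)\big)\subset\overline{N^\vee_\mathbb{R}}$, the unimodularity of $\delta_j$ is exactly the statement that $(1,v^{(j)}_0),\dots,(1,v^{(j)}_d)$ form a $\mathbb{Z}$-basis of $\overline{N^\vee}$ (subtract the first vector from the others: the resulting determinant equals the normalized volume of $\delta_j$), and these generators all have degree one. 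Moreover $\sigma_\Delta=\bigcup_j\sigma_j$ since $\{1\}\times\Delta=\bigcup_j(\{1\}\times\delta_j)$. Thus everything comes down to covering $\Delta$ by unimodular lattice simplices.

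This last point is the crux, and it is where the Fano hypothesis is used; note that Lemma \ref{2-dim lemma} needs no such hypothesis because for a two-dimensional reflexive $\Delta$ any two consecutive boundary lattice points already form a lattice basis, so that $\Delta=\bigcup_i\mathrm{conv}(0,m_i,m_{i+1})$ is a unimodular covering. Since $\Delta$ is a Fano polyhedron, $\mathbb{P}_\Delta$ is smooth, so the normal fan of $\Delta$ is a smooth complete fan --- equivalently, $\Delta$ is a smooth (Delzant) polytope --- and I would use this smoothness, via the structure of smooth toric Fano varieties, to produce the required unimodular covering of $\Delta$. The main obstacle is that in dimension $\geq 3$ the facets of $\Delta$ need not be simplices, so one cannot simply cone them off to the origin as in the surface case; constructing a lattice covering of $\Delta$ by unimodular simplices in a manner controlled by the smoothness of the fan is the delicate combinatorial step, and is exactly the content of \cite[Lemma 12.9]{Baty}.
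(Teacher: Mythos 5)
The paper does not prove this lemma at all: it is quoted verbatim from \cite[Lemma 12.9]{Baty} with the end-of-proof mark standing in for a proof, so there is no internal argument to compare yours against. Your reduction itself is correct and is the natural extension of the paper's proof of Lemma \ref{2-dim lemma}: if $\Delta$ can be covered by $d$-dimensional lattice simplices of normalized volume one with vertices in $\Delta\cap N^\vee$, then the cones over $\{1\}\times\delta_j$ are generated by $\mathbb{Z}$-bases of $\overline{N^\vee}$ lying in degree one, they cover $\sigma_\Delta$, and $S_\Delta$ is generated by $S^1_\Delta$.

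As a proof of Lemma \ref{batyrev lemma}, however, the proposal has a genuine gap exactly at the decisive step, and you say so yourself: the existence of a unimodular covering of $\Delta$ is not deduced from the Fano hypothesis but deferred to ``the content of \cite[Lemma 12.9]{Baty}'', i.e.\ to the statement being proved. This step cannot be treated as routine. The hypothesis that $\mathbb{P}_\Delta$ is smooth says that the normal fan of $\Delta$ is smooth, i.e.\ that $\Delta$ is a smooth (Delzant) lattice polytope; but for general smooth lattice polytopes it is a well-known open problem (Oda's question) whether they are even integrally closed, let alone admit unimodular covers, so ``use the smoothness, via the structure of smooth toric Fano varieties'' is not an appeal one can make without further argument --- the reflexivity and the Fano condition must be exploited in an essential way, and your sketch gives no indication of how (nor is it clear that Batyrev's own argument runs through unimodular coverings; note also that, in contrast with Lemma \ref{2-dim lemma}, the smoothness here concerns the normal fan of $\Delta$, not the fan of cones over the faces of $\Delta$, so facets of $\Delta$ being unimodular simplices is not what the hypothesis directly provides). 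Since the paper simply imports the lemma, the honest alternatives are to do the same --- cite \cite[Lemma 12.9]{Baty} without proof --- or to supply the missing combinatorial argument; the proposal as written does neither.
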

          
          Now, we assume that $\Delta$ is 2-dimensional or Fano polyhedron.
          Fix a Laurent polynomial $f_0\in \mathbb{L}_{\rm reg}(\Delta)$ such that 
          corresponding $[f_0]\in \mathcal{M}(\Delta)$ is smooth stable point.
%          Fix a generator of $\mathrm{det}N^\vee= \bigwedge^d N^\vee$
%          and let $\omega_0$ be the corresponding class in $H^0(T_N,\Omega_{T_N}^d)$ 
          \begin{corollary}\label{construction in local B} 
          Let $\mathscr{H}_\Delta$ be the variation of mixed Hodge structure 
          on $(\mathcal{M}(\Delta),[f_0])$ defined by $H^d(T_N,V_f),\ ([f]\in\mathcal{M}(\Delta))$.
          Fix a graded polarization $S$ and an opposite filtration $U$. 
          Fix a generator $\zeta_0$ of $\mathrm{Gr}_F^d(H^d(T_N,V_{f_0}))$.
          There  exists the tuple $\big{(}(\widetilde{M},0)\mathscr{F},\iota,i\big{)}$ with 
          following properties uniquely up to isomorphisms.
              \begin{itemize}
                \item $\mathscr{F}=({\bm \nabla},\circ,E,e,\mathcal{I},g)$ is a MFS of charge $2d$
                      on a germ $(\widetilde{M},0)$ of a complex manifold.
                \item $\iota:(\mathcal{M}(\Delta),[f_0])\hookrightarrow (\widetilde{M},0)$ is a closed embedding.
                \item $i:\mathcal{T}(\mathscr{H}_\Delta,S,U)
                         \to \iota^*\mathcal{T}(\mathscr{F})$ 
                      is an isomorphism of mixed trTLEP-structure with
                      $i|_{(0,0)}(\zeta_0)=e|_0$.
              \end{itemize}
          \end{corollary}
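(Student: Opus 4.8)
The plan is to obtain the statement as an immediate consequence of Corollary~\ref{H2-generated mixed Frobenius}. Since $[f_0]$ is a smooth stable point, $(\mathcal{M}(\Delta),[f_0])$ is a germ of a complex manifold and $\mathscr{H}_\Delta$ is a graded polarizable VMHS on it, so the only thing to check before applying that corollary is that $\mathscr{H}_\Delta$ satisfies the $H^2$-generation condition of Definition~\ref{H2-generation condition} and that the associated integer $w=\max\{\ell\mid F^\ell\neq 0\}$ equals $d$; once this is done, Corollary~\ref{H2-generated mixed Frobenius} applied to $\mathscr{H}_\Delta$, the chosen $S$, the chosen $U$, and the fixed vector $\zeta_0$ produces a tuple $((\widetilde M,0),\mathscr{F},\iota,i)$ with $\mathscr{F}$ a MFS of charge $2w=2d$, $\iota$ a closed embedding, and $i\colon\mathcal{T}(\mathscr{H}_\Delta,S,U)\xrightarrow{\sim}\iota^*\mathcal{T}(\mathscr{F})$ with $i|_{(0,0)}(\zeta_0)=e|_0$, all unique up to isomorphism. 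The substantive input is the combinatorics already in place: that $S_\Delta$ is generated by $S^1_\Delta$.

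To verify the $H^2$-generation condition, I would first invoke the running hypothesis that $\Delta$ is two-dimensional or Fano, so that Lemma~\ref{2-dim lemma} (respectively Lemma~\ref{batyrev lemma}) gives that $S_\Delta$ is generated over $\mathbb{C}$ by $S^1_\Delta$; because $J_{f_0}=(t_0f_0,t_0\theta_1f_0,\dots,t_0\theta_df_0)$ is a homogeneous ideal, it follows that the graded ring $R_{f_0}\simeq S_\Delta/J_{f_0}$ is generated by $R^1_{f_0}=S^1_\Delta/J^1_{f_0}$. For clause (i) of Definition~\ref{H2-generation condition}, property $(a)$ of the isomorphism (\ref{rho}) together with the graded-ring identification $R_{f_0}\simeq S_\Delta/J_{f_0}$ gives $\mathrm{Gr}^d_FH^d(T_N,V_{f_0})\simeq R^0_{f_0}=S^0_\Delta/J^0_{f_0}=\mathbb{C}$; since moreover $F^\ell$ vanishes for $\ell>d$, this forces $w=d$ and shows $\mathrm{rk}\,F^w=1$, while by (\ref{Higgs field}) and Proposition~\ref{GM on moduli} one has $\mathrm{Gr}^{w-1}_FH^d(T_N,V_{f_0})\simeq R^1_{f_0}\simeq\Theta_{\mathcal{M}(\Delta),[f_0]}$, whose rank equals $\dim(\mathcal{M}(\Delta),[f_0])$. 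For clause (ii), Proposition~\ref{toric higgs} identifies the map $\mathrm{Sym}\,\Theta_{\mathcal{M}(\Delta),[f_0]}\otimes F^w\to\mathrm{Gr}_FH^d(T_N,V_{f_0})$ induced by the Higgs field $\theta=\mathrm{Gr}_F(\nabla)$ with the multiplication $\mathrm{Sym}(R^1_{f_0})\otimes R^0_{f_0}\to R_{f_0}$; as $R^0_{f_0}=\mathbb{C}$, surjectivity of this map is exactly the statement, established above, that $R_{f_0}$ is generated by $R^1_{f_0}$. Hence $\mathscr{H}_\Delta$ satisfies the $H^2$-generation condition with $w=d$.

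With the $H^2$-generation condition in hand, I would conclude by applying Corollary~\ref{H2-generated mixed Frobenius} as in the first paragraph; rewriting the output MFS in the notation $({\bm \nabla},\circ,E,e,\mathcal{I},g)$ used in the statement gives exactly the claimed tuple and its uniqueness. I do not expect a genuine obstacle: Lemmas~\ref{2-dim lemma} and~\ref{batyrev lemma} and Proposition~\ref{toric higgs} carry all the real work, and the correspondence between the $H^2$-generation condition and the algebraic generation of $R_{f_0}$ by $R^1_{f_0}$ has already been recorded. The only points demanding care are bookkeeping ones, namely matching the $t_0$-degree convention on $S_\Delta$ with the Hodge filtration so as to pin down $w=d$ and the charge $2d$, and confirming that ``$R_{f_0}$ generated by $R^1_{f_0}$'' is precisely clause (ii) rather than some weaker surjectivity; both are supplied by the identifications in (\ref{rho}), (\ref{Higgs field}), Proposition~\ref{GM on moduli}, and Proposition~\ref{toric higgs}.
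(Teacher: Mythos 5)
Your proposal is correct and follows essentially the same route as the paper: the paper's proof likewise deduces from Lemma \ref{2-dim lemma} and Lemma \ref{batyrev lemma} that $\zeta_0$ generates $\mathrm{Gr}_F H^d(T_N,V_{f_0})$ over $\Theta_{\mathcal{M}(\Delta),[f_0]}$ (i.e.\ the $H^2$-generation condition, via Proposition \ref{toric higgs} and the identifications around (\ref{rho}) and (\ref{Higgs field})) and then applies Corollary \ref{H2-generated mixed Frobenius}. Your write-up merely makes explicit the bookkeeping (clause (i), $w=d$, charge $2d$) that the paper leaves to the preceding discussion.
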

          \begin{proof}
          By Lemma \ref{2-dim lemma} and Lemma \ref{batyrev lemma}, 
          $\zeta_0$ generates
          $\mathrm{Gr}_{F}H^d(T_N,V_{f_0})$ over $\Theta_{\mathcal{M}(\Delta), [f_0]}$.
          Hence, by Corollary \ref{H2-generated mixed Frobenius}, 
          we have the conclusion.
          \end{proof}          
%%%%%%%%%%%%%%%%%%%%%%  section   application to local A-models   %%%%%%%%%%%%%%%%%
            \section{Application to local A-models}
          In this section, we give an application of the construction theorem 
      (Corollary \ref{construction theorem of mixed Frobenius manifold}) to local A-models.
      %%%%%%%%%%%%%%%%%%      subsection  limit mixed trTLEP   %%%%%%%%%%%%%%%%%
                 \subsection{Limit mixed trTLEP-structure}\label{limit G}
      
       %%%%%%%%%%%%%%%%%      subsection     %%%%%%%%%%%%%%%%
       \subsubsection{Mixed trTLEP-structure defined by a nilpotent endomorphism}
       Let $(\mathcal{H},\nabla,P)$ be a trTLEP($0$)-structure on a complex manifold $M$. 
       Let $p_\lambda:\mathbb{P}^1_\lambda\times M\to M$ be the projection.
       Assume that there is a nilpotent endomorphism $\mathfrak{N}$ on $\mathcal{H}$ with the following conditions;
         \begin{align}
         \label{flat nilpotent}
          [\nabla,\mathfrak{N}]=\mathfrak{N}\frac{\mathrm{d}\lambda}{\lambda}, \\
          \label{triviality}
          \mathfrak{N}=p_\lambda^*(\mathfrak{N}|_{\lambda=0}), \\
          \label{pairing nilpotent}
          P(\mathfrak{N}a,b)=P(a,\mathfrak{N}b).
         \end{align}
       We obtain a mixed trTLEP-structure as follows.
       Let $\mathcal{G}$ be the cokernel of $\mathfrak{N}$. 
       By (\ref{triviality}), $\mathcal{G}$ is a vector bundle over $\mathbb{P}^1_\lambda\times M$
       such that $p_{\lambda *}p^*_\lambda\mathcal{G}\to\mathcal{G}$ is an isomorphism. 
       Condition (\ref{flat nilpotent}) implies that $\nabla$ induces a flat connection $\overline{\nabla}$
       on $\mathcal{G}$.
       Let $W=(W_k\mid k\in \mathbb{Z})$ be a filtration on $\mathcal{G}$ defined by 
        \begin{equation}\label{limit weight}
        W_k:=
        \begin{cases} 0 &(k<0)\\
        \mathrm{Im}\big{(}\mathrm{Ker}(\mathfrak{N}^{k+1})\to \mathcal{G}\big{)}& (k\geq 0).      
        \end{cases}
        \end{equation}
       The graded pairing
       $Q=(Q_k:\mathrm{Gr}^W_k(\mathcal{G})\otimes j_\lambda^*\mathrm{Gr}^W_k(\mathcal{G})\to 
       \lambda^{-k}\mathcal{O}_{\mathbb{P}_\lambda^1\times M}\mid k\in\mathbb{Z})$
       is given by
        \begin{equation}\label{limit pairing}
         Q_k([a],[b]):=
         \begin{cases} 0&(k<0)\\
         P(\lambda^{-k}\mathfrak{N}^{k}a,b)&(k\geq 0).
         \end{cases}
        \end{equation}
       Here, $a$ is a local section of $\mathrm{Ker}(\mathfrak{N}^{k+1})$ 
       and $[a]$ is the corresponding class in $\mathrm{Gr}^W_k(\mathcal{G})$. 
       Similarly, $b$ is a local section of $j_\lambda^*\mathrm{Ker}(\mathfrak{N}^{k+1})$ 
       and $[b]$ is the corresponding class in $j_\lambda^*\mathrm{Gr}^W_k(\mathcal{G})$.
       We get the following.
       %%%%%%%%%%%%%%%%%  lemma  %%%%%%%%%%%
       \begin{lemma}\label{nilpotent mixed}
       The tuple $\mathcal{T}_\mathfrak{N}:=(\mathcal{G},\overline{\nabla},W,Q)$ 
       is a mixed {\rm trTLEP}-structure.
       \end{lemma}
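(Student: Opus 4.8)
The plan is to verify the three items of Definition~\ref{def mixed trTLEP} for $(\mathcal{G},\overline{\nabla},W,Q)$, building on what has already been recorded before the statement (that $\mathcal{G}$ is a vector bundle trivial along $\mathbb{P}^1_\lambda$ and that $\overline{\nabla}$ is a flat connection on it). First I would extract from (\ref{flat nilpotent}) the commutator relation $[\nabla,\mathfrak{N}^{j}]=j\,\mathfrak{N}^{j}\frac{\mathrm{d}\lambda}{\lambda}$ for all $j\geq 0$ by an easy induction. Together with (\ref{triviality}) this shows that each $\mathrm{Ker}(\mathfrak{N}^{j})$ and $\mathrm{Im}(\mathfrak{N}^{j})$ is a $\nabla$-stable (in the meromorphic sense) subbundle pulled back from $\{\lambda=0\}$; consequently every $W_k$ in (\ref{limit weight}) is an $\overline{\nabla}$-flat subbundle of $\mathcal{G}$ trivial along $\mathbb{P}^1_\lambda$, and $\overline{\nabla}$ descends to each $\mathrm{Gr}^W_k(\mathcal{G})$ with the same pole behaviour as $\nabla$ (pole order $1$ along $\{0\}$, logarithmic along $\{\infty\}$). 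Hence $(\mathcal{G},\overline{\nabla})$ is a trTLE-structure and $W$ is a weight filtration, and it remains only to show that $(\mathrm{Gr}^W_k(\mathcal{G}),\overline{\nabla},Q_k)$ is a trTLEP$(-k)$-structure for every $k$.

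For the well-definedness of $Q_k$ in (\ref{limit pairing}): if $a\in\mathrm{Ker}(\mathfrak{N}^{k+1})$ represents $0$ in $\mathrm{Gr}^W_k(\mathcal{G})$, then $a\in\mathrm{Ker}(\mathfrak{N}^{k})+\mathrm{Im}(\mathfrak{N})$, so $\mathfrak{N}^k a=\mathfrak{N}^{k+1}c$ for some $c$, and repeated use of (\ref{pairing nilpotent}) gives $P(\mathfrak{N}^k a,b)=P(\mathfrak{N}^{k+1}c,b)=P(c,\mathfrak{N}^{k+1}b)=0$ for every $b\in j_\lambda^*\mathrm{Ker}(\mathfrak{N}^{k+1})$; the second argument is handled symmetrically. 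Since $P$ takes values in $\mathcal{O}=\lambda^{0}\mathcal{O}$, formula (\ref{limit pairing}) lands in $\lambda^{-k}\mathcal{O}$, as required. The $(-1)^{-k}$-symmetry of $Q_k$ follows from the $(-1)^{0}$-symmetry of $P$, the identity $P(\mathfrak{N}^k a,b)=P(a,\mathfrak{N}^k b)$ obtained by moving all powers of $\mathfrak{N}$ across via (\ref{pairing nilpotent}), and the sign $j_\lambda^*(\lambda^{-k})=(-1)^k\lambda^{-k}$.

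The heart of the proof is the $\overline{\nabla}$-flatness of $Q_k$. Writing $g:=P(\mathfrak{N}^k a,b)\in\mathcal{O}$, so that $Q_k([a],[b])=\lambda^{-k}g$, one has $\mathrm{d}(\lambda^{-k}g)=-k\lambda^{-k}g\,\frac{\mathrm{d}\lambda}{\lambda}+\lambda^{-k}\mathrm{d}g$, while the $\nabla$-flatness of $P$ together with $[\nabla,\mathfrak{N}^{k}]=k\,\mathfrak{N}^{k}\frac{\mathrm{d}\lambda}{\lambda}$ yields $\mathrm{d}g=P(\mathfrak{N}^k\nabla a,b)+k\,g\,\frac{\mathrm{d}\lambda}{\lambda}+P(\mathfrak{N}^k a,\nabla b)$. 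The term $k\,g\,\frac{\mathrm{d}\lambda}{\lambda}$ is exactly cancelled by the contribution of $\mathrm{d}(\lambda^{-k})$, leaving $\mathrm{d}(Q_k([a],[b]))=\lambda^{-k}P(\mathfrak{N}^k\nabla a,b)+\lambda^{-k}P(\mathfrak{N}^k a,\nabla b)=Q_k(\overline{\nabla}[a],[b])+Q_k([a],\overline{\nabla}[b])$, which is precisely the flatness of $Q_k$. This cancellation, between the $\lambda$-twist of the target $\lambda^{-k}\mathcal{O}$ and the twisting term in (\ref{flat nilpotent}), is the essential mechanism of the lemma, and the step where I expect the bookkeeping (powers of $\lambda$, the action of $j_\lambda$, the conventions for flatness of a pairing) to need the most care.

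Finally, non-degeneracy. From (\ref{pairing nilpotent}) one gets $\mathrm{Im}(\mathfrak{N}^{k+1})\subseteq\mathrm{Ker}(\mathfrak{N}^{k+1})^{\perp}$, where $\perp$ denotes the orthogonal with respect to $P$; since $\mathrm{rank}\,\mathrm{Im}(\mathfrak{N}^{k+1})=\mathrm{rank}\,\mathcal{H}-\mathrm{rank}\,\mathrm{Ker}(\mathfrak{N}^{k+1})$ equals the rank of $\mathrm{Ker}(\mathfrak{N}^{k+1})^{\perp}$ by non-degeneracy of $P$, we obtain the equality $\mathrm{Ker}(\mathfrak{N}^{k+1})^{\perp}=\mathrm{Im}(\mathfrak{N}^{k+1})$. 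Hence, if $[a]\in\mathrm{Gr}^W_k(\mathcal{G})$ satisfies $Q_k([a],[b])=0$ for all $b\in j_\lambda^*\mathrm{Ker}(\mathfrak{N}^{k+1})$, then $\mathfrak{N}^k a\in\mathrm{Ker}(\mathfrak{N}^{k+1})^{\perp}=\mathrm{Im}(\mathfrak{N}^{k+1})$, so $a\in\mathrm{Ker}(\mathfrak{N}^{k})+\mathrm{Im}(\mathfrak{N})$ and $[a]=0$. Thus the bundle map induced by $\lambda^{k}Q_k$ is injective, and being a map between bundles of equal rank it is an isomorphism. Combining these verifications with Definition~\ref{def mixed trTLEP} shows that $\mathcal{T}_\mathfrak{N}$ is a mixed trTLEP-structure.
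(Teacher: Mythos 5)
Your proof is correct and follows essentially the same route as the paper: flatness of the $W_k$ via $[\nabla,\mathfrak{N}^j]=j\,\mathfrak{N}^j\,\mathrm{d}\lambda/\lambda$, well-definedness and non-degeneracy of $Q_k$ via $P$-orthogonality of kernels and images of powers of $\mathfrak{N}$, and flatness of $Q_k$ by the cancellation of the $\pm k\,\mathrm{d}\lambda/\lambda$ terms coming from the twist $\lambda^{-k}$ and the commutator identity. The only differences are cosmetic: you verify the $(-1)^{-k}$-symmetry explicitly (the paper leaves it implicit), and your final non-degeneracy step should be read fiberwise (an injective morphism of bundles of equal rank need not be an isomorphism as a sheaf map), which is how your kernel argument in fact operates.
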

       \begin{proof}       
       By (\ref{triviality}), 
       the adjunction $p_{\lambda *}p^*_\lambda\mathrm{Gr}^W_k\mathcal{G}\to\mathrm{Gr}^W_k\mathcal{G}$
       is an isomorphism for every $k$.
       By (\ref{flat nilpotent}), we have
       $[\nabla,\mathfrak{N}^k]=k\mathfrak{N}^k\mathrm{d}\lambda/\lambda$.
       Therefore, for $a\in \mathrm{Ker}(\mathfrak{N}^{k+1})$, we have 
       $$ \mathfrak{N}^{k+1}\nabla a
         =\nabla\mathfrak{N}^{k+1}a+(k+1)\mathfrak{N}^{k+1}a\frac{\mathrm{d}\lambda}{\lambda}=0.$$
       Hence $\nabla a\in \mathrm{Ker}{\mathfrak{N}^{k+1}}$.
       This implies that the subbundle $W_k$ is $\overline{\nabla}$-flat.
       
       Put $P_k(a,b):=P(\mathfrak{N}^ka,b)$. 
       For a subbundle $\mathcal{J}$ of $\mathcal{H}$, 
       put $\mathcal{J}^{\bot k}:=\{a\in\mathcal{H}\mid P_k(a,b)=0\ \text{for all } b\in j^*_\lambda \mathcal{J}\}$.
       Since $P$ is non-degenerate, we have
       $\mathcal{H}^{\bot k}=\mathrm{Ker}{\mathfrak{N}^K}$ and 
       $\mathrm{Im}\mathfrak{N}^{\bot k}=\mathrm{Ker}\mathfrak{N}^{k+1}$.
       Therefore, we have 
       \begin{align}\label{Pk}
       \big{(}\mathrm{Ker}\mathfrak{N}^{k+1}\big{)}^{\bot k}
       =\mathrm{Im}\mathfrak{N}+\mathrm{Ker}{\mathfrak{N}^k}.
       \end{align}
       For   
       $a\in\mathrm{Ker}(\mathfrak{N}^{k+1})$, and $b\in j^*\mathrm{Ker}(\mathfrak{N}^{k+1})$, 
       Let $[a]\in \mathrm{Gr}^W_k(\mathcal{G}),[b]\in j^*\mathrm{Gr}^W_k(\mathcal{G}))$ 
       be the corresponding classes.
       The relation $\lambda^k Q_k([a],[b])=P_k(a,b)$ and $(\ref{Pk})$
       shows that $Q_k$ is well defined and non-degenerate.
       
       Let $a,b$ and $[a],[b]$ as above. 
       We have
       \begin{align*}
        &\mathrm{d}Q_k([a],[b])-Q_k(\overline{\nabla}[a],[b])-Q_k([a],\overline{\nabla}[b])\\
        &=\frac{1}{\lambda^k}\Big{\{}(-k)P(\mathfrak{N}^ka,b)\frac{\mathrm{d}\lambda}{\lambda}
        +\mathrm{d}P(\mathfrak{N}^ka,b)-P(\mathfrak{N}^k\nabla a,b)-P(\mathfrak{N}^ka,\nabla b)
         \Big{\}}\\
        &=\frac{1}{\lambda^k}
          \Big{\{}
          \big{(}(-k)P(\mathfrak{N}^ka,b)+kP(\mathfrak{N}^ka,b)\big{)}\frac{\mathrm{d}\lambda}{\lambda}
          +\mathrm{d}P(\mathfrak{N}^ka,b)-P(\nabla\mathfrak{N}^k a,b)-P(\mathfrak{N}^ka,\nabla b) 
          \Big{\}}\\
        &=0.      
       \end{align*}
       This proves the flatness of $Q_k$.
       \end{proof}

       %$\mathfrak{ABCDEFGHIJKLMNOPQRSTUVWXYZ}$
       %%%%%%%%%%%%%%%%     subsection     %%%%%%%%%%%%%%%
       \subsubsection{Logarithmic trTLEP-structure and limit mixed trTLEP-structure}
      Let $Z$ be a normal crossing hypersurface of a complex manifold $M$.
      Recall the definition of logarithmic trTLEP-structure.
      \begin{definition}[{\cite[Definition 1.8]{R1}}]\label{logarithmic trTLEP}
      Let $k$ be an integer.
      A {\bf trTLEP$(k)$-structure} on $M$ {\bf logarithmic along $Z$}
     % $(($$\log Z$-trTLEP$(k))$-structure$)$
      is a tuple $\mathcal{T}=(\mathcal{H},\nabla,P)$ with the following properties.
      \begin{itemize}
       \item $\mathcal{H}$ is a holomorphic vector bundle over $\mathbb{P}^1\times M$ 
             such that the adjoint morphism $p^*p_*\mathcal{H}\to \mathcal{H}$ is an isomorphism.
       \item $\nabla$ is a meromorphic flat connection on $\mathcal{H}$ such that 
             \begin{align}
             \nabla: \mathcal{H} 
                     \to 
                     \mathcal{H} \otimes 
                     \Omega_{\mathbb{P}^1_\lambda \times M}(\log Z_0)
                     \otimes
                     \mathcal{O}_{\mathbb{P}^1_\lambda \times M}(\{0\}\times M)                   
            \end{align}
             where $Z_0:={(}\{0, \infty \} \times M) \cup (\mathbb{P}^1_\lambda \times Z)$.
       \item $P:\mathcal{H}\otimes j^*\mathcal{H}\to \lambda^k\mathcal{O}_{\mathbb{P}^1_\lambda\times M}$
             is a $(-1)^k$-symmetric, non-degenerate, $\nabla$-flat pairing.            
      \end{itemize}
      We also call $\mathcal{T}$ a {\bf logarithmic trTLEP($k$)-structure (or log$Z$-trTLEP($k$)-structure)}
      for short.
%      such that the restriction of $\mathcal{T}$ to $\mathbb{P}^1\times (M\setminus Z)$ is 
%      trTLEP$(k)$-structure. 
      \end{definition}
       We also recall the notion of logarithmic Frobenius type structure. % (\cite[Definition 1.6]{R1}). 
       \begin{definition}[{\cite[Definition 1.6]{R1}}]
        Let $K$ be a holomorphic flat bundle on $M$.
        Let $\mathcal{U}$ and $\mathcal{V}$ be endomorphisms on $K$.
        A tuple $(\nabla^{\rm r},\mathcal{C},\mathcal{U},\mathcal{V})$
        is called Frobenius type structure on $K$ with logarithmic pole along $Z$
        if 
        \begin{itemize}
        \item $\nabla^{\rm r}$ is a flat connection on $K$ with logarithmic pole along $Z$,
        \item $\mathcal{C}$ is a Higgs field on $K$ with logarithmic pole along $Z$, 
        \end{itemize}
        and these data satisfy the relations $(\ref{A})$ and $(\ref{B})$.
       We also call the tuple $(\nabla^{\rm r},\mathcal{C},\mathcal{U},\mathcal{V})$ 
       a {\bf logarithmic Frobenius type structure} 
       for short.
       \end{definition}
       \begin{remark}\begin{itemize}
       \item 
        This definition of logarithmic Frobenius type structure lacks the pairing.
       \item
        If we assume that $Z$ is smooth, 
        we have the residue endomorphisms 
        $\mathrm{Res}_{Z}\nabla^{\rm r}$, and $\mathrm{Res}_{Z}\mathcal{C}$. 
        \end{itemize}
       \end{remark}
       The following lemma is proved by the same way as Lemma \ref{Frob type trTLE}. 
      \begin{lemma}[{\cite[Proposition 1.10]{R1}}]
       Let $(\mathcal{H},\nabla,P)$ be a logarithmic {\rm trTLEP}$(0)$-structure.
       Then there is a unique logarithmic Frobenius type structure 
       $(\nabla^r,\mathcal{C},\mathcal{U},\mathcal{V})$ on $\mathcal{H}|_{\lambda=0}$
       such that 
       \begin{align}
        \nabla=p_\lambda^*\nabla^{\rm r}+
                \frac{1}{\lambda}p_\lambda^*\mathcal{C}
                +\Big{(}\frac{1}{\lambda}p_\lambda^*\mathcal{U}-p_\lambda^*\mathcal{V}\Big{)}
                \frac{\mathrm{d}\lambda}{\lambda}.
       \end{align}\qed
      \end{lemma}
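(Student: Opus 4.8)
The plan is to run the construction in the proof of Lemma \ref{Frob type trTLE} verbatim, while checking that the extra logarithmic pole along $\mathbb{P}^1_\lambda\times Z$ is preserved under all the restriction operations. First I would produce the Higgs field $\mathcal{C}$ and the endomorphism $\mathcal{U}$ from the behavior at $\lambda=0$: given a local section $a$ of $\Theta_M(\log Z)$, extend it constantly along $\mathbb{P}^1_\lambda$ to $\widetilde{a}$, and given $s\in\mathcal{H}|_{\lambda=0}$ take a holomorphic extension $\widetilde{s}$, then set $\mathcal{C}_a s:=(\lambda\nabla_{\widetilde{a}}\widetilde{s})|_{\lambda=0}$ and $\mathcal{U}s:=(\lambda\nabla_{\lambda\partial_\lambda}\widetilde{s})|_{\lambda=0}$. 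Because $\nabla$ has pole order $1$ along $\{0\}\times M$ and only logarithmic pole along $\mathbb{P}^1_\lambda\times Z$, the operator $\lambda\nabla_{\widetilde{a}}$ is holomorphic near $\{\lambda=0\}$ and its reduction mod $\lambda$ is independent of the chosen extension $\widetilde{s}$; hence $\mathcal{C}$ is a well-defined Higgs field on $\mathcal{H}|_{\lambda=0}$ with logarithmic pole along $Z$, and $\mathcal{U}\in\mathrm{End}(\mathcal{H}|_{\lambda=0})$.

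Next I would use the behavior at $\lambda=\infty$. Since $\nabla$ is logarithmic along $\{\infty\}\times M$ and along $\mathbb{P}^1_\lambda\times Z$, taking the residual connection along $\{\infty\}\times M$ yields a flat connection $\nabla^{\mathrm{res}}$ on $\mathcal{H}|_{\lambda=\infty}$ which is still logarithmic along $Z$, together with the residue endomorphism $\mathrm{Res}_{\lambda=\infty}\nabla$. Transporting these to $\mathcal{H}|_{\lambda=0}$ along the canonical isomorphism $\mathcal{H}|_{\lambda=\infty}\simeq\mathcal{H}|_{\lambda=0}$ coming from $p_\lambda^*{p_\lambda}_*\mathcal{H}\xrightarrow{\sim}\mathcal{H}$, I get a logarithmic flat connection $\nabla^{\mathrm{r}}$ and an endomorphism $\mathcal{V}$. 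One then checks the displayed identity by comparing both sides in the trivialization $\mathcal{H}\simeq p_\lambda^*(\mathcal{H}|_{\lambda=0})$: both are meromorphic connections on $\mathbb{P}^1_\lambda\times M$ with the prescribed pole structure, the same $\lambda^{-1}$-principal part at $\lambda=0$ (namely $\mathcal{C}$ and $\mathcal{U}$) and the same regular part at $\lambda=\infty$ (namely $\nabla^{\mathrm{r}}$ and $\mathcal{V}$), so they coincide. Substituting this expression into $\nabla\circ\nabla=0$ and collecting coefficients of powers of $\lambda$ and of $\mathrm{d}\lambda$ separates into exactly the relations $(\ref{A})$ and $(\ref{B})$. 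Uniqueness is immediate, since the identity forces $\mathcal{C},\mathcal{U}$ to be the $\lambda^{-1}$-coefficients and $\nabla^{\mathrm{r}},\mathcal{V}$ to be the regular part of $\nabla$ at $\lambda=\infty$.

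The only point not already contained in Lemma \ref{Frob type trTLE}, and hence the place I would be most careful, is the verification that $\nabla^{\mathrm{r}}$ and $\mathcal{C}$ acquire no pole worse than logarithmic along $Z$. This is where the transversality of the two boundary components $\{0,\infty\}\times M$ and $\mathbb{P}^1_\lambda\times Z$ enters: restriction to $\{\lambda=0\}$ or $\{\lambda=\infty\}$ commutes with the logarithmic structure along $\mathbb{P}^1_\lambda\times Z$, so the residual objects inherit exactly a logarithmic pole along $Z$ and nothing worse. Apart from this observation the argument introduces no new analytic input beyond Lemma \ref{Frob type trTLE}, so I expect the bookkeeping of the $Z$-poles to be the only (and rather mild) obstacle.
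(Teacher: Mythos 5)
Your proposal is correct and follows essentially the same route as the paper, which simply observes that the construction of Lemma \ref{Frob type trTLE} (defining $\mathcal{C},\mathcal{U}$ from the order-one pole at $\lambda=0$ and $\nabla^{\rm r},\mathcal{V}$ from the residual data at $\lambda=\infty$, transported via the trivialization along $\mathbb{P}^1_\lambda$) goes through verbatim, with the only extra point being that the logarithmic pole along $\mathbb{P}^1_\lambda\times Z$ is inherited by $\nabla^{\rm r}$ and $\mathcal{C}$ — exactly the bookkeeping you flag.
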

      
      In the following, we assume that $Z$ is smooth. 
      Let $\mathcal{T}=(\mathcal{H},\nabla,P)$ be a trTLEP(0)-structure on $M$ logarithmic along $Z$ such that 
      \begin{equation}\label{limit res}
        \mathrm{Res}_{\mathbb{P}^1_\lambda\times Z}(\nabla)\mid_{\{\infty\}\times Z}=0.
      \end{equation}
      Fix a point $z$ in $Z$ and a defining function $q$ of $Z$ on a neighborhood of $z$.
      Then the residual connection $\nabla^q$ on $\mathcal{H}\mid_{\mathbb{P}^1_\lambda\times(Z,z)}$ is induced.  
      It is easy to see that the tuple 
      $\mathcal{T}^q:=(\mathcal{H}\mid_{\mathbb{P}^1_\lambda\times(Z,z)},\nabla^q,P\mid_{(Z,z)})$
      is a trTLEP(0)-structure on the germ $(Z,z)$ of a complex manifold.
      
       \begin{lemma}The endomorphism
         $\mathfrak{N}:=\lambda\mathrm{Res}_{\mathbb{P}^1_\lambda\times Z}(\nabla)$ is nilpotent and 
         satisfies the conditions 
         $(\ref{flat nilpotent})$--$(\ref{pairing nilpotent})$
         with respect to the {\rm trTLEP}$(0)$-structure
         $\mathcal{T}^q=(\mathcal{H}\mid_{(Z,z)},\nabla^q,P\mid_{(Z,z)})$.
       \end{lemma}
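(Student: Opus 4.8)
The plan is to read everything off the shape of the logarithmic connection near $\mathbb{P}^1_\lambda\times Z$ together with the logarithmic Frobenius type structure $(\nabla^{\rm r},\mathcal{C},\mathcal{U},\mathcal{V})$ on $\mathcal{H}|_{\lambda=0}$ attached to $\mathcal{T}$. First I would decompose $\nabla$ into its $\tfrac{\mathrm{d}q}{q}$-part and the rest. By the logarithmic Frobenius type structure lemma, the $\tfrac{\mathrm{d}q}{q}$-coefficient of $\nabla$ is $p_\lambda^*\mathrm{Res}_Z(\nabla^{\rm r})+\tfrac1\lambda p_\lambda^*\mathrm{Res}_Z(\mathcal{C})$, so $\mathrm{Res}_{\mathbb{P}^1_\lambda\times Z}(\nabla)=p_\lambda^*\mathrm{Res}_Z(\nabla^{\rm r})+\tfrac1\lambda p_\lambda^*\mathrm{Res}_Z(\mathcal{C})$; restricting to $\{\infty\}\times Z$ and invoking the hypothesis \eqref{limit res} forces $\mathrm{Res}_Z(\nabla^{\rm r})=0$. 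Hence $\mathrm{Res}_{\mathbb{P}^1_\lambda\times Z}(\nabla)=\tfrac1\lambda p_\lambda^*\mathrm{Res}_Z(\mathcal{C})$, and therefore $\mathfrak{N}=\lambda\,\mathrm{Res}_{\mathbb{P}^1_\lambda\times Z}(\nabla)=p_\lambda^*\mathfrak{N}_0$ with $\mathfrak{N}_0:=\mathrm{Res}_Z(\mathcal{C})$. This simultaneously shows that $\mathfrak{N}$ is a genuine holomorphic endomorphism of the bundle underlying $\mathcal{T}^q$ and that it is constant along $\mathbb{P}^1_\lambda$, which is precisely condition \eqref{triviality}, with $\mathfrak{N}|_{\lambda=0}=\mathfrak{N}_0$.

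Next I would take residues along $Z$ of the Frobenius type relations \eqref{A} and \eqref{B}. A routine residue computation, using $\mathrm{Res}_Z(\nabla^{\rm r})=0$, turns $\nabla^{\rm r}(\mathcal{C})=0$ into $(\nabla^{\rm r}|_Z)(\mathfrak{N}_0)=0$; the Higgs condition $\mathcal{C}\wedge\mathcal{C}=0$ gives $[\mathcal{C}^q,\mathfrak{N}_0]=0$, where $\mathcal{C}^q$ is the induced Higgs field on $\mathcal{H}|_{\lambda=0}|_Z$; the relation $[\mathcal{C},\mathcal{U}]=0$ gives $[\mathfrak{N}_0,\mathcal{U}|_Z]=0$; and since $\mathrm{Res}_Z(\nabla^{\rm r}\mathcal{U})=[\mathrm{Res}_Z(\nabla^{\rm r}),\mathcal{U}]=0$, relation \eqref{B} yields $-[\mathfrak{N}_0,\mathcal{V}|_Z]+\mathfrak{N}_0=0$, i.e. $[\mathcal{V}|_Z,\mathfrak{N}_0]=-\mathfrak{N}_0$. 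Nilpotence then becomes linear algebra: by \eqref{A} the endomorphism $\mathcal{V}$ is $\nabla^{\rm r}$-flat, hence on the germ $(Z,z)$ it has finitely many eigenvalues and a flat decomposition of $\mathcal{H}|_{\lambda=0}|_Z$ into generalized $\mathcal{V}|_Z$-eigenspaces $E_\alpha$; the identity $[\mathcal{V}|_Z,\mathfrak{N}_0]=-\mathfrak{N}_0$ gives $p(\mathcal{V}|_Z)\mathfrak{N}_0=\mathfrak{N}_0\,p(\mathcal{V}|_Z-1)$ for every polynomial $p$, so $\mathfrak{N}_0(E_\alpha)\subset E_{\alpha-1}$, and since only finitely many $\alpha$ occur, $\mathfrak{N}_0^k=0$ for $k\gg0$; thus $\mathfrak{N}$ is nilpotent.

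For \eqref{flat nilpotent} I would use the explicit form $\nabla^q=p_\lambda^*(\nabla^{\rm r}|_Z)+\tfrac1\lambda p_\lambda^*\mathcal{C}^q+\bigl(\tfrac1\lambda p_\lambda^*(\mathcal{U}|_Z)-p_\lambda^*(\mathcal{V}|_Z)\bigr)\tfrac{\mathrm{d}\lambda}{\lambda}$ of the residual connection (which is this clean because $\mathrm{Res}_Z(\nabla^{\rm r})=0$), together with the four commutator identities above, to compute $\nabla^q(\mathfrak{N})=p_\lambda^*\bigl((\nabla^{\rm r}|_Z)\mathfrak{N}_0\bigr)+\tfrac1\lambda p_\lambda^*[\mathcal{C}^q,\mathfrak{N}_0]+\bigl[\tfrac1\lambda\mathcal{U}|_Z-\mathcal{V}|_Z,\mathfrak{N}_0\bigr]\tfrac{\mathrm{d}\lambda}{\lambda}=0+0+\mathfrak{N}\tfrac{\mathrm{d}\lambda}{\lambda}$, which is exactly \eqref{flat nilpotent}. (Alternatively, once \eqref{triviality} is known, the same identity drops out of the $\tfrac{\mathrm{d}q}{q}$-component of the curvature equation $\nabla^2=0$.) Finally, \eqref{pairing nilpotent} follows by taking the $\tfrac{\mathrm{d}q}{q}$-component of the flatness identity $\mathrm{d}\,P(a,b)=P(\nabla a,b)+P(a,j_\lambda^*\nabla\,b)$ and restricting to $Z$: the left-hand side has no pole along $\mathbb{P}^1_\lambda\times Z$, the first term on the right contributes $\tfrac1\lambda P(\mathfrak{N}a,b)$, and the second contributes $-\tfrac1\lambda P(a,\mathfrak{N}b)$ — the sign coming from $j_\lambda^*\lambda=-\lambda$ and $j_\lambda^*\mathfrak{N}=\mathfrak{N}$ (legitimate since $\mathfrak{N}$ is pulled back from $(Z,z)$) — whence $P(\mathfrak{N}a,b)=P(a,\mathfrak{N}b)$.

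I expect the main obstacle to be the residue bookkeeping of the first two paragraphs: checking that the $\tfrac{\mathrm{d}q}{q}$-coefficient of $\nabla$ has at worst a simple pole at $\lambda=0$ so that $\lambda\,\mathrm{Res}_{\mathbb{P}^1_\lambda\times Z}(\nabla)$ is honestly holomorphic on all of $\mathbb{P}^1_\lambda$, and — the one place where hypothesis \eqref{limit res} is indispensable — extracting $\mathrm{Res}_Z(\nabla^{\rm r})=0$, which is exactly what makes the residues of \eqref{A}, \eqref{B} and of the flatness identity for $P$ come out in the required form. Everything after that is either the linear-algebra argument for nilpotence or a direct substitution, so those are the steps I would write out carefully and the rest only schematically.
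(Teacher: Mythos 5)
Your proposal is correct, and its skeleton is the same as the paper's: you extract $\mathrm{Res}_{Z}(\nabla^{\rm r})=0$ from the hypothesis (\ref{limit res}) via the logarithmic Frobenius type structure, identify $\mathfrak{N}=p_\lambda^*\mathrm{Res}_Z(\mathcal{C})$ (which gives (\ref{triviality}) immediately), and get (\ref{pairing nilpotent}) from the flatness of $P$, exactly as in the paper. The one step where you genuinely diverge is nilpotence: the paper argues that the eigenvalues of $\mathrm{Res}_{\mathbb{P}^1_\lambda\times Z}(\nabla)=\lambda^{-1}\mathrm{Res}_Z\mathcal{C}$ must be constant along $\mathbb{P}^1_\lambda$, which forces them all to vanish, whereas you take residues of the structure relations (\ref{A}) and (\ref{B}) to obtain $[\mathcal{V}|_Z,\mathfrak{N}_0]=-\mathfrak{N}_0$ and conclude by the standard eigenvalue-shifting argument on generalized $\mathcal{V}|_Z$-eigenspaces. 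Both are valid; the paper's route is shorter and stays at the level of residue eigenvalues, while yours is more explicit and has the side benefit that the four commutator identities you derive (flatness of $\mathfrak{N}_0$ for the residual connection, $[\mathcal{C}^q,\mathfrak{N}_0]=[\mathcal{U}|_Z,\mathfrak{N}_0]=0$, and the $\mathcal{V}$-relation) give a hands-on verification of (\ref{flat nilpotent}) by direct substitution into $\nabla^q$, where the paper instead invokes the standard fact $[\nabla^q,\mathrm{Res}_{\mathbb{P}^1_\lambda\times Z}(\nabla)]=0$ and only keeps the Leibniz term coming from the factor $\lambda$. Your sign bookkeeping in the pairing step (using $j_\lambda^*\lambda=-\lambda$ and $j_\lambda^*\mathfrak{N}=\mathfrak{N}$) is also correct and fills in what the paper leaves as a one-line remark.
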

       \begin{proof}
       First of all, we show that $\mathrm{Res}_{\mathbb{P}^1_\lambda\times Z}(\nabla)$ is nilpotent.
       Let $(\nabla^{\rm r},\mathcal{C},\mathcal{U},\mathcal{V})$ be the 
       logarithmic Frobenius type structure on $\mathcal{H}|_{\lambda=0}$
       such that 
       \begin{align}
        \nabla=p_\lambda^*\nabla^{\rm r}+
                \frac{1}{\lambda}p_\lambda^*\mathcal{C}
                +\Big{(}\frac{1}{\lambda}p_\lambda^*\mathcal{U}-p_\lambda^*\mathcal{V}\Big{)}
                \frac{\mathrm{d}\lambda}{\lambda}.
       \end{align}
       The condition (\ref{limit res}) is equivalent to $\mathrm{Res}_{Z,z}(\nabla^r)=0$.
       Then we have 
       $\mathrm{Res}_{\mathbb{P}^1_\lambda\times(Z,z)}(\nabla)=\lambda^{-1}\mathrm{Res}_{Z,z}\mathcal{C}$.
       Since the eigenvalues of $\mathrm{Res}_{\mathbb{P}^1_\lambda\times(Z,z)}(\nabla)$
       and $\mathrm{Res}_{Z,z}\mathcal{C}$ 
       are both constant along $\mathbb{P}^1_\lambda$, 
       they are all zero. 
       Therefore, the endomorphism $\mathrm{Res}_{\mathbb{P}^1_\lambda\times(Z,z)}(\nabla)$ 
       is nilpotent.
       
       The following shows (\ref{flat nilpotent}):
       \begin{align*}
        [\nabla^q,\mathfrak{N}]%&=[\nabla^q,\lambda\mathrm{Res}_{\mathbb{P}^1_\lambda\times(Z,z)}(\nabla)]\\
                               &=\lambda[\nabla^q,\mathrm{Res}_{\mathbb{P}^1_\lambda\times(Z,z)}(\nabla)]
                                 +\lambda\mathrm{Res}_{\mathbb{P}^1_\lambda\times(Z,z)}(\nabla)
                                  \frac{\mathrm{d}\lambda}{\lambda}\\
                               &=\mathfrak{N}\frac{\mathrm{d}\lambda}{\lambda}.
       \end{align*}
       The condition (\ref{triviality}) is clear by $\mathfrak{N}=\mathrm{Res}_{Z,z}\mathcal{C}$.
       The flatness of $P$ implies (\ref{pairing nilpotent}).
       \end{proof}
       This lemma together with Lemma \ref{nilpotent mixed} defines mixed trTLEP-structure on $(Z,z)$.
       \begin{definition}\label{LMtrTLEP}
         We call the mixed trTLEP-structure 
         $\mathcal{T}_{Z,z}:=\mathcal{T}^q_\mathfrak{N}$ 
         a {\bf limit mixed trTLEP structure}.
       \end{definition}
       \begin{remark}
         The mixed trTLEP-structure $\mathcal{T}_{Z,z}$ does not depend on the choice of 
         the defining function $q$ of $Z$ around $z$.
%         Indeed, if $q'$ is another defining function, 
%         we have 
%         $$\nabla^q-\nabla^{q'}=
%         \mathrm{Res}_{\mathbb{P}^1_\lambda\times (Z,z)}(\nabla)\frac{\mathrm{d}\lambda}{\lambda} .$$
       \end{remark}

%      In this section we apply the theory of limit mixed trTLEP-structure to local A-model.
      %%%%%%%%%%% subsection QDM %%%%%%%%%%%%%%
      \subsection{Quantum D-modules on A-models}\label{QDM A}
       We recall the definition of quantum D-modules and their properties. 
       Let $X$ be a smooth projective toric variety and $\Lambda_X\subset H_2(X,\mathbb{C})$ a 
       semi-subgroup consists of effective classes.
       For each $d\in \Lambda_X$ and $n\in \mathbb{Z}_{\geq 0}$, 
       let $\overline{X}_{0,n,d}$ be the moduli space of genus 0 stable maps to $X$ of degree $d$.
       We denote the $i$-th evaluation map by $ev_i:\overline{X}_{0,n,d}\to X$.
       %The following is the definition of Gromov-Witten potential.
       %%%%%%%%%%%
       Fix a Hermitian metric $\|*\|$ on $H^{\neq 2}(X,\mathbb{C})$
       %%%%%%%%%%
       \begin{theorem}[{\cite[Theorem 1.3]{Iri}, See also \cite[Theorem 4.2]{RS}}]\label{domain}
        The Gromov-Witten potential 
        \begin{align}
             \Phi_X(\tau)
              &=\sum_{d \in \Lambda_X}\sum_{n\geq 0}\frac{1}{n!}
                \int_{[\overline{X}_{0,n,d}]^{\rm vir}}\prod_{i=1}^nev^*_i(\tau) \\
              &=\sum_{d \in \Lambda_X}\sum_{n\geq 0}\frac{1}{n!}
                 \int_{[\overline{X}_{0,n,d}]^{\rm vir}}ev^*_i(\tau')e^{\delta(d)}.
          \end{align} 
        converges on a simply connected domain 
         \begin{equation}
           U_X:=\{\tau=\tau'+\delta \in H^*(X,\mathbb{C})|\ \mathrm{Re}(\delta(d))<-C,\ \|\tau'\|<e^{-C}\}
        \end{equation}  
        for $C\gg 0$. Hence $\Phi_X(\tau)$ defines a holomorphic function on $U_X$.\qed
        \end{theorem}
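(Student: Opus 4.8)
The plan is to follow Iritani's argument in \cite{Iri}. Writing $\tau=\tau'+\delta$ with $\delta\in H^2(X,\mathbb{C})$ and $\tau'\in H^{\neq 2}(X,\mathbb{C})$, one first applies the divisor equation to extract the dependence on $\delta$: each genus-zero correlator $\langle\tau',\dots,\tau'\rangle_{0,n,d}$ picks up a scalar factor $e^{\delta(d)}$, which is precisely the rewriting of $\Phi_X$ recorded in the second line of the statement. It therefore suffices to show that
\[
\Phi_X(\tau)=\sum_{d\in\Lambda_X}e^{\delta(d)}\sum_{n\geq 0}\frac{1}{n!}\int_{[\overline{X}_{0,n,d}]^{\rm vir}}\prod_{i=1}^n ev^*_i(\tau')
\]
converges absolutely and locally uniformly on $U_X$; holomorphy then follows.

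Next I would note that, for each fixed $d\in\Lambda_X$, the inner sum over $n$ is actually finite. The virtual complex dimension of $\overline{X}_{0,n,d}$ is $\dim X+\langle c_1(TX),d\rangle+n-3$. Since $X$ is toric its cohomology is concentrated in even degrees; insertions of the degree-$0$ part of $\tau'$ are annihilated by the fundamental class axiom except for $(n,d)=(3,0)$, and $\tau'$ has no degree-$2$ component by hypothesis. Hence every monomial contributing to the integral is built from classes of real degree $\geq 4$, and matching total degree with twice the virtual dimension forces $n\leq \dim X+\langle c_1(TX),d\rangle-3$. Thus the inner sum is a polynomial in the coordinates of $\tau'$ whose degree is bounded in terms of $\langle c_1(TX),d\rangle$, and the problem is reduced to controlling the resulting series over $d$.

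The crux is a uniform exponential estimate of the shape
\[
\Big|\sum_{n\geq 0}\frac{1}{n!}\int_{[\overline{X}_{0,n,d}]^{\rm vir}}\prod_{i=1}^n ev^*_i(\tau')\Big|\leq A\,B^{\langle c_1(TX),d\rangle}\qquad\text{whenever }\|\tau'\|\leq 1,
\]
with constants $A,B>0$ independent of $d$. For a smooth projective toric variety this is where Givental's mirror theorem enters: the small $J$-function coincides with an explicit hypergeometric $I$-function whose coefficients are visibly bounded by a geometric series in $d$, while the big genus-zero potential is recovered from the small quantum product by finitely many algebraic operations per degree class, via the Kontsevich--Manin reconstruction theorem (applicable because $H^*(X,\mathbb{C})$ is generated by $H^2(X,\mathbb{C})$) together with the divisor equation; such operations preserve exponential-in-$d$ bounds. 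Granting the estimate, the series converges on $\{\mathrm{Re}(\delta(d))<-C,\ \|\tau'\|<e^{-C}\}$ once $C$ is large enough to dominate $B^{\langle c_1(TX),d\rangle}$ and the polynomial growth in $d$ of the inner sum, using that $\Lambda_X$ is a finitely generated semigroup contained in a strongly convex cone. Finally $U_X$ is simply connected, being a product of a half-space-type region in the $\delta$-variables with a ball in the $\tau'$-variables.

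I expect the only genuine obstacle to be the uniform exponential bound on the Gromov--Witten invariants of fixed degree; the dimension and fundamental-class reduction above, and the convergence bookkeeping over $\Lambda_X$, are routine once that bound is in hand. In the toric setting relevant to this paper the bound follows from the explicit mirror description of the small quantum cohomology, so for our purposes the statement may be quoted directly from \cite{Iri}.
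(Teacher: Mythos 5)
The paper gives no proof of this theorem: it is quoted directly from \cite[Theorem 1.3]{Iri} (see also \cite[Theorem 4.2]{RS}) and stated with a qed mark, so your closing remark that the statement may simply be cited from \cite{Iri} is precisely what the paper does. Your outline --- the divisor equation to extract the factor $e^{\delta(d)}$, the dimension count showing the sum over $n$ is finite for each fixed $d$, and a uniform exponential-in-$d$ bound obtained from the convergent small quantum product (Givental's mirror theorem) together with $H^2$-generation and reconstruction --- matches the structure of Iritani's argument, and you correctly identify the uniform estimate as the one genuinely nontrivial step and defer it to the reference, which is acceptable here since the paper itself supplies no proof.
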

        
       Define $g:H^*(X,\mathbb{C})\otimes H^*(X,\mathbb{C})\to \mathbb{C}$ by 
       $g(\alpha,\beta):=\int \alpha \cup \beta$, 
       and a quantum cup product $\circ$ on $H^*(X,\mathbb{C})$ by 
       \begin{equation}
        g(\alpha\circ_\tau \beta,\gamma)=\alpha \beta \gamma \Phi_X(\tau)\ 
        (\alpha,\beta,\gamma\in H^*(X,\mathbb{C})).
       \end{equation}
       Here, $\tau\in U_X$ and we regard $\alpha,\beta,\gamma$ on the right hand side as differential operators.
       Let $E$ be a vector field on $H^*(X,\mathbb{C})$ defined as a sum of first Chern class $c_1(X)$ 
       and fundamental vector field of the action of $\mathbb{C}^*$ defined by 
       $t\cdot \alpha=t^{\frac{2-i}{2}}\alpha\ (\alpha \in H^i(X,\mathbb{C}))$. 
       Then it is well known that
       %%%%%%
       %\begin{theorem}
         $\mathscr{F}_X:=(\circ,E,g)$ is a Frobenius structure 
         on $U_X$ 
         of charge $\dim X$.
       %\end{theorem}
       \begin{definition}[{\cite[Definition-Lemma 4.3]{RS}}]
         We call the {\rm trTLEP$(0)$}-structure $\mathcal{T}(\mathscr{F}_X)$ on $U_X$ a {\bf big quantum D-module}.
         We also call its restriction to $U'_X:=U_X\cap H^2(X,\mathbb{C})$ a {\bf small quantum D-module}.
       \end{definition}
       \begin{remark}
        This definition is equivalent to the definition in {\rm \cite{RS}}.
        The {\rm trTLEP}$(\dim X)$-structure $\mathcal{T}(\mathscr{F}_X)(\dim X/2)$ is considered there 
        $($see Remark $\ref{remark on MtrTLEP}$$)$.
       \end{remark}
       Denote by $V_{X}^0$ (resp. $V'^0_{X}$) 
       the quotient space of $U_X$ (resp. $U'_X$) 
       by the natural action of $2\pi\sqrt{-1}H^2(X,\mathbb{Z})$.
       The following lemma is trivial by construction.
       %%%%%
       \begin{lemma}
        The big quantum D-module $\mathcal{T}(\mathscr{F}_X)$ induces 
        a {\rm trTLEP$(0)$}-structure $\mathcal{T}^{\rm big}_X$ 
        on $V_X^0$.
        The small quantum D-module also induces
        a {\rm trTLEP$(0)$}-structure $\mathcal{T}^{\rm small}_X$
        on $V_X^{'0}$.
        \qed
       \end{lemma}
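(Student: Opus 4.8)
The plan is to realise $V_X^0$ (resp. $V_X^{'0}$) as a complex manifold together with a holomorphic covering $\mathbb{P}^1_\lambda\times U_X\to\mathbb{P}^1_\lambda\times V_X^0$ (resp. the analogue in the small case), and then to descend the big (resp. small) quantum D-module along it; the point is that the whole construction is manifestly invariant under the relevant translation action, so the statement is essentially a bookkeeping one.

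First I would set $\Gamma:=2\pi\sqrt{-1}H^2(X,\mathbb{Z})$, regarded as a subgroup of the additive group $H^2(X,\mathbb{C})$ acting by translation on $H^*(X,\mathbb{C})$ and on $H^2(X,\mathbb{C})$. I would check that this action preserves $U_X$ (resp. $U_X'$): translating the $\delta$-part by $2\pi\sqrt{-1}\eta$ with $\eta\in H^2(X,\mathbb{Z})$ leaves $\tau'$ untouched and changes $\delta(d)$ by $2\pi\sqrt{-1}\langle\eta,d\rangle$, which is purely imaginary, so none of the inequalities defining $U_X$ in Theorem \ref{domain} are affected. Since $\Gamma$ is a full lattice in $H^2(X,\mathbb{C})$, its translation action on $U_X$ (resp. $U_X'$) is free and properly discontinuous, so $V_X^0=U_X/\Gamma$ and $V_X^{'0}=U_X'/\Gamma$ are complex manifolds and $\mathbb{P}^1_\lambda\times U_X\to\mathbb{P}^1_\lambda\times V_X^0$, $\mathbb{P}^1_\lambda\times U_X'\to\mathbb{P}^1_\lambda\times V_X^{'0}$ are holomorphic coverings, with deck group $\Gamma$ acting trivially on the $\mathbb{P}^1_\lambda$-factor.

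Next I would check that all the data of the Frobenius structure $\mathscr{F}_X=(\circ,E,g)$ are $\Gamma$-invariant. The underlying flat connection is the constant one on the affine space $H^*(X,\mathbb{C})$ and the unit field is the constant identity vector field, both visibly translation-invariant; the Poincaré pairing $g$ is constant, hence translation-invariant. For the quantum product it suffices that $\Phi_X$ is $\Gamma$-invariant, and this is exactly the divisor axiom already used to rewrite $\Phi_X$ in the second form of Theorem \ref{domain}: the shift $\delta\mapsto\delta+2\pi\sqrt{-1}\eta$ multiplies $e^{\delta(d)}$ by $e^{2\pi\sqrt{-1}\langle\eta,d\rangle}=1$ for every effective class $d$. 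Finally $E$ is the sum of the constant vector field $c_1(X)$ and the grading vector field, whose $H^2$-component vanishes because the weight $(2-i)/2$ is zero for $i=2$; hence $E$ is invariant under all translations in $H^2(X,\mathbb{C})$, in particular under $\Gamma$. Therefore $\mathscr{F}_X$ descends to a Frobenius structure on $V_X^0$ (resp. $V_X^{'0}$).

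The last step is to pass from the Frobenius structure to the trTLEP$(0)$-structure compatibly with this descent. The construction $\mathscr{F}\mapsto\mathcal{T}(\mathscr{F})$ of Corollary \ref{saito trTLE} and Proposition \ref{proposition mixed Frobenius and mixed trTLEP} is functorial, so the $\Gamma$-invariance of $\mathscr{F}_X$ equips $\mathcal{T}(\mathscr{F}_X)$ with a $\Gamma$-equivariant structure lifting the trivial action on $\mathbb{P}^1_\lambda$, and its underlying bundle, connection and pairing then descend along the covering to a triple on $\mathbb{P}^1_\lambda\times V_X^0$ (resp. $\mathbb{P}^1_\lambda\times V_X^{'0}$). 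The trTLEP$(0)$-axioms of Definition \ref{def of trTLEP(k)} --- triviality of the bundle along $\mathbb{P}^1_\lambda$, the prescribed pole orders of the connection, and the symmetry, non-degeneracy and flatness of the pairing --- are all local on the base and invariant under local biholomorphisms, so they are inherited from $\mathcal{T}(\mathscr{F}_X)$. This yields $\mathcal{T}^{\rm big}_X$, and the identical argument applied to the small quantum D-module on $U_X'$ yields $\mathcal{T}^{\rm small}_X$. I do not expect a real obstacle here: the only non-formal input is the $\Gamma$-invariance of $\Phi_X$, i.e. the divisor axiom, which is already built into Theorem \ref{domain}; the one point to state carefully is that $U_X$ and $U_X'$ are genuinely $\Gamma$-stable, so that the quotients exist.
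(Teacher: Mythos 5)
Your proposal is correct and is precisely the argument the paper treats as implicit: the lemma is stated as ``trivial by construction,'' and the intended justification is exactly the descent you spell out, namely that $U_X$ and $U'_X$ are stable under translation by $2\pi\sqrt{-1}H^2(X,\mathbb{Z})$, that $\Phi_X$ (hence $\circ$), $g$, $e$, and $E$ are invariant by the divisor axiom and degree reasons, and that the trTLEP$(0)$-axioms are local so $\mathcal{T}(\mathscr{F}_X)$ descends to the quotients. Your write-up simply makes explicit what the paper leaves unstated, with no difference in method.
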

       
       Fix a nef basis $T_1, T_2, \dots T_r$ of $H^2(X,\mathbb{Z})$. 
       Then the embedding of $V'_X$ into $\mathbb{C}^r$ is naturally defined.
       Let $q=(q_1, q_2, \dots, q_r)$ be the canonical coordinate on $\mathbb{C}^r$ and 
       $\mid *\mid$ the canonical Hermitian metric.
       We define 
       \begin{align}\label{big domain}
        V_X&:=\{(q,\tau')\in \mathbb{C}^r\times H^{\neq2}(X,\mathbb{C}) 
        \big{|}|q|<e^{-C},\|\tau'\|<e^{-C}\} \\
        \label{small domain}
        V'_X&:=\{q\in \mathbb{C}^r \big{|}|q|<e^{-C}\}.
       \end{align}
       Then we have the following. 
       \begin{proposition}[{\cite[Corollary 4.5]{RS}}]\label{log on V}
         $\mathcal{T}^{\rm big}_X$ $($resp. $\mathcal{T}^{\rm small}_X$$)$ 
         is extended to a logarithmic {\rm trTLEP}$(0)$-structure on 
         $V_X$ $($resp. $V'_X$$)$.\qed
       \end{proposition}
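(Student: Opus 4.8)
The plan is to make the connection of $\mathcal{T}(\mathscr{F}_X)$ explicit near the large radius limit and read off the pole orders. By construction, $\mathcal{T}(\mathscr{F}_X)$ is carried by the trivial bundle $\mathcal{H}\simeq\mathcal{O}_{\mathbb{P}^1_\lambda\times U_X}\otimes H^*(X,\mathbb{C})$, its pairing $P$ is the (constant) Poincar\'e pairing $g$, and its connection has the form $(\ref{equation nabla})$ with $\nabla^{\rm r}=\mathrm{d}$ the trivial connection, $\mathcal{C}$ the Higgs field given (up to sign) by the quantum product, $\mathcal{U}$ quantum multiplication by the Euler field $E$, and $\mathcal{V}$ a constant grading endomorphism. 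I would choose linear coordinates $(t^0,t^1,\dots,t^m)$ on $H^*(X,\mathbb{C})$ adapted to the nef basis $T_1,\dots,T_r$ of $H^2(X,\mathbb{C})$, write $\tau=\sum_i t^iT_i$, and pass to the coordinates $q_a=e^{t^a}$ $(1\le a\le r)$ together with $\tau'=(t^i\mid T_i\in H^{\neq 2}(X,\mathbb{C}))$ on $V_X$; this identifies $V_X^0$ with the complement $V_X\setminus\{q_1\cdots q_r=0\}$.

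The key point is that, under this coordinate change, all coefficient endomorphisms extend holomorphically across $\{q_1\cdots q_r=0\}$. By the divisor equation in Gromov--Witten theory, the three-point structure constants $\partial_{t^a}\partial_{t^i}\partial_{t^j}\Phi_X$ depend on $t^1,\dots,t^r$ only through the monomials $q^d=\prod_a q_a^{\langle T_a,d\rangle}$ $(d\in\Lambda_X)$; since $\Lambda_X$ consists of effective classes and the $T_a$ are nef, these exponents are non-negative, and by Theorem $\ref{domain}$ the resulting series converge on $V_X$. Hence $\mathcal{C}_{\partial_{t^a}}$, $\mathcal{U}$ and $\mathcal{V}$ are holomorphic on $V_X$ (for $\mathcal{U}$ one also uses that the $H^2$-components $\tfrac{2-\deg T_a}{2}$ of $E$ vanish, so only the $\tau'$-variables enter). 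Since $\partial_{t^a}=q_a\partial_{q_a}$, we have $\mathrm{d}t^a=\mathrm{d}q_a/q_a$, so the $H^2$-part of the connection is
\begin{align*}
\frac{1}{\lambda}\sum_{a=1}^{r}\mathcal{C}_{\partial_{t^a}}\,\frac{\mathrm{d}q_a}{q_a},
\end{align*}
which is logarithmic along $\mathbb{P}^1_\lambda\times\{q_a=0\}$, has a pole of order $1$ along $\{0\}\times V_X$, and no pole along $\{\infty\}\times V_X$. Together with the remaining terms $\tfrac1\lambda\mathcal{C}_{\partial_{t^i}}\mathrm{d}t^i$ (with $T_i\in H^{\neq 2}$) and $(\tfrac1\lambda\mathcal{U}-\mathcal{V})\tfrac{\mathrm{d}\lambda}{\lambda}$, this shows that the extended connection maps $\mathcal{H}$ into $\mathcal{H}\otimes\Omega^1_{\mathbb{P}^1_\lambda\times V_X}(\log Z_0)\otimes\mathcal{O}_{\mathbb{P}^1_\lambda\times V_X}(\{0\}\times V_X)$ with $Z:=\{q_1\cdots q_r=0\}$ and $Z_0:=(\{0,\infty\}\times V_X)\cup(\mathbb{P}^1_\lambda\times Z)$, exactly as in Definition $\ref{logarithmic trTLEP}$.

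It then remains to record routine verifications. The bundle $\mathcal{H}$ is trivial along $\mathbb{P}^1_\lambda$, so the adjoint morphism $p^*p_*\mathcal{H}\to\mathcal{H}$ is an isomorphism over all of $V_X$. The Poincar\'e pairing is constant, hence extends to a non-degenerate, $(-1)^0$-symmetric, $\nabla$-flat pairing $\mathcal{H}\otimes j_\lambda^*\mathcal{H}\to\mathcal{O}_{\mathbb{P}^1_\lambda\times V_X}$. Flatness of the extended $\nabla$ follows since its curvature is a meromorphic $2$-form vanishing on the dense open subset $\mathbb{P}^1_\lambda\times V_X^0$, hence identically zero. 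This produces the logarithmic trTLEP$(0)$-structure on $V_X$; the statement for $\mathcal{T}^{\rm small}_X$ follows by the same argument applied to the small quantum product, or equivalently by restricting to the closed slice $V'_X=\{\tau'=0\}\subset V_X$. The only non-formal input is the holomorphic extendability of the quantum products across the large radius boundary, i.e. Theorem $\ref{domain}$ together with the divisor equation; I expect this to be the crux, while the pole-order bookkeeping is mechanical.
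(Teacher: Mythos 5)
Your argument is correct and is essentially the proof of the result the paper relies on: the paper offers no proof of Proposition \ref{log on V} at all, citing it directly as \cite[Corollary 4.5]{RS}, and your sketch (divisor equation reducing the $H^2$-dependence to the monomials $q^d$, nefness of the $T_a$ and effectivity of $d$ giving non-negative exponents, Iritani's convergence from Theorem \ref{domain}, the coordinate change $\mathrm{d}t^a=\mathrm{d}q_a/q_a$ producing the logarithmic poles, and the routine checks of triviality, pairing and flatness by density) is exactly the standard Reichelt--Sevenheck argument. So this matches the paper's route, merely spelled out rather than cited.
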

       %%%%%%%%%%%%%%%%%%%%%%%%%%%%%%  section     %%%%%%%%%%%%%%%%%%%%%%%%%%%%
       \subsection{MFS for local A-models}\label{QDM locA}
        \subsubsection{Construction of MFS for local A-models}
         Let $S$ be a weak Fano toric surface, 
         $\gamma_0=1\in H^0(S,\mathbb{Z})$ a unit, 
         and $\gamma_{r+1}$ its Poincare dual. 
         Fix a nef basis $\gamma_1,\gamma_2,\dots,\gamma_r\in H^2(S,\mathbb{Z})$.
         Then $\{\gamma_0,\gamma_,\dots,\gamma_{r+1}\}$ is a basis of $H^*(S,\mathbb{C})$.
         Denote by $\Lambda_S$ the semi-subgroup consists of effective classes in $H_2(S,\mathbb{Z})$.
         %and by $K_S$ the canonical bundle of $S$.
         
         Let $X$ be the projective compactification of the 
         canonical bundle $K_S$ (i.e. $X:=\mathbb{P}(K_S\oplus \mathcal{O}_S)$).
         Let $p:X\to S$ be the natural projection and 
         $i:S\to X$ the embedding defined by the zero section of $K_S$.
         Put $\Gamma_i:=p^*\gamma_i\ (0\leq i\leq r+1)$ and 
         $\Delta_0:=c_1(\mathcal{O}_{X/S}(1)) \in H^2(X,\mathbb{Z}) $.
         Put $\Delta_i:=\Delta_0\cup\Gamma_i\ (0\leq i\leq r+1)$.
         Then the classes $\Gamma_i, \Delta_j\ (0\leq i,j \leq r+1)$ form a basis of $H^*(X,\mathbb{C})$.
         This basis gives a coordinate 
         $$(t,s)=(t^0,\dots, t^i,\dots,t^{r+1},s^0,\dots, s^j,\dots s^{r+1})$$
         on $H^*(X,\mathbb{C})$. Put $q_0:=e^{s_0}$ and $q_i:=e^{t_i}$.
         Then, the quantum cup product on $H^*(X,\mathbb{C})$ 
         %over the domain $U_X$ as in 
         %Lemma \ref{domain} 
         is given as follows.         
         \begin{lemma}[{\cite[Lemma 8.9, 8.10]{konisi1}}]\label{local cup}
            \begin{align}\label{Delta}
            &\Delta_i\circ*=\Delta_i\cup* + O(q_0), \\
            \label{toric GW prod}
            &\Gamma_i\circ \Gamma_j=\Gamma_i\cup \Gamma_j+\sum_{k=1}^r\sum_{d\in i_*{\Lambda_S}\setminus \{0\}}
            (\Gamma_i(d)\Gamma_j(d)\Gamma_k(d)N_dq^d)\Gamma^\vee_k+O(q_0).
            \end{align}
            Here, $q^d:=\prod_{i=1}^rq^{\langle d,\Gamma_i \rangle}$ and 
            $N_d:=\int_{[\overline{X}_{0,0,d}]^{\rm vir}}1$.
            The symbol $O(q_0)$ represents the higher order term with respect to $q_0$. \qed
         \end{lemma}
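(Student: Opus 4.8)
The statement concerns the genus‑zero three‑point Gromov--Witten invariants of $X=\mathbb{P}(K_S\oplus\mathcal{O}_S)$, and the plan is to extract the $q_0^0$‑part of the quantum product from the geometry of the $\mathbb{P}^1$‑bundle $p\colon X\to S$ together with the divisor and fundamental‑class axioms for Gromov--Witten invariants. First I would fix notation: let $S_0:=i(S)\subset X$ be the zero section, whose normal bundle is $K_S$, and let $S_\infty\subset X$ be the complementary section, whose normal bundle is $K_S^{-1}$; the two sections are disjoint. Since $S$ is weak Fano, $K_S^{-1}$ is nef, so $S_\infty$ is a nef effective divisor; it is the zero locus of a section of $\mathcal{O}_{X/S}(1)$, so $[S_\infty]=\Delta_0$ in $H^2(X,\mathbb{Z})$, and the relative Euler sequence gives $c_1(X)=2\Delta_0$. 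From this I would deduce the Mori cone $\overline{NE}(X)=\mathbb{R}_{\geq0}\,\ell+i_*\overline{NE}(S)$, where $\ell$ is the class of a $p$‑fibre, together with $\Delta_0\cdot\ell=1$ and $\Delta_0\cdot i_*\beta=0$ for $\beta\in\overline{NE}(S)$ (the last because $S_0\cap S_\infty=\emptyset$). Using the divisor axiom to make the dependence of $\Phi_X$ on the degree‑two directions $\sum_i t_i\Gamma_i+s_0\Delta_0$ explicit through the monomials $q^{d}q_0^{\Delta_0\cdot d}$, it then follows that the only effective curve classes $d$ contributing to the $q_0^0$‑part of a quantum product are $d=0$ and $d=i_*\beta$ with $\beta\in\Lambda_S\setminus\{0\}$; every other effective class has $\Delta_0\cdot d>0$ and falls into $O(q_0)$. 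The $d=0$ contribution is the classical cup product, which produces the terms $\Gamma_i\cup\Gamma_j$ and $\Delta_i\cup *$.

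It then remains to analyse the surviving classes $d=i_*\beta$ with $\beta\neq0$; for these, $c_1(X)\cdot d=2\,\Delta_0\cdot i_*\beta=0$, so the moduli space $\overline{X}_{0,n,d}$ has complex virtual dimension $n$, and an $n$‑point primary invariant of class $d$ is forced to satisfy $\sum_i\deg_{\mathbb{C}}\alpha_i=n$. For (\ref{toric GW prod}): the coefficient of $\Gamma_k^\vee$ in the $d$‑part of $\Gamma_i\circ\Gamma_j$ is $\langle\Gamma_i,\Gamma_j,\Gamma_k\rangle_d$, and the dimension constraint forces $n=3$ (hence no insertions from $H^{\neq2}$) and all three insertions into $H^2(X)$; since $\Delta_0\cdot d=0$ it also excludes $\Gamma_k=\Delta_0$, leaving $k\in\{1,\dots,r\}$. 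Three applications of the divisor axiom then give $\langle\Gamma_i,\Gamma_j,\Gamma_k\rangle_d=(\Gamma_i\cdot d)(\Gamma_j\cdot d)(\Gamma_k\cdot d)\,N_d$ with $N_d=\int_{[\overline{X}_{0,0,d}]^{\mathrm{vir}}}1$, and $\Gamma_i\cdot i_*\beta=\gamma_i\cdot\beta$; summing over $k$ and $d$ yields exactly (\ref{toric GW prod}). For (\ref{Delta}): since $\Delta_i=\Delta_0\cup\Gamma_i$ has complex degree $2$, the same count, applied to the invariants $\langle\Delta_i,\,*\,,T_a,\tau',\dots,\tau'\rangle_d$ entering $\Delta_i\circ_\tau *$ (with $c_1(X)\cdot d=0$), shows after splitting $\tau'\in H^{\neq2}$ into homogeneous pieces that in every admissible case one of the insertions is the fundamental class $1\in H^0(X)$; such invariants vanish for $d\neq0$ by the fundamental‑class axiom. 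Hence no class $d=i_*\beta$ with $\beta\neq0$ contributes, and $\Delta_i\circ *=\Delta_i\cup *+O(q_0)$.

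The step I expect to be the main obstacle is the first one: establishing the toric bookkeeping for $X$ — namely that $\Delta_0$ is nef with $\Delta_0^{\perp}\cap\overline{NE}(X)=i_*\overline{NE}(S)$ and that $c_1(X)=2\Delta_0$ — because once this is available both equations reduce to a dimension count plus the standard Gromov--Witten axioms. As a remark not needed for the statement, for $d=i_*\beta$ the concavity of $K_S$ over the weak Fano surface $S$ identifies the stable maps of class $d$ with those to $S_0\cong S$ of class $\beta$, so that $N_d=\int_{[\overline{M}_{0,0}(S,\beta)]^{\mathrm{vir}}}e(R^1\pi_*f^*K_S)$ is the local Gromov--Witten invariant. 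This is essentially the computation of Konishi--Minabe \cite{konisi1}.
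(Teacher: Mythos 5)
Your argument is essentially correct, but note that this paper does not prove the lemma at all: it is quoted, with a \qed, from Konishi--Minabe \cite{konisi1} (Lemmas 8.9, 8.10). So your text is a self-contained reconstruction of the cited computation rather than an alternative to an argument appearing here. The route you take --- $[S_\infty]=\Delta_0$, $c_1(X)=2\Delta_0$, $\overline{NE}(X)=\mathbb{R}_{\geq 0}\,\ell+i_*\overline{NE}(S)$ with $\Delta_0\cdot\ell=1$ and $\Delta_0\cdot i_*\beta=0$, so that by the divisor axiom the $q_0$-grading of the potential is by $\Delta_0\cdot d$ and only $d\in i_*\Lambda_S$ survives at order $q_0^0$; then the dimension count ($c_1(X)\cdot i_*\beta=0$, so the $n$-pointed space has virtual dimension $n$) combined with the divisor and fundamental-class axioms --- is exactly the kind of argument behind the cited lemmas, and the toric inputs you list do hold for $X=\mathbb{P}(K_S\oplus\mathcal{O}_S)$ in the subbundle convention for $\mathcal{O}_{X/S}(1)$, i.e. the one in which $\Delta_0$ restricts to zero on the zero section (this is also the normalization the rest of \S\ref{QDM locA} needs, e.g. for (\ref{limit cup})). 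Your identification of the coefficient of $\Gamma_k^\vee$ with $\langle\Gamma_i,\Gamma_j,\Gamma_k\rangle_d=(\Gamma_i\cdot d)(\Gamma_j\cdot d)(\Gamma_k\cdot d)N_d$, and the exclusion of the $\Delta_0^\vee$-component because $\Delta_0\cdot d=0$, are both right.

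One small repair: your treatment of (\ref{Delta}) assumes $\Delta_i$ has complex degree $2$, which is true only for $1\leq i\leq r$. For $i=r+1$ the degree is $3$ and the same count works a fortiori; but $i=0$ is included (since $\Delta_0=\Delta_0\cup\Gamma_0$) and there $\deg_{\mathbb{C}}\Delta_0=1$, so the dimension count does not force a fundamental-class insertion. In that case apply the divisor axiom directly: every contributing class satisfies $\Delta_0\cdot d=0$, so all $d\neq 0$ terms at order $q_0^{0}$ vanish anyway. With this case added the proof is complete; the closing remark identifying $N_d$ with the local Gromov--Witten invariant of $S$ is correct but, as you say, not needed for the statement.
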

         By Proposition \ref{log on V}, 
         we have logarithmic trTLEP$(0)$-structure 
         $\mathcal{T}^{\rm small}_X$
         on $V'_X$. (Note that $q=(q_0,q_1,\dots,q_r)\in\mathbb{C}^{r+1}$.) 
         Put $M:=V'_X\setminus\bigcup_{i>0}\{q_i=0\}$ and 
         denote the restriction of $\mathcal{T}^{\rm small}_X$
         to $M$ by the same latter.
         Let $Z$ be a divisor of $M$ defined by $q_0=0$.
         For each point $z$ in $Z$, 
         the restriction of $\mathcal{T}^{\rm small}_X$
         to the germ $(M,z)$ satisfies the condition (\ref{limit res})
         along $(Z,z)$. 
         Hence we get the limit mixed trTLEP-structure $(\mathcal{T}^{\rm small}_X)_{Z,z}$.
         To compare with the results of Konishi-Minabe \cite{konisi1} later, 
         we consider the Tate twist $(\mathcal{T}^{\rm small}_X)_{Z,z}(-1/2)$ (See Remark \ref{remark on MtrTLEP}).           
         \begin{proposition}\label{Frobenius type for limit}
         Let $(\mathcal{T}^{\rm small}_X)_{Z,z}(-1/2)=(\mathcal{H},W,P)$ be the mixed trTLEP-structure on $(Z,z)$
         constructed above 
         and $(\nabla^{\rm r},\mathcal{C},\mathcal{U},\mathcal{V})$ the corresponding Frobenius type structure.
         $($See Lemma {\rm \ref{Frob type trTLE}}.$)$
         Put $\mathfrak{N}:=\Delta \cup *:H^*(X)\to H^*(X)$.
         Then we get the following.
              \begin{align}
              \label{co-kernel}
              \mathcal{H}|_{\lambda=0}&=\mathrm{Cok} (\mathfrak{N})\times (Z,z) \simeq 
              \Bigg(\bigoplus_{i=0}^{r+1}\mathbb{C}\Gamma_i\Bigg)\times (Z,z),\\
              \label{surf filter}
              W_k|_{\lambda=0}&=
              \begin{cases}0\ &(k\leq 0)\\
              \mathrm{Im}(\mathrm{Ker}(\mathfrak{N}^{k})\to \mathcal{H}|_{\lambda=0}) &(k>0),              
              \end{cases} 
              \\ \label{limit cup}
              \mathcal{C}_{q_i\partial_{q_i}}(\Gamma_j)&=
              \begin{cases}
              \Gamma_i,&(j=0)\\
              \Gamma_i\cup \Gamma_j+\sum_{k=1}^r
              \sum_{d\in i_*{\Lambda_S} \setminus \{0\}}
              (\Gamma_i(d)\Gamma_j(d)\Gamma_k(d)N_dq^d)\Gamma^\vee_k
              & (j>0),
              \end{cases}
              \\
              \mathcal{U}&=0,\\
              \label{mathcal V}
              \mathcal{V}&=-\frac{\deg}{2}+2,
%              \begin{cases}
%              2\Gamma_0,\ \ 
%              \mathcal{V}(\Gamma_{r+1})=4\Gamma_{r+1}, 
%              \ \ \mathcal{V}(\Gamma_i)=3\Gamma_i,
%              \end{cases}
              \end{align}
              where $i=1,2,\dots,r$, 
              $q^d$ and $N_d$ in the equation $(\ref{limit cup})$ are defined as in Lemma $\ref{local cup}$, 
              and the operator $\deg$ is defined by $\deg(\Gamma_j):=m\Gamma_j$ for $\Gamma_j\in H^m(X)$.
         \end{proposition}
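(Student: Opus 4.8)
The plan is to unwind, one layer at a time, the construction of the limit mixed {\rm trTLEP}-structure recalled in \S\ref{limit G}, and to translate each of the identities (\ref{co-kernel})--(\ref{mathcal V}) into a statement about the small quantum product of $X=\mathbb{P}(K_S\oplus\mathcal{O}_S)$ specialised to $q_0=0$, for which Lemma \ref{local cup} supplies exactly the input needed. First I would fix, over a neighbourhood of $z\in Z$, the flat trivialisation $\mathcal{H}|_{\lambda=0}\simeq H^*(X)\otimes\mathcal{O}_M$ coming from the Frobenius structure $\mathscr{F}_X$ and the logarithmic extension of Proposition \ref{log on V}, and read off from the Dubrovin connection the Frobenius type structure $(\nabla^{\rm r,0},\mathcal{C}^0,\mathcal{U}^0,\mathcal{V}^0)$ of $\mathcal{T}^{\rm small}_X$ (Lemma \ref{Frob type trTLE}): the component $\mathcal{C}^0_{q_i\partial_{q_i}}$ for $0\le i\le r$ is quantum multiplication by $\Gamma_i$, the operator $\mathcal{U}^0$ is quantum multiplication by the Euler vector field of $\mathscr{F}_X$, and $\mathcal{V}^0$ is the grading operator of $\mathscr{F}_X$ (of charge $\dim X$) normalised as in Corollary \ref{saito trTLE} by $c=(2-\dim X)/2$. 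Since the Tate twist $\mathcal{T}\mapsto\mathcal{T}(\ell)$ modifies only the $\mathrm{d}\lambda/\lambda$-component of the connection, it leaves $\mathcal{C}$ and $\mathcal{U}$ unchanged and shifts $\mathcal{V}$ by a constant; this is what produces the additive constant in (\ref{mathcal V}).

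Next I would compute the nilpotent endomorphism. By the residue computation preceding Definition \ref{LMtrTLEP}, condition (\ref{limit res}) gives $\mathrm{Res}_{Z,z}(\nabla^{\rm r})=0$, hence $\mathfrak{N}=\lambda\,\mathrm{Res}_{\mathbb{P}^1_\lambda\times Z}(\nabla)=\mathrm{Res}_{Z,z}\mathcal{C}^0$, which by eq (\ref{Delta}) of Lemma \ref{local cup} is precisely ordinary cup product by $\Delta_0$; in particular it is constant along $(Z,z)$, which is condition (\ref{triviality}) and identifies $\mathfrak{N}$ with the operator $\Delta\cup *$ of the statement. Then (\ref{co-kernel}) follows from Lemma \ref{nilpotent mixed}, giving $\mathcal{H}|_{\lambda=0}=\mathrm{Cok}(\mathfrak{N})=H^*(X)/\Delta_0\!\cup\!H^*(X)$, together with the projective-bundle presentation $H^*(X)\cong H^*(S)[\Delta_0]\big/\big(\Delta_0^2+p^*c_1(K_S)\,\Delta_0+p^*c_2(K_S\oplus\mathcal{O}_S)\big)$ and the vanishing $c_2(K_S\oplus\mathcal{O}_S)=c_1(K_S)\cdot c_1(\mathcal{O}_S)=0$: the quotient by $(\Delta_0)$ is $H^*(S)$, onto which the classes $\Gamma_i=p^*\gamma_i$ $(0\le i\le r+1)$ descend to a basis. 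The identity (\ref{surf filter}) is then just the definition (\ref{limit weight}) of the limit weight filtration combined with the index shift $W(\ell)_k=W_{k+2\ell}$ of the Tate twist at $\ell=-1/2$.

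For the Higgs field (\ref{limit cup}), the connection of the limit structure is induced on $\mathcal{G}=\mathrm{Cok}(\mathfrak{N})$ by the residual connection $\nabla^q$ of $\mathcal{T}^{\rm small}_X$ along $Z$; hence for $1\le i\le r$ the operator $\mathcal{C}_{q_i\partial_{q_i}}$ is the endomorphism of $\mathrm{Cok}(\Delta_0\cup)$ induced by quantum multiplication by $\Gamma_i$ at $q_0=0$, and substituting eq (\ref{toric GW prod}) of Lemma \ref{local cup} (together with $\Gamma_0=1$) yields exactly the right-hand side of (\ref{limit cup}); one only has to check that the displayed representatives lie in $\bigoplus_i\mathbb{C}\Gamma_i$ and are well defined modulo $\Delta_0\!\cup\!H^*(X)$. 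For $\mathcal{U}=0$: on $U'_X\subset H^2(X)$ the Euler field of $\mathscr{F}_X$ is the constant vector field $c_1(X)$ (degree-$2$ classes have grading weight $1-1=0$), and a direct Chern-class computation on $X=\mathbb{P}(K_S\oplus\mathcal{O}_S)$ gives $c_1(X)=c_1(\Theta_{X/S})+p^*c_1(\Theta_S)=\big(2\Delta_0+p^*c_1(K_S)\big)-p^*c_1(K_S)=2\Delta_0$; therefore $\mathcal{U}^0$ is multiplication by $c_1(X)$, which at $q_0=0$ equals $2\,\Delta_0\cup=2\mathfrak{N}$ and so induces the zero map on $\mathrm{Cok}(\mathfrak{N})$. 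Finally, in (\ref{mathcal V}) the linear part $-\deg/2$ is the grading operator of $\mathscr{F}_X$ transported to $\mathrm{Cok}(\mathfrak{N})$, whose generators $\Gamma_0,\dots,\Gamma_{r+1}$ have cohomological degrees $0,2,4$; the additive constant is then fixed by the charge of $\mathscr{F}_X$, the normalisation $c=(2-\dim X)/2$, and the Tate twist by $-1/2$, or equivalently it is forced by the fact that each $\mathrm{Gr}^W_k$ of $(\mathcal{T}^{\rm small}_X)_{Z,z}(-1/2)$ is a {\rm trTLEP}$(-k)$-structure (Definition \ref{def mixed trTLEP}) together with the relation $[\mathcal{V},\mathfrak{N}]=-\mathfrak{N}$ coming from (\ref{flat nilpotent}).

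The genuinely routine part is the translation into quantum cohomology; the delicate point, and the main obstacle, is the bookkeeping of signs and normalisations in the last two claims, i.e. consistently tracking the conventions that link the quantum product, the Saito/Frobenius-type structure of Corollary \ref{saito trTLE}, the residual connection along $Z$, the passage to $\mathrm{Cok}(\mathfrak{N})$ and the Tate twist by $-1/2$, which is exactly what pins down the constants appearing in $\mathcal{U}$ and $\mathcal{V}$.
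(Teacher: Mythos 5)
Your outline reproduces the paper's own argument: identify the residue $\lambda\mathrm{Res}_{\mathbb{P}^1_\lambda\times Z}(\nabla)$ with $\Delta_0\cup$ via (\ref{Delta}), read off (\ref{co-kernel}) and (\ref{limit cup}) from Lemma \ref{local cup} on the cokernel (your explicit use of $H^*(X)\simeq H^*(S)\{1,\Delta_0\}$ with $\Delta_0^2=-p^*c_1(K_S)\Delta_0$ is a nice substitute for the paper's terse citation), get (\ref{surf filter}) from (\ref{limit weight}) plus the index shift of the twist, and kill $\mathcal{U}$ because the Euler field restricted to $H^2$ is $c_1(X)=2\Delta_0$, whose product at $q_0=0$ lands in $\mathrm{Im}\,\mathfrak{N}$ (the paper instead quotes the coordinate formula for $E$ from Konishi--Minabe, but the content is the same).

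The one genuine shortfall is (\ref{mathcal V}): the statement to be proved contains the specific constant $+2$, and your proposal never derives it. You correctly name everything it depends on (the grading operator $\bm\nabla_\bullet E=1-\deg/2$, the shift $c=(2-\dim X)/2=-1/2$ of Corollary \ref{saito trTLE}, and the Tate twist by $-1/2$), but you only assert that these "fix" the constant; this is exactly the sign-sensitive step, and applying the twist with the wrong sign yields $1-\deg/2$ instead of $2-\deg/2$ (which would also change the charge $4$ used in Theorem \ref{loc A thm}). The paper settles it in one line, $\mathcal{V}=\overline{\bm\nabla_\bullet E}-(2-3)/2+1/2=-\deg/2+2$. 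Your proposed alternative pinning-down does work and is worth completing: since $Q_k$ on $\mathrm{Gr}^W_k$ of the twisted structure takes values in $\lambda^{-k}\mathcal{O}$ and pairs classes whose cohomological degrees sum to $8-2k$, flatness in the $\lambda$-direction forces the sum of the corresponding $\mathcal{V}$-eigenvalues to equal $k$, which together with the linear part $-\deg/2$ gives the constant $2$; as written, however, neither route is actually carried out, so the value $+2$ (and with it $\mathcal{U}$'s normalization, where your identification of $\mathcal{U}$ with $E\circ$ versus $\theta_E=-E\circ$ should also be fixed, though it does not affect $\mathcal{U}=0$) remains unverified.
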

         \begin{proof}
         The Lemma $\ref{local cup}$ implies (\ref{co-kernel}) and (\ref{limit cup}).
         In particular, we have $\lambda\mathrm{Res}_{\mathbb{P}^1_\lambda\times(Z,z)}\nabla=\Delta\cup*$ 
         for the connection $\nabla$ underlying $\mathcal{T}^{\rm small}_X$.
         The equation \ref{limit weight} twisted by $(-1/2)$ gives (\ref{surf filter}).
         As is shown in \cite{konisi1}, the Euler vector field $E$ of $\mathscr{F}_X$ is given by 
         $$E=t^0\frac{\partial}{\partial t^0}+2\frac{\partial}{\partial s^0}
         -t^{r+1}\frac{\partial}{\partial t^{r+1}}
         -\sum_{i=1}^r s^i\frac{\partial}{\partial s^i}-2s^{r+1}\frac{\partial}{\partial s^{r+1}}.$$
         Hence $\mathcal{C}_E$ is $0$ on $\mathrm{Cok}(\mathfrak{N})$ over $Z$. 
         Since the charge of $\mathscr{F}_X$ is $3$, considering the twist, we have 
         $\mathcal{V}=\overline{\nabla E}-(2-3)/2+1/2=-\deg/2+2$, 
         where $\overline{\nabla E}$ is the endomorphism induced on $\mathrm{Cok}(\mathfrak{N})$ over $Z$ 
         by $\nabla E$.
         \end{proof} 
         Using this proposition,% and Corollary \ref{construction theorem of mixed Frobenius manifold},
         we get the following.
         \begin{theorem}\label{loc A thm}
         If $z\in Z$ is in a sufficiently small neighborhood of $0\in V'_X$, 
         there exits a tuple 
         $\big{(}(\widetilde{M},0), \mathscr{F}_A^{\rm loc},\iota,i\big{)}$ 
         with the following conditions 
         uniquely up to isomorphisms.
          \begin{enumerate}
          \item[$1$.] $\mathscr{F}^{\rm loc}_A$ 
          is a MFS of charge $4$ on a germ  of complex manifold $(\widetilde{M},0)$.
          \item[$2.$] $\iota:(Z,z)\hookrightarrow (\widetilde{M},0)$ is a closed embedding.
          \item[$3.$] $i:(\mathcal{T}^{\rm small}_X)_{Z,z}(-1/2)\xrightarrow{\sim}\iota^*\mathcal{T}(\mathscr{F})$
          is an isomorphism of mixed trTLEP-structure such that the restriction $i|_{(0,z)}$
          sends $\Gamma_0$ to the unit vector field of $\mathscr{F}$.
          \end{enumerate}
         \end{theorem}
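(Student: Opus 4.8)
The plan is to deduce the statement from the construction theorem, Corollary \ref{construction theorem of mixed Frobenius manifold}, applied to the limit mixed {\rm trTLEP}-structure $\mathcal{T}:=(\mathcal{T}^{\rm small}_X)_{Z,z}(-1/2)$ on the germ $(Z,z)$, with the distinguished vector $\zeta_0:=\Gamma_0=1\in H^0(X)$, regarded as an element of $\mathcal{H}|_{(0,z)}=\mathrm{Cok}(\mathfrak{N})$ via the identification (\ref{co-kernel}) of Proposition \ref{Frobenius type for limit}. The whole argument then reduces to verifying that the (unique) $\nabla^{\rm r}$-flat extension of $\zeta_0$ over $(Z,z)$ satisfies the injectivity condition {\rm (IC)}, the generation condition {\rm (GC)}, and the eigenvalue condition $(\mathrm{EC})_4$ with respect to the Frobenius type structure $(\nabla^{\rm r},\mathcal{C},\mathcal{U},\mathcal{V})$ of that proposition. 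Granting these, Corollary \ref{construction theorem of mixed Frobenius manifold} with $d=4$ produces a tuple $\big((\widetilde{M},0),\mathscr{F},\iota,i\big)$ in which $\mathscr{F}$ is a MFS of charge $4$, $\iota$ is a closed embedding, and $i\colon\mathcal{T}\xrightarrow{\sim}\iota^*\mathcal{T}(\mathscr{F})$ satisfies $i|_{(0,z)}(\Gamma_0)=e|_0$; the uniqueness up to isomorphism is inherited verbatim from that corollary, and setting $\mathscr{F}^{\rm loc}_A:=\mathscr{F}$ completes the proof.

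Two of the three conditions are read off directly from Proposition \ref{Frobenius type for limit}. For {\rm (IC)}: by (\ref{limit cup}) one has $\mathcal{C}_{q_i\partial_{q_i}}\zeta_0=\Gamma_i$ for $i=1,\dots,r$, and these are $r$ of the basis vectors of $\mathrm{Cok}(\mathfrak{N})$ listed in (\ref{co-kernel}); since $(Z,z)$ has dimension $r$ with tangent bundle spanned by the $q_i\partial_{q_i}$, the morphism $a\mapsto\mathcal{C}_a\zeta_0$ is injective (in fact at every point of $(Z,z)$), so {\rm (IC)} holds. For $(\mathrm{EC})_4$: Proposition \ref{Frobenius type for limit} gives $\mathcal{U}=0$ and $\mathcal{V}=-\deg/2+2$, and since $\Gamma_0\in H^0(X)$ has $\deg\Gamma_0=0$ we get $\mathcal{V}\zeta_0=2\zeta_0$, i.e. $(\mathrm{EC})_4$ with $d=4$; this fixes the charge of the resulting MFS to be $4$.

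The condition {\rm (GC)} is the heart of the matter and is where the hypothesis that $z$ lie in a sufficiently small neighborhood of $0\in V'_X$ is used. Since $\mathcal{U}=0$, the commutative algebra $\mathcal{A}$ is generated by $\mathcal{C}_{q_1\partial_{q_1}},\dots,\mathcal{C}_{q_r\partial_{q_r}}$, and by the preceding paragraph $\mathcal{A}\zeta_0$ already contains $\Gamma_0,\Gamma_1,\dots,\Gamma_r$; by Remark \ref{remark on GC} it remains to place the last basis vector $\Gamma_{r+1}$ in $\mathcal{A}|_{(0,z)}\zeta_0$. For $j\in\{1,\dots,r\}$ one has $\mathcal{C}_{q_i\partial_{q_i}}\Gamma_j=\mathcal{C}_{q_i\partial_{q_i}}\mathcal{C}_{q_j\partial_{q_j}}\zeta_0\in\mathcal{A}\zeta_0$, and by (\ref{limit cup}) this equals $\Gamma_i\cup\Gamma_j$ plus correction terms of order $O(q^d)$ with $d\in i_*\Lambda_S\setminus\{0\}$ effective, which vanish as $z\to 0$; moreover $\Gamma_i\cup\Gamma_j=p^*(\gamma_i\cup\gamma_j)=(\gamma_i\cdot\gamma_j)\Gamma_{r+1}$ in $\mathrm{Cok}(\mathfrak{N})$, where $\gamma_i\cdot\gamma_j:=\int_S\gamma_i\cup\gamma_j$ and $\gamma_{r+1}$ is the point class of $S$. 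Since the intersection form on $H^2(S)$ is non-degenerate, there is a pair $(i,j)$ with $\gamma_i\cdot\gamma_j\neq0$; for such a pair the $\Gamma_{r+1}$-component of $\mathcal{C}_{q_i\partial_{q_i}}\Gamma_j$ is $\gamma_i\cdot\gamma_j+O(q^d)$, which is nonzero once $z$ is close enough to $0$. Therefore $\Gamma_{r+1}\in\mathcal{A}|_{(0,z)}\zeta_0$, hence $\mathcal{A}|_{(0,z)}\zeta_0=\mathrm{Cok}(\mathfrak{N})=\mathcal{H}|_{(0,z)}$ and {\rm (GC)} is proved.

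I expect this last step to be the only real obstacle: producing the top class $\Gamma_{r+1}$ out of the classical limit of a quantum product of divisor classes is what forces the restriction to a neighborhood of $0$ and what uses Poincar\'e duality on the surface $S$; the remaining ingredients are either quoted from Proposition \ref{Frobenius type for limit} or supplied by a direct appeal to Corollary \ref{construction theorem of mixed Frobenius manifold}.
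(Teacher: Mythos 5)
Your proposal is correct and follows the same route as the paper: the paper's proof simply cites (\ref{surf filter}), (\ref{limit cup}), and (\ref{mathcal V}) to conclude that $\Gamma_0$ satisfies {\rm (IC)}, {\rm (GC)}, and $(\mathrm{EC})_4$ for $z$ sufficiently small, and then invokes Corollary \ref{construction theorem of mixed Frobenius manifold}. Your write-up just makes explicit the verification the paper leaves implicit, in particular the {\rm (GC)} step producing $\Gamma_{r+1}$ from the classical part of the quantum product via Poincar\'e duality on $S$, which is exactly where the smallness of $z$ enters.
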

         \begin{proof}
         % The equation (\ref{co-kernel}) shows that 
         %$\Gamma_0$ determines the residually flat section $\xi_{\Gamma_0}$ of $(\mathcal{T}^{\rm small}_X)_{Z,z}$.
         By (\ref{surf filter}), (\ref{limit cup}), and (\ref{mathcal V}),
         $\Gamma_0$ satisfies (IC), (GC), and $(EC)_4$ when $z$ is sufficiently small. 
         Therefore, by Corollary \ref{construction theorem of mixed Frobenius manifold}, 
         we have the conclusion.
         \end{proof}
         
         \subsubsection{Comparison with the result of Konishi and Minabe}
          The mixed Frobenius manifold $\mathscr{F}^{\rm loc}_A$ constructed in Theorem \ref{loc A thm} 
          is isomorphic to the mixed Frobenius manifold 
          constructed in \cite{konisi1} as follows. 
          Let $\mathscr{F}_{\rm KM}$ be the mixed Frobenius structure on a open subset of $H^*(S,\mathbb{C})$ 
          defined in \cite[Theorem 8.7]{konisi1}.
          Regard $Z$ as a subset of the quotient 
          $H^2(S,\mathbb{C})/2\pi\sqrt{-1} H^2(S,\mathbb{Z})$ 
          via the pull back $p^*$.
          It is easy to see that $\mathscr{F}_{\rm KM}$ induces MFS on 
          $H^*(S,\mathbb{C})/2\pi\sqrt{-1}H^*(S,\mathbb{Z})$
          , which we denote by the same notation.
          We restrict the induced mixed trTLEP-structure $\mathcal{T}(\mathscr{F}_{\rm KM})$
          to the germ $(Z,z)$ and denote it by $\mathcal{T}(\mathscr{F}_{\rm KM})|_{(Z,z)}$.
          We have the following proposition.
          \begin{proposition}
            We have a natural isomorphism 
            $\mathcal{T}(\mathscr{F}_{\rm KM})|_{(Z,z)}\simeq (\mathcal{T}^{\rm small}_X)_{Z,z}(-1/2)$.
          \end{proposition}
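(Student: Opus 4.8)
The plan is to reduce the assertion to an identification of the data living over $\lambda=0$, and then to match that data term by term. By Lemma~\ref{Frob type trTLE} a trTLE-structure is reconstructed from its associated Frobenius type structure through~(\ref{equation nabla}); and, as in the proofs of Proposition~\ref{proposition mixed Frobenius and mixed trTLEP} and Proposition~\ref{unfolding of pairing}, a weight filtration together with a graded pairing on a mixed trTLEP-structure amounts, on each graded quotient of $\mathcal{H}|_{\lambda=0}$, to a $\nabla^{\rm r}$-flat subbundle stable under $\mathcal{C},\mathcal{U},\mathcal{V}$ with a compatible $\nabla^{\rm r}$-flat pairing. So it suffices to compare, over the germ $(Z,z)$, the associated Frobenius type structures, the weight filtrations, and the graded pairings of $\mathcal{T}(\mathscr{F}_{\rm KM})|_{(Z,z)}$ and of $(\mathcal{T}^{\rm small}_X)_{Z,z}(-1/2)$. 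For the latter everything is spelled out in Proposition~\ref{Frobenius type for limit}: by~(\ref{co-kernel}), $\mathcal{H}|_{\lambda=0}$ is, as a flat bundle, the trivial bundle on $(Z,z)$ with frame $\Gamma_0,\dots,\Gamma_{r+1}$, that is $\mathrm{Cok}(\Delta_0\cup\ast)$, which I identify with $H^*(S,\mathbb{C})\otimes\mathcal{O}_{Z,z}$ by $\Gamma_i=p^*\gamma_i\leftrightarrow\gamma_i$ — the same bundle, with the same (trivial) flat structure, that underlies $\mathcal{T}(\mathscr{F}_{\rm KM})|_{(Z,z)}$ after restricting the tangent bundle of the ambient open subset of $H^*(S,\mathbb{C})$ to $(Z,z)$.

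Under this identification I would first match the underlying Saito (hence trTLE) structures. The Higgs field $\mathcal{C}$ of $(\mathcal{T}^{\rm small}_X)_{Z,z}(-1/2)$ is $\mathcal{C}_{q_i\partial_{q_i}}=\Gamma_i\cup\ast$ corrected by the local Gromov--Witten invariants $\sum_d\Gamma_i(d)\Gamma_j(d)\Gamma_k(d)N_dq^d$, by~(\ref{limit cup}); this is exactly the local quantum product on $H^*(S,\mathbb{C})$ underlying $\mathscr{F}_{\rm KM}$ in \cite[Theorem~8.7]{konisi1}, Lemma~\ref{local cup} being precisely the reduction of the quantum product of $X$ to that product as $q_0\to 0$. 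The endomorphisms $\mathcal{U}$ and $\mathcal{V}$ are read off from~(\ref{mathcal V}) and the computation around it: $\mathcal{U}=0$ and $\mathcal{V}=-\deg/2+2$; on the Konishi--Minabe side the corresponding operators are the restrictions to $(Z,z)$ of $(\theta_{\rm KM})_{E_{\rm KM}}$ and of ${\bm\nabla}_{{\rm KM},\bullet}E_{\rm KM}+\frac{2-d}{2}\mathrm{id}$, which by the explicit Euler field of $\mathscr{F}_{\rm KM}$ and its weight grading are $0$ and $-\deg/2+\frac{2-d}{2}\mathrm{id}$; matching forces $d=4$, consistent with Theorem~\ref{loc A thm}. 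Hence the two trTLE-structures on $(Z,z)$ coincide under the identification above.

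It then remains to match the weight filtrations and the graded pairings. Writing $\mathfrak{N}=\Delta_0\cup\ast$ on $H^*(X,\mathbb{C})$ and using $X=\mathbb{P}(K_S\oplus\mathcal{O}_S)$, so that $H^*(X,\mathbb{C})=p^*H^*(S,\mathbb{C})\oplus\Delta_0\cdot p^*H^*(S,\mathbb{C})$ with $\Delta_0^2=\pm p^*c_1(S)\cup\Delta_0$, the powers $\mathfrak{N}^k$ and the ranks of $\mathfrak{N}^k$ are computed explicitly in terms of cup product by $c_1(S)$ on $H^*(S,\mathbb{C})$; this pins down the filtration $W_k|_{\lambda=0}=\mathrm{Im}(\mathrm{Ker}(\mathfrak{N}^k)\to\mathcal{H}|_{\lambda=0})$ of~(\ref{surf filter}) on $\bigoplus_i\mathbb{C}\Gamma_i\cong H^*(S,\mathbb{C})$, and one checks it equals the filtration $\mathcal{I}$ of $\mathscr{F}_{\rm KM}$ up to its index normalization (fixed by comparing any one graded piece). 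For the pairings, the pairing $P$ underlying $\mathcal{T}^{\rm small}_X$ is a constant multiple of the Poincar\'e pairing of $X$, so by~(\ref{limit pairing}) and the Tate-twist bookkeeping of Remark~\ref{remark on MtrTLEP} the graded pairing $Q_k$ on $\mathrm{Gr}^W_k$ is, on chosen lifts $\widetilde a,\widetilde b$, a multiple of $\lambda^{-k}\int_X\Delta_0^{\,k-1}\cup\widetilde a\cup\widetilde b$; using the projection formula $\int_X\Delta_0\cup p^*\alpha\cup p^*\beta=\int_S\alpha\cup\beta$ and the relation for $\Delta_0^2$, one evaluates $Q_k$ on the graded pieces and verifies that after the twist by $-1/2$ it equals the graded metric $g$ of $\mathscr{F}_{\rm KM}$ from \cite[Theorem~8.7]{konisi1}. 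All the identifications used ($\Gamma_i\leftrightarrow\gamma_i$, the cokernel of $\mathfrak{N}$, the Poincar\'e pairings, the projection formula) are canonical, which gives naturality of the resulting isomorphism of mixed trTLEP-structures.

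The step I expect to be the main obstacle is this last one: keeping the ring structure of $H^*\!\big(\mathbb{P}(K_S\oplus\mathcal{O}_S)\big)$ and the Tate twist under precise control so as to match the graded pairings with Konishi--Minabe's normalization, and, throughout, reconciling the grading, sign, and twist conventions of \cite{konisi1} with those used here. Once this dictionary is in place, the remaining verifications — flatness of the matched data and the Frobenius-filtration identities~(\ref{g1})--(\ref{g2}) on each graded piece — are routine.
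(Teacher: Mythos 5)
Your proposal follows essentially the same route as the paper: the isomorphism is the map $\gamma_i\mapsto\Gamma_i$, and the verification amounts to comparing the explicit limit data of Proposition \ref{Frobenius type for limit} and the pairing formula (\ref{limit pairing}) (with the Tate-twist bookkeeping) against the data in the proof of \cite[Theorem 8.7]{konisi1}. Your write-up just makes explicit the term-by-term checks (Higgs field, $\mathcal{U},\mathcal{V}$, weight filtration, graded pairing via the projective-bundle relation and projection formula) that the paper summarizes as ``we can check''.
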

          \begin{proof}
          The isomorphism is given by $\gamma_i\mapsto \Gamma_i$ $(0\leq i\leq r+1)$.
          Comparing the proof of \cite[Theorem 8.7]{konisi1} 
          with (\ref{limit pairing}) and  Proposition \ref{Frobenius type for limit}, 
          we can check that this gives an isomorphism of mixed trTLEP-structure over $(Z,z)$.
          \end{proof}
          This proposition together with the uniqueness in Theorem \ref{loc A thm} shows the following.
          \begin{corollary}
           We have an isomorphism of mixed Frobenius manifolds
           $$\big{(}(\widetilde{M},0),\mathscr{F}^{\rm loc}_A\big{)}\simeq 
           \big{(}(H^*(S,\mathbb{C})/2\pi\sqrt{-1}H^*(S,\mathbb{Z}),z),\mathscr{F}_{\rm KM}\big{)}.$$
           \qed
          \end{corollary}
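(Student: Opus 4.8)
The plan is to deduce this directly from the uniqueness clause of Theorem~\ref{loc A thm}: I would exhibit $\mathscr{F}_{\rm KM}$, restricted to a small neighbourhood of $z$, as a second tuple of exactly the shape classified in that theorem, and then invoke uniqueness. Concretely, let $(\widetilde{M}',z)$ be the germ at $z$ of $H^*(S,\mathbb{C})/2\pi\sqrt{-1}H^*(S,\mathbb{Z})$, on which $\mathscr{F}_{\rm KM}$ is defined, and let $\iota'\colon(Z,z)\hookrightarrow(\widetilde{M}',z)$ be the composite of $p^*\colon (Z,z)\hookrightarrow\big(H^2(S,\mathbb{C})/2\pi\sqrt{-1}H^2(S,\mathbb{Z}),z\big)$ with the inclusion of the latter as a closed coordinate subgerm; this is a closed embedding of germs. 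For the comparison isomorphism I would take $i'$ to be the inverse of the isomorphism $\mathcal{T}(\mathscr{F}_{\rm KM})|_{(Z,z)}\xrightarrow{\sim}(\mathcal{T}^{\rm small}_X)_{Z,z}(-1/2)$ of the preceding proposition, so that $i'\colon(\mathcal{T}^{\rm small}_X)_{Z,z}(-1/2)\xrightarrow{\sim}\iota'^*\mathcal{T}(\mathscr{F}_{\rm KM})$. Since that isomorphism is induced by $\gamma_i\mapsto\Gamma_i$, its restriction to the central fibre sends $\Gamma_0$ to $\gamma_0=1\in H^0(S,\mathbb{C})$, which is the unit vector field of $\mathscr{F}_{\rm KM}$; hence the normalization in condition~(3) of Theorem~\ref{loc A thm} holds for $(\iota',i')$.

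Next I would check the remaining conditions. That $\mathscr{F}_{\rm KM}$ is a MFS on $(\widetilde{M}',z)$ is \cite{konisi1}. Its charge is $4$: the unit vector field $\gamma_0$ corresponds under $i'$ to $\Gamma_0$, which satisfies $(\mathrm{EC})_4$ with respect to $(\mathcal{T}^{\rm small}_X)_{Z,z}(-1/2)$ (this is part of the proof of Theorem~\ref{loc A thm}), and the charge of a MFS is exactly the value $d$ for which the unit satisfies $(\mathrm{EC})_d$ in the associated mixed trTLEP-structure (Corollary~\ref{construction theorem of mixed Frobenius manifold}, via Corollary~\ref{saito trTLE}). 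Thus $\big((\widetilde{M}',z),\mathscr{F}_{\rm KM},\iota',i'\big)$ is a tuple of the type produced by Theorem~\ref{loc A thm}, and its uniqueness clause yields a biholomorphism $(\widetilde{M},0)\xrightarrow{\sim}(\widetilde{M}',z)$ carrying $\mathscr{F}^{\rm loc}_A$ to $\mathscr{F}_{\rm KM}$ and compatible with $\iota,\iota',i,i'$. In particular the underlying mixed Frobenius manifolds are isomorphic, which is the assertion.

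The only real subtlety I anticipate is verifying that the unfolding of $(\mathcal{T}^{\rm small}_X)_{Z,z}(-1/2)$ attached to $\mathscr{F}_{\rm KM}$ is the universal one, so that the uniqueness in Theorem~\ref{loc A thm} actually applies; equivalently, that $i'|_{(0,z)}(\Gamma_0)=\gamma_0$ satisfies (IdC) with respect to $\mathcal{T}(\mathscr{F}_{\rm KM})$. This is automatic: for the Saito structure underlying any MFS, the morphism $\mathcal{C}_\bullet\zeta$ associated with the unit is an isomorphism (this is built into Proposition~\ref{lemma trTLE to Saito} and Lemma~\ref{IdC for Saito}), so the corresponding unfolding is terminal in $\mathfrak{Unf}$ and the comparison through Theorem~\ref{loc A thm} goes through. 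Everything else is routine bookkeeping with germs and Tate twists.
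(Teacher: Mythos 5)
Your proposal is correct and follows essentially the same route as the paper, whose proof is exactly to combine the preceding proposition (the isomorphism $\mathcal{T}(\mathscr{F}_{\rm KM})|_{(Z,z)}\simeq(\mathcal{T}^{\rm small}_X)_{Z,z}(-1/2)$, given by $\gamma_i\mapsto\Gamma_i$) with the uniqueness clause of Theorem \ref{loc A thm}. Your additional verifications (unit goes to unit, charge $4$ via $(\mathrm{EC})_4$, and (IdC) for the unit of a MFS so that the induced unfolding is universal and the uniqueness applies) are correct fillings-in of details the paper leaves implicit.
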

          This shows that we have constructed the mixed Frobenius manifold $\mathscr{F}_{\rm KM}$
          by using the limit mixed trTLEP-structure and the unfolding theorem.

%          
%          It is a future problem to induce the local mirror symmetry 
%          from the isomorphism of logarithmic trTLEP-structures
%          given by \cite{RS}.

       %%%%%%%%%%%%%%%%%%  reference %%%%%%%%%%%%%%%%%%       

\end{document}